\newcommand\ownremark[1]{}
\newcommand{\RR}{\mathbb R}
\newcommand{\ZZ}{\mathbb Z}
\newcommand{\QQ}{\mathbb Q}
\newcommand{\NN}{\mathbb N}
\newcommand{\CC}{\mathbb C}
\newcommand{\calA}{\mathcal A}
\newcommand{\calT}{{\mathcal T}}
\newcommand{\calY}{\mathcal Y}
\newcommand{\calZ}{\mathcal Z}
\newcommand{\calI}{\mathcal I}
\newcommand{\calJ}{\mathcal J}
\newcommand{\del}{\partial}
\newcommand{\dM}{\partial M}
\newcommand{\CN}{\dot{C}^N}
\newcommand{\ff}{{\rm ff}}
\newcommand{\lf}{{\rm lf}}
\newcommand{\rf}{{\rm rf}}
\newcommand{\dx}{\partial_x}
\newcommand{\dyi}{{\partial_{y_i}}}
\newcommand{\dzj}{{\partial_{z_j}}}
\newcommand{\dwk}{{\partial_{w_k}}}
\newcommand{\calV}{\mathcal{V}}
\newcommand{\calE}{\mathcal{E}}
\newcommand{\calF}{\mathcal{F}}
\newcommand{\calG}{\mathcal{G}}
\newcommand{\calS}{\mathcal{S}}
\newcommand{\bfa}{{\bf{a}}}
\newcommand{\bfPhi}{{\bf{\Phi}}}
\newcommand{\Btilde}{\tilde{B}}
\newcommand{\Ytilde}{\tilde{Y}}
\newcommand{\Vtilde}{{\tilde{V}}}
\newcommand{\xtilde}{\tilde{x}}
\newcommand{\ytilde}{\tilde{y}}
\newcommand{\utilde}{\tilde{u}}
\newcommand{\Ttilde}{\tilde{T}}
\newcommand{\Omegatilde}{\tilde{\Omega}}
\newcommand{\Deltatilde}{\tilde{\Delta}}
\newcommand{\fbar}{\overline{f}}
\newcommand{\gbar}{\overline{g}}
\newcommand{\eps}{\epsilon}
\newcommand{\interior}[1]{\mathring{#1}}
\newcommand{\intM}{\interior{M}}
\newcommand{\intX}{\interior{X}}
\newcommand{\Span}{\operatorname{span}}
\newcommand{\id}{\operatorname{id}}
\newcommand{\Red}{\operatorname{Red}}
\newcommand{\Diff}{\operatorname{Diff}}
\newcommand{\cl}{\operatorname{cl}}
\newcommand{\MO}{(M)}  
\newcommand{\MOEF}{(M,E,F)}  
\newcommand{\intff}{\interior{\ff}}
\newcommand{\sus}{{\rm{sus}}}
\newcommand{\sfV}{\mathsf{V}}
\newcommand{\Nhat}{\widehat{N}}
\newcommand{\Ahat}{\hat{A}}
\newcommand{\xPsiinfty}{x^\infty\Psi_\bfa^{-\infty}(M)}
\newcommand{\dvol}{\operatorname{dvol}}
\newcommand{\Cdot}{\dot{C}}
\newcommand{\loc}{{\operatorname{loc}}}
\newcommand{\clos}{\operatorname{clos}}
\newtheorem{theorem}{Theorem}
\newtheorem{lemma}{Lemma}
\newtheorem{proposition}{Proposition}
\newtheorem{corollary}{Corollary}
\newtheorem{definition}{Definition}
\begin{document}

\title{Pseudodifferential operator calculus for generalized $\mathbb{Q}$-rank 1 locally symmetric  spaces, I}
\author{
Daniel Grieser}
\address{Institut f\"ur Mathematik, Carl-von-Ossietzky Universit\"at Oldenburg, 26111 Oldenburg, Germany}
\email{grieser@mathematik.uni-oldenburg.de}
\author{
Eug\'enie Hunsicker}
\thanks{EH was partially supported by the Max Planck Institute of Mathematics, Bonn}
\address{Department of Mathematical Sciences, Loughborough University, Loughborough, Leic. LE11 3TU, United Kingdom}
\email{e.hunsicker@lboro.ac.uk}
\date{September 26, 2008}

\subjclass[2000]{Primary 58J40 
         Secondary
                      35S05 
                      53C35 
                      }

\begin{abstract}
This paper is the first of two papers constructing a calculus of pseudodifferential operators suitable
for doing analysis on $\QQ$-rank $1$ locally symmetric spaces and Riemannian manifolds generalizing these.
This generalization is the interior of a manifold with boundary, where the boundary has the structure of a tower of fibre bundles. The class of operators we consider on such a space includes those arising naturally from metrics which   degenerate to various orders at the boundary, in directions given by the tower of fibrations. As well as $\QQ$-rank $1$ locally symmetric spaces, examples include  Ricci-flat
metrics on the complement of a divisor in a smooth variety constructed by Tian and Yau.
In this first part of the calculus construction, parametrices are found for ``fully elliptic differential \bfa-operators'', which are uniformly elliptic operators on these manifolds that satisfy an additional invertibility condition at infinity.
In the second part we will consider operators that do not satisfy this condition.
\end{abstract}

\maketitle

\tableofcontents

\section{Introduction}

In this paper we present the first part of the construction of a  calculus of pseudodifferential operators that will contain  parametrices for Dirac-type operators over manifolds that are  generalizations of $\mathbb{Q}$-rank 1 locally symmetric spaces.   These generalizations are complete noncompact Riemannian manifolds
that off a compact set have the structure $\mathbb{R}^+ \times Y$,  where $Y \rightarrow B_1 \rightarrow B_0$ is a double fibre bundle and  $Y$ is compact.  That is, $Y$ is the total space of a fibre bundle  over $B_1$,  which in turn is the total space of a fibre bundle over  $B_0$.
The metrics we consider on these
spaces, which we call $\bfa$-cusp metrics, are metrics that as $r\in \mathbb{R}^+$ approaches infinity
cause the two fibres to collapse at rates $e^{-a_1 r}$ and $e^{- (a_1+a_2) r}$ for a pair of positive integers
$\bfa=(a_1, a_2)$, while the base $B_0$ does not collapse.   In the  case of $\mathbb{Q}$-rank 1 locally symmetric
spaces, the fibres will be tori, and the $a_i$ will come from the Lie-theoretic weights \cite{Bo}.
These algebraic manifolds have been extensively studied in  \cite{muller}, where generalized eigenfunctions are constructed and an  $L^2$ index theorem is proved.
The calculus we construct will also be relevant to manifolds whose  metrics are conformally
equivalent to $\bfa$-cusp metrics.  In particular, a class of metrics  conformal to the \bfa-cusp metrics, which
we call \bfa-boundary metrics, arises naturally in the analysis of \bfa-cusp metric operators.
Given a manifold as above it is convenient to consider it as the  interior of a manifold with boundary $M$, where $\del M = Y$, and  where $x\in [0,1)$ is a boundary defining function on $M$ equal to
$e^{-r}$ near $\del M$.
This is the approach we will take when these definitions are made  precise in Section \ref{section:definitions}, and we will call  such an $M$ a {\em manifold with double-fibred boundary} or $\bfa$-manifold.
We emphasize here, however, that the operators we consider act only on the {\em interior} of $M$, and since their structure of degeneracy corresponds to a complete metric, we do not need to impose  boundary conditions.

There are several motivations for creating a pseudodifferential  calculus
manifolds with {\bf a}-metrics.  One of these is to develop tools to  explore how analytic results about locally
symmetric spaces, here particularly those of $\mathbb{Q}$-rank 1,  might generalize to manifolds
with similar geometric structures but without strict algebraic  structures.
For example, it is interesting to understand what versions of the  $L^2$ Hodge theorem
apply for such manifolds, generalizing the theorem for arithmetic  locally symmetric spaces
by Saper and Stern in \cite{SaSt} or what $L^2$ index theorem might exist, generalizing
that of M\"uller in \cite{muller}.  A pseudodifferential calculus  would also permit us to construct an extension of the
resolvent for these manifolds, done for $\mathbb{Q}$-rank 1 locally  symmetric spaces in \cite{Gu} and \cite{Mu2}.
There has been recent work on analytic continuation of the  resolvent of the Laplacian on (globally) symmetric spaces
by Mazzeo and Vasy in \cite{MeVa}, who, along with M{\"u}ller, also  continue to work on the higher rank
case for locally symmetric spaces.

A second motivation is to study elliptic operators on, and in  particular, prove a full Hodge-type theorem for, the noncompact  manifolds constructed by Tian and Yau in \cite{TY1} as solutions of  the noncompact Calabi conjecture. This is part of a larger program to  prove Hodge theorems for manifolds with special holonomy, and is  related to Sen's conjecture in string theory \cite{sen2}.  Many  mathematicians have
worked in this area. To name just a few, Hitchin studied the $L^2$  Hodge theorem outside of the middle degree for monopole moduli spaces  in \cite{Hi}.  The second author, together with Mazzeo and Hausel, proved an $L^2$ Hodge theorem for various cases of gravitational  instantons in \cite{HHM}.  Carron has developed techniques for  calculating dimensions of harmonic forms of various degrees in a  variety of geometric situations, \cite{Ca1}, \cite{Ca2}, \cite{Ca3}.   Mazzeo and Degeratu are currently writing up an $L^2$ Hodge result  about certain QALE spaces.   The Hodge theorem outside the middle  degree for the manifolds constructed by Tian and Yau was proved in  \cite{Hu1}, and a conjecture proposed for the middle degree case. The  metrics on the manifolds in question are conformal $\bfa$-metrics, and  the theory in our two papers permits us to confirm the conjecture in  \cite{Hu1}, as will be described in a subsequent paper.

A third motivation is to study spectral properties of manifolds with  metrics that  interpolate between infinite cylindrical ends and fibred  cusp ends, both of which are interesting and well-studied classes of noncompact manifolds.  It is interesting therefore to see how these  can be seen as opposite limiting cases in a family of examples.  A  similar study, resulting in a family of Hodge and signature theorems,
was done for singular manifolds with metrics between finite cylinders  and cone bundles off a compact set in \cite{Hu2}.

Finally, this work is part of a general program to construct pseudodifferential operator calculi on singular and noncompact
manifolds.  Various groups of researchers have used different  approaches to do this, including Melrose and his collaborators, who  construct calculi using operator kernels defined on ``blown up'' double  spaces (more on this below), Schulze and his collaborators, for  instance in \cite{Shul}, using an approach generalizing that of Boutet  de Monvel, and recently, Ammann, Lauter and Nistor
in their paper \cite{ALN} which uses integration of Lie algebroids.
The calculus for double-fibred manifolds that we construct in this  paper is the natural generalization and next step
in the program of the Melrose school after the phi-calculus of  \cite{MaMe}.  It extends easily to the case of more than two nested  fibrations. We restrict the main part of the presentation to the case  of two fibrations in order to keep the notation manageable; the  essentials needed for the extension to any number of fibrations are  given in an appendix. Another way in which our calculus extends \cite{MaMe} is that we allow arbitrary powers of degenerations (called $a_i$ above) for the various fibrations.

This paper and its sequel build on a great deal of previous work on  the construction of pseudodifferential calculi for singular and  noncompact manifolds.  The general approach is taken from the
work by Melrose, starting with his work on the b-calculus of operators  over manifolds with infinite
cylindrical ends \cite{Me-aps}, see also \cite{Me-Men}.
The main idea of this approach is to construct a calculus of operators
through studying distributions on a ``blown up double space''.  That  is, given a manifold $M$ as
above, we identify differential and pseudodifferential operators on $M$ with their Schwartz kernels, which are distributions on $M^2$, and study these by lifting them to a blow-up $M^2_z$.  This  method has been used to develop many different  calculi of pseudodifferential operators, including, for instance, the scattering calculus \cite{Me-sc}
and the 0-calculus and edge calculus, \cite{Ma-edge}, which are all calculi
designed to study geometric operators on noncompact manifolds with different types of geometry.
The original b-calculus is a special case of the calculi that will be constructed in this paper,
where the two boundary fibrations are trivial, as is Wunsch's quadratic scattering calculus \cite{W},
where there is only one boundary fibration, and the scaling is given by $a_1=2$.
Finally, the phi-calculus is suited for manifolds with  fibred cusp metrics, which are metrics like those above, but where the  second fibration,
$B_1 \rightarrow B_0$, is trivial \cite{MaMe}.  In 2001, Boris  Vaillant, a PhD student working
with W. M{\"u}ller and R. Melrose, completed a dissertation supplementing  the construction of the
phi-calculus in a way that permitted him to prove an index theorem for  fibred cusp manifolds.
These two pieces of work on the phi-calculus have been much of the  basis for the work in this paper
and its successor.   Unfortunately,
Vaillant left mathematics after the completion of his dissertation,  and this important work was therefore never revised and submitted to a  journal.  Nevertheless, there has been a fair degree of
interest in his results, which among other things have been used to  prove the Hodge theorem in \cite{HHM} and to
prove a families index theorem for manifolds with hyperbolic cusps in  \cite{AlRo}, of
interest in smooth K-theory of noncompact manifolds (defined in  \cite{MR}) and with an application to Teichm\"uller theory in \cite{AlRo2}.
Thus an additional motivation for the present work is to provide a  revised and edited (as well as generalized) version of Vaillant's work.

The content of this first paper is a generalization of the material in  \cite{MaMe}.  In it we define and
study the classes of \bfa-differential  and \bfa-pseudodifferential operators on an \bfa-manifold $M$.
If $A$ is a geometric operator over $M$ for an
\bfa-boundary metric, then $A$ will be an \bfa-differential operator,  and if $A$ is a geometric
operator of degree $m$ associated to an \bfa-cusp metric on $M$, then  $x^{m(a_1+a_2)}A$
will be an \bfa-differential operator.
These operators are elliptic in the sense that their principal symbols are invertible, uniformly up to the boundary after a suitable rescaling. A stronger condition is full ellipticity, which requires in addition invertibility of an associated family of operators on the fibres of $\partial M\to B_1$. Our first main theorem is:
\begin{theorem}
\label{th:psd-calc}
On any $\bfa$-manifold $M$ there is an algebra of operators, $\Psi^*_\bfa(M)$ that contains the $\bfa$-differential operators and parametrices for fully elliptic elements. The elements of $\Psi^*_\bfa(M)$ are given by conormal distributions on the blown up double space $M^2_z$.
\end{theorem}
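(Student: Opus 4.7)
The plan is to follow the Melrose blow-up paradigm, generalizing the phi-calculus of \cite{MaMe} to the double-fibred setting. First I would construct the blown-up double space $M^2_z$ by a sequence of blow-ups of $M^2$: starting from the corner $\partial M \times \partial M$, blow up the fibre diagonal associated to the outer fibration $B_1 \to B_0$, then the fibre diagonal associated to $Y \to B_1$, and finally the lifted interior diagonal. The weights $\bfa = (a_1,a_2)$ enter as the parabolic/quasi-homogeneous scalings used in these blow-ups, reflecting the differing rates at which the two fibres collapse. The resulting front faces, together with the lifted left and right boundary faces, carry exactly the structure needed to track the Schwartz kernels of $\bfa$-operators.

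Next, I would define $\Psi^m_\bfa(M)$ as the space of distributions on $M^2_z$ which are conormal of order $m$ to the lifted diagonal, smooth up to the new front faces (with coefficients in an appropriate density bundle), and rapidly vanishing at the lifted left and right boundary faces. To show that $\Diff^*_\bfa(M) \subset \Psi^*_\bfa(M)$, it suffices to verify that each generator of the Lie algebra of $\bfa$-vector fields lifts smoothly to $M^2_z$ and acts tangentially to all front faces; this reduces to a direct coordinate computation in charts adapted to the tower of fibrations, using the weights $\bfa$.

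The heart of the argument is the composition theorem $\Psi^m_\bfa \circ \Psi^{m'}_\bfa \subset \Psi^{m+m'}_\bfa$. For this I would build the triple space $M^3_z$ by an analogous iterated blow-up of $M^3$ and check that the three projections $\pi_{ij} : M^3_z \to M^2_z$ lift to b-fibrations. Composition then arises as the pushforward of the product of pulled-back kernels, and Melrose's pushforward theorem for conormal distributions under b-fibrations yields membership in $\Psi^{m+m'}_\bfa(M)$. This is the main obstacle: one must identify all the blow-ups of $M^3$ needed (including those generated by intersections of diagonals with corners of various codimensions), verify the b-fibration conditions face by face, and check that the density factors match. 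The double-fibred structure produces substantially more boundary faces to analyze than in the phi-calculus, and most of the technical work of the paper is expected to live here.

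Finally, for a fully elliptic $A \in \Psi^m_\bfa(M)$, I would construct a parametrix in two stages. Inverting the rescaled interior principal symbol gives a first parametrix $B_0 \in \Psi^{-m}_\bfa(M)$ with $AB_0 - I$ and $B_0 A - I$ in $\Psi^{-\infty}_\bfa(M)$, by the usual symbolic Neumann argument using the composition theorem. To improve the error so that it vanishes at the front faces, I invert the normal operators associated to $A$: at the principal front face this is the indicial family, a family of operators on the fibres of $\partial M \to B_1$ parametrized by $B_1$ together with dual variables, whose invertibility is exactly the full ellipticity hypothesis. Inverting this family, pasting back into the calculus, and solving away the remaining error iteratively in a Taylor/indicial expansion at the front faces produces a parametrix in $\Psi^{-m}_\bfa(M)$ with errors in $\xPsiinfty$, finishing the proof.
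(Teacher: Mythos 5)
Your proposal follows essentially the same route as the paper: the same iterated blow-up construction of $M^2_z$ (b-blowup of the corner, then quasi-homogeneous blow-ups of the two boundary fibre diagonals with weights $a_1$, $a_2$), the same definition of $\Psi^m_\bfa(M)$ via conormality to the lifted diagonal with rapid vanishing at $\lf$, $\rf$ and the lower front faces, composition via the triple space and the pullback/pushforward theorems for b-fibrations, and the two-stage parametrix (symbolic Neumann series, then inversion of the normal family on the $F_2$-fibres, whose invertibility is full ellipticity, followed by asymptotic summation in powers of $x$). The only slip is your remark that the lifted interior diagonal is itself blown up as a final step -- the paper (and your own subsequent definition of the calculus) keeps the diagonal as an interior p-submanifold and works with distributions conormal to it, so this does not affect the argument.
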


Here a parametrix is an inverse up to operators in $x^\infty\Psi^{-\infty}_\bfa(M)$, which are operators whose kernels are smooth on $M^2$ and vanish to infinite order at the boundary of $M^2$. There is an obvious extension of Theorem \ref{th:psd-calc} to operators acting between sections of vector bundles over $M$.

Theorem \ref{th:psd-calc} applies in particular to the resolvent of the Laplacian, and we obtain:
\begin{theorem}
\label{th:resolv}
For a manifold $M$ with a double-fibred boundary endowed with an  \bfa-boundary
metric $g_{\bfa b}$ let $\Delta_{\bfa b}$ be the Hodge Laplace operator on differential forms. For any $\lambda\in \CC \setminus [0,\infty)$  the operator $\Delta_{{\bfa b}} -\lambda$ is an invertible  fully elliptic \bfa-operator of degree 2 and the  resolvent
$(\Delta_{{\bfa b}} -\lambda)^{-1}$ is an element of the small \bfa-calculus.
\end{theorem}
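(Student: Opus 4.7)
The plan is to reduce Theorem \ref{th:resolv} to Theorem \ref{th:psd-calc}. I would first verify that $\Delta_{\bfa b} - \lambda$ is a fully elliptic $\bfa$-differential operator of degree $2$, then apply Theorem \ref{th:psd-calc} to obtain a parametrix in $\Psi^{-2}_\bfa(M)$, and finally upgrade this parametrix to the true resolvent by showing that the correction term lies in $x^\infty\Psi_\bfa^{-\infty}(M)$.

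For the first step, the fact that $\Delta_{\bfa b}$ is an $\bfa$-differential operator of degree $2$ is already indicated in the introduction: it is the geometric Laplacian of an $\bfa$-boundary metric. Adding $-\lambda\cdot \id$ does not affect the class. The $\bfa$-principal symbol of $\Delta_{\bfa b}-\lambda$ coincides with that of $\Delta_{\bfa b}$, which is the metric function $|\xi|^2_{g_{\bfa b}^{-1}}$ on the $\bfa$-cotangent bundle and is thus invertible on nonzero covectors, uniformly up to $\partial M$. For full ellipticity I would identify the normal operator of $\Delta_{\bfa b}-\lambda$ at the relevant face: by the structure of an $\bfa$-boundary metric it decomposes as a Laplace-type operator on the fibre of $\partial M\to B_1$ plus tangential terms and a constant parameter from $B_1$ and the boundary defining function direction, shifted by $-\lambda$. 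These model operators are symmetric and nonnegative, so for any $\lambda\in\CC\setminus[0,\infty)$ they are invertible, which is the required condition.

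Having full ellipticity, Theorem \ref{th:psd-calc} produces $Q_\lambda\in\Psi_\bfa^{-2}(M)$ with
\[
(\Delta_{\bfa b}-\lambda)Q_\lambda = I - R_\lambda,\qquad Q_\lambda(\Delta_{\bfa b}-\lambda) = I - R_\lambda',
\]
where $R_\lambda, R_\lambda'\in x^\infty\Psi_\bfa^{-\infty}(M)$. Meanwhile $\Delta_{\bfa b}$, being essentially self-adjoint and nonnegative, has $\sigma(\Delta_{\bfa b})\subset[0,\infty)$, so $(\Delta_{\bfa b}-\lambda)^{-1}$ exists as a bounded operator on $L^2$ for $\lambda\in\CC\setminus[0,\infty)$; in particular $\Delta_{\bfa b}-\lambda$ is invertible as claimed. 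Writing
\[
(\Delta_{\bfa b}-\lambda)^{-1} = Q_\lambda + (\Delta_{\bfa b}-\lambda)^{-1}R_\lambda,
\]
the task reduces to showing that $S_\lambda := (\Delta_{\bfa b}-\lambda)^{-1}R_\lambda$ belongs to $x^\infty\Psi_\bfa^{-\infty}(M)$, that is, has a smooth kernel on $M^2$ vanishing to infinite order at $\partial(M^2)$.

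The main obstacle is this last step, because $(\Delta_{\bfa b}-\lambda)^{-1}$ is only known a priori as a bounded $L^2$ operator, and composition with an element of the calculus is not automatically in the calculus. My plan is the standard bootstrap: using the smoothness and rapid vanishing of the kernel of $R_\lambda$, together with $L^2$-boundedness of $(\Delta_{\bfa b}-\lambda)^{-1}$, one shows that $S_\lambda$ maps $x^{-N}L^2$ into $x^N$ times an $\bfa$-Sobolev space of any order, for every $N$; iterating with the identity $(\Delta_{\bfa b}-\lambda)S_\lambda = R_\lambda - \text{remainder}$ and elliptic $\bfa$-regularity up to $\partial M$ then propagates smoothness and infinite-order vanishing to the kernel of $S_\lambda$ on the blown-up double space $M^2_z$. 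Keeping track of the rescalings in the $\bfa$-geometry — in particular the different collapsing rates $a_1,a_2$ at the nested boundary faces of $M^2_z$ — is the genuinely new difficulty compared with the phi-calculus case of \cite{MaMe}; everything else is a direct transcription of the fully-elliptic parametrix-to-inverse argument.
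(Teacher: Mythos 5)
Your first two steps coincide with the paper's own argument: ellipticity comes from the metric symbol, full ellipticity from nonnegativity of the normal family (the paper argues that $P\geq 0$ forces $\Nhat_\bfa(P)(p,\mu)\geq 0$, so $\Nhat_\bfa(P-\lambda)(p,\mu)$ is invertible on $L^2(F_{2,p})$ for $\lambda\notin[0,\infty)$, and Lemma \ref{lem.fullyelliptic} together with the remark following it upgrades $L^2$-invertibility to full ellipticity; for the Hodge Laplacian of a general, not necessarily product-type, \bfa-boundary metric your claim that the model operators are ``symmetric and nonnegative'' is cleanest via the $*$-homomorphism property of $N_\bfa$, Lemma \ref{lem:symbolcomp}, applied to $\Delta=(d+\delta)^*(d+\delta)$), and $L^2$-invertibility of $\Delta_{\bfa b}-\lambda$ comes from essential self-adjointness for the complete metric $g_{\bfa b}$. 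Up to there you are reproducing the paper.

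The gap is in your last step. From the single identity $(\Delta_{\bfa b}-\lambda)S_\lambda=R_\lambda$, elliptic regularity (Theorem \ref{th:mainthm}) shows that $S_\lambda$ maps tempered distributions into $\Cdot^\infty(M)$, which controls the Schwartz kernel of $S_\lambda$ only in the \emph{left} variable; it gives no smoothness or infinite-order vanishing in the right variable near $\partial M$, and the mechanism you propose --- ``elliptic \bfa-regularity propagates to the kernel on $M^2_z$'' --- is not a legitimate application of Theorem \ref{th:mainthm}, because $\Delta_{\bfa b}\otimes 1-\lambda$ acting in one factor is not an elliptic operator on the double space. Moreover, the difficulty you single out (bookkeeping of the rates $a_1,a_2$ at the faces of $M^2_z$) is not where the problem lies: the residual class $x^\infty\Psi^{-\infty}_\bfa(M)$ is characterized without any reference to the \bfa-structure, as operators whose kernels are smooth on the unblown-up $M^2$ and vanish to infinite order at $\partial(M^2)$, so no blow-up analysis is needed for the correction term. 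The paper's resolution (Lemma \ref{lem.inverse in calculus}) is purely algebraic and uses \emph{both} parametrix identities: from $P^{-1}=Q-P^{-1}R$ and $P^{-1}=Q-R'P^{-1}$ one gets $P^{-1}=Q-QR+R'P^{-1}R$; then $QR\in x^\infty\Psi^{-\infty}_\bfa(M)$ by Corollary \ref{th:smallcomp}, and $R'LR\in x^\infty\Psi^{-\infty}_\bfa(M)$ for \emph{any} $L$ bounded on $L^2$, since sandwiching between two kernels in $\Cdot^\infty(M^2)$ supplies regularity and decay in the left variable from $R'$ and in the right variable from $R$. Your argument can be repaired in exactly this way --- insert the left parametrix identity into $S_\lambda=(\Delta_{\bfa b}-\lambda)^{-1}R_\lambda$ to obtain $S_\lambda=Q'_\lambda R_\lambda+R'_\lambda(\Delta_{\bfa b}-\lambda)^{-1}R_\lambda$ --- but as written the one-sided bootstrap does not close.
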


We then apply Theorem \ref{th:psd-calc} to obtain the usual Fredholm and regularity results. In the following theorem, $g$ is a metric of the form $x^{2r}g_{\bfa b}$ on $\intM$ for some \bfa-boundary metric $g_{\bfa b}$ and $r\in\RR$, $\dvol_g$ is the associated volume form, and $L^2(M,\dvol_g)$  and
$H^k_\bfa(M,\dvol_g)$, $k\in\RR$, are the $L^2$ and Sobolev spaces naturally associated with $g$ and the $\bfa$-structure.

\begin{theorem}
\label{th:mainthm}
Let $P\in\Psi^m_\bfa(M)$ be a fully elliptic \bfa-operator on an $\bfa$-manifold, and let $g$ be a metric as described above.
Then $P$ is a Fredholm operator on $L^2(M,\dvol_g)$.  Further, if $Pu=f$ where $u$ is a tempered distribution and
$f \in  x^\alpha H^k_{\bfa}(M,\dvol_g)$ 
then in fact $u \in x^\alpha H^{(k +m)}_{\bfa}(M,\dvol_g)$.
In particular, all elements of the kernel and cokernel of $P$ in any  weighted Sobolev space
are smooth sections that vanish to infinite order on $\partial M$.
\end{theorem}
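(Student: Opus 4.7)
The plan is to feed the fully elliptic parametrix from Theorem~\ref{th:psd-calc} into the usual machinery for pseudodifferential calculi on noncompact manifolds. Full ellipticity of $P$ produces $Q \in \Psi^{-m}_\bfa(M)$ with $QP = I - R_L$ and $PQ = I - R_R$, where $R_L, R_R \in x^\infty \Psi^{-\infty}_\bfa(M)$. All three conclusions---Fredholmness, regularity, and (co)kernel smoothness---follow once one has established (i) that elements of $\Psi^k_\bfa(M)$ act boundedly $x^\alpha H^s_\bfa(M, \dvol_g) \to x^\alpha H^{s-k}_\bfa(M, \dvol_g)$, and (ii) that elements of $x^\infty \Psi^{-\infty}_\bfa(M)$ act compactly on $L^2(M, \dvol_g)$.

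For (i), one first obtains $L^2$-boundedness of $\Psi^0_\bfa(M)$ with respect to an intrinsic $\bfa$-density by the usual H\"ormander trick: pick $c$ larger than the sup-norm of the principal symbol of $A\in\Psi^0_\bfa(M)$, take a self-adjoint parametrix for $c^2-A^*A$ inside $\Psi^0_\bfa(M)$, and iterate, reducing matters to $L^2$-boundedness of operators in $\Psi^{-\infty}_\bfa(M)$; the latter follows from a Schur estimate on the lifted kernel on $M^2_z$. The case of general order reduces to this by composing with a suitable invertible element of $\Psi^1_\bfa(M)$, using that $\Psi^*_\bfa(M)$ is an algebra. Conjugation $x^{-\alpha} A x^\alpha$ stays in $\Psi^k_\bfa(M)$, which handles the weight, and $\dvol_g$ differs from a chosen $\bfa$-density by a fixed power of $x$ that can be absorbed into $\alpha$. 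For (ii), a kernel in $x^\infty\Psi^{-\infty}_\bfa(M)$ is smooth on $M^2$ and vanishes to infinite order at $\partial M^2$; this vanishing dominates the blow-up of $\dvol_g$ near $\partial M$, so $R$ is Hilbert--Schmidt on $L^2(M,\dvol_g)$, hence compact. Combined with (i), Atkinson's lemma applied to the identities $PQ=I-R_R$ and $QP=I-R_L$ yields the Fredholm property on $L^2$, and more generally on each $x^\alpha H^s_\bfa$.

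For the regularity statement, assume $Pu = f$ with $u$ tempered and $f \in x^\alpha H^k_\bfa$. Applying $Q$ yields $u = Qf + R_L u$. The first term lies in $x^\alpha H^{k+m}_\bfa$ by (i). Since $R_L$ is smoothing of infinite order with infinite-order boundary vanishing, it sends any space $x^{-N}H^{-N}_\bfa$ into $\bigcap_{\alpha',s}x^{\alpha'}H^s_\bfa$; as $u$ is tempered it lies in some such space, so $R_L u \in \bigcap_{\alpha',s}x^{\alpha'}H^s_\bfa \subset x^\alpha H^{k+m}_\bfa$, whence $u \in x^\alpha H^{k+m}_\bfa$. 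Applied to $u \in \ker P$ (so $f=0$), this gives $u = R_L u$, which is smooth on $M$ and vanishes to infinite order at $\partial M$; the same argument applied to $P^*$ handles the cokernel.

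The main obstacle is item (ii), together with the careful bookkeeping in (i) comparing $\dvol_g$ (which depends on the rescaling parameter $r$) with an intrinsic $\bfa$-density and tracking how weights interact with the normal operators on the various boundary faces of $M^2_z$. Once these mismatches are accounted for by a density-conversion lemma using the explicit form $g = x^{2r}g_{\bfa b}$ and absorbing the resulting power of $x$ into the weight $\alpha$, the rest reduces to standard parametrix bookkeeping.
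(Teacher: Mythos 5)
Your proposal follows essentially the same route as the paper's: the parametrix from full ellipticity, Sobolev boundedness via weight conjugation, reduction to order zero through invertible reference operators, and the H\"ormander square-root trick, compactness of $x^\infty\Psi^{-\infty}_\bfa(M)$ remainders via Hilbert--Schmidt kernels, the density shift $\dvol_g = x^s\nu$ absorbed into the weight, and the identification of tempered distributions with the union of weighted $\bfa$-Sobolev spaces to run the bootstrap $u = Qf + R_L u$. The only (harmless) deviation is that you close the square-root argument with a Schur estimate for remainders in $\Psi^{-\infty}_\bfa(M)$, whereas the paper uses the normal operator to improve the remainder to $x^\infty\Psi^{-\infty}_\bfa(M)$ and then quotes square-integrability of its kernel; both are valid.
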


The space $\Psi^*_\bfa(M)$ may be called the ``small'' $\bfa$-calculus in the philosophy of b-type  calculi, since the operator kernels have non-trivial expansions only  at the highest order front face of $M^2_z$.
Note that a fully elliptic \bfa-operator is Fredholm on any weighted \bfa-Sobolev space. This, and the vanishing to infinite order of elements of the kernel of a fully elliptic operator, is also true in the $\phi$- and scattering calculus. Compare this with the b-calculus, where elliptic operators are Fredholm only off a discrete set of weights, the parametrix depends on the weight and elements of the kernel have polynomial asymptotic behavior at the boundary.

We also define the full $\bfa$-calculus and study its mapping and composition properties. In the sequel to this paper we will show that the full $\bfa$-calculus contains parametrices for a particular class of elliptic but not fully elliptic $\bfa$-operators.  This is
an important extension, as the most geometrically interesting  operators on manifolds
with either $\bfa$-cusp or \bfa-boundary metrics, including Dirac  operators, belong to this class.  This step is a  generalization of  the extension done by Vaillant  in his dissertation  of the phi-calculus. In future papers we will apply these techniques  to obtain Hodge and index theorems for manifolds with $\bfa$-cusp and  boundary metrics.

The rough outline of this paper is as follows.
In Section 2, we make several definitions necessary for the  construction of the small $\bfa$-calculus,
then go on to a description of the geometry of the $\bfa$-double space  and discuss the kernels of
$\bfa$-differential operators lifted to this space.  We then define  the small $\bfa$-calculus and the normal operator of an element of the  small calculus, whose invertibility gives the ellipticity condition at  the boundary that was discussed earlier.  In Section 3 we define and analyze the \bfa-triple space,  introduce the full \bfa-calculus and prove  composition and mapping theorems for it.  Finally, in Section 4 we construct the parametrix of a  fully elliptic \bfa-differential operator and prove the Theorems stated above.





\section{Definitions and basic lemmas}
\label{section:definitions}
\subsection{Double-fibration structures}
\label{subsec:double}
\begin{definition} Let $M$ be a manifold with boundary, where
$\del M = B_2$ is the total space of a fibre bundle
$F_2 \hookrightarrow B_2 \stackrel{\phi_2}{\rightarrow}B_1$
and $B_1$ is again the total space of a fibre bundle
$F_1 \hookrightarrow B_1 \stackrel{\phi_1}\rightarrow B_0$.
Then $M$ is called a {\em manifold with double-fibred boundary}.
\end{definition}
Thus, $B_2$ comes with two fibrations, $\phi_2:B_2\to B_1$ and $\phi_{2,1}=\phi_1\circ\phi_2:B_2\to B_0$, the fibres of the latter being unions of fibres of the former.
Fix the notation that dim$(F_i)=f_i$ and dim$(B_0)=b$, so dim$(M)=f_1+f_2+b+1$.

For convenience we will assume throughout the paper that $M$ is compact. However, this is not needed for many of the considerations.
The prototype of metrics we consider is the following.
\begin{definition}  Let $M$ be a manifold with double-fibred boundary and fix $a_1,a_2 \in \mathbb{N}$.
Then a metric $ds_{{\bf a}c}^2$ on the interior of $M$ is called a {\em product-type} {\bf a}{\em -cusp metric}
if for some trivialization of a neighborhood  $U$ of the boundary,
$U  \cong [0,1)_x \times B_2$, it has the form, for $x>0$:
\begin{equation}
\label{eq:def product metric}
ds_{{\bf a}c}^2 = \frac{dx^2}{x^{2}} + (\phi_1 \circ \phi_2)^*ds_{B_0}^2 + x^{2a_1}\phi_2^*h_1 +
x^{2(a_1+a_2)}h_2,
\end{equation}
\noindent
where
$ds_{B_0}^2$ is a metric on $B_0$, $h_1$ is a nondegenerate 2-form on $B_1$ that restricts to a metric on each fibre, $F_1(p)=\phi_1^{-1}(p)$, $p \in B_0$, and
$h_2$ is a nondegenerate 2-form  on $B_2$ that restricts
to a metric on each fibre, $F_2(q)=\phi_2^{-1}(q)$, $q \in B_1$.

A {\em product type} {\bf a}{\em -boundary metric} is one of the form $ds_{{\bf a}b}^2 =x^{-2(a_1+a_2)}ds_{{\bf a}c}^2$
for some product type {\bf a}-cusp metric $ds_{{\bf a}c}^2$, that is, has the form:
\begin{equation}
\label{eq:def product metric2}
ds_{{\bf a}b}^2 = \frac{dx^2}{x^{2(1+a_1+a_2)}} +
\frac{(\phi_1 \circ \phi_2)^*ds_{B_0}^2}{ x^{2(a_1+a_2)}}
+ \frac{\phi_2^*h_1}{ x^{2a_2}}+
h_2,
\end{equation}

\end{definition}
Fibred cusp metrics are closer to the intended applications, and fibred boundary metrics are more naturally associated with the calculus constructed below.

Note that `product type' refers to the fact that none of $ds_{B_0}^2$, $h_1$ or $h_2$ depends on $x$. The fibrations may be non-trivial.
This is a stronger condition than we need for analysis, although it simplifies things if
we can choose to work with such a metric, as, for instance, we can when proving Hodge
theorems.  However, the analysis below will be carried out for a somewhat
more general class of metrics having the same structure of degeneracy as $x\to 0$ as $ds_{{\bf a}c}^2$.

The degeneracy as $x\to 0$ of these metrics and of the associated differential operators is efficiently encoded in a set of vector fields, $^\bfa\calV(M)$, which serves as the basic object in building the pseudodifferential calculus used to analyze these operators. To make this explicit, we fix a trivialization $M\supset U \cong B_2\times [0,1)$ and extend the fibrations $\phi_i$ to the interior by
\begin{equation}
\label{eq:product structure}
U\cong B_2\times[0,1)\stackrel{\Phi_2=\phi_2 \times \id}\longrightarrow B_1\times[0,1)
\stackrel{\Phi_1=\phi_1\times\id} \longrightarrow B_{0}\times[0,1).
\end{equation}
This defines a boundary defining function (or bdf) on $U$, always denoted $x$, given by the coordinate in $[0,1)$, as well as the two fibrations $\Phi_2$ and $\Phi_{2,1}=\Phi_1\circ\Phi_2$ with total space $U$ and the same fibres as $\phi_1$, $\phi_{2,1}$.
Then $^\bfa\calV(M)$ consists of those smooth vector fields $V$ on $M$ that have {\em bounded length} with respect to $ds^2_{\bfa b}$. From \eqref{eq:def product metric2} it is clear that these are characterized by
\begin{equation}
\label{eq:def a calV'}
(\Phi_2)_* V = O(x^{a_2}),\ (\Phi_{2,1})_* V = O(x^{a_1+a_2}),
\
x_* V = O(x^{1+a_1+a_2}).
\end{equation}
Here, $O(x^a)$, for sections of a bundle $E$ over $M$, denotes an element of the form $x^a f$ with $f$ a smooth (up to the boundary!) section of $E$. Also $(\Phi_2)_*V$ is regarded as a section of the pull-back bundle $(\Phi_2)^*T(B_1\times[0,1))$ over $B_1\times[0,1)$, and similar for the other maps.
Note that condition \eqref{eq:def a calV'} does not depend on $ds_{B_0}^2$, $h_1$ and $h_2$ in the definition of $ds^2_{\bfa b}$; hence we use it to define:
\begin{definition} \label{def:interior dfs}
Let $M$ be a manifold with double-fibred boundary and $\bfa=(a_1,a_2)\in \NN^2$.
An {\em \bfa-structure} on $M$ is a set of vector fields
$$ ^\bfa\calV(M) = \{V\in \Gamma(TM): \, V\text{ satisfies \eqref{eq:def a calV'}}\}$$
for some trivialization $M\supset U \cong B_2\times [0,1)$. $M$ together with an \bfa-structure will also be called an {\em \bfa-manifold}.
\end{definition}
Condition \eqref{eq:def a calV'} says that $V$ is tangent to order $a_2$ (as $x\to 0$) to the fibres $F_2(w)$ of $\Phi_2$, tangent to order $a_1+a_2$ to the fibres $F_{21}(p)$ of $\Phi_{2,1}$, and tangent to order $1+a_1+a_2$ to the fibres of $x$. This shows that $^\bfa\calV(M)$ does not uniquely determine the fibrations $\Phi_2,\Phi_1,x$, or the trivialization $U\cong B_2\times [0,1)$. However, it does determine $\Phi_2,\Phi_1,x$ uniquely at the boundary (up to diffeomorphisms of the bases), and also to some finite orders in the interior. This is discussed in greater detail in the appendix.

We assume from now on that a trivialization as in Definition \ref{def:interior dfs}  is chosen.
However, any object introduced below that is decorated with an upper index $\bfa$  will be determined purely by $^\bfa\calV(M)$ and not depend on the particular choice of trivialization.

Throughout we will use coordinates adapted to the trivialization and the fibrations. These are, near a boundary point, $\{x, y_1, \ldots, y_b, z_1, \ldots , z_{f_1}, w_1, \ldots w_{f_2}\}$ where $x$ is the boundary defining function,
the $y_i$ are coordinates lifted from the base $B_0$, the $z_j$ are $F_1$ coordinates and the
$w_k$ are $F_2$ coordinates. \footnote{More precisely, $y_i$ are coordinates on $B_0$, $z_j$ are local functions on $B_1$ so that combined with the $(\phi_1)^*y_i$ they give coordinates on $B_1$, and $w_k$ are local functions on $B_2$ that combined with the $(\phi_{2,1})^*y_i$ and the $(\phi_2)^*z_j$ give coordinates on $B_2$. For simplicity we write $z_j$ for $(\phi_2)^*z_j$ etc.}
The question of which coordinate changes are admissable given an \bfa-structure is discussed in the appendix.
The set of ${\bf a}$-vector fields is a $C^{\infty}(M)$ module and is locally spanned by:
$$
x^{1+a_1+ a_2} \del_x,\ x^{a_1+a_2} \del_{y_i},\ x^{a_2}\del_{z_j},\ \del_{w_k}.
$$
Thus $^{\bf a} \mathcal{V}(M)$ is the space of sections of a bundle which we will call the ${\bf a}${\em-tangent bundle} over $M$ associated to $^\bfa\calV(M)$ and denote by  $^{\bf a}TM$.

Associated with $^\bfa\calV(M)$ is the space of one-forms
$$ ^\bfa\Omega(M) = \{ \omega\in \Omega^1(\interior{M}):\, \omega(V) \text{ extends smoothly from }\interior{M}\text{ to } M,\text{ for all }V\in {}^\bfa\calV(M)\}.
$$
It is locally spanned by
\begin{equation}
\label{eq:local basis aT*}
\frac{dx}{x^{1+a_1+a_2}},\ \frac{dy_i}{x^{a_1+a_2}},\ \frac{dz_j}{x^{a_2}},\ dw_k
\end{equation}
over $C^\infty(M)$, and is the space of sections of the dual bundle $^\bfa T^*M$.

\begin{definition}
An {\em $\bfa$-boundary metric} (or {\em $\bfa$-metric}) on an \bfa-manifold $M$ is a
smooth positive definite section of the symmetric square of $^\bfa T^*M$.
An {\bf a}{\em -cusp metric} is one of the form $ds_{{\bf a}c}^2 =x^{2(a_1+a_2)}ds_{{\bf a}b}^2$
for some {\bf a}-boundary metric $ds_{{\bf a}b}^2$.\end{definition}
In other words, an $\bfa$-boundary metric is, locally near the boundary, a symmetric positive definite quadratic form in the expressions \eqref{eq:local basis aT*}, with smooth coefficients.


There are several vector bundles over $\partial M$ associated naturally with an \bfa-structure on $M$, which will be needed as spaces carrying  various reduced (or model) operators.
Here we simply define them in terms of their local bases and refer to the appendix, see \eqref{eq:def a-normal bundle}, for an intrinsic characterization. The local bases are:
\begin{align*}
{}^\bfa N\partial M: &\quad x^{1+a_1+a_2}\partial_x,\ x^{a_1+a_2}\partial_{y_i},\ x^{a_2}\partial_{z_j}\\
{}^\phi N\partial M: &\quad x^{1+a_1}\partial_x, x^{a_1}\partial_{y_i}\\
{}^{\rm b} N\partial M:&\quad x\partial_x
\end{align*}
Note that $^{\rm b} N\partial M$ is the b-normal bundle from \cite{Me-aps}. It is canonically trivial since $x\partial_x$ is independent of the choice of bdf $x$. The bundles ${}^\phi N\partial M$ and ${}^\bfa N\partial M$ are in general not trivial. ${}^\phi N\partial M$ is the pull-back of a bundle ${}^\phi N'B_0$ on $B_0$, and in case $a_1=1$ is the phi-normal bundle defined in \cite{MaMe} for the fibration $\partial M \stackrel{\phi_{2,1}}{\to} B_0$. Similarly,
${}^\bfa N\partial M$ is the pull-back of a bundle ${}^\bfa N'B_1$ on $B_1$, which is just ${}^\bfa T_{B_1\times\{0\}}(B_1\times [0,1))$ for the induced \bfa-structure on $B_1\times[0,1)$, which arises by squashing the $F_2$-fibres to points. See again the appendix, equation \eqref{eq:normal bundle as pullback}.



\begin{definition}
An {\em {\bf a}-differential operator} on an \bfa-manifold $M$  is an element of the universal
enveloping algebra of $^{{\bf a}}\mathcal{V}(M)$, that is, an operator of the form $a_0 + \sum_{l=1}^m V_{l,1}\cdots V_{l,l}$ for some $m\in\NN_0$, with $a_0\in C^\infty(M)$ and all $V_{l,i}\in {}^{{\bf a}}\mathcal{V}(M)$. The set of \bfa-differential operators of order at most $m$ will be denoted ${}^\bfa\Diff^m(M)$.
\end{definition}
Note that $^{{\bf a}}\mathcal{V}(M)$ is a Lie-subalgebra of $\mathcal{V}(M)$, i.e.\ closed under brackets.
Therefore, ${}^\bfa\Diff^1(M)= C^\infty(M) + {}^\bfa\calV(M)$.

In terms of the local coordinates above, an {\bf a}-differential operator of order $m$ has the form:
\begin{equation}
P = \sum_{\alpha + |I| + |J| +|K| \leq m}
a_{\alpha, I, J,K}(x, y, z,w) (x^{1+a_1+a_2} D_x)^\alpha (x^{a_1+a_2} D_y)^I (x^{a_2}D_z)^J
(D_w)^K,
\label{eq:difflocal}
\end{equation}
\noindent for multi-indices $I, \, J$ and $K$,
where  $(x^{a_2}D_z)^J= x^{a_2}D_{z_1}^{J_1} \cdots x^{a_2} D_{z_f}^{J_f}$, and we define
$(x^{a_1+a_2} D_y)^I $  and $(D_w)^K$ analogously. Here $D_x = \frac1i \partial_x$ etc.

The definition of an {\bf a}-differential operator can be
extended in an obvious way to operators that map from sections of one vector bundle $E$ over $M$ to sections
of a second vector bundle $F$ over $M$.  The space of such operators will be denoted
by ${}^\bfa\Diff^m(M;E,F),$ or simply by ${}^\bfa\Diff^m(M;E)$ if $E=F$.

\medskip
\noindent{{\bf Densities etc.:}}
For purposes of bookkeeping, it is useful (as in, e.g. \cite{Me-aps}) to consider everything in sight (functions, forms, operator kernels, and so on) as having coefficients in a half-density bundle. This allows us to integrate products of two such objects invariantly, i.e.\ without choosing an extra measure. This occurs, for example, when we define the action of kernels on functions and the composition of kernels, see \eqref{eq:action}, \eqref{eq:comp}.
A density on a manifold $M^n$ is an object that in local coordinates has the form $h(x)|dx_1\cdots dx_n|$ for a smooth function $h$; more invariantly it may be considered as a section of a line bundle (as usual the local form is supposed to indicate the transition maps between local trivializations of this bundle), the density bundle, which is associated to the cotangent bundle and denoted as $|\bigwedge^n T^* M|$. Correspondingly, a half density is locally of the form $h(x)|dx_1\cdots dx_n|^{1/2}$, and a section of the half density bundle $\Omega^{1/2}M := |\bigwedge^n T^*M|^{1/2}$. On an $\bfa$-manifold it is natural to consider $\Omega_\bfa M := |\bigwedge^n{}^\bfa T^*M|$ (not to be notationally confused with
the space of smooth \bfa-one forms, ${}^\bfa\Omega(M)$). That is, in adapted coordinates around a boundary point, a smooth $\bfa$-density is of the form
\begin{equation}
\label{eq: half density}
h(x,y,z,w)\,\left| \frac {dx}{x^{1+a_1+a_2}} \prod_{i}\frac{dy_i}{x^{a_1+a_2}} \prod_j \frac{dz_j}{x^{a_2}} \prod_k dw_k \right|
\end{equation}
with $h$  a smooth function, and the density is called positive if $h$ is positive.
The volume density of an $\bfa$-metric is a positive $\bfa$-density. If $\nu$ is a smooth positive density then $\nu^{1/2}$ is a half-density. Fixing such a $\nu$ allows us to identify arbitrary half-densities $\alpha$ with smooth functions $f$, via $\alpha=f\nu^{1/2}$. In this way a differential operator $P$ as in \eqref{eq:difflocal} can be considered as acting on half-densities, by setting $P'(f\nu^{1/2}) := P(f) \nu^{1/2}$. $P'$ will depend on the choice of $\nu$, but only up to conjugation by a smooth function, which does not affect its main properties. Therefore we will continue to write $P$ instead of $P'$. See Subsection \ref{subsec.proofs} for further discussion of this.

In this paper we will use both regular and \bfa-half densities.

\subsection{Regularity definitions}

Various types of regularity will be considered in this paper, both for functions or sections over
our original manifold, and for functions, sections, and distributions used in describing
the kernels for the space of pseudodifferential operators we will construct.  We lay them
out here for reference.  The definitions here, and more discussion, can be found in various
sources, including \cite{Ma-edge}, \cite{EMM},\cite{Me-aps}, and \cite{Me-mwc}.

\begin{definition}
\label{def:mwc}
An n-dimensional {\em manifold with corners}, $X$,  is a
topological manifold with
boundary $\del X$ that is a submanifold of an n-dimensional smooth manifold without
boundary, $\tilde{X}$ such that in a neighborhood $U$ of any point
on the boundary of $X$, there is a finite set of smooth functions $x_1, \ldots x_k$ on $\tilde{X}$
whose differentials are linearly independent at each point of $\del X \cap U$ and such that
$U$ is the set $\{x_1 \geq 0, \ldots, x_k \geq 0\}$ with $\del X \cap U$ given by
the subset where one or more of the functions vanish. Also, it is assumed that the boundary hypersurfaces, introduced below, are embedded.
\end{definition}
The closures of the maximal connected subsets $F\subset X$ on which exactly $l$ of the functions $x_i$ vanish, in some (and hence any) local representation, are called {\em faces of codimension $l$}, for $l=0,\dots,n$, and for $l=1$ {\em boundary hypersurfaces}. The local condition implies that they are immersed submanifolds of $\tilde{X}$, and it is assumed that they are actually embedded. This implies that, for each boundary hypersurface $H$, there is a globally defined {\em boundary defining function}, i.e.\ a smooth function $x_H:X\to[0,\infty)$ such that $x_H^{-1}(0)=H$ and $dx_{H|p}\neq 0$ at each point $p\in H$, and a trivialization $U\to [0,1)\times H$ of a neighborhood $U\subset X$ of $H$. The ambient manifold $\tilde{X}$ is not really needed anywhere, but its existence is sometimes useful for simple arguments.
Any point of $X$ is contained in a unique smallest face $F$, and if $F$ has codimension $k$, then one has local coordinates $x_1\geq 0,\dots,x_k\geq 0,y_1,\dots,y_{n-k}$ on $X$ centered at that point, with $F=\{x_1=\dots=x_k=0\}$ locally.
\begin{definition}
\label{def:psub}
A {\em p-submanifold} $Y$ of a manifold with corners $X$ is a subset of $X$ for
which near any point on $Y\cap X$ there exist local coordinates
$x_1 \geq 0, \ldots, x_k \geq 0, y_1, \ldots, y_{n-k}$ on $X$ such that $Y$ is given by the
vanishing of some subset of these coordinates.  We call $Y$ an
{\em interior p-submanifold} if it is given everywhere locally by the vanishing of some subset of the $y_i$; otherwise we call it a {\em boundary p-submanifold}.
\end{definition}

As is usual in various b-type calculi, we will consider the action of operators
on bundle sections with various types of regularity:  polyhomogeneous sections, conormal
sections, and sections in various weighted Sobolev spaces.  In addition, the kernels
of {\bf a}-differential operators and later, of {\bf a}-pseudodifferential operators, will also
be distributions with polyhomogeneous type regularity in some regions and
conormal regularity in others.  Thus we briefly recall here the definitions of conormal
and of polyhomogeneous distributions. Weighted Sobolev spaces will be introduced in Subsection \ref{section.sob}.

First we need to define index sets and index families.

\begin{definition}
Let $X$ be a manifold with corners.
An {\em index set} at a boundary hypersurface  $H \subset \del X$ is a discrete subset
$G \subset \mathbb{C} \times \mathbb{N}_0$ satisfying
\begin{enumerate}
\item for every $c\in \mathbb{R}$, the subset $G \cap \{\Re(z) < c\} \times \mathbb{N}_0$
is finite, and
\item if $(z,p) \in G$ and $0 \leq q \leq p$, then $(z,q)$ is also in $G$.
\item if $(z,p) \in G$ then also $(z+1,p) \in G$.
\end{enumerate}
An {\em index family}, $\mathcal{G}$ for $X$ is a choice of index set $\calG(H)$
for each boundary hypersurface $H$ of $X$.
\label{def:indfam}
\end{definition}
Condition 3 ensures the coordinate invariance of the definitions below.
Now we can define polyhomogeneous functions. We first consider the case of a manifold with boundary.
Denote by $\CN(X)$ the space
of functions on $\intX$ that are $N$ times differentiable and whose derivatives up to order $N$ vanish when approaching $\del X$.
\begin{definition}  Let $X$ be a manifold with a single boundary hypersurface, $H= \del X$, and let $x$ be a boundary defining function for $H$ and let $G$ be an index set.  Then
a smooth function $u$ on $\intX$ is called {\em polyhomogeneous} with index set $G$ at $H$,
and we write $u \in \mathcal{A}^G(X)$ if there are smooth functions $u_{z,p}$ on $X$
such that for all $N$,
$$
u-\sum_{\stackrel{(z,p) \in G}{\Re(z)\leq N}} x^z (\log x)^p u_{z,p} \in \CN(X).
$$
We denote this by writing:
\begin{equation}
\label{eq:phg}
u \sim \sum_{(z,p) \in G} x^z (\log x)^p u_{z,p}.
\end{equation}
\end{definition}
It is essential that $u$ need only be defined in the interior but the $u_{z,p}$ are smooth up to (i.e.\ including) the boundary.
Certain index sets are of particular importance.  If a function is smooth on $X$ up to $\del X$
then it will be polyhomogeneous with index set $\mathbb{N}_0 \times \{0\}$.
We will denote this set by $0$.  The set of functions
that vanish to all orders at $\del X$ will be indicated by the index set $\emptyset$.
Now we extend this definition to the case where $X$ may have corners.

\begin{definition}
Let $X$ be a manifold with corners and let $\calG$ be an index family for $X$. For any boundary hypersurface $H$ of $X$, denote by  $\mathcal{G}_H$ the index family for $H$ given by the collection of index sets for the boundary hypersurfaces intersecting $H$.
A smooth function $u$ on $\intX$ is called {\em polyhomogeneous with index family
$\mathcal{G}$}, and we write $u\in\calA^\calG(X)$, if
near each boundary hypersurface $H$, $u$ has an expansion as in equation \eqref{eq:phg}
in which $x$ is a boundary defining function for $H$ and
\begin{enumerate}
\item the coefficients $\sigma_{z,p}$ are required to lie in
$C^{\infty}([0,1);\mathcal{A}^{\mathcal{G}_H}(H))$ and
\item the remainder is in $\CN([0,1),\calA^{\calG_H}(H))$, with respect to some trivialization $[0,1)\times H$ of a neighborhood of $H$.
\end{enumerate}
We take this definition recursively in the codimension of the boundary faces.
\label{def:polyhom}
\end{definition}

Next we want to discuss regularity properties for distributions that will be relevant for
the descriptions of our operator kernels.

\begin{definition}
Let $X$ be a manifold and $Y \subset X$ be an (embedded) submanifold.  A distribution $u$ on $X$
is called a {\em classical conormal distribution of degree $m \in \mathbb{R}$ with respect to $Y$} if
\begin{enumerate}
\item $u$ is smooth on $X-Y$ and
\item in any local coordinate system around $Y$,
$u$ can be represented as the Fourier transform in the transverse
direction of a function $\sigma$ with prescribed asymptotics.
More precisely, let
$\{y_1, \ldots, y_k, z_1, \ldots, z_l \}$ be local coordinates around $Y$ where $Y$ is defined by
$z_{1} = \cdots = z_l = 0$.  Then
$$
u(y,z) = \int_{R^{l}} e^{i z \zeta} \sigma(y, \zeta) d\zeta
$$
where
\begin{equation}
\label{eq:symb}
\sigma(y,\zeta) \sim \sum_{j=0}^{\infty} \sigma_{m'-j}(y, \zeta)
\end{equation}
where the $\sigma_{m'-j}(y, \zeta)$ are homogeneous of degree $m'-j$ in $\zeta$,
with $m' = m + \frac{1}{4}\rm{dim}(X) - \frac{1}{2} \rm{codim}(Y)$.
\end{enumerate}
We denote the space of such distributions by $I^m(X, Y)$.  It is possible to consider a broader
definition of conormal distributions.  In texts where this is done, the classical distributions
are indicated by a subscript ``cl" or ``os".
\label{def:conorm}
\end{definition}
The asymptotic sum in \eqref{eq:symb} is defined as in the standard pseudodifferential calculus, see \cite{Sh}.
It is a non-trivial and important fact that this is independent of the choice of coordinates $(y,z)$ and that condition 2 is independent of the choice of coordinates $(y,z)$ and that $\sigma_{m'} |d\zeta|$, the {\em principal symbol} of $u$,  is defined invariantly when interpreted as (fibrewise) density on the conormal bundle of $Y$.
We can put these definitions together to get

\begin{definition}
A distribution $u$ on a  manifold with corners X is both conormal of degree $m$ with respect to an
interior p-submanifold, $Y$,
and polyhomogeneous at the boundary of $X$ with index family $\calG$ if
$u \in I^m(\interior{X}, \interior{Y})$
and if near each boundary hypersurface $H \subset X$ given by a boundary defining function $x$,
$u$ has an expansion as in equation (\ref{eq:phg}) where
\begin{enumerate}
\item the coefficients $u_{z,p}$ are
required to lie in $C^\infty([0,1); I^{m+1/4, \mathcal{G}_H}(H, H \cap Y))$ and
\item the remainder is in $\CN([0,1),I^{m+1/4, \mathcal{G}_H}(H, H \cap Y))$, with respect to some local trivialization of a neighborhood of $H$.
\end{enumerate}
Here $\mathcal{G}_H$ is as in Definition \ref{def:polyhom}.
If these conditions are satisfied, we write $u\in I^{m, \mathcal{G}}(X,Y)$

\label{def:polyhom+conorm}
\end{definition}

Note here that the change in degree as we move to hypersurfaces is caused by the definition
of degree of conormality of a distribution.  We define the space of conormal distributions that take
values in a vector bundle $E$ over $X$ analogously and denote it by
$$
I^{m, \mathcal{G}}(X,Y;E).
$$

\subsection{Quasihomogeneous blowups}
\label{subsec:qhom}
In the construction of the double space below, we will need quasi-homogeneous blow-ups. These generalize the `standard' blowups discussed, for example, in \cite{Gr} and \cite{Me-mwc}, and will be used to resolve the higher order vanishing of elements in ${}\bfa\Diff(M)$ at $\partial M$.  Quasihomogeneous blowups are
related to the inhomogeneous blowups discussed in \cite{HMV}, where points are blown up
inhomogeneously, and in the quadratic scattering calculus of \cite{W}.
The theory of quasihomogeneous blowups will be discussed in detail in \cite{GH2}. A much more general setup is introduced in \cite{Me-mwc}. We record only the basics here.

In the standard setting, a blowup is the construction of a new manifold $[X;Y]$ from a manifold with corners $X$ and a p-submanifold $Y$. For an invariant definition of a quasihomogeneous blowup of a boundary submanifold $Y$, it needs to have an extension to finite order to the interior. We discuss this notion first. It is also basic for an invariant understanding of $\bfa$-manifolds.
Let $Y$ be a boundary p-submanifold of $X$. An {\em interior extension of $Y$} is an interior p-submanifold $\Ytilde$ such that $Y=\Ytilde\cap\partial X$. In this situation, local coordinates as in Definition \ref{def:psub} may be chosen in terms of which $Y$ is given locally by the
vanishing of $x'= (x_1, \ldots, x_r)$ ($r\geq 1$) and of $y'=(y_1, \ldots, y_m)$, and $\Ytilde$ by the vanishing of $y'$.

\begin{definition} \label{def:submfd finite order}
Let $X$ be a manifold with corners and $a\in\NN$. Let $Y$ be a boundary p-submanifold of $X$.
We say that two interior extensions $\Ytilde,\Ytilde'$ of $Y$ {\em agree to order $a$} if in any local coordinate system for which $\Ytilde$ is given by the vanishing of $y'$ as above one has,
with $y''=(x_{r+1},\dots,x_k,y_{m+1},\dots,y_{n-k})$,
\begin{equation}
\label{eq:Ytilde' local coord}
\Ytilde' = \{(x',y',y''):\, y'= \sum_{\alpha\in \NN_0^r,|\alpha|=a} (x')^{\alpha} F_\alpha(x',y'')\}\ \text{ locally }
\end{equation}
for some smooth functions $F_\alpha$.

An {\em interior extension of $Y$ to order $a$} is an equivalence class of interior extensions of $Y$ under this equivalence relation, and  $Y$ together with an extension to order $a$ is called  a {\em submanifold of the boundary to order $a$}.
\end{definition}

The case $a=1$ corresponds to a `usual' boundary p-submanifold.
If $Y$ is a p-submanifold of
$X$ then there is a unique smallest face $F=F(Y)$ of $X$ containing $Y$. A p-submanifold $Y$ may be characterized by the ideal $\calI(Y)\subset C^\infty(X)$ of functions vanishing on $Y$. This may easily extended to submanifolds to order $a$ as follows. It follows from \eqref{eq:Ytilde' local coord} that interior extensions $\Ytilde,\Ytilde'$ of $Y$ agree to order $a$ if and only if
$ \calI(\Ytilde) + \calI(F)^a = \calI(\Ytilde') + \calI(F)^a$ where $F=F(Y)$, so we define:
\begin{definition} \label{def:ideal order a}
Let $Y_a$ be a boundary submanifold of $X$ to order $a$. The {\em ideal of functions defining $Y_a$} is
$$ \calI(Y_a) := \calI(\Ytilde) + \calI(F)^a$$
where $\Ytilde\subset X$ is any representative of $Y_a$ and $F=F(Y)$.
\end{definition}
Near any point of $Y$ where $\Ytilde$ is given in terms of the local coordinates by
$y_1= \cdots= y_m=0$,
we have
\begin{equation}
\label{eq:order a ideal locally}
\calI(Y_a)_{|U} = \Span_{C^\infty(X)_{|U}} \{y_1,\dots,y_m\}\cup \{(x')^\alpha:\, |\alpha|=a\}
\end{equation}
while near any interior point $\calI(Y_a)$ is trivial, $\calI(Y_a)_{|U} = C^\infty(X)_{|U}$.

If $Y \subset \del X$ is a closed submanifold and we fix an interior extension $Y_a$ to order $a$,
then the natural notion of the blow-up in $X$ of $Y$ to order $a$ at the boundary, is a {\em quasi-homogeneous blow-up}: It resolves the ideal $\calI(Y_a)$, and points on the front face correspond to order $a+1$ information at $Y_a$. To define it, we first consider a local coordinate neighborhood $U$ of a boundary point in which \eqref{eq:order a ideal locally} holds. The guiding idea is to consider the variables as having weights (or degrees of homogeneity), where $y_1,\dots,y_m$ have weight $a$ and $x_1,\dots,x_r$ have weight $1$ (while $y''$ has weight $0$). Then a monomial $(x')^\alpha y^I$ has weight $|\alpha| + a(I_1+\dots+I_m)$, and a function is in $\calI(Y_a)_{|U}$ if and only if all monomials in its Taylor expansion around $Y$ have weight at least $a$.

Let $r_a:\RR_+^r\times \RR^{m}\to\RR_+$, $r_a(x',y') = (x_1^{2a}+\dots+x_r^{2a} + y_1^2 +\dots + y_m^2)^{1/2a}$. The function $r_a$ is weighted homogeneous of degree $1$. Let $S_a^+ = \{(\omega,\nu)\in\RR_+^r\times\RR^m:\, r_a(\omega,\nu)=1\}$ be an `octant' of the (weighted) sphere. Over the neighorhood
$U=r_a^{-1}([0,\eps)) \times U''$, $U'' \subset \RR_+^{k-r}\times\RR^{n-k-m}$, we define the
{\em blow-up of} $Y$ {\em
to order} $a$ {\em in} $X$ {\em associated to the extension} $Y_a$, and its blow-down map, by
\begin{equation}
\label{eq:def weighted blow-up local}
\begin{aligned}{}
\beta_a: [U,Y_a\cap U]  := \left(S_a^+ \times [0,\eps)\right) \times U'' & \to U \\
((\omega,\nu), s;y'') & \mapsto (s\omega,s^a\nu,y'').
\end{aligned}
\end{equation}
As usual it is easy to check that this is defined invariantly, i.e.\ a diffeomorphism of $U$ preserving $Y_a$ induces a diffeomorphism of $[U,Y_a\cap U]$, and therefore patches together to define a manifold with corners $[X;Y_a]$ with blow-down map $\beta_a:[X;Y_a]\to X$. The front face is, as usual, defined as $\ff = \beta_a^{-1} (Y)$, and locally is given by $(S_a^+ \times \{0\})\times U''$.
Points on $\ff$ correspond to equivalence classes of curves ending at $Y$ and tangent to order $a-1$ to an extension $\Ytilde$ in the equivalence class defining $Y_a$. Two such curves are equivalent if they have the same endpoint and if their $a$th derivatives at the endpoint agree modulo a vector tangent to $Y$.

Most convenient for calculations are (weighted) projective coordinates on $[X;Y_a]$.
For each $i=1,\dots,r$  these are given on $x_i>0$ by
\begin{equation}
\label{eq:proj coord quasihom ff}
x_i \text{ (defining $\ff$)},\ W_j= \frac{x_j}{x_i},\ V = \frac{y'}{x_i^a},\ y''
\end{equation}
(use the coordinates $W_j=\omega_j/\omega_i$, $V= \nu/\omega_i^a$ on $S_a^+ \setminus \{\omega_i=0\}$)
in terms of which, say for $i=1$, $\beta_a(x_1, W_2,\dots,W_r,V,y'') = (x_1,x_1 W_2,\dots,x_1 W_r,x_1^a V, y'')$,
and for each $p=1,\dots,m$ they are given on $y_p\neq 0 $  by
\begin{equation}
\label{eq:proj coord quasihom bd}
r=y_p^{1/a} \text{ (defining $\ff$)},\ \xi'=\frac{x'}{y_p^{1/a}},\ \eta_q=\frac{y_q}{y_p}\ \ (q=1,\dots,m,\ q\neq p),\ y''
\end{equation}
(use local coordinates $\frac{\omega}{\nu_p^{1/a}} = \xi'$,
$\frac{\nu_q}{\nu_p}=\eta_q$ on $S_a^+\setminus \{\nu_p=0\}$, and $s\nu_p^{1/a}= r$ as bdf), in terms of which $\beta_a (r,\xi',\eta_2,\dots,\eta_m,y'') = (r\xi', r^a, r^a \eta_2,\dots,r^a\eta_m,y'')$ (say for $p=1$).
The coordinates \eqref{eq:proj coord quasihom ff} extend, as coordinates, smoothly to the front face outside the strict transform of the hypersurface $x_i=0$ of $X$ (the strict transform of a hypersurface $H$ under blowup of $Y$ is $\clos(\beta^{-1}(H\setminus Y))$), and similarly the coordinates \eqref{eq:proj coord quasihom bd} outside the strict transform of $y_p=0$.
This shows that
$\beta_a$ is a b-map whose exponents are all $0$ or $1$ just as in the case of homogeneous blow-ups. The power $a$ occurs only in the $\beta_a^*y'$, i.e.\ with respect to the {\em interior} submanifolds $y_p=0$ ($p=1,\dots,m$).
Also, this shows that
\begin{equation}
\label{eq:pull-back generator}
\beta_a^* \calI(Y_a) = \Span_{C^\infty([X;Y_a ])} \{(r_\ff)^a\}
\end{equation}
where $r_\ff$ is any bdf of $\ff$.

In terms of the coordinates \eqref{eq:proj coord quasihom ff} we calculate pull-backs of some vector fields:
\begin{equation}\label{eq:vf pullback}
\begin{gathered}
\beta_a^*(x_i \partial_{x_i}) = x_i \partial_{x_i} - \sum_j W_j \partial_{W_j} - a V \partial_V \\
\beta_a^*(x_j \partial_{x_j}) = W_j \partial_{W_j},\quad
\beta_a^*(x_i^a \partial_{y'}) = \partial_V
\end{gathered}
\end{equation}

Throughout this paper, we simplify notation by writing simply $Y$ instead of $Y_a$ for a submanifold of $X$ defined to order $a$, and we will denote
by $[X:Y]_{a}$ the quasihomogeneous blowup of $Y$ of degree $a$.  When $a=1$ this is the standard blow-up, simply denoted $[X;Y]$. We will denote by $[X; Y_1, \ldots, Y_k]_{a}$
a sequence of blowups all of the same degree.

\medskip
\noindent{\bf Lifts of submanifolds.} Recall that if $Y$ and $V$ are closed p-submanifolds of $X$ then the {\em lift} of $Y$ under the blow-down map $\beta:[X;V]\to X$, denoted $\beta^*(Y)$, is defined to be $\beta^{-1}(Y)$ if $Y\subset V$, and as the closure $\cl(\beta^{-1}(Y\setminus V))$ if $Y=\cl(Y\setminus V)$ (and undefined otherwise; one of the two cases will always occur in this paper). If $Y_a$ and $V_b$ are boundary submanifolds defined to orders $a,b$, respectively, then this has to be modified. We consider only the case of interest here: If $a>b$ and $Y_a\subset V_b$ in the sense that any interior extension $\Ytilde$ of $Y_a$ is contained in some interior extension $\Vtilde$ of $V_b$ then the lift of $Y_a$ under $\beta:[X;V_b]\to X$ is defined as the boundary submanifold with interior extension $\beta^*(\Ytilde) =\cl(\beta^{-1}(\Ytilde\setminus V))$ for any such $\Ytilde$. For example, if $Y=V$ is defined to order $a$ but blown up only to order $b<a$, then $\beta_b^*Y_a$ is a submanifold to order $a-b$, of the same dimension as $Y$. See \cite{GH2} for a more thorough discussion of this.

\subsection{The geometry of the \bfa-double space}
As in previous b-type calculi, the operators of the \bfa-pseudodifferential calculus will be
defined in terms of their integral kernels, which will live on a space $M^2_z$, which
is obtained from the double space $M^2$ by a sequence of blowups.  Below we depart
from the names given to these blown up spaces and their various submanifolds in
previous works in the area, eg \cite{Me-aps} and \cite{MaMe}, as their somewhat
ad hoc names do not generalize well to our more complicated geometric setting.

Recall that we assume a trivialization of a neighborhood of the boundary of $M$ is fixed, and that the extensions $\Phi_i$ of $\phi_i$ defining ${}^\bfa\calV(M)$ have product structure as in \eqref{eq:product structure}. This also fixes the boundary defining function $x$. This assumption is made for convenience. The double space constructed below is actually independent of such a choice, as is shown in the appendix.

Summarizing the discussion below, we introduce the blowups $M^2_z\stackrel{\beta_z}\to M^2_y \stackrel{\beta_y}\to M^2_x \stackrel{\beta_x}\to M^2$ accomplished by blowing up the lifts of the `partial' (or fibre) diagonals $\Delta_z$, $\Delta_y$, $\Delta_x$ intersected with the boundary. In local coordinates, these (interior) fibre diagonals are given by
$$\Delta_x=\{x=x'\},\quad \Delta_y=\{x=x',\,y=y'\},\quad \Delta_z=\{x=x',\,y=y',\,z=z'\}$$
This is the origin of the notation.

\medskip
\noindent{\bf The b-blowup (or $x$-blowup), $M^2_x$}.
The first blowup we do is the standard b-blowup from \cite{Me-aps}. Denote by $x$ the boundary defining function on $M$, and also its lift to the first factor in $M\times M$, and by $x'$ its lift to the second factor (a similar convention will be used for coordinates $y_i$ etc.). Then $M^2$ has a corner $\dM \times \dM$ at the intersection of the left face,
$\lf = \{x=0\}=\dM\times M$, and the right face, $\rf = \{x^\prime=0 \}=M\times\dM$, and we blow this up to obtain
$$
M^2_x = [M^2; \dM \times \dM] \stackrel{\beta_x}\to M^2.
$$
The space $M^2_x$ has three boundary hypersurfaces: $\lf$, $\rf$ and $\ff_x$.
The front face $\ff_x$ is the preimage $\beta_x^{-1}(\dM\times\dM)$, while
the faces $\lf$ and $\rf$ are the lifts $\beta_x^*(\lf)$, $\beta_x^*(\rf)$.
For example, $\lf = \mbox{cl}(\beta_x^{-1}(\dM \times \stackrel{o}{M}))$. (In \cite{Me-aps} $M^2_x$ is called $X^2_b$ and $\ff_x$ is called bf.)
In terms of coordinates, this blowup is accomplished by changing coordinates to:
$$
t = \frac{x^\prime}{x},\, x, y_i, y_i^\prime,z_j, z_j^\prime,w_k, w_k^\prime
$$
in terms of which $\beta_x(x,t,\dots)=(x,x'=tx,\dots)$. These coordinates are valid outside $\lf$, with similar coordinates (switching $x,x'$) valid outside $\rf$.
\ownremark{One may be tempted to think that these coordinates make sense only in a neighborhood of the diagonal, since both $y_i$ and $y_i'$ are defined only on the same `small' coordinate neighorhood. However, given any two points $p,p'$ one may find a coordinate neighborhood containing both (possibly disconnected), so this is no issue.}
In these coordinates the front face is given by $x=0$, hence one has a diffeomorphism
\begin{equation}
\label{eq:ffx diffeo}
\interior{\ff_x} \cong \dM \times \dM \times (0, \infty),
\end{equation}
with $t$ the coordinate in $(0,\infty)$.

In addition
to the boundary hypersurfaces, there is another important submanifold in
$M^2_x$, namely the lifted diagonal, which we denote by
$\Delta$ just like the diagonal  $\Delta=\Delta_M=\{(p,p):\,p\in M\}\subset M\times M$. In coordinates,
$$ \Delta = \{t=1,\ y=y',\ z=z',\ w=w'\}.$$

\medskip
\noindent{\bf The $y$-blowup, $M^2_y$.}
The second blowup is similar to the one used in the $\phi$ calculus, discussed in
\cite{MaMe} and \cite{Va}. Let $\Delta_y=\{(p,p'):\, \Phi_{2,1}(p) = \Phi_{2,1}(p')\}\subset M\times M$ be the fibre diagonal. Consider its lift under $\beta_x$, which we will denote in the same way. In local coordinates, $\Phi_{2,1}(x,y,z,w)=(x,y)$, so $\Delta_y=\{x=x',\ y=y'\}$ and its lift is $\{t=1,\ y=y'\}$. The boundary of this manifold is its intersection with the front face,
$$\partial\Delta_y = \Delta_y\cap \ff_x $$
and is given locally by $\{t=1,\ x=0,\ y=y'\}$. In terms of the identification \eqref{eq:ffx diffeo}, $\partial\Delta_y$ is the fibre diagonal of $\phi_{2,1}$, a subset of $\dM\times\dM$, times $\{1\}\subset (0,\infty)$.
In the $\phi$-calculus, $\partial\Delta_y$ is blown up in the standard homogeneous way.
In the ${\bf a}$-calculus, this blowup
will be the order $a_1$ quasi-homogeneous blowup.  This is defined only if an extension of $\partial\Delta_y$ to the interior to order $a_1$ is given. Here this extension is $\Delta_y$. We obtain
$$ M^2_y = [M^2_x; \partial\Delta_y]_{a_1} \stackrel{\beta_y} \longrightarrow M^2_x.$$
This blowup generates a new hypersurface, $\ff_y = \beta_y^{-1}(\partial\Delta_y)$.
In terms of coordinates, this blowup is accomplished
by changing near
$\ff_x \cap \Delta$ to the coordinates
$$
T=\frac{1-t}{x^{a_1}}, \, x,Y= \frac{y-y^\prime}{x^{a_1}},\, y,z, z^\prime, w, w^\prime,
$$
valid near the interior of $\ff_y$, which is given by $x=0$. Here $T\in\RR$, $Y\in\RR^b$. As $|T|+|Y|\to\infty$ one approaches the boundary of $\ff_y$.
As in the $\phi$-calculus
case 
we have a diffeomorphism
\begin{equation}
\label{eq:ffy diffeo}
\interior{\ff_y} \cong \ {}^{\phi}N\dM \times_{B_0} \dM,
\end{equation}
which is a fibre bundle over $B_0$ with the
fibre over $y\in B_0$  diffeomorphic to $(\phi_{2,1} \times \phi_{2,1})^{-1}(y,y) \times \RR^{b+1}$, and $(T,Y_1,\dots,Y_b)$ are the coordinates on $\RR^{b+1}$ with respect to the basis $x^{1+a_1}\partial_x$, $x^{a_1}\partial_{y_1},\dots,x^{a_1}\partial_{y_b}$, see the proof of Proposition \ref{prop:a front face general} in the Appendix.

In addition to the new blowup hypersurface, $M^2_y$ has the boundary hypersurfaces $\ff_{yx}=\beta_y^*(\ff_x)$, $\lf = \beta_y^* (\lf)$ and $\rf=\beta_y^*(\rf)$ (the latter two are actually simple preimages since $\lf,\rf$ are disjoint from $\partial\Delta_y$). The lift of the diagonal is in coordinates
$$\Delta = \{T=0,\ Y=0,\ z=z',\ w=w'\}.$$

\medskip
\noindent{\bf The $z$-blowup, $M^2_z$.}
The last blowup is completely analogous to the $y$-blowup, except that we start with $M^2_y$ and with the
`smaller' fibre diagonal $\Delta_z = \{(p,p'):\, \Phi_2(p)=\Phi_2(p')\}\subset M^2$. The lift of $\Delta_z$ to $M^2_x$ and then to $M^2_y$ is denoted in the same way, and in local coordinates is given by $\{T=0,\ Y=0,\ z=z'\}$. Its boundary is the intersection with the front face,
$$ \partial\Delta_z = \Delta_z\cap\ff_y$$
and is given locally by $\{x=0,T=0,\ Y=0,\ z=z' \}$. In terms of \eqref{eq:ffy diffeo}, $\partial\Delta_z$ is the subset of the zero section (i.e.\ $T=Y=0$) given by the fibre diagonal of $\phi_2$, when this zero section is identified with $\dM\times_{B_0}\dM\subset \dM\times\dM$. Now we perform the order $a_2$ quasi-homogeneous blowup of $\partial\Delta_z$, again with respect to the extension $\Delta_z$ of $\partial\Delta_z$. We obtain
$$ M^2_z = [M^2_y; \partial\Delta_z]_{a_2} \stackrel{\beta_z}\longrightarrow M^2_y.$$
This blowup generates a new hypersurface, $\ff_z = \beta_z^{-1}(\partial\Delta_z)$.
In terms of coordinates, this blowup is accomplished
by changing near $\ff_y \cap \Delta$ to the coordinates
$$
\mathcal{T}=\frac{T}{x^{a_2}}, \, x,\mathcal{Y}=\frac{Y}{x^{a_2}}, \, y,
\mathcal{Z}=\frac{z-z^\prime}{x^{a_2}}, \, z, w, w^\prime,
$$
valid near the interior of $\ff_z$, which is given by $x=0$. Here $\calT\in\RR$, $\calY\in\RR^b$, $\calZ\in\RR^{f_1}$. As $|\calT|+|\calY|+|\calZ|\to\infty$ one approaches the boundary of $\ff_z$. One has a diffeomorphism
\begin{equation}
\label{eq:ffz diffeo}
\interior{\ff_z} \cong \ {}^{\bfa}N\del M \times_{B_1} \del M,
\end{equation}
which is a fibre bundle over $B_1$ with the
fibre over $p\in B_1$  diffeomorphic to $F_2 \times F_2 \times \RR^{f_1+b+1}$ and $\calT,\calY,\calZ$ coordinates on $\RR^{1+b+f_1}$ corresponding to the basis $x^{1+a_1+a_2}\partial_x$, $x^{a_1+a_2}\partial_y$, $x^{a_2}\partial_z$. The diffeomorphisms \eqref{eq:ffx diffeo}, \eqref{eq:ffy diffeo} and \eqref{eq:ffz diffeo} are canonical when fixing an $(a_1,a_2+1)$-structure refining ${}^\bfa\calV(M)$, see Proposition \ref{prop:a front face general} in the Appendix.

In addition to the new blowup hypersurface, $M^2_z$ has the boundary hypersurfaces
$\ff_{zy}$, $\ff_{zx}$, $\lf$ and $\rf$, which are the lifts of
$\ff_y$, $\ff_{yx}$, $\lf$ and $\rf$ respectively. The latter three are disjoint from $\partial\Delta_z$, so their lifts are simple preimages. Finally, the lifted diagonal is, in coordinates, $$\Delta=\{\calT=0,\calY=0,\calZ=0,w=w'\}.$$

We denote the total blow-down map by
$$ \beta_\bfa = \beta_x\circ \beta_y \circ \beta_z : M^2_z \to M^2.$$

\begin{figure}
\begin{center}
\includegraphics[width=10cm, height=10cm]{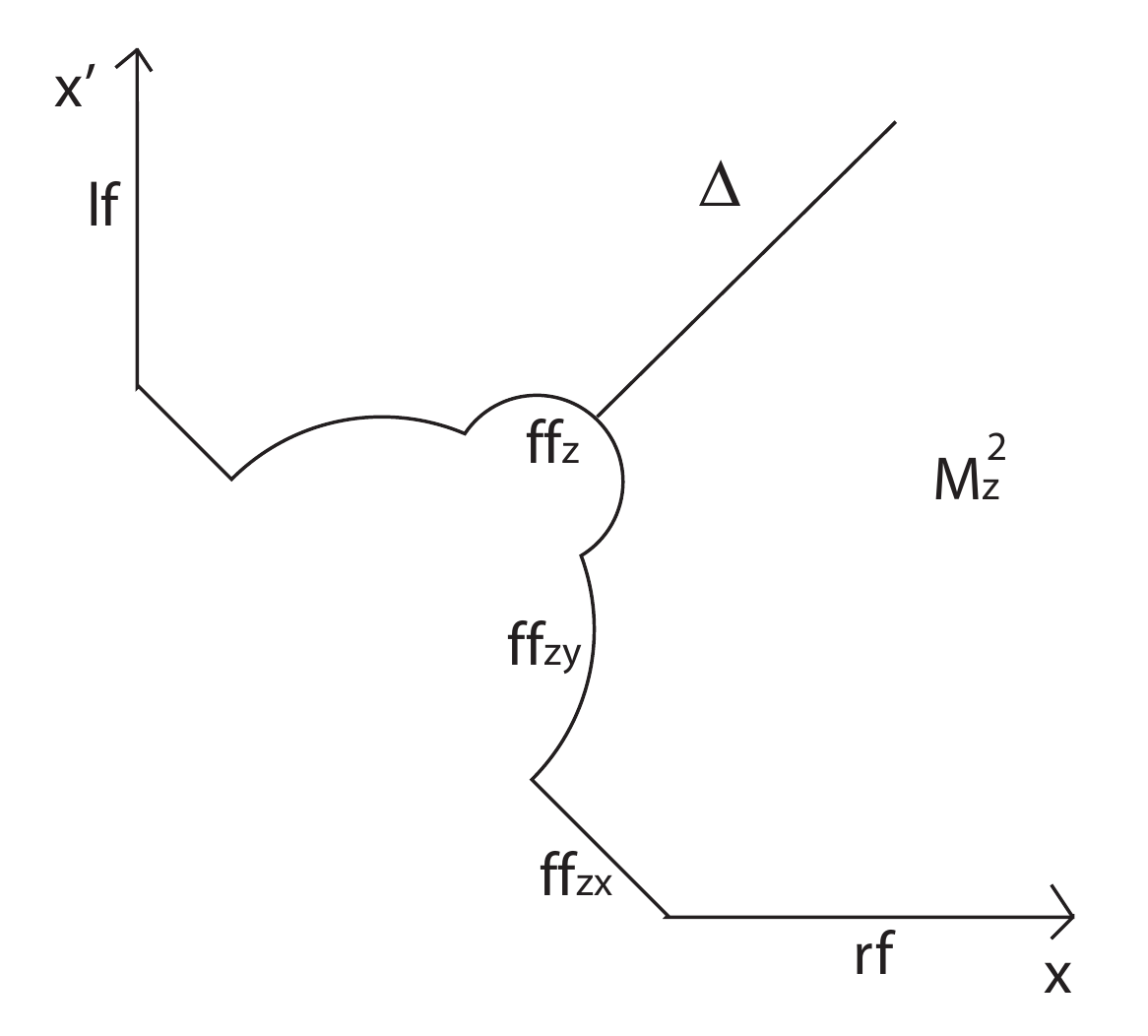}
\end{center}
\caption{The \bfa-double space}
\label{fig:doublespace}
\end{figure}

Instead of constructing the \bfa-double space $M^2_z$ as described above using quasi-homogeneous blowups one may construct a closely related double space, $\tilde{M^2_z}$, using only regular homogeneous blow-ups: The order $a_1$ blowup of $\partial\Delta_y$ is replaced by a sequence of $a_1$ regular blowups, and similarly for the blowup of $\partial\Delta_z$. The finite order extensions of these boundary submanifolds are then needed to have well-defined centers of blowup for the second, third etc. blowup in the sequence.  One way to think about this is to add additional fibrations to the setup, one for each positive integer $\leq a_1+a_2$, with all but those numbered $a_1$ and $a_1+a_2$ being trivial. The resulting space is 'bigger' than $M^2_z$, i.e.\ the identity on the interior extends to a smooth map $\tilde{M^2_z}\to M^2_z$, but not vice versa (unless $a_1=a_2=1$, in which case they are equal). This construction yields the same small calculus $\Psi_\bfa^*(M)$, but a bigger large calculus (see Definition \ref{def.full calculus}). It has the advantage of using only the standard blowup machinery and does not require the considerations in the Appendix.

\subsection{Lifts of \bfa-vector fields and \bfa-differential operator kernels}

We can begin to see the purpose of these blowups by considering the lifts of the vector
fields in $^{{\bf a}}\mathcal{V}(M)$ to these various spaces.  We have the following lemma:

\begin{lemma}
\label{lemma.vectorfields}
Let $p \in \dM$ be a point on the boundary of $M$ and let $x, y_i, z_j, w_k$ be local
coordinates near $p$.  Then all of the basis vector fields $x^{1+a_1+a_2} \dx,
x^{a_1+a_2} \dyi, x^{a_2} \dzj, \dwk$ lift via left or right projection and $\beta_\bfa$ to smooth vector fields on $M^2_z$, and the span of their lifts is transversal to the lifted diagonal $\Delta$.
\end{lemma}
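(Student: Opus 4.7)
The plan is to verify both assertions by an explicit chain-rule computation in a single coordinate chart covering a neighborhood of $\Delta\cap\ff_z$ in $M^2_z$. Near the interior of $\ff_z$ the projective coordinates $(\calT,x,\calY,y,\calZ,z,w,w')$ defined in the construction of $M^2_z$ cover the space, and in them the blow-down map $\beta_\bfa:M^2_z\to M^2$ takes the explicit one-step form
\[
x' = x - x^{1+a_1+a_2}\calT,\qquad y'_i = y_i - x^{a_1+a_2}\calY_i,\qquad z'_j = z_j - x^{a_2}\calZ_j,
\]
while $x,y,z,w,w'$ are unchanged. This lets me bypass the intermediate spaces $M^2_x,M^2_y$; equivalently one may apply equation~\eqref{eq:vf pullback} successively to $\beta_x,\beta_y,\beta_z$, arriving at the same conclusion.

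I will then compute the lift of each basis left-vector field by the chain rule. Since $w_k$ is unchanged, $\partial_{w_k}$ lifts trivially to $\partial_{w_k}$. For $\partial_{z_j}$ acting in the old coordinates only $\calZ_j$ depends on $z_j$, contributing an $x^{-a_2}\partial_{\calZ_j}$ singular term, so multiplication by $x^{a_2}$ gives the smooth lift $x^{a_2}\partial_{z_j}+\partial_{\calZ_j}$. The analogous computation yields $x^{a_1+a_2}\partial_{y_i}+\partial_{\calY_i}$. The substantive case is $x^{1+a_1+a_2}\partial_x$: each of $\calT,\calY_i,\calZ_j$ depends on $x$, so three would-be singular terms of orders $x^{-(1+a_1+a_2)}$ and $\calT/x,\calY_i/x,\calZ_j/x$ appear in $\partial_x|_{\text{old}}$; after multiplication by $x^{1+a_1+a_2}$ they combine to the smooth expression
\[
x^{1+a_1+a_2}\partial_x + \partial_\calT - (1{+}a_1{+}a_2)\,x^{a_1+a_2}\calT\,\partial_\calT - (a_1{+}a_2)\,x^{a_1+a_2}\calY_i\,\partial_{\calY_i} - a_2\,x^{a_1+a_2}\calZ_j\,\partial_{\calZ_j}.
\]
The right lifts are obtained by the symmetric computation swapping primed and unprimed variables. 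Smoothness on the remaining charts of $M^2_z$ (near corners of $\ff_z$ or near $\lf,\rf$) is handled by analogous projective-coordinate calculations, and in the interior of $M^2$ the blow-down is a diffeomorphism so there is nothing to check.

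For transversality, in the chart above $\Delta=\{\calT=0,\calY=0,\calZ=0,w=w'\}$, so $T\Delta$ is spanned by $\partial_x,\partial_{y_i},\partial_{z_j},\partial_{w_k}+\partial_{w'_k}$ and has dimension $\dim M$. From the computation, at $x=0$ the leading terms of the left lifts are $\partial_\calT,\partial_{\calY_i},\partial_{\calZ_j},\partial_{w_k}$, detected respectively by the 1-forms $d\calT,d\calY_i,d\calZ_j,d(w_k-w'_k)$, each of which annihilates $T\Delta$; hence these $\dim M=\codim\Delta$ vectors complement $T\Delta$ at every point of $\Delta\cap\ff_z$, giving transversality there. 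Openness propagates this to a neighborhood, and at interior points of $\Delta$ transversality is immediate from the fact that $^\bfa\calV(M)$ spans $TM$ off the boundary. The main obstacle will be organizing the cancellation of the three $1/x$-singularities in the lift of $x^{1+a_1+a_2}\partial_x$: this is exactly why the quasi-homogeneous blow-up orders $a_1,a_2$ in the definition of $M^2_z$ are matched to the $\bfa$-degeneracies, and the one-step change of coordinates above makes the cancellation transparent.
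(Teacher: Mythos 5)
Your proposal is correct and takes essentially the same approach as the paper: an explicit computation of the lifted basis fields in the projective coordinates $(\calT,x,\calY,y,\calZ,z,w,w')$ near the interior of $\ff_z$, showing they are smooth and reduce at $x=0$ to $\partial_\calT,\partial_{\calY_i},\partial_{\calZ_j},\partial_{w_k}$, hence span a complement of $T\Delta$ along $\Delta=\{\calT=\calY=\calZ=0,\,w=w'\}$. The only difference is organizational: you compose the three blow-downs into a single coordinate change, while the paper tracks the pullbacks step by step through $\beta_x$, $\beta_y$, $\beta_z$ using \eqref{eq:vf pullback}, and both computations agree.
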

\ownremark{this is stronger than each vector field separately being transversal, and we need this stronger version}
The point is that the transversality holds uniformly up to  the boundary $x=0$.

\begin{proof}
By symmetry it suffices to consider the left projection  $\pi_l:M^2 \rightarrow M,\ (p,p')\mapsto p$
which is given locally by
$$
\pi_l(x,x^\prime, y, y^\prime, z, z^\prime, w, w^\prime)= (x, y, z, w).
$$
Thus $\pi_l^*( \dx) = \partial_{x}$, and so forth.
The blowdown map $\beta_x$ acts by
$\beta_x(t,x,etc.)= (x, tx, etc.)$, that is, $x^\prime = tx$.  Thus (compare \eqref{eq:vf pullback})
$$
\beta_x^*(x\partial_x) = x\partial_x - t\partial_t
$$
while the other vector fields lift to themselves, essentially, i.e. $\beta_x^* (\partial_{y_i}) = \partial_{y_i}$ etc.

Now at the second step,
$T=(1-t)/x^{a_1}$, $Y= (y-y')/x^{a_1}$ so using \eqref{eq:vf pullback} again we get
$$
\beta_y^* ( x\partial_x) = x\partial_x - a_1 T\partial_T - a_1 Y\partial_Y
$$
$$
\beta_y^*( x^{a_1}\partial_t) = -\partial_T,\quad \beta_y^*( x^{a_1}\partial_y) = \partial_Y
$$
Again, the other vector fields lift to themselves.
This gives the intermediate result (use $\beta_x^*x=x$, $\beta_y^*x=x$)
\begin{equation}
\label{eq:vf pullback phi}
\beta_y^*\beta_x^* (x^{1+a_1}\partial_x) = \beta_y^*(x^{a_1}(x\partial_x-t\partial_t)) = \partial_T, \quad
\beta_y^*\beta_x^*(x^{a_1}\partial_y) = \partial_Y\ \text{ at }\ff_y
\end{equation}
since $x=0, t=1$ there.

Finally, at the third step,
$\calT=T/x^{a_2}$, $\calY = Y/x^{a_2}$, $\calZ= (z-z')/x^{a_2}$ so
$$
\beta_z^* (x\partial_x) = x\partial_x - a_2 \calT\partial_\calT - a_2\calY\partial_\calY - a_2\calZ\partial_\calZ
$$
$$
\beta_z^*(x^{a_2}\partial_T) = \partial_\calT,\quad
\beta_z^*(x^{a_2}\partial_Y) = \partial_\calY,\quad
\beta_z^*(x^{a_2}\partial_z) = \partial_\calZ
$$
Once again, the $w$ vector fields lift to themselves.
Putting everything together we get that the pullbacks under $\beta_\bfa^*=\beta_z^*\beta_y^*\beta_x^*$ of
$x^{1+a_1+a_2}\partial_x$, $x^{a_1+a_2}\partial_y$, $x^{a_2}\partial_z$ are smooth and, at $\ff_z$, equal to $\partial_\calT$, $\partial_\calY$, $\partial_\calZ$, respectively, so the span of these together with $\partial_{w}$ is transversal to
$\Delta = \{\calT=\calY=\calZ=0, w=w^\prime\}$.

\end{proof}

The calculation in the proof of the lemma implies the following corollary. Recall the discussion on differential operators acting on half-densities at the end of Subsection \ref{subsec:double}.

\begin{lemma}
\label{lemma.bdiff}
If $P$ is an ${\bf a}$-differential operator of degree $m$ on $M$, acting on smooth half-densities, then its integral kernel lifts under $\beta_\bfa$ to a
smooth Dirac \bfa-half-density on $M^2_z$ of order $m$ with respect to the diagonal $\Delta$. That is, in local
coordinates it is given by
\begin{multline}
\label{eq:diff kernel}
K_P' = \sum_{\alpha + |I| + |J| +|K| \leq m}
b_{\alpha, I, J,K}(x, y, z,w) (D^\alpha \delta)(\mathcal{T})\, (D^I\delta)(\mathcal{Y})\, (D^J\delta)(\mathcal{Z})\, (D^K\delta)(w - w') \cdot \\
\left| \frac {dx}{x^{1+a_1+a_2}} \prod_{i}\frac{dy_i}{x^{a_1+a_2}}\prod_j \frac{dz_j}{x^{a_2}} \prod_k dw_k \,\, d\calT d\calY d\calZ dw' \right|^{1/2}.
\end{multline}
If $P$ is as in \eqref{eq:difflocal} and functions $f$ are identified with half-densities $f|dxdydzdw|^{1/2}$ then $b_{\alpha, I, J,K}=a_{\alpha, I, J,K}$ at $x=0$.
\end{lemma}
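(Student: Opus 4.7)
The plan is to reduce the lemma to a direct computation using linearity together with Lemma \ref{lemma.vectorfields}. Writing $P = a_0 + \sum V_{l,1}\cdots V_{l,m}$ with each $V_{l,i}$ one of the basis $\bfa$-vector fields $x^{1+a_1+a_2}D_x$, $x^{a_1+a_2}D_{y_i}$, $x^{a_2}D_{z_j}$, $D_{w_k}$, the Schwartz kernel of $P$ (acting on half-densities via the fixed trivialization $f\mapsto f|dxdydzdw|^{1/2}$) is obtained by applying $P$ in the left factor to the kernel $K_{\mathrm{Id}}$ of the identity. So it will suffice to compute $\beta_\bfa^* K_{\mathrm{Id}}$ and then apply the lifts of the basis $\bfa$-vector fields furnished by Lemma \ref{lemma.vectorfields}.

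First I would compute $\beta_\bfa^* K_{\mathrm{Id}}$. In local coordinates near the diagonal, $K_{\mathrm{Id}} = \delta(x-x')\delta(y-y')\delta(z-z')\delta(w-w')\cdot|dx\,dx'\,dy\,dy'\,dz\,dz'\,dw\,dw'|^{1/2}$. Using $x' = x - x^{1+a_1+a_2}\calT$, $y' = y - x^{a_1+a_2}\calY$, $z' = z - x^{a_2}\calZ$, each delta pulls back with a negative power of $x$ (specifically, $\delta(x-x') = x^{-(1+a_1+a_2)}\delta(\calT)$, and similarly for the others), while the Jacobian of the coordinate change contributes a matching positive power $x^{c}$ with $c = (1+a_1+a_2) + b(a_1+a_2) + f_1 a_2$. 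Under the square root these cancel exactly, yielding that $\beta_\bfa^* K_{\mathrm{Id}}$ equals $\delta(\calT)\delta(\calY)\delta(\calZ)\delta(w-w')$ times precisely the $\bfa$-half-density factor displayed in \eqref{eq:diff kernel}. This verifies the lemma in the case $m=0$.

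Next I would apply the lifted vector fields. By the calculation in the proof of Lemma \ref{lemma.vectorfields}, the lift of each basis $\bfa$-vector field via the left projection and $\beta_\bfa$ is a smooth vector field on $M^2_z$ whose restriction to $\ff_z$ equals $D_\calT$, $D_{\calY_i}$, $D_{\calZ_j}$, or $D_{w_k}$, respectively. Applying an $m$-fold product $V_{l,1}\cdots V_{l,m}$ to $\beta_\bfa^* K_{\mathrm{Id}}$ therefore produces the corresponding $m$-th order derivative of $\delta(\calT)\delta(\calY)\delta(\calZ)\delta(w-w')$, with smooth coefficients on $M^2_z$; regrouping yields the local form \eqref{eq:diff kernel} with smooth $b_{\alpha,I,J,K}$. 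The boundary identification $b_{\alpha,I,J,K}(0,y,z,w) = a_{\alpha,I,J,K}(0,y,z,w)$ then follows because all lower-order correction terms in those lifts vanish at $\ff_z$, so only the constant-coefficient leading parts $D_\calT^\alpha D_\calY^I D_\calZ^J D_w^K$ contribute at $x=0$.

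The main obstacle will be the bookkeeping of the half-density factors through the three quasi-homogeneous blowups, specifically confirming the exact cancellation of the singular $x$-powers from the pullback of the deltas against the Jacobian factors. Away from the diagonal, $K_{\mathrm{Id}}$ vanishes, so no separate analysis is required near $\lf$ or $\rf$, and the conjugation ambiguity arising from the choice of trivialization of half-densities (as discussed at the end of Subsection \ref{subsec:double}) affects the coefficients $b_{\alpha,I,J,K}$ only by a smooth function that equals $1$ at $x=0$, preserving the stated boundary identification.
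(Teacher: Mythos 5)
Your argument is essentially the paper's own proof: you pull back the identity kernel, check that the singular $x$-powers from the deltas combine with the Jacobian of the change of coordinates to give exactly the $\bfa$-half-density factor in \eqref{eq:diff kernel}, and then apply the lifted basis vector fields from Lemma \ref{lemma.vectorfields}, whose restrictions to $\ff_z$ are $\partial_\calT,\partial_\calY,\partial_\calZ,\partial_w$, to get smoothness and $b_{\alpha,I,J,K}=a_{\alpha,I,J,K}$ at $x=0$. The closing remark about independence of the trivialization is inessential (and, taken literally, not quite right, since changing the density by $h$ alters lower-order coefficients through $w$-derivatives of $\log h$ even at $x=0$), but the lemma fixes the trivialization $f\mapsto f|dx\,dy\,dz\,dw|^{1/2}$, which is what your core argument uses, so the proof stands.
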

\begin{proof}
The kernel of the identity operator on half-densities is
$\delta(x-x')\delta(y-y')\delta(z-z')\delta(w-w')\, |dxdx'\,dydy'\,dzdz'\,dwdw'|^{1/2}$
which under change of coordinates becomes $$\delta(\calT)\delta(\calY)\delta(\calZ)\delta(w-w')
x^{-\gamma} \left| dx\,dx' \prod_i dy_i\,dy_i'\,\prod_j dz_j\,dz_j'\,\prod_k dw_k\,dw_k' \right|^{1/2}$$
where $\gamma=(1+a_1+a_2) + b(a_1+a_2) + f_1a_2$. This half density factor equals the one in \eqref{eq:diff kernel}. Now apply the lifted
vector fields to obtain the lifted kernel of $P$. Identifying functions with half-densities as stated means that the $|dx|$ etc. terms are not differentiated. Since $x^{1+a_1+a_2}\partial_x$, $x^{a_1+a_2}\partial_y$, $x^{a_2}\partial_z$ lift to $\partial_\calT$, $\partial_\calY$, $\partial_\calZ$ at $\ff_z$, respectively, the result follows.
\end{proof}

\subsection{The small \bfa-calculus, the \bfa-principal symbol and the \bfa-normal operator}
In this subsection we define the small calculus of $\bfa$-pseudodifferential operators.
We then introduce the $\bfa$-principal symbol of an operator $P$ in the small calculus. It describes the singularity of the Schwartz kernel of $P$ at the diagonal, as in the case of standard pseudodifferential operators.
However, unlike in the classical case, invertibility of the \bfa-principal symbol is not sufficient to guarantee that $P$ is a Fredholm operator. Therefore we define a second symbol, called the \bfa-normal operator, which describes the behavior of the operator at the boundary $x=0$. In Section \ref{sec:param} we will see that invertibility of both symbols, called full ellipticity, implies existence of a parametrix with compact remainder, hence Fredholmness.

\medskip
\noindent{\bf The small \bfa-calculus.}
The definition of the small calculus is modelled on \eqref{eq:diff kernel}, by replacing the Dirac distribution by an arbitrary distribution conormal with respect to the diagonal and imposing the condition of rapid decay as $|(\calT,\calY,\calZ)|\to\infty$. It is essential to keep the $x$-factor in the half-density. To state this invariantly, we define the half-density bundle
\begin{equation}
\label{eq:def omegatilde}
\Omegatilde_\bfa^{1/2}(M^2_z) = \beta_\bfa^*(\Omega_{\bfa,l}^{1/2}(M) \otimes \Omega_{\bfa,r}^{1/2}(M))
\end{equation}
where $\Omega_{\bfa,l/r}^{1/2}(M)$ are the $\bfa$-half-density bundles on $M$ pulled back to the left resp. right factor of $M^2$. This has the half density factor in \eqref{eq:diff kernel} as smooth non-vanishing section in a neighborhood of $\intff_z$. \ownremark{ but not at $\lf$ and $\rf$ because of the extra factor of $t$ occuring when replacing have of the $x$ factors by $x'$ factors}
\begin{definition}
\label{def:small calculus}
The small \bfa-calculus, $\Psi^m_a(M)$, of \bfa-pseudodifferential operators of
degree $m\in\RR$ acting on half-densities on $M$ consists of those operators whose kernels, when lifted to $M^2_z$,  have values in $\Omegatilde_\bfa^{1/2}(M_z^2)$ and vanish to all orders at the faces $\lf$, $\rf$, $\ff_{zx}$ and $\ff_{zy}$ and are conormal of degree $m$ with respect to $\Delta$, smoothly up to $\ff_z$; that is, kernels in $I^{m,\mathcal{G}}(M^2_z,\Delta; \Omegatilde_\bfa^{1/2}(M^2_z)),$ where $G(\lf)=G(\rf)=G(\ff_{zx})=G(\ff_{zy}) = \emptyset$ and $G(\ff_z)= 0$.
\end{definition}
We will denote both the kernel of $P$ and its lift to $M^2_z$ by $K_P$.
As with the space of conormal distributions, this space of operators corresponds to the so-called classical or one-step polyhomogeneous pseudodifferential operators whose symbols have complete asymptotic expansions rather than just symbol bounds.

If $P\in \Psi^m_\bfa(M)$ has kernel $K_P'(x,\calT,\dots)\left| \frac {dx}{x^{1+a_1+a_2}} \prod_{i}\frac{dy_i}{x^{a_1+a_2}}\prod_j \frac{dz_j}{x^{a_2}} \prod_k dw_k \,\, d\calT d\calY d\calZ dw' \right|^{1/2}$ in local coordinates
and $u = u(x,y,z,w)|dxdydzdw|^{1/2}$ is a smooth half-density on $M$, supported in $\Phi_2^{-1}(U)$ for $U$ a coordinate neighborhood in $B_1\times[0,1)$,
we get the following expression for $Pu$ in $U$:

\begin{multline}
Pu(x,y,z,w) =  \bigg( \int_{\rm{fib}} K_P'(x,\calT, y, \calY, z, \calZ, w, w')
u(x-x^{1+a_1+a_2}\calT, y- x^{a_1+a_2}\calY,z- x^{a_2}\calZ, w') \cdot
\\
\label{eq:Plocal}
d\calT d\calY d\calZ dw' \bigg) \ \mid  dx dy dz dw \mid^{1/2},
\end{multline}
where ${\rm fib}$ are the fibres of the right projection
to $M$ and $d\calY = d\calY_1 \cdots d\calY_b$
and so on.  The condition that $K_P$ vanish to all orders at all faces except $\ff_z$ means
that $K'_P$ and its derivatives must vanish faster than any negative power of $|(\calT, \calY, \calZ)|$ as this norm
goes to infinity.
\medskip

\noindent{\bf The \bfa-principal symbol.}
The \bfa-principal symbol map is analogous to the symbol map in the standard pseudodifferential
operator calculus, as well as to the symbol maps in the b and $\phi$-calculi.
Let $S^{[m]}({}^\bfa T^*M)$ be the space of smooth functions on ${}^\bfa T^*M \smallsetminus 0$ that
are homogeneous of degree $m$ in the fibre.  Then the  \bfa-principal symbol is a map
$$
{}^\bfa \sigma_m: \Psi^m_{\bfa}(M) \longrightarrow S^{[m]}({}^\bfa T^*M),
$$
defined as in the compact setting: By the remark after Definition
\ref{def:conorm}, which also applies to the boundary case, the
principal symbol of a distribution conormal with respect to the
diagonal $\Delta$ lives on the conormal bundle of $\Delta$. This
bundle can be identified, via the map $\beta_\bfa$ followed by left
projection, with ${}^\bfa T^*M$.  In the case that $P \in
\Psi^m_\bfa(M)$ is an \bfa-differential  operator, we have the
standard formula away from $\ff_z$ and
\begin{equation}
\label{eq:smallsymb}
{}^\bfa \sigma_m(P) = \sum_{\alpha + |I| + |J| +|K| = m}
a_{\alpha, I, J,K}(x, y, z,w) \tau^\alpha \eta^I \zeta^J
\theta^K,
\end{equation}
near $\ff_z$.  As in  the compact case, one has a short exact
sequence and asymptotic completeness:
\begin{lemma}
\label{lem.short exact - sigma}
For each $m\in\RR$ the sequence
\begin{equation}
\label{eq:smallsymbses}
0  \longrightarrow \Psi^{m-1}_{\bfa}(M)\hookrightarrow \Psi^m_{\bfa}(M) \stackrel{{}^\bfa \sigma_m}{\longrightarrow} S^{[m]}(^aT^*M)
\longrightarrow 0
\end{equation}
is exact. Furthermore, any sequence $P_k\in \Psi^{m-k}_\bfa(M)$ can be asymptotically summed, i.e.\ there is $P\in \Psi^m_{\bfa}(M) $ with $P-\sum_{k=0}^{N-1}P_k \in \Psi^{m-N}_{\bfa}(M)$ for all $N$.
\end{lemma}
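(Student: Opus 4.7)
The proof is a standard application of conormal distribution theory together with the extra information about behavior near $\ff_z$ and the other boundary faces coming from Definition \ref{def:small calculus}. The plan is to establish exactness step by step and then obtain asymptotic completeness by a Borel-type construction.

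For exactness at $\Psi^{m-1}_\bfa(M)$: if $P \in \Psi^{m-1}_\bfa(M)$, then $K_P$ is conormal of degree $m-1$ at $\Delta$, so its symbol $\sigma(y,\zeta)$ has an asymptotic expansion in \eqref{eq:symb} whose leading degree (after the normalization of Definition \ref{def:conorm}) is $m-1+\tfrac14\dim M^2_z - \tfrac12\codim\Delta$, which is one less than that of a degree-$m$ operator; hence ${}^\bfa\sigma_m(P) = 0$ by the very definition of the principal symbol. For exactness at $\Psi^m_\bfa(M)$: if ${}^\bfa\sigma_m(P)=0$, then the leading term in the symbol expansion vanishes, and standard conormal theory shows that $K_P$ is in fact of conormal order $m-1$; since $P$ still vanishes to infinite order at $\lf,\rf,\ff_{zx},\ff_{zy}$ (this is unaffected by changing the conormal order at $\Delta$), we conclude $P\in\Psi^{m-1}_\bfa(M)$.

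The main content is surjectivity. Given $a\in S^{[m]}({}^\bfa T^*M)$, I will build a kernel by a localized Fourier transform construction. Using a partition of unity on $M$ subordinate to coordinate charts adapted to the fibrations and the trivialization near $\partial M$, reduce to constructing, in each chart, an element whose symbol at $\Delta\cap \ff_z$ equals a piece of $a$. In local coordinates $(x,y,z,w)$ with dual \bfa-fibre coordinates $(\tau,\eta,\zeta,\theta)$, take a cutoff $\chi(\tau,\eta,\zeta,\theta)$ vanishing near zero and equal to $1$ at infinity, form $\chi\cdot a$, and define
\[
K'(x,y,z,w,\calT,\calY,\calZ,w-w') = (2\pi)^{-(1+b+f_1+f_2)}\int e^{i(\calT\tau+\calY\cdot\eta+\calZ\cdot\zeta+(w-w')\cdot\theta)}(\chi a)(x,y,z,w,\tau,\eta,\zeta,\theta)\,d\tau\,d\eta\,d\zeta\,d\theta
\]
in the sense of oscillatory integrals, and multiply by the half-density factor from \eqref{eq:diff kernel}. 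Schwartz-type estimates on oscillatory integrals with classical symbols give that $K'$ is Schwartz in $(\calT,\calY,\calZ)$ as $|(\calT,\calY,\calZ)|\to\infty$, and smooth in $w-w'$ with its derivatives rapidly decreasing; this yields the required infinite-order vanishing at $\ff_{zx},\ff_{zy}$. Multiplying by a cutoff supported near the diagonal (hence away from $\lf,\rf$) without changing the symbol, we obtain an element of $\Psi^m_\bfa(M)$ mapping to the chosen local piece of $a$. Patching these local constructions via the partition of unity, errors lie in $\Psi^{m-1}_\bfa(M)$, and iteration of this argument together with the exactness already established yields surjectivity.

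The main subtlety, and what I expect to be the hardest bookkeeping, is verifying that the kernel so constructed is smooth up to $\ff_z$ as a section of $\widetilde\Omega^{1/2}_\bfa(M^2_z)$ (not merely in the interior of $\ff_z$), and that the vanishing at the other faces is preserved under the changes of chart; for this one uses the projective coordinates \eqref{eq:proj coord quasihom ff} on $\ff_z$, which identify a neighborhood of $\intff_z$ with a neighborhood in $\partial M \times \RR^{1+b+f_1+f_2}$ in such a way that the fibre Fourier transform is the standard one. Finally, asymptotic summability is a Borel argument: given $P_k\in \Psi^{m-k}_\bfa(M)$, choose $\chi_k\in C^\infty_c(\RR)$ with $\chi_k(s)=0$ near $0$ and $\chi_k(s)=1$ for $|s|\geq 1$, and set $P := \sum_k \chi_k(\epsilon_k|(\tau,\eta,\zeta,\theta)|) P_k$ (applied at the symbol/oscillatory-integral level, and with $P_k$ localized near the diagonal), with $\epsilon_k\downarrow 0$ chosen fast enough so that the term-by-term bounds on a fixed seminorm of the $k$-th partial error decay summably; this produces the required $P\in\Psi^m_\bfa(M)$ with $P-\sum_{k=0}^{N-1}P_k \in \Psi^{m-N}_\bfa(M)$.
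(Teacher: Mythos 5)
Your proof is correct and follows essentially the same route the paper intends: the paper does not spell the argument out but refers to the b-calculus (\cite{Me-aps}), where exactness is obtained by precisely this local Fourier-transform construction of a kernel from a given symbol (cut off near the diagonal, with rapid decay in $(\calT,\calY,\calZ)$ yielding the infinite-order vanishing at the side faces and smoothness up to $\ff_z$), and asymptotic completeness by the Borel argument with symbol cutoffs. One trivial slip: your $\chi_k$ cannot lie in $C_c^\infty(\RR)$ if it equals $1$ for $|s|\geq 1$; you clearly mean $\chi_k\in C^\infty(\RR)$ vanishing near $0$ and identically $1$ near infinity, which is what the standard Borel construction uses.
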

The construction of the \bfa-principal symbol and the proof of the lemma are essentially the
same in this setting as in the b-calculus, see \cite{Me-aps}. After we discuss composition we will see that Lemma \ref{lem.short exact - sigma} and Lemma \ref{lem:symbolcomp} imply the existence of a parametrix, up to an error in $\Psi^{-\infty}_{\bfa}(M)$, of \bfa-operators with invertible principal symbol. However, these error terms are not compact on $L^2$. The obstruction to compactness is the normal operator, which we now discuss.

\medskip
\noindent{\bf The \bfa-normal operator.}
The normal operator $N_\bfa(P)$ of $P\in \Psi_\bfa^m(M)$ encodes the leading behavior of $P$ at $\partial M$ and serves as a second (non-commutative) symbol of $P$. We will give it in local coordinates, in terms of its Schwartz kernel, as operator on a suitable space and in terms of a family of operators on the fibers $F_2$, the normal family.

For an \bfa-differential operator, given in coordinates as in \eqref{eq:difflocal}, we define the normal operator as
\begin{equation}
N_\bfa(P) = \sum_{\alpha + |I| + |J| + |K| \leq m}
a_{\alpha, I, J,K}(0, y, z,w)\, D_\calT^\alpha D_\calY^I D_\calZ^J D_w^K.
\label{eq:indlocal}
\end{equation}
That is, we set $x=0$, replace differentiation in $x,y,z$ by differentiation in dummy variables $\calT,\calY,\calZ$ (leaving out the $x$-scaling factors) while regarding $y,z$ (the $B_1$-variables) as parameters. Differentiation in $w$ (i.e. in the fibre $F_2$) remains as in $P$.
We need to make sense of this invariantly and extend it to \bfa-pseudodifferential operators.

This is done by considering the Schwartz kernel.
Since \eqref{eq:indlocal} has constant coefficients in $\calT,\calY,\calZ$, its Schwartz kernel can be given as a convolution kernel in these variables. This kernel is precisely \eqref{eq:diff kernel} at $x=0$, i.e.\ the restriction of the kernel of $P$ to $\ff_z$, which is an invariant notion. The restriction to  $\ff_z$ is also defined for the kernel of an $\bfa$-{\em pseudo}differential operator $P$, so we would like to use this to define the normal operator $N_\bfa(P)$. For this we need to interpret a distribution on $\ff_z$ as kernel of an operator on a suitable space. This will be done using the identification \eqref{eq:ffz diffeo} and the concept of {\em suspended fibre pseudodifferential operators} introduced in \cite{MaMe} which we now recall.

Given a compact manifold $F$ and a real vector space $V$, let $\Psi^m_{\sus(V)}(F)$ be the subspace of $\Psi^m(F\times V)$ of operators $A$ with Schwartz kernels of the form
(neglecting density factors for the moment)
$$ K_A(q,q',v,v') = \overline{K}_A (q,q',v-v'),\quad \overline{K}_A \in C^{-\infty}_c (F^2\times V) + \calS(F^2\times V)$$
where $\calS$ is the Schwartz space. That is, $A$ is translation invariant with respect to $V$ and its kernel decays rapidly at infinity with all derivatives. The singularities of $\overline{K}_A$ are conormal and at $\Delta_F\times\{0\}$.
Via Fourier transform in the $V$ variable, i.e.\ by defining $\Ahat(\zeta)\in \Psi^m(F)$ for $\zeta\in V^*$ by the kernel $K_{\Ahat(\zeta)}(q,q') = \int_V e^{-i<\zeta,w>} \overline{K}_A(q,q',w) \, dw$, operators in $\Psi^m_{\sus(V)}(F)$ correspond exactly to pseudodifferential operators with parameter on $F$ (as defined in \cite{Sh}, for example). These notions can be extended to a family of operators parametrized by a base manifold $B$, where $V$ may now vary with the base point, i.e.\ arise from a (possibly non-trivial) vector bundle over $B$:
\begin{definition}
\label{def:suspended algebra}
Let $\phi:X\to B$ be a fibration of compact manifolds and $\sfV\to B$ a real vector bundle. Let $X\times_B \sfV\to B$ be the fibre product. An operator
$$ A: \calS (X\times_B \sfV) \to \calS(X\times_B \sfV)
$$
is a {\em $\sfV$-suspended fibre $\Psi$DO}, and we write $A\in \Psi^m_{\sus(\sfV)-\phi} (X)$, if it is a family of operators $A_b \in \Psi^m_{\sus(\sfV_b)}(\phi^{-1}(b))$ depending smoothly on $b\in B$ as parameter.
\end{definition}
Clearly, $\Psi^*_{\sus(\sfV)-\phi} (X)=\bigcup_{m\in\ZZ}\Psi^m_{\sus(\sfV)-\phi} (X)$ is an algebra, that is, closed under composition, and the definition can be extended to operators acting on sections of a vector bundle $E\to X$.
It is possible to replace the union over $m\in \ZZ$ with a union over $m \in \RR$ if we also allow
polyhomogeneous expansions with any discrete set of orders in Definition \ref{def:conorm}.

The (convolution) Schwartz kernel of $A\in \Psi^*_{\sus(\sfV)-\phi} (X)$ is a distribution on $X\times_B X\times_B \sfV$ (one base variable, one $\sfV$-fibre variable, two $\phi$-fibre variables). As usual we consider the kernels as half densities without expressing this in the notation.

We apply this definition to the fibration $\phi_2:\partial M\to B_1$ and the vector bundle ${}^\bfa N'B_1\to B_1$, which pulls back to ${}^\bfa N\partial M$ under $\phi_2$. Then $\partial M \times_{B_1} {}^\bfa N'B_1 = {}^\bfa N\partial M$, so the Schwartz kernel is a distribution on $\partial M \times_{B_1} {}^\bfa N\partial M$, which by \eqref{eq:ffz diffeo} can be identified with the interior of the front face, with $\calT,\calY,\calZ$ the variables on the vector space fibre. Therefore, the following definition is consistent with \eqref{eq:indlocal}.
\begin{definition}
\label{def:normal operator}
The {\em normal operator} of $P\in \Psi^m_\bfa(M)$ is the element $N_\bfa(P) \in \Psi^m_{\sus({}^\bfa N'B_1)-\phi_2} (\partial M)$ whose Schwartz kernel is the restriction of $K_P$ to the front face $\ff_z$, where the identification described above is used.
\end{definition}
Summarizing, the normal operator acts on $\calS ({}^\bfa N\partial
M,\Omega^{1/2})$ as a family of pseudodifferential operators on the
fibres of ${}^\bfa N\partial M \to B_1$ that are translation
invariant in the ${}^\bfa N$ fibres.

%

\begin{lemma}
\label{lemma.normalop-exactseq}
For any $m\in\RR\cup\{-\infty\}$ the map assigning to an operator its normal operator fits into an exact sequence
\begin{equation}
\label{eq:normexseq}
0 \longrightarrow x\Psi^m_{\bfa}(M) \hookrightarrow \Psi^m_{\bfa}(M)
\stackrel{N_\bfa}{\longrightarrow} \Psi^m_{\sus(^\bfa N'B_1)-\phi_2}(\del M)
\longrightarrow 0.
\end{equation}
Also, any sequence $P_k\in x^k\Psi^m_\bfa(M)$, $k=0,1,2,\dots$, can
be asymptotically summed, i.e.\  there is a $P\in\Psi^m_\bfa(M)$
such that $P-\sum_{k=0}^{N-1}P_k \in x^N\Psi^m_\bfa(M)$ for all $N$.
\end{lemma}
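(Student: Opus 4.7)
The plan is to work entirely at the level of Schwartz kernels lifted to $M^2_z$, using the fact that the function $x$ (pulled back from the left factor of $M^2$, then lifted to $M^2_z$) serves as a boundary defining function for $\ff_z$ in the projective coordinate chart $(x,\calT,y,\calY,z,\calZ,w,w')$ that covers $\interior{\ff_z}$. Throughout we use that kernels of elements of $\Psi^m_\bfa(M)$ vanish to infinite order at $\lf,\rf,\ff_{zx},\ff_{zy}$ by definition, so only the behavior at $\ff_z$ is at issue, and that conormality at $\Delta$ is preserved under the operations we perform.

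For exactness at $\Psi^m_\bfa(M)$, the inclusion $x\Psi^m_\bfa(M)\subset \ker N_\bfa$ is immediate from Definition \ref{def:normal operator}. Conversely, suppose $N_\bfa(P)=0$, so $K_P$ restricts to zero on $\ff_z$. Since $K_P\in I^{m,\calG}(M^2_z,\Delta;\tilde\Omega_\bfa^{1/2}(M^2_z))$ with $\calG(\ff_z)=0$, it is smooth (in the sense of classical conormality transverse to $\Delta$, smooth along) up to $\ff_z$, and the vanishing of its restriction means $K_P = x \tilde K$ with $\tilde K$ of the same type. Transferring the factor $x$ to the source of $P$ using $x\cdot K_P = K_{xP}$ gives $P=xQ$ for some $Q\in \Psi^m_\bfa(M)$, by the definition of $\tilde\Omega_\bfa^{1/2}(M^2_z)$ in \eqref{eq:def omegatilde} and the fact that $x$ acts by pullback from the left factor.

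For surjectivity, let $A\in \Psi^m_{\sus({}^\bfa N'B_1)-\phi_2}(\partial M)$, whose convolution Schwartz kernel $\overline K_A$ is a distribution on $\partial M\times_{B_1} {}^\bfa N\partial M$ that is conormal of degree $m$ at $\Delta_{\partial M}\times\{0\}$ and Schwartz in the ${}^\bfa N$-fibre variables. Via the diffeomorphism \eqref{eq:ffz diffeo}, view $\overline K_A$ as a distribution on $\interior{\ff_z}$ with values in the restriction of $\tilde\Omega_\bfa^{1/2}(M^2_z)$ to $\ff_z$. In local projective coordinates around $\interior{\ff_z}$, extend it trivially in $x$, cut off by a function $\chi(x)\in C_c^\infty([0,1))$ with $\chi\equiv 1$ near $0$, and patch together local pieces using a partition of unity on $B_1$. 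The canonicity of the identification \eqref{eq:ffz diffeo}, noted after the construction of $M^2_z$ and made precise in the Appendix (Proposition \ref{prop:a front face general}), guarantees that the patched kernel defines a globally well-defined element $K_P$ of the space in Definition \ref{def:small calculus}: conormality of degree $m$ at $\Delta$ persists, smoothness up to $\ff_z$ with prescribed restriction holds by construction, rapid decay in $(\calT,\calY,\calZ)$ translates (under the coordinate changes going from $\ff_z$ to $\ff_{zy},\ff_{zx},\lf,\rf$) to infinite order vanishing at those faces, and the half-density factor matches by the definition \eqref{eq:def omegatilde}. The resulting $P\in\Psi^m_\bfa(M)$ satisfies $N_\bfa(P)=A$.

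The asymptotic summation is a standard Borel-type argument. Given $P_k\in x^k\Psi^m_\bfa(M)$, choose a cutoff $\chi\in C_c^\infty([0,1))$ with $\chi\equiv 1$ near $0$ and a sequence $c_k\downarrow 0$ tending to zero fast enough that for every $k$ the kernel $\chi(x/c_k)K_{P_k}$ is bounded in every seminorm of $x^k \Psi^{m+1}_\bfa(M)$ by $2^{-k}$, where the Fréchet structure comes from the characterization of kernels in Definition \ref{def:small calculus}. Then the sum $K_P:=\sum_{k\ge 0}\chi(x/c_k)K_{P_k}$ converges in that topology to the kernel of a $P\in\Psi^m_\bfa(M)$; subtracting the first $N$ terms leaves a tail lying in $x^N\Psi^m_\bfa(M)$, since each $\chi(x/c_k)K_{P_k}-K_{P_k}$ is supported in $\{x\ge c_k\}$ hence vanishes to infinite order at $\ff_z$, and the genuine tail $\sum_{k\ge N}\chi(x/c_k)K_{P_k}$ has the required vanishing by construction. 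The main technical point, and the only place where care is needed, is the coordinate-invariance of the extension in the surjectivity step; this is precisely why the canonical form of $\interior{\ff_z}$ in \eqref{eq:ffz diffeo} is essential.
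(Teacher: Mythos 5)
Your argument follows essentially the same route as the paper: identify $N_\bfa(P)$ with the restriction of $K_P$ to $\ff_z$, obtain exactness by dividing by $x$, get surjectivity by a cut-off trivial extension off $\ff_z$, and conclude asymptotic completeness by a Borel argument. Two points, however, need repair or at least explicit justification. First, in the step ``$N_\bfa(P)=0$ implies $K_P=x\tilde K$ with $\tilde K$ of the same type'' you treat $x$ as a boundary defining function of $\ff_z$, which is only true in the interior chart; near $\partial\ff_z$ one has, by \eqref{eq:proj coord quasihom bd} with $a=a_2$, $\rho_{\ff_z}\sim x\,|\calT|^{1/a_2}$ (similarly with $\calY$, $\calZ$), i.e.\ $x$ is comparable to $\rho_{\ff_z}$ times a defining function of the adjacent face $\ff_{zy}$. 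Division by $x$ rather than by $\rho_{\ff_z}$ is therefore legitimate only because the kernels decay rapidly in $(\calT,\calY,\calZ)$, equivalently vanish to infinite order at $\ff_{zy}$ (and likewise at the other faces); this is precisely the one substantive observation in the paper's proof, and your phrase ``only the behavior at $\ff_z$ is at issue'' should be replaced by this explicit statement, since the global division is exactly a statement about the corners of $\ff_z$.

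Second, in the Borel argument your smallness claim is not achievable as stated: for fixed $K_{P_k}\in x^k\Psi^m_\bfa(M)$, multiplying by $\chi(x/c_k)$ shrinks the $x$-support but does not shrink the conormal (symbol-type) seminorms uniformly down to $x=0$, so one cannot make $\chi(x/c_k)K_{P_k}$ small in the seminorms of $x^k\Psi^{m+1}_\bfa(M)$ by taking $c_k$ small (e.g.\ a symbol independent of $x$ has the same order-$(m+1)$ seminorms after the cut-off). The standard fix is to measure smallness in a space with a weaker weight, say $x^{k-1}\Psi^m_\bfa(M)$, where the cut-off gains a factor of order $c_k$ from the weight; the tail $\sum_{k\ge N}\chi(x/c_k)K_{P_k}$ then lies in $x^{N-1}\Psi^m_\bfa(M)$, and a shift of index in $N$ recovers the stated conclusion. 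With these two adjustments your proof is correct and coincides in substance with the paper's (much terser) argument.
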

\begin{proof}
This follows from the fact that $\ff_z$ has a boundary defining function that is equivalent to $x$ in the interior of $\ff_z$ and to $x$ times the $1/a_2$ power of $\calT$, $\calY$ or $\calZ$ at boundary points of $\ff_z$, see \eqref{eq:proj coord quasihom bd} (with $y_p$ replaced by $T$, $Y_i$, or $z_j-z'_j$ and $a=a_2$). These factors are irrelevant since the kernels are rapidly decaying in $\calT,\calY,\calZ$. This together with the standard Borel lemma argument also gives the second claim.
\end{proof}

\begin{definition}
An operator $P\in \Psi_\bfa^m(M)$ is called {\em elliptic} (or \bfa-elliptic) if ${}^\bfa\sigma_m(P)$ is invertible. It is called {\em fully elliptic} if it is elliptic and if $N_\bfa(P)$ is invertible with inverse in $ \Psi^{-m}_{\sus(^\bfa N'B_1)-\phi_2}(\del M)$.
\end{definition}
In Lemma \ref{lem.fullyelliptic} we will give an alternative characterization of full ellipticity.

When we discuss composition we will see that $N_\bfa$ preserves composition. For this, the following
additional characterization of $N_\bfa(P)$ will be useful, which expresses it more directly in terms of the action of $P$, rather than its kernel. For $P$ an $\bfa$-differential operator \eqref{eq:difflocal}, the Fourier transform in $\calT,\calY,\calZ$ of the kernel of its normal operator, i.e.\ \eqref{eq:diff kernel} at $x=0$, is (leaving out density factors for the moment)
\begin{equation}
\label{eq:diff normalfamily}
\Nhat_\bfa(P) = \sum_{\alpha + |I| + |J| +|K| \leq m}
a_{\alpha, I, J,K}(0, y, z,w)\, \tau^\alpha \eta^I \zeta^J D_w^K
\end{equation}
This is similar to the \bfa-principal symbol \eqref{eq:smallsymb}, but here we don't transform in $w$  and we keep lower order terms. Again we want to define this invariantly and extend it to \bfa-pseudodifferential operators.
Write $a=a_1+a_2$ and fix $p=(y_0,z_0)\in B_1$ and $\mu=(\tau,\eta,\zeta)\in\RR^{1+b+f_1}$.
The operators $x^{1+a} D_x$, $x^a D_y$, $x^{a_2}D_z$ have eigenfunctions
$e^{-i\tau a^{-1}x^{-a}}$, $e^{i\eta (y-y_0) x^{-a}}$, $e^{i\zeta(z-z_0) x^{-a_2}}$ with eigenvalues $\tau,\eta,\zeta$, respectively. Their product is $e^{ig}$ with $g = -\tau a^{-1}x^{-a} + \eta\cdot(y-y_0)x^{-a} + \zeta \cdot(z-z_0) x^{-a_2}$. Using this we can express
\eqref{eq:diff normalfamily} applied to $u\in C^\infty(\partial M)$, supported near $y=y_0, z=z_0$, as
\begin{equation}
\label{eq:diff normalfamily2}
\Nhat_\bfa (P) u = \left[ e^{-i g} P(e^{i g} u)\right]_{|x=0} \ \text{ at } (y,z)=(y_0,z_0).
\end{equation}
Note that $D_x$ also differentiates the $D_y$ and $D_z$ eigenfunctions but that this contribution vanishes at $y=y_0$, $z=z_0$ for this choice of eigenfunctions. This generalizes similar formulas in the scattering and $\phi$-calculus setting (see \cite{MaMe}) and is similar to the standard `oscillatory testing' characterization of the principal symbol, but here one needs no extra large parameter since the negative $x$-powers in $g$ already create fast oscillations near the boundary.

Equation \eqref{eq:diff normalfamily2} makes sense invariantly.  Note that $dg = \tau \frac{dx}{x^{1+a}} + \eta \frac{dy}{x^a} + \zeta \frac{dz}{x^{a_2}}$ at $y=y_0,z=z_0,x=0$, so $\tau,\eta,\zeta$ should be regarded as local fibre coordinates on ${}^\bfa N^* \partial M$  with respect to the local basis $\frac{dx}{x^{1+a}}, \frac{dy}{x^a}, \frac{dz}{x^{a_2}}$, see also \eqref{eq:local basis aT*}. Recall that ${}^\bfa N\partial M$ is the pull-back of ${}^\bfa N'B_1={}^\bfa T_{B_1\times\{0\}}(B_1\times [0,1))$.
\begin{definition}[and Proposition]
\label{def.prop.normal family}
Let $P\in \Psi^m_\bfa(M)$. The {\em normal family} of $P$ is the family of operators
on the fibres $F_{2,p}=\phi_2^{-1}(p)$, parametrized by $(p,\mu)\in ({}^\bfa N')^* B_1 $,
defined as follows. Given $(p,\mu)$ choose $\gbar\in C^\infty(B_1\times (0,1))$ such that $d\gbar$ extends to a section of ${}^\bfa T^*_{B_1\times\{0\}}(B_1\times [0,1))$ and  $d\gbar (p)= \mu$, and let $g=\Phi_2^*\gbar$. For $u\in C^\infty(\partial M,\Omega^{1/2})$ let $\utilde$ be a smooth extension to $M$. Then $e^{-ig} P (e^{ig} \utilde)$ extends smoothly to the boundary of $M$. Set
\begin{equation}
\label{eq:def normal family 2}
( \Nhat_\bfa(P)  u ) (p) = \left[ e^{-ig} P (e^{ig} \utilde) \right]_{|F_{2,p}}.
\end{equation}
This depends only on $u_{|F_{2,p}}$ and $\mu$, so defines an operator $\Nhat_\bfa(P)(p,\mu)$ on $C^\infty(F_{2,p})$, and
\begin{equation}
\label{eq:def normal family 1}
\Nhat_\bfa(P)(p,\mu) \in \Psi^m (F_{2,p}).
\end{equation}
The Schwartz kernel of $\Nhat_\bfa(P)$ is the Fourier transform, in the fibres of $^\bfa N' B_1$ (that is, in the variable $\mu$), of the Schwartz kernel of $N_\bfa (P)$.
\end{definition}
The last part of this definition should be interpreted using the identifications made in the remarks before Definition \ref{def:normal operator}. The proof will show that the Fourier transform is well-defined without an additional choice of measure on the fibres.  It makes use of Proposition \ref{prop:a front face general} from the
appendix.

Note that $\gbar$ and hence $g$ may blow up at $x=0$, as in the function used in \eqref{eq:diff normalfamily2}. The condition on $d\gbar$ amounts to controlling this blow up.
\begin{proof}
First, note that $P\in\Psi^m_\bfa(M)$ defines a ``boundary operator''
$$ P_\partial: C^\infty(\partial M) \to C^\infty(\partial M), u\mapsto (P\utilde)_{|\partial M}  $$
for $\utilde$ an arbitrary smooth extension of $u$ to the interior. We leave out density factors.
That $P_\partial u$ does not depend on the choice of extension follows from its expression in coordinates, which from \eqref{eq:Plocal} is seen to be
\begin{equation}
\label{eq:Pbdlocal}
\begin{aligned}
(P_\partial u)(y,z,w) &= \int_{F_{2,(y,z)}} K'_{P_\partial} (y,z,w,w') u(y,z,w')\, dw' \ \text{ where} \\
K'_{P_\partial}(y,z,w,w') &= \int_{\RR^{1+b+f_1}} K'_P (0,\calT,y,\calY,z,\calZ,w,w') \, d\calT d\calY d\calZ
\end{aligned}
\end{equation}
for $u$ supported in $\phi_2^{-1}(U)$ for a coordinate neighborhood $U$ in $B_1$.
Observe that as there is no integration in $y$ and $z$ this can be considered as a family of operators in the $F_2$ fibres parametrized by $p=(y,z)\in B_1$.
Now by \eqref{eq:def normal family 2} we need to consider $\left(e^{-ig} P e^{ig}\right)_\partial$ and then restrict this to $F_{2,p}$. The operator $e^{-ig} P e^{ig}$ has kernel $e^{-ig} e^{ig'} K_P$. Its lift to $M^2_z$ is $e^{-iG}K_P$ where $G=\beta_\bfa^*(\pi_l^*g-\pi_r^*g)$. By \eqref{eq:dg meaning} the function $G$ is smooth near the interior of $\ff_z$, with only finite order singularities at the other faces of $M^2_z$, so $e^{-ig}Pe^{ig}\in\Psi^m_\bfa$. This proves the smoothness claim before \eqref{eq:def normal family 1}. Also, $G_{|\intff_z} = dg$, which, over $p$, equals $\tau\calT+\eta\calY+\zeta\calZ$ in coordinates (as in the note before Definition \ref{def.prop.normal family}), where $\mu=(\tau,\eta,\zeta)$. Finally, from \eqref{eq:Pbdlocal} we obtain that $\Nhat_\bfa(P)(p,\mu)$ has kernel
$$K'_{\Nhat_\bfa(P)(y,z,\tau,\eta,\zeta)}(w,w') = \int_{\RR^{1+b+f_1}} e^{-i(\tau\calT+\eta\calY+\zeta\calZ)}K'_P (0,\calT,y,\calY,z,\calZ,w,w') \, d\calT d\calY d\calZ
$$
from which the remaining claims follow.
\end{proof}
\begin{lemma}
\label{lem.fullyelliptic} Suppose $P\in\Psi_\bfa^m(M)$ is elliptic.
Then $N_\bfa(P)$ is elliptic. Furthermore, $P$ is fully elliptic if
and only if $\Nhat_\bfa(P)(p,\mu)$ is invertible, with inverse in $\Psi^{-m}(F_{2,p})$, for all $(p,\mu)$.
\end{lemma}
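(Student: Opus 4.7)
The plan is to prove the first claim (ellipticity of $N_\bfa(P)$) by identifying the principal symbol of the normal family with the restriction of the $\bfa$-principal symbol to the boundary, and then to prove the equivalence by combining Fourier transform on the suspension with the standard theory of parameter-dependent $\Psi$DOs on the compact fibres $F_{2,p}$.

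For the first claim, I would start from the oscillatory characterization \eqref{eq:def normal family 2}. The family $\Nhat_\bfa(P)(p,\mu)$ is a classical $\Psi$DO of order $m$ on $F_{2,p}$ by Definition \ref{def.prop.normal family}; one needs to compute its principal symbol in the $F_2$-fibre cotangent variable $\theta$ and, jointly, to verify parameter-dependent ellipticity in $(\mu,\theta)$. Probing $e^{-ig}P(e^{ig}\utilde)$ additionally against $e^{ith\cdot w}$ with $h\to\infty$ extracts the leading symbol, and a direct comparison with \eqref{eq:smallsymb} shows that the principal symbol of $\Nhat_\bfa(P)(p,\mu)$ at $\theta$ is exactly ${}^\bfa\sigma_m(P)(p,\mu,\theta)$, where $(\mu,\theta)$ are the fibre coordinates on ${}^\bfa T^*_{\partial M}M$ relative to the local basis \eqref{eq:local basis aT*}. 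Thus $P$ elliptic implies that this joint symbol is invertible for $(\mu,\theta)\neq 0$, which is precisely parameter-dependent ellipticity of $\Nhat_\bfa(P)$ in the sense of Shubin. In particular each $\Nhat_\bfa(P)(p,\mu)$ is an elliptic classical $\Psi$DO on $F_{2,p}$, i.e.\ $N_\bfa(P)$ is elliptic as a $\sfV$-suspended fibre $\Psi$DO.

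For the forward direction of the equivalence, suppose $P$ is fully elliptic, so $N_\bfa(P)$ has an inverse $S\in \Psi^{-m}_{\sus({}^\bfa N'B_1)-\phi_2}(\partial M)$. Applying the fibrewise Fourier transform in the suspension variable gives a smooth family $\Shat(p,\mu)\in\Psi^{-m}(F_{2,p})$ by definition of the suspended calculus, and since Fourier transform intertwines convolution with multiplication we have $\Shat(p,\mu) = \Nhat_\bfa(P)(p,\mu)^{-1}$ for every $(p,\mu)$.

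The converse is the main obstacle and uses parameter-dependent Fredholm theory. Assume $P$ is elliptic and each $\Nhat_\bfa(P)(p,\mu)$ is invertible with inverse in $\Psi^{-m}(F_{2,p})$. By the first paragraph, $\Nhat_\bfa(P)$ is elliptic with parameter $\mu$, so a standard symbolic construction produces a parameter-dependent parametrix $Q(p,\mu)$ whose Schwartz kernel (after inverse Fourier transform in $\mu$) belongs to $\Psi^{-m}_{\sus({}^\bfa N'B_1)-\phi_2}(\partial M)$, with
\[
\Nhat_\bfa(P)\,Q = I - R_1,\qquad Q\,\Nhat_\bfa(P) = I - R_2,
\]
where $R_1,R_2$ are smoothing and rapidly decaying in $\mu$; thus for each $(p,\mu)$ the errors are compact on $L^2(F_{2,p})$ and their Schwartz kernels are Schwartz in $\mu$ with values in smoothing operators on $F_{2,p}$. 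Since $\Nhat_\bfa(P)(p,\mu)$ is invertible for every $(p,\mu)$ by hypothesis, the identity
\[
\Nhat_\bfa(P)^{-1} = Q + \Nhat_\bfa(P)^{-1} R_1 = Q + (Q + \Nhat_\bfa(P)^{-1} R_1) R_1
\]
together with analytic Fredholm theory in the compact fibre gives that $(I-R_1)^{-1} = I + T$ where $T(p,\mu)$ is smoothing, smooth in $(p,\mu)$ and Schwartz in $\mu$ (for large $|\mu|$ one inverts by Neumann series; for $|\mu|$ bounded one uses that $I-R_1$ is an invertible perturbation of the identity by a smooth compact family). Hence $\Nhat_\bfa(P)^{-1} = Q(I+T)$ lies in the suspended calculus of order $-m$, so its inverse Fourier transform in $\mu$ is a genuine inverse of $N_\bfa(P)$ in $\Psi^{-m}_{\sus({}^\bfa N'B_1)-\phi_2}(\partial M)$, and $P$ is fully elliptic.
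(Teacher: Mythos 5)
Your treatment of the first claim and of the forward implication is correct and is essentially the paper's route: the paper also proves ellipticity of $N_\bfa(P)$ by identifying the (parameter-dependent) principal symbol of the normal operator with the restriction of ${}^\bfa\sigma_m(P)$ to the boundary of $N^*\Delta$ (you extract the same identification by oscillatory testing in \eqref{eq:def normal family 2}, which is fine), and the forward direction is just the Fourier transform in the suspension variable. For the converse the paper does not reprove anything: it observes that ellipticity of $N_\bfa(P)$ is parameter-dependent ellipticity of $\Nhat_\bfa(P)(p,\mu)$ and then quotes the result from \cite{Sh} that an elliptic operator with parameter which is invertible for every parameter value has inverse again in the parameter-dependent calculus. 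You instead try to prove that quoted fact, and this is where there is a genuine gap.

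The failing step is the assertion that $I-R_1$ is invertible for all $(p,\mu)$, so that $\Nhat_\bfa(P)^{-1}=Q(I+T)$ with $(I-R_1)^{-1}=I+T$. Since $I-R_1=\Nhat_\bfa(P)\,Q$ and $\Nhat_\bfa(P)(p,\mu)$ is invertible, invertibility of $I-R_1$ is equivalent to invertibility of the parametrix $Q(p,\mu)$; but the symbolic construction only makes $Q$ Fredholm of index zero, and it may well have nontrivial kernel at parameter values with $|\mu|$ bounded. ``Analytic Fredholm theory'' does not rescue this: the parameters $(p,\mu)$ are real and multidimensional, and even in a genuinely analytic one-parameter setting the theorem yields invertibility only off a discrete set, not everywhere. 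So the Neumann series handles large $|\mu|$, but for bounded $|\mu|$ your formula for $\Nhat_\bfa(P)^{-1}$ is unjustified. The standard repair avoids inverting $I-R_1$ altogether: use both one-sided identities $\Nhat_\bfa(P)^{-1}=Q+\Nhat_\bfa(P)^{-1}R_1$ and $\Nhat_\bfa(P)^{-1}=Q+R_2\,\Nhat_\bfa(P)^{-1}$ to obtain $\Nhat_\bfa(P)^{-1}=Q+R_2Q+R_2\,\Nhat_\bfa(P)^{-1}R_1$, and then show the sandwiched term is a smoothing family, smooth in $(p,\mu)$ and Schwartz in $\mu$, using smoothness of the inverse of a smooth invertible family together with the uniform bounds for large $|\mu|$ coming from your Neumann series. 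This is exactly the device used in Lemma \ref{lem.inverse in calculus} of the paper, and it is in effect the content of the parameter-dependent inversion theorem in \cite{Sh} that the paper's proof invokes.
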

Note that invertibility of $\Nhat_\bfa(P)(p,\mu)$ as an operator on $L^2(F_{2,p})$ already implies that its inverse is in $\Psi^{-m}(F_{2,p})$. This is a standard result about pseudodifferential operators on compact manifolds, see \cite{Sh}, since $\Nhat_\bfa(P)(p,\mu)$ is elliptic. See Lemma \ref{lem.inverse in calculus} for the analogous result in the \bfa-calculus.

\begin{proof}
Interpret the principal symbol ${}^\bfa\sigma_m(P)$ of $P$ as a function on
the conormal bundle of the diagonal, $N^*\Delta$, see the remarks
before Lemma \ref{lem.short exact - sigma}. To prove the first statement of the lemma, we claim that the principal symbol of $N_\bfa(P)$ is, under the identifications explained before Definition \ref{def:normal operator}, just the restriction of ${}^\bfa\sigma_m(P)$ to the boundary $N^*_{\partial\Delta}\Delta$ of $N^*\Delta$. To prove this claim, note that in the setting of
operators $P\in \Psi^m_{\sus(V)}(F)$, the principal symbol of $P$ is naturally a function on $N^*(\Delta_F\times \{0\})=N^*(\Delta_F)\times V^*$.  Thus in the $\phi$-calculus setting, the principal symbol of an element of $\Psi^m_{\sus(\sfV)-\phi}(X)$ as in Definition \ref{def:suspended algebra} is a function on $N^*(\Delta_F)\times_B \sfV^*$, where $N^*(\Delta_F)$ is the bundle over $B$ whose fibre over $b\in B$ is the normal bundle of the diagonal $\Delta_{F_b}\subset F_b^2$. So finally in the $\bfa$-calculus setting, the principal symbol of the normal operator is a function on $N^*(\Delta_{F_2})\times_{B_1}({}^\bfa N')^*B_1$, which is identified with the boundary
$N^*_{\partial\Delta}\Delta$ of $N^*\Delta$ (in local coordinates, both are parametrized by the variables $y,z,w,\tau,\eta,\zeta,\theta$, see also \eqref{eq:smallsymb} with $x=0$).  For the second claim in the lemma, note that, by  the discussion
before Definition \ref{def:suspended algebra}, ellipticity of
$N_\bfa(P)$ is equivalent to ellipticity of $\Nhat_\bfa(P)(p,\mu)$
in the sense of pseudodifferential operators with parameter $\mu$.
Therefore, the second claim follows from the fact that if such an
operator is invertible for every value of the parameter then its
inverse (taken for each parameter separately) is again a
pseudodifferential operator with parameter, see \cite{Sh}. There it is also shown that invertibility for large $\mu$ already follows from ellipticity.
\end{proof}

\section{Composition and mapping properties}

Both the action and the composition of operators can be expressed, as is laid out in \cite{Gr}, through pullbacks and pushforwards:  If the operator $P$ has kernel $K_P\in C^{-\infty}(M^2)$ and $u$ is a function on $M$ then $P(u) = (\pi_l)_*[K_P\cdot (\pi_r)^*u]$ where $\pi_l$ and $\pi_r$ are the left and right projections from $M^2$ to $M$. Since operators in $\Psi^*_\bfa$ are characterized by the properties of their kernels lifted to $M^2_z$ we should rewrite this as
\begin{equation}
\label{eq:action}
P(u) = (\pi_{z,l})_* \left[
K_P\cdot(\pi_{z,r})^*u\right]
\end{equation}
where $\pi_{z,l/r} = \beta_{\bfa} \circ \pi_{l/r}$. Recall that we consider $K_P$ and $u$ as half-densities. Strictly speaking, pushforward is defined invariantly only on full densities, so one should choose an auxiliary non-vanishing half-density $\nu$ on $M$ and define $P(u)$ by $P(u)\cdot\nu = (\pi_{z,l})_* [
K_P \cdot(\pi_{z,l})^*\nu \cdot(\pi_{z,r})^*u]$, but we will suppress such auxiliary densities from the notation.
Clearly, since pushforward involves integration, $P(u)$ will be defined only under certain integrability conditions.
Similarly, the composition $P\circ Q$ of two operators $P$ and $Q$ with kernels
$K_P$ and $K_Q$ on $M^2_z$ has kernel
\begin{equation}
\label{eq:comp}
K_{P\circ Q} = (\pi^3_{z,2})_*\left[(\pi^3_{z,3})^*K_P\cdot (\pi^3_{z,1})^*K_Q\right],
\end{equation}
where now the $\pi^3_{z,i}:M^3_z\to M^2_z$ are lifts of the projections $\pi_i^3:M^3\to M^2$, with $\pi_i^3$ projecting off the $i$th factor, $i=1,2,3$. Here, the blow-up $M^3_z$ of $M^3$, called the \bfa{\em-triple
space}, has to be constructed in such a way that these lifts actually exist and satisfy appropriate conditions.
To check that $P,Q\in\Psi^*_\bfa$ implies $P\circ Q\in\Psi^*_\bfa$ one needs to understand the behavior of distributions on manifolds with corners under pull-back and push-forward. This is expressed in the pullback and pushforward theorems which we recall below. These theorems require the maps involved to be of a certain type (b-maps resp.\ b-fibrations). We then check that for our construction of $M^3_z$ the maps $\pi^3_{z,i}$ are b-fibrations. Finally we define the full calculus of \bfa-pseudodifferential operators, an extension of the small calculus, and apply this construction to prove mapping and composition theorems for operators in the full calculus, where again integrability conditions will appear.

Our construction of the \bfa-triple space is a fairly straightforward generalization of the construction of the $\phi$-triple space in \cite{MaMe}. However, checking that the triple to double space projections are b-fibrations is rather more involved in the $\bfa$-calculus, see Lemma \ref{lem:atripmaps}.

Before embarking on all of this, it is worth considering if it is really necessary.  In \cite{ALN}, Ammann, Lauter and Nistor construct an algebra of pseudodifferential operators on a broad family of
complete manifolds with a ``Lie structure at infinity," which includes $\bfa$-manifolds with $\bfa$-boundary
metrics.  They prove that this algebra contains the algebra of $\bfa$-differential operators and satisfies a composition theorem, a Sobolev mapping theorem and is closed under adjoints.  The proof of their
composition result uses Lie groupoids and avoids our messy triple-space construction altogether.
However, although this is an elegant paper, it does not achieve the goal of this work and its upcoming
sequel, which is to construct parametrices for the standard geometric operators on an $\bfa$-manifold.  It
is possible to construct inverses for these operators up to smoothing operators in the ALN-algebra,
but it is not generally possible to construct inverses up to compact operators.  This is due to the
proper support condition they impose, which requires that kernels be supported in a compact
region around the diagonal in the double space.  The kernels of parametrices of the fully-elliptic
$\bfa$-operators considered in this paper will decay away from the intersection of the
lifted diagonal with $\ff_z$ in the blown-up double space (see Figure 1), but will not be
compactly supported on $\ff_z$.
So for instance, the resolvent family of the Laplacian on an $\bfa$-manifold with an $\bfa$-boundary
metric will not be contained in the Ammann, Lauter, Nistor algebra, whereas it is contained in the small
$\bfa$-calculus constructed here, as proved in Theorem \ref{th:resolv}.

Thus, to construct parametrices for fully elliptic operators, it is necessary to consider behavior
near the intersection of
$\ff_z$ and $\ff_{zy}$ in the double space, and for composition, it will be necessary to consider behavior
at the intersections of several faces in the triple space.  Again, there are faces and intersections
in the double and triple spaces that do not come into play when restricting consideration to
fully elliptic operators, and it might be possible to construct somewhat simpler spaces and still
study these.  For instance, in \cite{V}, Vasy successfully used a simplified blow-up construction
to study three-body scattering.  However, when we go on to construct parametrices for
``split elliptic \bfa-operators,"  which, for instance, include the Laplacian and Dirac operators on
$\mathbb{Q}$-rank one locally symmetric spaces, we will need to consider behavior at all of
the boundary faces.  Thus, complicated though it is, we do really need the full double and triple
space constructions and theorems we prove in this section.

\subsection{Pullback and pushforward theorems}

Before we can state the necessary pullback and pushforward theorems, we need
several definitions from \cite{Me-mwc} and \cite{EMM}.

\begin{definition}
Two p-submanifolds, $Y$ and $W$ of a manifold with corners $X$ are said to intersect
{\em cleanly} if their intersection is again a p-submanifold and if at each point $p \in Y \cap W$,
$$
T_pY \cap T_pW = T_p(Y\cap W).
$$
They intersect {\em transversally} if, as usual, for all $p \in Y \cap W$
$$
T_pY + T_pW = T_p X.
$$
\end{definition}

\begin{definition}
\label{def:b-map}
A map $f:X \rightarrow Z$ between manifolds with corners is called a {\em b-map} if
it is of product type near the boundaries of $X$ and $Z$.  That is, for any local identification
of $X$ with $\RR^k_+ \times \RR^{n-k}$ and of $Z$ with
$\RR^{k^\prime}_+ \times \RR^{n^\prime-k^\prime}$, $f$ is given by $f=(f_1, \ldots, f_{n^\prime})$
where
$$
f_i (x_1, \ldots, x_k, x_{k+1}, \ldots, x_n) = a_i(x) \Pi_{j=1}^{k} x_j^{\alpha_{i,j}}
$$
for some smooth $a_i$ nonvanishing at $0$ and for non-negative integers $\alpha_{i,j}$.
\end{definition}
It follows from the connectedness of boundary hypersurfaces (bhs) that, given a b-map $f:X\to Z$, one may associate an integer $\alpha_{GH}$ to each bhs $G$ of $X$ and each bhs $H$ of $Z$, such that the $\alpha_{i,j}$ above equal $\alpha_{GH}$ if $G$ is locally given by $x_i=0$ and $H$ by $x_j'=0$. The array of numbers $\alpha_{GH}$ is called the {\em exponent matrix } of $f$.

\begin{definition}
A b-map $f:X \rightarrow Z$ between manifolds with corners is called {\em transversal}
to an interior p-submanifold $Y \subset Z$ if the associated map on the
b-tangent bundles, ${}^{\rm b}\!f_*:{}^{\rm b}T_p X \rightarrow {}^{\rm b}T_{f(p)} Z$, satisfies
$$
{}^{\rm b}\!f_*(^{\rm b}T_p X) + {}^{\rm b}T_{f(p)}Y = {}^{\rm b}T_{f(x)}Z
$$
for all $p \in f^*(Y)$.
\end{definition}

A b-map $f:X\rightarrow Z$ induces a map $\fbar$ from the faces (assumed connected) of $X$ to the faces of $Z$ defined by the requirement that for each face $F$ of $X$, interior points of $F$ are mapped by $f$ to interior points of $\fbar(F)$.

\begin{definition}
\label{def:b-fibration}
A b-map $f:X\to Z$ is called a {\em b-fibration} if for each boundary hypersurface $H$ of $X$,
$codim \overline{f}(H) \leq 1$ and
for each face $F$ of $X$,
the map $f$ restricted to the interior of $F$ is a fibration over the interior of $\overline{f}(F)$.
\end{definition}
Index families of polyhomogeneous functions transform under pullback as follows.

\begin{definition}
\label{def:pullbackindex}
Let $f:X\rightarrow Z$ be a b-map between manifolds with corners.  Let $\mathcal{E}$ be
an index family on $Z$. Define the index family $f^{\#}(\calE)$ on $X$ as follows. If $G$ is a bhs of $X$ then
$$
f^{\#}(\mathcal{E})(G) =
 \left\{
     (q+ \sum_H e_f(G,H) z_H, \sum_H p_H) :  \right.
     $$
     $$ \left.
     q \in \mathbb{N}_0 \
         \mbox{ and for each bhs } H \subset Z,
             \left\{
                 \begin{array}{ll}
                 (z_H, p_H) \in \mathcal{E}(H) & \mbox{ if } e_f(G,H) \neq 0 \\
                 (z_H, p_H) = (0,0)                    & \mbox{ if } e_f(G,H) = 0.
                 \end{array}
                 \right.
 \right\}.
$$
\end{definition}

Now we may state the pullback theorem (Proposition 6.6.1 in \cite{Me-mwc},
Theorem 4 in \cite{GH2}).

\begin{theorem}
\label{thm:genpullback}
Let $f:X \rightarrow Z$ be a b-map of compact manifolds with corners, and let $Y \subset Z$ be
an interior p-submanifold to which $f$ is transversal.  Then for any index family $\mathcal{E}$ and any
$m \in \RR$, there is a natural continuous pull-back map
$$
f^*: I^{m, \mathcal{E}}(Z,Y) \rightarrow
I^{m-\frac{1}{4}(\dim X - \dim Z), f^{\#}\mathcal{E}}(X, f^{-1}(Y)).
$$
\end{theorem}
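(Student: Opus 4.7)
The plan is to reduce the theorem to the combination of two well-known pullback principles under the decoupling afforded by transversality: the transversal pullback of conormal distributions, which accounts for the order shift $-\frac14(\dim X-\dim Z)$, and the b-map pullback of polyhomogeneous functions, which produces the new index family $f^{\#}\calE$ through a combinatorial manipulation of the b-map exponents. A partition of unity reduces everything to local model calculations, and the key observation is that transversality provides local coordinates in which the conormal data and the boundary polyhomogeneous data live in disjoint variable blocks, so the two ingredient theorems can be applied independently and then recombined.

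First I would choose good local coordinates. Transversality of the b-map $f$ to the interior p-submanifold $Y$ implies, via the b-differential version of the implicit function theorem, that $f^{-1}(Y)$ is an interior p-submanifold of $X$. Near $p\in f^{-1}(Y)\cap\partial X$, select coordinates $(x_1,\dots,x_k,y,z)$ on $X$ with $f^{-1}(Y)=\{z=0\}$ and $(x'_1,\dots,x'_{k'},y',z')$ on $Z$ with $Y=\{z'=0\}$, such that the b-map structure of Definition \ref{def:b-map} gives
$$f^*(x'_j)=a_j(x,y,z)\prod_{i=1}^k x_i^{\alpha_{ij}},\qquad a_j>0\text{ smooth},$$
while the Jacobian of $z\mapsto z'\circ f$ is surjective at $p$. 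In such coordinates the $z$-block and the $x$-block are cleanly separated. Representing $u\in I^{m,\calE}(Z,Y)$ by the oscillatory integral of Definition \ref{def:conorm} in the $z'$-variables and pulling back yields
$$f^*u=\int e^{i(f^*z')\cdot\zeta}\,\sigma(f(y),\zeta)\,d\zeta,$$
and transversality allows a smooth linear change of $\zeta$ so that the phase becomes $z\cdot\zeta$ and the integral is in standard conormal form at $\{z=0\}=f^{-1}(Y)$; the symbol class is preserved and the degree shift $-\frac14(\dim X-\dim Z)$ arises from the codimension convention in Definition \ref{def:conorm} combined with $\codim_X f^{-1}(Y)=\codim_Z Y$.

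For the polyhomogeneous part, at boundary hypersurfaces $G$ of $X$ and $H$ of $Z$ with $e_f(G,H)=\alpha$, substitute the b-map formula into a typical term $(x'_H)^{z_H}(\log x'_H)^{p_H}$ of the expansion of $u$ at $H$, obtaining
$$a^{z_H}\Bigl(\prod_i x_i^{\alpha_i z_H}\Bigr)\Bigl(\log a+\sum_i\alpha_i\log x_i\Bigr)^{p_H}.$$
Expanding $a^{z_H}$ as a Taylor series in $x_G$ (permitted by positivity and smoothness of $a$) produces the integer offset $q\in\NN_0$ appearing in Definition \ref{def:pullbackindex}, while the binomial expansion of the log polynomial accounts for all log powers $\sum_H p_H$ allowed by the definition. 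Taking the product over all $H$ mapping into $G$ and summing gives an expansion at $G$ with exactly the exponents $(q+\sum_H e_f(G,H)z_H,\sum_H p_H)$ of $f^{\#}\calE(G)$. The main obstacle is to assemble the two ingredients uniformly into a single element of $I^{m',f^{\#}\calE}(X,f^{-1}(Y))$ in the sense of Definition \ref{def:polyhom+conorm}: one needs the asymptotic coefficients at each boundary hypersurface of $X$ to remain conormal along $f^{-1}(Y)\cap G$ of the correct degree, and the $\CN$ remainders to pull back to remainders of compatible type. Thanks to the coordinate decoupling above, this reduces to term-by-term application of the transversal conormal pullback and the scalar b-map pullback, together with a Borel-type summation of the $f^{\#}\calE$-expansion; continuity of the resulting map $f^*$ is then inherited from continuity of the two ingredient operations.
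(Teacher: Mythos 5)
This theorem is not proved in the paper at all: it is quoted verbatim from the literature (Proposition 6.6.1 of \cite{Me-mwc}, Theorem 4 of \cite{GH2}), so there is no in-paper argument to compare against. Your sketch reconstructs what is essentially the standard proof given in those references: transversality (in the b-sense) makes $f^{-1}(Y)$ an interior p-submanifold of the same codimension, writing $z'\circ f = A\,z$ with $A$ invertible near $f^{-1}(Y)$ lets you absorb the phase change into a fibrewise linear change of $\zeta$, and the order shift $-\frac{1}{4}(\dim X-\dim Z)$ is exactly the bookkeeping of the convention $m'=m+\frac14\dim-\frac12\codim$ with codimension preserved; the boundary behaviour is Melrose's pullback theorem for polyhomogeneous conormal functions under b-maps, where substituting $f^*x'_H=a\prod_i x_i^{e_f(G,H)}$ and Taylor-expanding the positive factor $a$ produces precisely the index family $f^{\#}\calE$. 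The one place your write-up is thin is the ``assembly'' step you flag yourself: Definition \ref{def:polyhom+conorm} is recursive in the codimension of boundary faces, so a complete proof must argue by induction over faces that the expansion coefficients at each boundary hypersurface $G$ of $X$ are again conormal of the shifted order along $G\cap f^{-1}(Y)$, with symbol estimates uniform up to the corners, and that the $\CN$ remainders pull back compatibly; this is where the cited proofs spend their effort, but your decoupled-coordinate setup is the correct framework for it, so the gap is one of detail rather than of idea.
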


The second theorem we use is a combination of a product theorem for distributions and
a pushforward theorem.  Again, we start by defining the pushforward of an index family.

\begin{definition}
\label{def:pushforwardindex}
Let $f:X\rightarrow Z$ be a b-fibration between manifolds with corners.  Let $\mathcal{E}$ be
an index family on $X$. Define an index family $f_{\#}(\calE)$ by setting, for $H\subset Z$ a bhs,
$$
f_{\#}(\mathcal{E})(H)= \underset{G}{\overline{\bigcup}}
\left\{ (\frac{z}{e_f(G,H)}, p) : (z,p) \in \mathcal{E}(G) \right\},
$$
where the union is over bhs $G$ of $X$ satisfying $e_f(G,H)>0$ and the symbol $\overline{\cup}$ denotes the {\em extended union} of these index
sets, defined inductively from
$$
\overline{\cup}(E,F) := E \cup F \cup \{(z, p^\prime + p^{\prime\prime} +1): (z,p^\prime) \in E, \:
(z, p^{\prime\prime}) \in F \}.
$$
\end{definition}

Now we can state the pushforward theorem (Proposition B7.20 in \cite{EMM}, Theorem 6 in \cite{GH2}):

\begin{theorem}
\label{thm:genpushforward}
Let $f:X \rightarrow Z$ be a b-fibration and $Y$ and $W$ be interior p-submanifolds
satisfying the following conditions:
\begin{itemize}
\item $Y$ and $W$ intersect transversally
\item $f$ restricted to each of $Y$ and $W$ is a diffeomorphism onto $Z$
\end{itemize}
Consider now $u \in I^{m, \mathcal{E}}(X, Y;\Omega_b^{1/2})$ and $v \in I^{m^\prime, \mathcal{F}}(X,W;\Omega_b^{1/2})$ for some index families $\mathcal{E}$ and $\mathcal{F}$ on $X$.
If for all boundary hypersurfaces $H$ of $ X$ with $\fbar(H)=Z$ we
have  $\inf(\mathcal{E}(H) + \mathcal{F}(H))>0$,
then the pushforward
by $f$ of $uv$, denoted $f_*(uv)$, is in $I^{m'', \mathcal{G}}(Z,Y\cap W;\Omega_b)$, where
$\mathcal{G} = f_{\#} (\mathcal{E} + \mathcal{F})$ and $m''= m+m' + \frac{1}{4} \dim(Z)$.
\end{theorem}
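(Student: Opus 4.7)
The theorem combines two essentially independent results: a product theorem for conormal-polyhomogeneous distributions at transversally intersecting p-submanifolds, and a pushforward theorem under a b-fibration. I will carry these out in sequence and then reconcile orders and index sets.

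\textbf{Step 1 (product).} Since $Y$ and $W$ intersect transversally, the sum $T_pY+T_pW=T_pX$ holds at every $p\in Y\cap W$, so the conormal bundles $N^*Y$ and $N^*W$ have trivial intersection and $N^*(Y\cap W) = N^*Y\oplus N^*W$. Working in local coordinates in which $Y=\{y'=0\}$ and $W=\{y''=0\}$ with $y'$ and $y''$ independent collections of fibre variables (transversal to the boundary hypersurfaces), I would write
\[
u = \int e^{iy'\cdot\eta'}\,\sigma(y,\eta')\,d\eta',\qquad
v = \int e^{iy''\cdot\eta''}\,\tau(y,\eta'')\,d\eta'',
\]
with $\sigma,\tau$ classical symbols, multiplied by polyhomogeneous factors at the bhs. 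The product $uv$ is then the iterated oscillatory integral in $(\eta',\eta'')$ with amplitude $\sigma\cdot\tau$, which by transversality is a classical symbol on the combined conormal bundle $N^*(Y\cap W)$. This identifies $uv$ as a conormal distribution with respect to $Y\cap W$ with symbol order the sum of the symbol orders; recomputing the Hörmander-normalized conormal degree gives $\mu = m+m' + \tfrac14\dim X$ (using $\codim(Y\cap W)=\codim Y + \codim W$). The polyhomogeneous expansions at each bhs multiply coefficientwise, so $uv$ is polyhomogeneous with index family $\mathcal{E}+\mathcal{F}$, and hence
\[
uv \in I^{m+m'+\frac14\dim X,\; \mathcal{E}+\mathcal{F}}(X, Y\cap W;\Omega_b).
\]
The main technical point is to show that the joint asymptotic expansion of $uv$ at an intersection point of several bhs can be assembled from the expansions of $u$ and $v$; this is a standard Taylor-series + remainder estimate using Definition \ref{def:polyhom+conorm} and the locally defined product rule.

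\textbf{Step 2 (pushforward).} The hypothesis that $f|_Y$ and $f|_W$ are diffeomorphisms onto $Z$ means $Y$ and $W$ are graphs of smooth sections of $f$; it follows that $Y\cap W$ is the image of an agreement locus in $Z$ and $f|_{Y\cap W}$ is a diffeomorphism onto a p-submanifold $Y''\subset Z$. I would decompose $f_*(uv)$ into two pieces using a cutoff $\chi$ supported near $Y\cap W$ and $1-\chi$ supported away from it.

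For the piece $(1-\chi)uv$, there is no conormal singularity and I invoke the standard polyhomogeneous pushforward theorem under b-fibrations (as in \cite{Me-mwc}, also recalled in the discussion preceding Definition \ref{def:pushforwardindex}) to conclude it is polyhomogeneous on $Z$ with index family $f_\#(\mathcal{E}+\mathcal{F})$. The integrability hypothesis $\inf(\mathcal{E}(H)+\mathcal{F}(H))>0$ on bhs $H$ with $\overline f(H)=Z$ is exactly what guarantees absolute convergence of the fibre integrals defining $f_*$.

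For the piece $\chi\cdot uv$, I straighten $f$ to a projection in adapted local coordinates $(z,\zeta)$ on $X$ with $f(z,\zeta)=z$, where $Y\cap W$ becomes $\{\zeta=\phi(z)\}$ for a smooth section $\phi$. Writing the conormal part of $\chi uv$ as a partial Fourier transform in $\zeta-\phi(z)$ and pushing forward in $\zeta$ commutes with the $\zeta$-Fourier integral; this turns $f_*$ into an oscillatory integral in the fibre-conormal variables whose amplitude remains a classical symbol on $N^*(Y'')$. The polyhomogeneous coefficients at bhs are integrated in the fibre directions separately, producing the $\overline\cup$ extended union (with the logarithmic-power jumps coming from coincidences of exponents, as in the proof of the one-dimensional integration lemma in \cite{Me-mwc}).

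\textbf{Step 3 (degree and index accounting).} Combining the two pieces, the result is of the form
$I^{m'',\,f_\#(\mathcal{E}+\mathcal{F})}(Z,Y'';\Omega_b)$. The conormal-degree arithmetic is $m'' = (m+m'+\tfrac14\dim X) - \tfrac14(\dim X - \dim Z) + $ a shift accounting for $\codim Y'' = \dim X-\dim Z$, which simplifies to $m'' = m+m'+\tfrac14\dim Z$ as claimed.

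\textbf{Anticipated main obstacle.} The real work is Step~2 near the interior singularity: one must track simultaneously (i) classical-symbol behaviour of the conormal amplitude under fibre integration, (ii) polyhomogeneous expansions at boundary hypersurfaces whose exponents may collide (producing the extended union), and (iii) the b-map exponent matrix of $f$ entering the $e_f(G,H)^{-1}$ rescalings of Definition \ref{def:pushforwardindex}. The cleanest way to manage all three at once is to reduce to the model situation of $f=\mathrm{pr}_1:\RR_+^k\times\RR_+^\ell\times\RR^{n-k-\ell}\to\RR_+^k\times\RR^{n-k-\ell}$ by the usual partition-of-unity and coordinate-straightening, and to invoke the one-dimensional Mellin-type integration lemma iteratively to pick up the $\overline\cup$ combinatorics.
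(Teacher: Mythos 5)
A point of comparison first: the paper does not prove this theorem at all — it is recalled from the literature, with references to Proposition B7.20 of \cite{EMM} and Theorem 6 of \cite{GH2}. So your plan can only be measured against the argument in those sources, and in outline it is the same one: form the product, push forward under the b-fibration, transform the index families by $f_\#$ with extended unions, use the integrability hypothesis at hypersurfaces $H$ with $\fbar(H)=Z$ to make the fibre integrals converge, and fix the conormal order by the normalization that reproduces $m+m'$ for composition of pseudodifferential operators. The boundary bookkeeping and the final order $m''=m+m'+\frac14\dim Z$ in your Step 3 are correct.

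There is, however, a genuine flaw in Step 1 as written. The product of two classical conormal distributions on transversally intersecting interior p-submanifolds is in general \emph{not} a classical conormal distribution with respect to $Y\cap W$: in the model $X=\RR^2$, $Y=\{x_1=0\}$, $W=\{x_2=0\}$, the product $\delta(x_1)\cdot |x_2|^{-1/2}$ has Fourier transform $c\,|\xi_2|^{-1/2}$, which is not smooth on $\{\xi_2=0,\ \xi_1\neq 0\}$ and hence is not a classical symbol on $N^*(Y\cap W)$; your claim that $\sigma(y,\eta')\tau(y,\eta'')$ is "a classical symbol on the combined conormal bundle" fails for the same reason. The correct intermediate object is a distribution conormal to the intersecting \emph{pair} $(Y,W)$, with product-type (bi-homogeneous) symbols, which is how \cite{EMM} formulates the product step. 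One lands in the classical class only after the pushforward: since $f$ restricted to each of $Y$ and $W$ is a diffeomorphism onto $Z$, the fibres of $f$ meet each of $Y$ and $W$ transversally, so the fibre integration smooths out the singularities along $Y\setminus W$ and $W\setminus Y$, and near $Y\cap W$ the integration over the fibre variables converts the product-type amplitude into a classical symbol on $N^*f(Y\cap W)$ by the same asymptotic-expansion argument as in the composition formula for pseudodifferential operators. That conversion is precisely the sentence in your Step 2 ("the amplitude remains a classical symbol on $N^*(Y'')$") which you assert rather than prove; it is the analytic core of the interior part of the theorem and cannot be obtained by simply commuting $f_*$ with the fibre Fourier transform. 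With Step 1 restated in the product-type class and this expansion argument supplied (or delegated to \cite{EMM}/\cite{Me-mwc}), your plan matches the cited proofs.
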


Here by $\mathcal{E} + \mathcal{F}$ we mean that for each boundary hypersurface $H \subset X$,
we take the indicial set $\mathcal{E}(H) + \mathcal{F}(H)$ as a subset of
$\mathbb{C} \times \NN_0$. Note that for the stated simple transformation rule of index sets and integrability condition to be correct it is important that everything is expressed as b-density. Here, a b-density on a manifold with corners is locally of the form $| \frac{dx_1}{x_1}\cdots \frac{dx_k}{x_k}\,dy_1\cdots dy_{n-k} |$.

In order to prove that the various maps and submanifolds we will use in our composition and
mapping results satisfy the conditions of these theorems, we will use the following lemma
(proof of Proposition 6 in \cite{MaMe}):

\begin{lemma}
\label{lem:b-map}
The composition of a b-fibration with the blowdown map for a blowup of a boundary p-submanifold is again a b-fibration.
\end{lemma}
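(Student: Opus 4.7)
The plan is to verify the three defining properties of a b-fibration for $f \circ \beta \colon [X;Y] \to Z$, where $\beta \colon [X;Y] \to X$ is the blowdown of the boundary p-submanifold $Y$ and $f \colon X \to Z$ is the given b-fibration. First I would show that $f \circ \beta$ is a b-map. This is a local condition in boundary-adapted coordinates, and by the explicit formulas \eqref{eq:proj coord quasihom ff}--\eqref{eq:proj coord quasihom bd} the components of $\beta$ are smooth nonvanishing functions times monomials in the boundary defining functions of $[X;Y]$. Composing these with the monomial-times-smooth expressions for $f$ from Definition \ref{def:b-map} yields products of the same form, so $f \circ \beta$ is a b-map whose exponent matrix is the product of those of $f$ and $\beta$.

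Next I would classify the boundary hypersurfaces of $[X;Y]$, namely the front face $\mathrm{ff}$ together with the lifts $\tilde H$ of the boundary hypersurfaces $H$ of $X$, and verify the codimension-at-most-one condition on each. For a lifted hypersurface $\tilde H$, the blowdown $\beta$ restricts to a diffeomorphism between a dense open subset of $\mathrm{int}(\tilde H)$ and one of $\mathrm{int}(H)$, so $\overline{f\circ\beta}(\tilde H) = \overline{f}(H)$, which has codimension at most one by the b-fibration hypothesis on $f$. On the front face, $\beta$ sends $\mathrm{int}(\mathrm{ff})$ onto $\mathrm{int}(Y)$, which lies in the interior of the smallest face $F(Y) \subset X$ containing $Y$; in the concrete situations where this lemma is applied, $Y$ lies in a single boundary hypersurface, so that $F(Y)$ is itself a boundary hypersurface and $\overline{f\circ\beta}(\mathrm{ff}) = \overline{f}(F(Y))$ again has codimension at most one.

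Finally I would check the fibration property on the interior of every face of $[X;Y]$. Faces disjoint from $\mathrm{ff}$ lift diffeomorphically under $\beta$, so the fibration condition transfers directly from $f$. Faces meeting $\mathrm{ff}$ are fibred under $\beta$ over faces of $X$, with fibres given by interiors of weighted spherical octants (visible in the projective coordinates above); composing with the fibration of $f$ over the image face then yields the required fibration structure for $f \circ \beta$. The step I expect to be the main obstacle is the codimension verification at the front face, since that is the only place where the blowup actually modifies the face structure and must interact cleanly with the b-normal form of $f$; this ultimately reduces to the observation that the boundary defining function of $\mathrm{ff}$ appears in $\beta^* f^*$ only through those boundary defining functions of $X$ whose zero sets contain $F(Y)$, so no new column with two nonzero exponents is introduced.
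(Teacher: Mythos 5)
Your overall skeleton (check the b-map property, then the codimension condition face by face, then the fibration condition on face interiors, treating lifted hypersurfaces and the front face separately) is the natural one; note that the paper itself gives no argument for this lemma but defers to the proof of Proposition 6 in \cite{MaMe}. However, the way you dispose of what you yourself identify as the main obstacle does not work. You claim that ``in the concrete situations where this lemma is applied, $Y$ lies in a single boundary hypersurface, so that $F(Y)$ is itself a boundary hypersurface.'' That is false for the paper's applications: the very first use composes $\pi_{l/r}\colon M^2\to M$ with the blow-down of the corner $\partial M\times\partial M$, which lies in both $\lf$ and $\rf$, and in the triple space the centres include (preimages of) $(\partial M)^3$, the edges $E_i$, and various submanifolds of corners. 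What saves these cases is not that $F(Y)$ has codimension one, but that the particular b-fibrations used (projections off a factor of $M$ and their composites with earlier blow-downs) map the smallest face containing the centre onto a face of codimension at most one; this is an additional hypothesis relating $Y$ to $f$ that must be stated and verified, not a consequence of $f$ being a b-fibration. Indeed, as literally stated the lemma is false: take $X=Z=[0,1)^2$, $f=\mathrm{id}$ and $Y$ the corner point; then $f\circ\beta=\beta$ sends the interior of the front face into the codimension-two corner of $Z$.

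The same compatibility between $Y$ and $f$ is also needed, and missing, in your verification of the fibration condition on face interiors. For a boundary hypersurface $H\supset Y$, the interior of the lift $\tilde H$ is carried by $\beta$ diffeomorphically onto $\mathrm{int}(H)\setminus Y$ (it is not fibred with octant fibres), and the restriction of a fibration to the complement of a closed subset need not be a fibration; likewise on the interior of $\ff$ one needs $f$ restricted to $\mathrm{int}(Y)$ --- not to $\mathrm{int}(F(Y))$ --- to fibre over $\mathrm{int}(\overline f(F(Y)))$, which is again extra information about how $Y$ sits relative to $f$. A concrete failure: $X=[0,1)_x\times\RR_y\times\RR_w$, $Z=[0,1)\times\RR_y$, $f$ the projection (a b-fibration), $Y=\{x=0,\ y=w^2\}$, a boundary p-submanifold; then $f\circ\beta$ restricted to the interior of the front face has image contained in $\{x=0,\ y\ge 0\}$, so it is not a fibration over the interior of the hypersurface $\{x=0\}$ of $Z$, and $f\circ\beta$ is not a b-fibration. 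So a correct proof must exploit the specific structure of the centres occurring in this paper (corners, and fibre-diagonal--type submanifolds that are unions of fibres of the relevant projections), i.e.\ the hypotheses under which Mazzeo--Melrose actually carry out this step; your argument as written does not supply them.
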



and the lemma (Proposition 5.7.2 in \cite{Me-mwc}, Lemma 4 in \cite{GH2}):

\begin{lemma}
\label{lem:boundaryp}
If $Y$ and $V$ are cleanly intersecting p-submanifolds then the lift of $Y$ to $[X;V]$ is again a p-submanifold. If $Y_a,V_b$ are boundary p-submanifolds defined to higher order then this remains true for the blowup $[X;V]_b$ if the lift of $Y_a$ is defined as at the end of Subsection \ref{subsec:qhom}.
\end{lemma}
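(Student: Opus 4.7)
The strategy is to reduce both statements to an explicit computation in local coordinates adapted to the clean intersection of $Y$ and $V$, and then to carry out the blowup in polar (standard case) or weighted-polar / projective (quasihomogeneous case) coordinates.

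For the first statement, clean intersection of p-submanifolds implies that near any point $p\in Y\cap V$ there exist local coordinates $x_1\geq 0,\dots,x_k\geq 0, y_1,\dots,y_{n-k}$ in which $V$ is cut out by the vanishing of some subset $S_V$ of these coordinates and $Y$ by the vanishing of some subset $S_Y$. The standard blowup $\beta:[X;V]\to X$ is, over a neighborhood of $p$, a product of a polar-coordinate chart in the variables indexed by $S_V$ with the remaining coordinates. There are then two cases. If $S_V\subset S_Y$, i.e.\ $Y\subset V$, then the lift of $Y$ equals $\beta^{-1}(Y)$, which in projective coordinates is seen to be cut out by the vanishing of the coordinate functions corresponding to $S_Y\setminus S_V$; this is a p-submanifold containing (part of) the front face. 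Otherwise one takes the closure of $\beta^{-1}(Y\setminus V)$; in any projective chart built from a direction $x_i$ with $i\in S_V\setminus S_Y$ (which exists by hypothesis), the lift is cut out by the vanishing of the projective analogues of the functions in $S_Y$, again giving a p-submanifold.

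For the quasihomogeneous part, choose representative interior extensions $\Ytilde$ of $Y_a$ and $\Vtilde$ of $V_b$ with $\Ytilde\subset\Vtilde$. Pick coordinates adapted simultaneously to $\Vtilde$, $\Ytilde$, and the face $F=F(V)$, so that $\Vtilde=\{y_1=\cdots=y_m=0\}$, $F=\{x_1=\cdots=x_r=0\}$, and $\Ytilde$ is cut out by the vanishing of a subset of these. The blowup $[X;V_b]_b$ is described locally by \eqref{eq:def weighted blow-up local}, with projective coordinates as in \eqref{eq:proj coord quasihom ff} and \eqref{eq:proj coord quasihom bd}. In these coordinates, by the same case analysis as above, the closure of $\beta_b^{-1}(\Ytilde\setminus V)$ is cut out by the vanishing of coordinate functions. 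To see that the result is a boundary submanifold to order $a-b$, one uses \eqref{eq:pull-back generator}, which says $\beta_b^*\calI(V_b)=\langle r_{\ff}^b\rangle$, together with Definition \ref{def:ideal order a}, to compute that $\beta_b^*\calI(Y_a)$ coincides with $\calI(\beta_b^{-1}(\Ytilde))+\calI(F')^{a-b}$, where $F'$ is the appropriate face of the blown-up space.

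The main obstacle is the last verification: showing that the lift, as a boundary submanifold to order $a-b$, is independent of the representative $\Ytilde$. Concretely, if $\Ytilde'$ is another representative, so $\Ytilde'$ and $\Ytilde$ agree to order $a$ in the sense of \eqref{eq:Ytilde' local coord}, one must check that their lifts $\cl(\beta_b^{-1}(\Ytilde\setminus V))$ and $\cl(\beta_b^{-1}(\Ytilde'\setminus V))$ agree to order $a-b$ in the blown-up space. This reduces, via \eqref{eq:vf pullback} and \eqref{eq:pull-back generator}, to observing that terms of the form $(x')^{\alpha}F_\alpha(x',y'')$ with $|\alpha|=a$ pull back under $\beta_b$ to expressions divisible by $r_{\ff}^{a}$, and hence lie in $\calI(F')^{a-b}$ after dividing by the $r_{\ff}^{b}$ factor appearing in the pulled-back defining equations for $y_j$. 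Once this is done, the lift is well-defined as a boundary p-submanifold to the claimed order, completing the proof.
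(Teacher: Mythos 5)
There is a genuine gap, and it sits exactly where the content of the lemma lies. (Note that the paper itself gives no proof of Lemma \ref{lem:boundaryp}; it is quoted from \cite{Me-mwc}, Prop.~5.7.2, and \cite{GH2}, so the comparison is with your argument on its own terms.) Your first step asserts that clean intersection, in the sense defined in this paper ($Y\cap V$ is a p-submanifold and $T_pY\cap T_pV=T_p(Y\cap V)$), already yields adapted local coordinates in which $Y$ and $V$ are \emph{simultaneously} coordinate subspaces. For manifolds with corners this does not follow from that definition. Take $X=[0,\infty)_x\times\RR^3_{y_1,y_2,y_3}$, $V=\{y_1=y_2=0\}$, $Y=\{y_1-x=0,\ y_2=0\}$: both are p-submanifolds, $Y\cap V=\{x=y_1=y_2=0\}$ is a p-submanifold, and the tangent condition holds, so the pair is clean in the stated sense. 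But a joint normal form is impossible: neither $Y$ nor $V$ lies in $\{x=0\}$, so each would have to be cut out by interior coordinates only, whence $Y\cap V$ would be a coordinate subspace not contained in the boundary, contradicting $Y\cap V\subset\{x=0\}$. Moreover, for this pair the conclusion itself fails: writing $r$ for a defining function of the front face of $[X;V]$ and $\theta$ for the angular variable, the lift $\cl(\beta^{-1}(Y\setminus V))$ is locally $\{\theta=0,\ r=x\}$, the diagonal of the corner formed by the front face and the lift of $\{x=0\}$, which is not a p-submanifold. So the joint normal form cannot be deduced from the tangent-space condition; it must either be built into the meaning of ``cleanly intersecting'' (which is how the cited treatments, and all applications of the lemma in this paper, effectively use it, since there explicit product coordinates are available) or established by an argument using more than what you invoke. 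As written, your proof assumes the crux.

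Granting the simultaneous normal form, the rest is essentially fine: the case analysis $Y\subset V$ versus $Y=\cl(Y\setminus V)$ in projective charts is the standard argument, and the quasihomogeneous bookkeeping via \eqref{eq:proj coord quasihom ff}, \eqref{eq:proj coord quasihom bd} and \eqref{eq:pull-back generator} is the right mechanism. But the second half is only an outline: to show the lift of $Y_a$ is well defined to order $a-b$ you must take two representatives agreeing as in \eqref{eq:Ytilde' local coord}, pull back their defining functions in \emph{both} types of charts, divide out the factor $r_{\ff}^{\,b}$ coming from the blown-up $V$-directions, and exhibit the resulting discrepancy in the form required by Definition \ref{def:ideal order a} with exponent $a-b$ relative to the relevant face of $[X;V]_b$; the divisibility of $\beta_b^*\bigl((x')^\alpha F_\alpha\bigr)$, $|\alpha|=a$, by $r_{\ff}^{\,a}$ is correct but is only the first step of that verification, and the case distinction between the front face and the lifts of the old hypersurfaces is not carried out.
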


It is clear that a map with a global product structure is a b-fibration.
For instance the maps $\pi_{l/r}:M^2 \rightarrow M$ have global product structures, so they
are b-fibrations.  Then to get the maps $\pi_{\bfa,l/r}$ we compose with three blow-down maps
which by the second lemma above all come from blowups of boundary p-submanifolds.
Thus by the first lemma, the overall maps are b-fibrations.
We will show that all of the relevant maps for the composition theorems are also compositions
of blow-down maps for boundary p-submanifolds and maps with global product
structures, and thus also b-fibrations.

In order to apply the pullback theorem, we also need to check that these b-fibrations are
transversal to certain submanifolds in
their image.  But this will be  true automatically from the definitions of b-fibration and transversality.
Finally, to apply the pushforward theorem we need to know that certain submanifolds intersect
transversally.  For this, we will use the following lemma (Lemma 5 in \cite{GH2}):

\begin{lemma}
\label{lem:transversal}
If $Y$ and $W$ are transversally intersecting interior p-submanifolds  of $X$ and $V_a$ is a submanifold to order $a$ of $X$ having an interior extension contained in $Y\cap W$, then the lifts
of $Y$ and $W$ to $[X;V]_a$ also intersect transversally.
\end{lemma}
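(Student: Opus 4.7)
The plan is to proceed locally. I will fix an arbitrary point $q\in\beta_a^*Y\cap\beta_a^*W$ and split into two cases. The easy case is $q$ away from the front face $\ff=\beta_a^{-1}(V)$, where $\beta_a$ is a local diffeomorphism and transversality at $q$ follows immediately from the transversality of $Y,W$ at $\beta_a(q)\in X$. The substantial case is $q\in\ff$. For this I will work near $p=\beta_a(q)\in V$ in local coordinates $x_1,\dots,x_k\ge0$, $y_1,\dots,y_{n-k}$ realizing the extension $\tilde V$ as in Definition \ref{def:submfd finite order}, so that
$$V=\{x_1=\cdots=x_r=0,\ y_1=\cdots=y_m=0\},\qquad \tilde V=\{y_1=\cdots=y_m=0\}.$$

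The key step will be a simultaneous normal form: I intend to refine the coordinates (keeping $V$ and $\tilde V$ fixed) so that $Y=\{y_1=\cdots=y_\alpha=0\}$ and $W=\{y_{\alpha+1}=\cdots=y_{\alpha+\beta}=0\}$ with $\alpha+\beta\le m$. First I will observe that any local presentation of the interior p-submanifold $Y$ by vanishing of coordinates $y_{j_1},\dots,y_{j_\alpha}$ must have each $j_\ell\le m$, since otherwise $y_{j_\ell}$ would be a free parameter on $\tilde V\subset Y$. Then a standard implicit-function argument — writing each defining function of $Y$ (which vanishes on $\tilde V$) as a combination of $y_1,\dots,y_m$ with smooth coefficients and solving for the first $\alpha$ of them — yields a coordinate change touching only $y_1,\dots,y_m$ that straightens $Y$ without disturbing $V$ or $\tilde V$. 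Repeating the argument for $W$ within the remaining coordinates $y_{\alpha+1},\dots,y_m$ and invoking the transversality of $Y$ and $W$ (which forces the defining sets of coordinates to be disjoint) will complete the normal form.

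With the normal form in place, I will compute the lifts using projective coordinates on $[X;V_a]$ near $q$. In a chart of type \eqref{eq:proj coord quasihom ff} (say $i=1$), the relation $y_k=x_1^a V_k$ gives
$$\beta_a^*Y=\cl\bigl(\beta_a^{-1}(Y\setminus V)\bigr)=\{V_1=\cdots=V_\alpha=0\},\qquad \beta_a^*W=\{V_{\alpha+1}=\cdots=V_{\alpha+\beta}=0\},$$
two p-submanifolds cut out by disjoint subsets of the new coordinates and hence automatically transversal. For a chart of type \eqref{eq:proj coord quasihom bd} indexed by $p_0$: if $p_0\notin\{1,\dots,\alpha+\beta\}$, the substitution $y_k=y_{p_0}\eta_k$ yields $\beta_a^*Y=\{\eta_1=\cdots=\eta_\alpha=0\}$ and $\beta_a^*W=\{\eta_{\alpha+1}=\cdots=\eta_{\alpha+\beta}=0\}$ — disjoint slices again; and if $p_0$ lies in one of the defining sets, the corresponding lift is empty in that chart (the chart enforces $y_{p_0}>0$, but the lift lives in $\{y_{p_0}=0\}$), so transversality holds vacuously there. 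The hardest part will be the normal-form step, which depends crucially on $\tilde V\subset Y\cap W$ to trap the defining coordinates of $Y$ and $W$ inside $\{y_1,\dots,y_m\}$ and thereby guarantee that the required coordinate changes respect $V_a$'s boundary and extension structure.
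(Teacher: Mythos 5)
The paper does not actually prove this lemma: it is quoted as Lemma 5 of \cite{GH2} (a companion paper in preparation), so there is no in-text argument to compare yours against; your proposal has to stand on its own, and it does. The structure is the natural one and the substantive steps are correct: away from the front face transversality is immediate since $\beta_a$ is a diffeomorphism there; near a point of $V$ the simultaneous normal form works because the defining functions of $Y$ (and then of $W$) vanish on the chosen representative $\tilde V\subset Y\cap W$, hence by Hadamard's lemma lie in the ideal generated by $y_1,\dots,y_m$, so their differentials at the base point lie in $\Span\{dy_1,\dots,dy_m\}$; the implicit function theorem then straightens $Y$ by a change of the $y_1,\dots,y_m$ only, which fixes $V$ and $\tilde V$ exactly, and transversality of $Y,W$ gives the rank condition needed to straighten $W$ in the remaining $y$'s (and forces $\alpha+\beta\le m$). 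The chart computations in \eqref{eq:proj coord quasihom ff} and \eqref{eq:proj coord quasihom bd} are then correct, including the observation that a chart based at $y_{p_0}$ with $p_0$ in a defining set misses the corresponding lift (an open chart disjoint from $\beta_a^{-1}(Y\setminus V)$ is disjoint from its closure), so transversality is vacuous there. Two cosmetic points: your preliminary remark that any presentation of $Y$ by vanishing coordinates must use indices $\le m$ presumes a common chart and is not needed — the Hadamard/IFT step is what carries the argument — and for completeness one should also mention the sign-reflected charts with $y_{p_0}<0$ (and that one may take the representative $\tilde V$ of the order-$a$ class to be the extension contained in $Y\cap W$, since the blow-up depends only on the class); neither affects correctness.
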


\subsection{The \bfa-triple space}

Here we construct the triple space $M^3_z$ for the \bfa-calculus.
The construction of the \bfa-triple space is an orgy of notation, so bear with us. See Figure \ref{fig:triplespace}.
We start with the triple space $M^3$.  This space has three boundary hypersurfaces,
corresponding to the boundaries in each of the three factors.  Call these
$H_1 = \del M \times M^2$, $H_2 = M \times \del M \times M$ and
$H_3 = M^2 \times \del M $.
We also have three edges, $E_3 = H_1 \cap H_2$,
$E_2 = H_1 \cap H_3$ and $E_1 = H_2 \cap H_3.$
Finally, there is the `corner', $V=(\del M)^3$.

The interior fibre diagonals $\Delta_y,\,\Delta_z \subset M^2$ lift under the three projection maps to the double fibre diagonals $\Delta_{i,y} =(\pi_i^3)^{-1} (\Delta_y) \subset M^3$, and similarly we define the smaller fibre diagonals $\Delta_{i,z} =(\pi_i^3)^{-1} (\Delta_z) \subset M^3$. The intersection of the three double fibre diagonals $\Delta_{i,y}$ is the triple fibre diagonal $\Delta_y^3$,  and similarly $\Delta_z^3$ is the intersection of the $\Delta_{i,z}$.

We will want to trace these various submanifolds through the several blowups we
will do to $M^3$.  In any of the blowups where a given submanifold has not itself been
blown up, we will abuse notation and use the same notation for the original submanifold and for its lift under the blowup.

To construct the \bfa-triple space, we start by constructing the $x$-(or b-)triple space $M^3_x$.
That is obtained by first blowing up the vertex $V \subset M^3$ to obtain
$[M^3;V]$.  Call the resulting new front face $\mathcal{V}_x$.
Then blow up the (now disjoint) three axes to obtain
\begin{equation}
\label{eq:xtriplespace}
M^3_x = [[M^3;V] ; E_1; E_2; E_3].
\end{equation}
By Theorem \ref{th:commblowups} in the next subsection, the fact that the inverse images of the axes
$E_1, E_2$ and $E_3$ are disjoint in $[M^3;V]$ means it doesn't matter in which order
we blow them up.
Also recall again here that the $E_i$ in
this blowup really represent the closures in $[M^3;V]$ of the inverse images of the original
$E_i \setminus V$.  In the future, we will not point this out.
Call the faces resulting from these blowups $\mathcal{E}_{i,x}$.
Denote the overall blow down map for this step by $\beta^3_x:M^3_x \rightarrow M^3$.

The next step is to construct the $y$-triple space $M^3_y$, but using quasihomogeneous
rather than ordinary blowups. In the case $a_1=1$ this is the $\phi$-triple space from \cite{MaMe}. The first step is to blow up the intersection of
the $y$ triple diagonal with $\mathcal{V}_x$, that is, $\del \Delta^3_y$.  Call the new face
$\mathcal{V}_y$.  Now blow up the
three submanifolds that are the intersections with $\mathcal{V}_x$ of
the double fibre diagonals $\Delta_{i,y}$.  Call these new faces
$\mathcal{G}_{i, y}$.  Finally, blow up the intersections of the
double fibre diagonals with the submanifolds $\mathcal{E}_{i, x}$, whose inverse images
are disjoint in $[M^3_x;\del \Delta^3_y]$.
Call these resulting faces $\mathcal{E}_{i, y}$.  All of these
blowups will be quasihomogeneous blowups of degree $a_1$.  The result will be essentially
the $\phi$ triple space for the fibration $\phi_{2,1}$ constructed in \cite{MaMe},
and will be exactly that when $a_1=1$.  So overall from this step we have
\begin{equation}
\label{eq:ytriplespace}
M^3_y = [[[M^3_x; \del\Delta^3_y]_{a_1}; \Delta_{i,y} \cap \mathcal{V}_x ]_{a_1};
\Delta_{i,y} \cap \mathcal{E}_{i, x}]_{a_1}.
\end{equation}
Here to save notation we have written a blowup of $\Delta_{i,y} \cap\mathcal{V}_x$
to indicate a blowup of each of the three such (disjoint) manifolds for $i=1,2,3$, and so on.  We will
continue also to use this space saving notation.
Denote the overall blow down map for this step by $\beta^3_y:M^3_y \rightarrow M^3_x$.

Finally we construct the \bfa-(or $z$-)triple space $M^3_z$ by blowing up submanifolds from the $y$-
triple space.  Again, start by blowing up the intersection of the triple diagonal,
this time the $z$ triple diagonal, with the new front face, $\mathcal{V}_y$, that is $\del \Delta^3_z$.
Call the resulting face $\mathcal{V}_z$.  Now blow up the three now disjoint preimages
of the intersections
of the $z$ double diagonals with $\mathcal{V}_y$.  Call the resulting faces
$\mathcal{F}_{i,z}$.  Now blow up the intersections of the $z$ double diagonals
with the faces $\mathcal{G}_{i,y}$.  Call the resulting faces $\mathcal{G}_{i,z}$.
Finally, blow up the intersections of the $z$ double diagonals with the
faces $\mathcal{E}_{i, y}$, and call the resulting faces $\mathcal{E}_{i,z}$.
All of these blowups will be of degree $a_2$.
Then overall we have
\begin{equation}
\label{eq:ztriplespace}
M^3_z = [[[[M^3_y; \del {\Delta}^3_{z}]_{a_2};
{\Delta}_{i,z} \cap \mathcal{V}_y ]_{a_2};
{\Delta}_{i,z} \cap \mathcal{G}_{i, y}]_{a_2};
{\Delta}_{i,z} \cap \mathcal{E}_{i, y}]_{a_2}.
\end{equation}
Denote the overall blow down map for this step by $\beta^3_z:M^3_z \rightarrow M^3_y$.
This completes the construction of the \bfa-triple space, $M^3_z$. The total blow-down map will be denoted
$\beta^3_\bfa = \beta^3_x\circ\beta^3_y\circ\beta^3_z :M^3_z\to M^3$.

\begin{figure}
\begin{center}
\includegraphics[width=15cm]{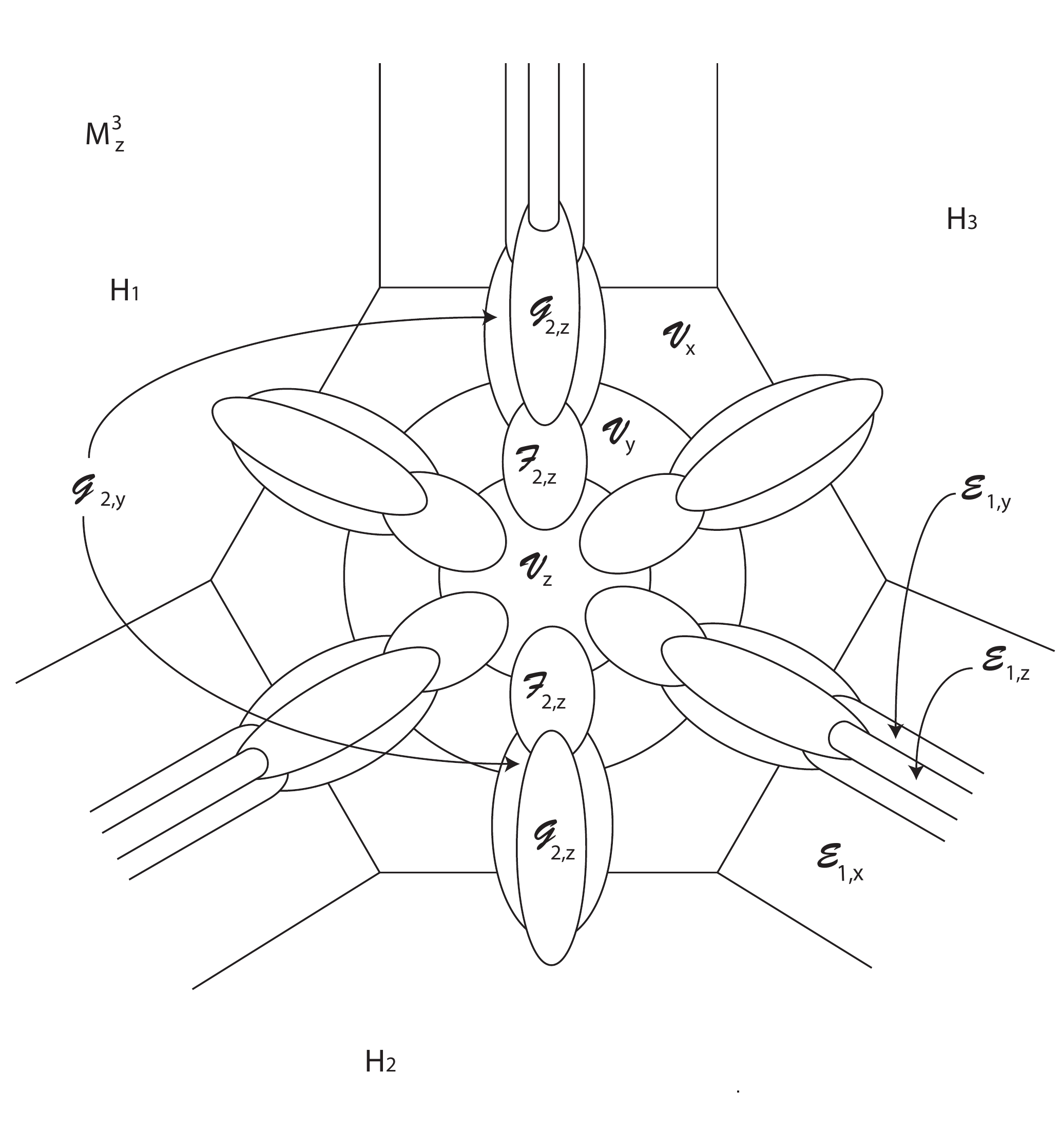}
\end{center}
\caption{The \bfa-triple space}
\label{fig:triplespace}
\end{figure}

\subsection{The \bfa-double and \bfa-triple space maps}
We need to check that the maps $\pi^3_{z,2}$ and
$\pi_{z,l}$ are  b-fibrations,
and $\pi^3_{z,1}$,  $\pi^3_{z,3}$and
$\pi_{z,r}$ are b-maps.  It is of course sufficient to show they are all b-fibrations using
Lemmas \ref{lem:b-map} and \ref{lem:boundaryp}.
After Lemma \ref{lem:boundaryp} we have already indicated how to do this in the case of the double space maps, so we
can state:

\begin{lemma}
\label{lem:bdouble}
The \bfa-double space maps,
$\pi_{z,l/r} := \beta_{\bfa}\circ \pi_{l/r}$
are b-fibrations.
\end{lemma}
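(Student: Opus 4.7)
The plan is to factor $\pi_{z,l/r}$ as $\pi_{l/r}\circ\beta_x\circ\beta_y\circ\beta_z$ and to iterate Lemma \ref{lem:b-map}, extended so as to apply to the quasi-homogeneous blow-downs $\beta_y$ and $\beta_z$. First, I would observe that $\pi_{l/r}:M^2\to M$ has global product structure and is therefore a b-fibration (the fibre over a point is $M$, both boundary hypersurfaces $\partial M\times M$ and $M\times\partial M$ project onto a face of $M$ of codimension $\le 1$, and the restriction to the interior of each face is manifestly a fibration).

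Next I would verify that the iterated composition with $\beta_x$, $\beta_y$, $\beta_z$ preserves the b-fibration property. For $\beta_x$ this is a direct application of Lemma \ref{lem:b-map}, since $\partial M\times\partial M$ is an ordinary (order one) boundary p-submanifold and its image under $\pi_{l/r}$ is $\partial M$ (codimension one in $M$), so the induced map on faces still sends each bhs to a face of codimension at most one. For $\beta_y$ and $\beta_z$ the centers $\partial\Delta_y\subset M^2_x$ and $\partial\Delta_z\subset M^2_y$ are boundary submanifolds to orders $a_1$ and $a_2$ respectively, and both project under $\pi_{l/r}\circ\beta_x$ (resp.\ $\pi_{l/r}\circ\beta_x\circ\beta_y$) into $\partial M$, which again has codimension one.

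The main point of the argument, and the one on which I would concentrate, is the extension of Lemma \ref{lem:b-map} to quasi-homogeneous blowups: I need to show that if $f:X\to Z$ is a b-fibration, $Y_a\subset X$ is a boundary submanifold to order $a$, and $\overline{f}(Y)\subset\partial Z$ has codimension at most one, then $f\circ\beta_a:[X;Y_a]\to Z$ is again a b-fibration. The key observation, already made in the text after \eqref{eq:pull-back generator}, is that $\beta_a$ is itself a b-map whose exponent matrix has entries only in $\{0,1\}$, because the factor $a$ in \eqref{eq:proj coord quasihom ff}--\eqref{eq:proj coord quasihom bd} occurs only in the pull-back of interior variables $y'$ and therefore does not enter the exponents of boundary defining functions. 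Using the local expressions \eqref{eq:proj coord quasihom ff} on charts of the form $\{x_i\neq 0\}$ and \eqref{eq:proj coord quasihom bd} on charts of the form $\{y_p\neq 0\}$, I would check on each such chart that the composition $f\circ\beta_a$ has a local form of the product type required in Definition \ref{def:b-map} and that every bhs of $[X;Y_a]$ is mapped into a face of $Z$ of codimension at most one (the new front face $\ff$ maps to $\overline{f}(Y)$, of codimension $\le 1$ by hypothesis; the strict transform of any old bhs $H$ of $X$ maps to $\overline{f}(H)$, unchanged in codimension). The restriction of $f\circ\beta_a$ to the interior of each face being a fibration then follows from the fact that the restriction of $\beta_a$ to the interior of any face is a smooth submersion onto its image together with the corresponding property of $f$.

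With this extension in hand, three successive applications—to $\beta_x$, $\beta_y$, $\beta_z$—yield that $\pi_{z,l/r}$ is a b-fibration. I expect the only nontrivial technical step to be the verification of the quasi-homogeneous version of Lemma \ref{lem:b-map}; a more leisurely treatment of this point is given in \cite{GH2}, but all the facts needed are already encoded in the projective coordinate formulas recalled in Subsection \ref{subsec:qhom}.
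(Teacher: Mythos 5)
Your proposal is correct and takes essentially the same route as the paper: $\pi_{l/r}$ is a b-fibration because of its global product structure, and composing with the three blow-down maps of (lifts of) boundary p-submanifolds preserves this property by Lemma \ref{lem:b-map} together with Lemma \ref{lem:boundaryp}. The extra verification you devote to the quasi-homogeneous blow-downs is precisely the point the paper handles implicitly via the observation after \eqref{eq:pull-back generator} that $\beta_a$ is a b-map whose exponents are all $0$ or $1$, with a fuller treatment deferred to \cite{GH2}.
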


Above, we constructed the \bfa-triple space as the result of a sequence of blowups
of the triple space $M^3$ which shows that it is symmetrical in the three
sets of variables.  Here we will show that it is also obtained by a sequence
of blowups of $M\times M^2_z $, which will imply that the relevant maps are b-fibrations.

In order to do this we first need some rules for commuting two blowups.  These
are contained in the following theorem, (Propositions 5.8.1 and 5.8.2 in \cite{Me-mwc},
Theorem 1 in \cite{GH2}):
\begin{theorem}
\label{th:commblowups}
Suppose that $Y$ and $W$, $A$ and $B$ are p-submanifolds of a mwc $X$, with given extensions to the interior (so blow-ups to any orders are defined), and let $a,b\in\NN$.  Then
blowups can be commuted according to the following rules:
\begin{enumerate}
\item If $Y \cap W= \emptyset$ then $[[X;Y]_a;W]_b = [[X;W]_b;Y]_a$.
\item If $A \subset B \subset X$ and $A$ is a p-submanifold of $B$, then
$[[X;A]_a;\tilde B]_a = [[X;B]_a;\tilde A]_a$, where
$\tilde{A}$ and $\tilde{B}$ denote the lifts of $A$ and $B$ under the $B$ and $A$
blowdown maps, respectively.
\item If $Y, W$ intersect cleanly in $A$, then
$$[[[X;Y]_a;\tilde{A}]_b; \tilde{\tilde{W}}]_b = [[[X;W]_b;\tilde{A}^\prime]_a;\tilde{\tilde{Y}}]_a,
$$
where $\tilde{A}$ and $\tilde{A}^\prime$ are the lifts of $A$ under the $Y$-blowdown
and $W$-blowdown maps, respectively, etc.
If in addition $a=b$, then we have further that both of these
$$= [[[X;A]_a;\tilde{Y}^\prime]_a;\tilde{\tilde{W}}^\prime]_a.$$
\end{enumerate}
\end{theorem}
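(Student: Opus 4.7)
The plan is to prove each of the three statements by verifying them in local coordinates adapted to the submanifolds involved, using the invariant characterization of the quasihomogeneous blowup via the ideal $\calI(Y_a)$ from Definition \ref{def:ideal order a} and the projective coordinate descriptions in \eqref{eq:proj coord quasihom ff}--\eqref{eq:proj coord quasihom bd}. The general strategy is: for each case, write both iterated blowups as manifolds covered by projective coordinate charts, exhibit a diffeomorphism between the two covers that intertwines the blow-down maps, and check compatibility on overlaps. Equivalently, one may verify that both iterated blowups resolve the same ideal (with the same weight data) and invoke the universal property of resolution to get a canonical diffeomorphism.

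For case (1), since $Y \cap W = \emptyset$, any point of $X$ has a neighborhood meeting at most one of the two submanifolds. Locally in such a neighborhood one of the two blowups is the identity, so they commute trivially; these local diffeomorphisms patch together globally. For case (2), with $A \subset B$, pick local coordinates in which $B$ is cut out by a subset of the boundary-type coordinates defining $A$ (so $\calI(B_a) \subset \calI(A_a)$); the projective coordinates \eqref{eq:proj coord quasihom ff}--\eqref{eq:proj coord quasihom bd} introduced by blowing up $A$ first and those produced by blowing up $B$ then $\tilde A$ are related by an invertible monomial change of the ``angular'' variables, which extends smoothly across the relevant front faces. The weight-$a$ scaling is the same on both sides, so the diffeomorphism is global.

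Case (3) is the one I expect to carry the real content, and I would attack it as follows. Clean intersection of $Y$ and $W$ in $A$ lets us choose coordinates in which $Y = \{y' = 0\}$, $W = \{y'' = 0\}$, and $A = Y \cap W = \{y' = y'' = 0\}$ (with any additional boundary-type coordinates required by the extensions to order $a$, resp.\ $b$). On $[X;Y]_a$ the lift $\tilde A$ is a submanifold extended to order $b$ (Lemma \ref{lem:boundaryp}), and one computes in projective coordinates that the composition $\beta_Y \circ \beta_{\tilde A}$ introduces exactly the same local monomial structure as $\beta_W \circ \beta_{\tilde A'}$ on the right-hand side, with $\tilde{\tilde W}$ and $\tilde{\tilde Y}$ appearing as strict transforms whose projective coordinates match under the exchange $y' \leftrightarrow y''$ combined with the order-$a$ versus order-$b$ rescalings. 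For the second equality (when $a = b$), the key observation is that after blowing up $A$ to order $a$, the lifts $\tilde Y'$ and $\tilde W'$ become disjoint interior p-submanifolds of $[X;A]_a$ (their tangent spaces no longer meet, precisely because they met cleanly along $A$), so case (1) applies and gives commutativity of the two remaining blowups.

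The main obstacle is the bookkeeping for the orders of extension of all iterated lifts: when a submanifold of order $a$ is blown up only to order $b < a$, its lift retains an extension of order $a - b$ (as noted at the end of Subsection \ref{subsec:qhom}), and one must track these reduced orders consistently across the sequences in case (3) to match exponents on both sides. A clean way to organize this is to express the iterated pull-back ideals using \eqref{eq:pull-back generator} at each step and verify that the two final ideals on $X$, together with the weight data they determine on each front face, coincide; once this is done, the coordinate diffeomorphism is forced and the proof reduces to a finite symbolic verification.
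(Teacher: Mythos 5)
First, a point of comparison: the paper itself does not prove Theorem \ref{th:commblowups}; it is quoted with attribution to Propositions 5.8.1--5.8.2 of \cite{Me-mwc} (the homogeneous case) and Theorem 1 of \cite{GH2} (the quasihomogeneous case), so there is no in-paper argument to measure your proposal against. Your plan — simultaneous normal forms, explicit diffeomorphisms in the projective charts \eqref{eq:proj coord quasihom ff}--\eqref{eq:proj coord quasihom bd}, and careful bookkeeping of the residual orders of definition of lifted centers — is the right kind of argument and is essentially the route the cited companion paper takes. Parts (1) and (2) as you sketch them are fine (and you correctly keep the same order $a$ in (2), which the paper warns is essential).

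There are, however, two concrete soft spots. First, the ``equivalently, verify that both iterated blowups resolve the same ideal and invoke the universal property of resolution'' shortcut is not available as stated: the paper only establishes \eqref{eq:pull-back generator}, i.e.\ that $\beta_a$ principalizes $\calI(Y_a)$, and principalization does not characterize the space (any further blowup also principalizes the same ideal). For weighted blowups of boundary submanifolds defined to finite order no universal/minimality property is proved here, so matching the two final ideals plus front-face weights does not ``force'' the diffeomorphism; you must actually exhibit it chart by chart, which is your primary route — so this is a gap in the organizing shortcut, not in the method. Second, in the $a=b$ addendum to (3), your key observation (that the strict transforms $\tilde Y'$, $\tilde W'$ are disjoint in $[X;A]_a$, which clean intersection does guarantee) combined with part (1) only proves the symmetry $[[[X;A]_a;\tilde Y']_a;\tilde{\tilde W}']_a=[[[X;A]_a;\tilde W']_a;\tilde{\tilde Y}']_a$; it does not identify either space with $[[[X;Y]_a;\tilde A]_a;\tilde{\tilde W}]_a$, which is what the theorem asserts. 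For that you need part (2) applied to $A\subset Y$ (same order $a$), giving $[[X;A]_a;\tilde Y']_a=[[X;Y]_a;\tilde A]_a$, together with a check that the lifts of $W$ on the two sides correspond under this identification before the final blowup. Relatedly, in your normal form for (3) you should allow the coordinate blocks cutting out $Y$ and $W$ to overlap: clean intersection gives $Y=\{y'=0\}$, $W=\{y''=0\}$ with $y'$ and $y''$ possibly sharing coordinates, and the ``exchange $y'\leftrightarrow y''$'' bookkeeping must be done in that generality.
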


Note that the statement corresponding to (2) is false for different orders for $A,B$. There is a corresponding statement if one introduces a quasi-homogeneous blowup with different weights in different boundary directions. However, this will not be necessary for our proof, and we can now prove the required lemma about the \bfa-triple space maps.

\begin{lemma}
\label{lem:atripmaps}
For $i=1,2,3$, there exist b-fibrations $\pi^3_{z,i}$,
$\pi^3_{y,i}$ and $\pi^3_{x,i}$
fixed by the demand that they make the following diagram with the projection
maps $\pi^3_i$ commutative.
$$
\xymatrix{
M^3_z   \ar[d]^{\pi^3_{z,i}  } \ar[r]^{\beta^3_z} &   M^3_y    \ar[d]^{\pi^3_{y,i}} \ar[r]^{\beta^3_y}&
           M^3_x             \ar[d]^{\pi^3_{x,i}} \ar[r]^{\beta^3_x}&     M^3     \ar[d]^{\pi^3_i } \\
M^2_z    \ar[r]^{\beta_z   } &  M^2_y    \ar[r]^{\beta_y}& M^2_x
                      \ar[r]^{\beta_x}&      M^2
}
$$
\end{lemma}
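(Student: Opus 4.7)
My plan is to produce the three b-fibrations by showing that each triple space is obtained from the product of $M$ with the corresponding double space by further blow-ups along boundary p-submanifolds. By symmetry of the construction under permutations of the three factors, it suffices to treat a single value of $i$; take $i=1$ with $\pi_1^3(m_1,m_2,m_3)=(m_2,m_3)$. Since the projection $M\times M^2_\star\to M^2_\star$ off the first factor is trivially a b-fibration, repeated application of Lemma \ref{lem:b-map} will then produce the required b-fibration $\pi^3_{\star,1}$ at each of the three levels, and commutativity of the diagram in the statement will follow automatically from the construction, since each $\pi^3_{\star,1}$ is built as the composition of a blow-down back to $M\times M^2_\star$ with the product projection.

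At the $x$-level, the identity $M\times M^2_x=[M^3;E_1]$ holds because $E_1=M\times\partial M\times\partial M$ is the preimage under $\pi_1^3$ of the corner $\partial M\times\partial M\subset M^2$. Using Theorem \ref{th:commblowups}(2), the nested centers $V\subset E_1$ may be swapped: $[M^3;V;E_1]=[M^3;E_1;\tilde V]$. The further lifts $\tilde E_2,\tilde E_3$ are disjoint boundary p-submanifolds of the intermediate space (one checks this in projective coordinates, using $E_2\cap E_3=V$), so by rule (1) of Theorem \ref{th:commblowups} they can be blown up in any order. This identifies $M^3_x=[M\times M^2_x;\tilde V;\tilde E_2;\tilde E_3]$, and Lemma \ref{lem:b-map} produces $\pi^3_{x,1}$.

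At the $y$- and $z$-levels, the constructions \eqref{eq:ytriplespace}--\eqref{eq:ztriplespace} introduce quasi-homogeneous blow-ups of orders $a_1$ and $a_2$ respectively. The centers that need to appear first, in order to assemble $M\times M^2_y$ (resp.\ $M\times M^2_z$) from $M\times M^2_x$ (resp.\ $M\times M^2_y$), are precisely the centers in the $M^3_y$ (resp.\ $M^3_z$) construction lying above $E_1$: namely $\Delta_{1,y}\cap\calE_{1,x}$ and $\Delta_{1,y}\cap\calV_x$ for the $y$-level, and the analogous $\Delta_{1,z}$-centers at the $z$-level. The remaining centers are either nested inside these or disjoint from them, so by Theorem \ref{th:commblowups} (using rule (3) for intersections within a single level, where the orders match, and rules (1)--(2) across levels) they can be commuted to the end. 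This presents $M^3_y$ and $M^3_z$ as $[M\times M^2_y;\,\ldots]$ and $[M\times M^2_z;\,\ldots]$ with only boundary p-submanifolds as remaining centers, and Lemma \ref{lem:b-map} then yields $\pi^3_{y,1}$ and $\pi^3_{z,1}$.

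The main obstacle is the combinatorial bookkeeping: rule (3) of Theorem \ref{th:commblowups} requires matching orders, so any commutation that exchanges two quasi-homogeneous centers must be carried out within a single level ($x$, $y$, or $z$). Cross-level commutations must rely on rules (1) and (2), which impose no order restriction but demand disjointness or nesting of the centers. The key geometric observation that makes this feasible is that the $y$-centers lie inside front faces created at the $x$-level and the $z$-centers inside those created at the $y$-level, a consequence of the nesting $\Delta_z\subset\Delta_y$ of the fibre diagonals. Verifying that the remaining centers are genuine boundary p-submanifolds with proper interior extensions, in the sense of Definition \ref{def:submfd finite order}, is done via Lemma \ref{lem:boundaryp}.
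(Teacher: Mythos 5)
Your overall strategy is the same as the paper's: realize each triple space as a sequence of blowups of boundary p-submanifolds starting from $M\times M^2_{x/y/z}$, then invoke Lemma \ref{lem:b-map}, with the $x$-level handled exactly as in the paper via $[M^3;E_1]=M\times M^2_x$ and rule (2) for $V\subset E_1$. However, there is a genuine gap at the $y$- and $z$-levels, which is precisely where the real work lies. You assert that rule (3) of Theorem \ref{th:commblowups} ``requires matching orders'' and that therefore all cross-level reorderings can be achieved with rules (1)--(2), i.e.\ by disjointness or nesting of centers. Both halves of this are wrong. First, rule (3) as stated allows the two orders $a$ and $b$ to differ; only the supplementary statement identifying both with a third blowup sequence requires $a=b$. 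Second, and more importantly, the indispensable cross-level commutations involve centers that intersect cleanly but are neither disjoint nor nested, so rules (1)--(2) simply do not apply to them. Concretely: to produce $M\times M^2_y=[M\times M^2_x;\Delta_{1,y}\cap\calE_{1,x}]_{a_1}$ you must move this order-$a_1$ blowup past the order-$1$ blowup of the lift of $V$; in $M\times M^2_x$ the lift of $V$ is $\del M\times\ff_x$ and $\Delta_{1,y}\cap\calE_{1,x}$ is $M\times\del\Delta_y$, and these meet in $\del M\times\del\Delta_y$ without either containing the other. The paper handles exactly this step with rule (3) applied to centers of orders $1$ and $a_1$, and at the $z$-level it applies rule (3) twice more with orders $a_1$ and $a_2$ mixed (commuting $\Delta_{1,z}\cap\calG_{1,y}$ and $\Delta_{1,z}\cap\calE_{1,y}$ past the lifts of $\del\Delta^3_y$ and of $\calE_{1,y}\cap(\del M\times M^2_y)$). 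Under your self-imposed restriction, your plan stalls at precisely these steps.

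A related slip: your ``key geometric observation'' that the $y$-centers lie inside front faces created at the $x$-level (and the $z$-centers inside $y$-level front faces) gives containment in a boundary hypersurface of the blown-up space, not containment in the \emph{center} of an earlier blowup, so it does not feed rule (2). To repair the argument you should drop the matching-orders restriction, note that rule (3) is exactly the mixed-order tool needed, and carry out the bookkeeping as the paper does: isolate the $i=1$ centers by disjointness (rule (1)), use nesting such as $\del\Delta^3_y\subset\Delta_{1,y}\cap\calV_x$ via rule (2), and then perform the clean-intersection swaps of different orders via rule (3) to bring the blowups of $\Delta_{1,y}\cap\calE_{1,x}$ (order $a_1$) and $\Delta_{1,z}\cap\calE_{1,y}$ (order $a_2$) to the front, identifying $M\times M^2_y$ and $M\times M^2_z$ along the way.
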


The existence of the $x$-projections $\pi^3_{x,i}$  is known from the
b-calculus, see, eg \cite{Gr}.  Similarly, the existence of the $y$-projections $\pi^3_{y,i}$, at least in
the case where $a_1=1$, is known from the $\phi$-calculus
and is contained, eg, in \cite{Va}.  Nevertheless, in order to study the $z$-projections,
we need to start by considering the $x$- and $y$-projections.  The proof,
unfortunately, involves a very large number of submanifolds in a large number of various
blown up spaces.  Thus although the basic ideas are not complicated, and just consist
of applying the various parts of the above theorem recursively, the notation is very
cumbersome, despite our attempts to simplify and rationalize it as much as possible.

\begin{proof}
By symmetry it suffices to consider the case $i=1$.
The uniqueness of the lifted projections follows from the fact that the blow-ups are diffeomorphisms in the interior. In order to show that they exist (i.e. extend continuously, even smoothly to the boundary) and are b-fibrations, we show that we can construct the triple spaces
as a series of blowups of boundary p-submanifolds starting with $M \times M^2_{x/y/z}$.
This is so that we can define the maps  $\pi^3_{x/y/z,1}$ as the composition of
the projection $M \times M^2_{x/y/z} \rightarrow M^2_{x/y/z}$ with a sequence of
blowups of boundary p-submanifolds and thereby show they are b-fibrations.
We do this in each case by commuting blowups to change the construction of the triple
spaces through the string of blowups we described above into a string of blowups
starting with $M\times M^2_{x/y/z}$.

Start with the $x$-triple space, $M^3_x = [[M^3;V];E_1;E_2;E_3]$.  Since $V \subset E_1$,
we can apply part (2) of Theorem \ref{th:commblowups} to get this equals
$[[M^3;E_1];V;E_2;E_3]$ (recall we are using the same notation for an original
submanifold as for its inverse image in a blowup).  Now we notice that $[M^3;E_1] = M \times M^2_x$,
so $M^3_x = [M \times M^2_x;V;E_2;E_3]$ (where here $V$ indicates the inverse
image of the original $V=E_1 \int E_2 \int E_3$ in the blow-up $[M^3;E_1] = M
\times M^2_x$).  Thus we have a natural map
$\pi^3_{x,1}:M^3_x \rightarrow M^2_x$ that is the composition of the three blow down maps
from $M^3_x$ to $M \times M^2_x$ with the natural projection $M \times M^2_x \to M^2_x\rightarrow M^2$.
Thus this map is a b-fibration.  Further, by construction, this map fits into the necessary commutative
diagram.

Now consider the $y$-triple space
$$
M^3_y = [[[M^3_x; \del \Delta^3_y]_{a_1};
\Delta_{i,y} \cap \mathcal{V}_x ]_{a_1};
\Delta_{i,y} \cap \mathcal{E}_{i, x}]_{a_1}
$$
Notice that the inverse images in $M^3_x, \del \Delta^3_y]_{a_1}$ of the submanifolds
$\Delta_{i,y} \cap \mathcal{V}_x$ are disjoint
for $i=1,2,3$, as are the inverse images of the submanifolds $\Delta_{j,y} \cap \mathcal{E}_{j, x}$
for $j=1,2,3$. Further, if $i\neq j$ then the two types are disjoint from each other.  Thus by
part (1) of Theorem \ref{th:commblowups}, we can put the blow-ups with $i=j=1$ first, then
put the others at the end:
$$
M^3_y = [[[[M^3_x; \del \Delta^3_y]_{a_1};
\Delta_{1,y} \cap \mathcal{V}_x ]_{a_1};
\Delta_{1,y} \cap \mathcal{E}_{1, x}]_{a_1}; \cdots]_{a_1}.
$$
Now since $\del\Delta^3_y = \Delta^3_y \cap \mathcal{V}_x \subset \Delta_{1,y} \cap \mathcal{V}_x$, we can use part (2) of the theorem to switch the first two blowups in this sequence:
$$
= [[[[M^3_x;\Delta_{1,y} \cap \mathcal{V}_x ]_{a_1};
\del \Delta^3_y ]_{a_1};
\Delta_{1,y} \cap \mathcal{E}_{1, x}]_{a_1}; \cdots]_{a_1}.
$$
After this switch, we see that the inverse images of the two submanifolds $ \del \Delta^3_y$
and $\Delta_{1,y} \cap \mathcal{E}_{1, x}$ in the first blowup are disjoint, so we can switch them
again by part (1) of the theorem to get:
$$
= [[[[M^3_x;\Delta_{1,y} \cap \mathcal{V}_x ]_{a_1};
\Delta_{1,y} \cap \mathcal{E}_{1, x}]_{a_1};
\del \Delta^3_y]_{a_1}; \cdots]_{a_1}.
$$
Now going to the $x$-triple space part of this proof, we can substitute in for $M^3_x$ to get:
$$
= [[[[[[M \times M^2_x;V];E_2;E_3];
\Delta_{1,y} \cap \mathcal{V}_x ]_{a_1};
\Delta_{1,y} \cap \mathcal{E}_{1, x}]_{a_1};
\del \Delta^3_y]_{a_1}; \cdots]_{a_1}.
$$
Now notice that $E_2$ and $E_3$ are disjoint from
$\Delta_{1,y} \cap \mathcal{E}_{1, x}$ and
$\del \Delta^3_y$, so again by part (1) of the theorem,
we can commute these blowups and lump the
$\del \Delta^3_y$, $E_2$ and $E_3$ blowups together with the
other ones at the end to get
\begin{equation}
\label{eq:midphiblowup}
= [[[[M \times M^2_x; V];
\Delta_{1,y} \cap \mathcal{V}_x ]_{a_1};
\Delta_{1,y} \cap \mathcal{E}_{1, x}]_{a_1};
\cdots]_{*}.
\end{equation}
Here we use the shorthand notation $[A;\cdots]_*$ to indicate that there are different degrees to the
various blowups at the end.

Now we notice that in $M \times M^2_x$, the submanifolds
$R=(M \times \Delta_y)\cap \mathcal{E}_{1,x}$ and $Q=$ inverse image of $V$
intersect cleanly to give a manifold we will call $A$, as in part (3) of the
commuting blowups theorem.
Thus $[M \times M^2_x;Q;\tilde{A};\tilde{\tilde{R}}] = [M \times M^2_x;R;\tilde{A}^\prime;\tilde{\tilde{Q}}]$.
If we consider these submanifolds, we can see that
$\tilde{A}=\Delta_{1,y} \cap \mathcal{V}_x$ and
$\tilde{\tilde{R}} =  \Delta_{1,y} \cap \mathcal{E}_{1, x}$.
So before the switch we have exactly the blow-up in equation \eqref{eq:midphiblowup}.
So that sequence of blowups is equal to
\begin{equation}
\label{eq:tripleMy1}
= [[[M \times M^2_x;(M \times \Delta_y)\cap \mathcal{E}_{1, x}];
(M \times \Delta_y)\cap \mathcal{E}_{1, x} \cap V;
V]_{a_1}; \cdots]_{*}.
\end{equation}
When we consider it, we
then notice that $[M \times M^2_x;(M \times \Delta_y)\cap \mathcal{E}_{1, x}] =
M \times M^2_y$.  With this in mind, we can
also rewrite $\tilde{A}^\prime= \mathcal{E}_{1, y} \cap (\partial{M} \times M^2_y)$
and
$\tilde{\tilde{Q}} = (\tilde{\mathcal{E}}_{1,x} \smallsetminus \stackrel{\circ}{\mathcal{E}}_{1,y})\cap (\partial{M} \times M^2_y)$
So we have
\begin{equation}
\label{eq:tripleMy2}
= [[M \times M^2_y;
\mathcal{E}_{1, y} \cap (\partial{M} \times M^2_y);
(\tilde{\mathcal{E}}_{1,x} \smallsetminus\stackrel{\circ}{\mathcal{E}}_{1,y})\cap (\partial{M} \times M^2_y)]_{a_1}; \cdots]_{*}.
\end{equation}
Thus we have a natural map
$\pi^2_{y,1}:M^3_y \rightarrow M^2_y$ that is the composition of several blow down maps
from $M^3_y$ to $M \times M^2_y$ with the natural projection $M \times M^2_y \rightarrow M^2$.
Thus this map is a b-fibration.  Again by construction this map fits into the necessary commutative
diagram.

Finally we consider the \bfa-triple space.
$$
M^3_z = [[[[M^3_y; \del\Delta^3_z]_{a_2};
\Delta_{i,z} \cap \mathcal{V}_y ]_{a_2};
\Delta_{i,z} \cap \mathcal{G}_{i, y}]_{a_2};
\Delta_{i,z} \cap \mathcal{E}_{i, y}]_{a_2}
$$
As for the $b$ and $\phi$ triple spaces, notice that the submanifolds of each type $\Delta_{i,z} \cap \mathcal{V}_y$,
$\Delta_{i,z} \cap \mathcal{G}_{i, y}$ and $\Delta_{i,z} \cap \mathcal{E}_{i, y}$
are disjoint from others of the same type
for $i=1,2,3$. Further, if the indices $i$ and $i^\prime$ are distinct then manifolds of two
different types are disjoint from each other.  Thus by
part (1) of Theorem \ref{th:commblowups}, we can put the blow-ups with $i=1$ first, then
put the others at the end:
$$
M^3_z = [[[[[M^3_y; \del\Delta^3_z]_{a_2};
\Delta_{1,z} \cap \mathcal{V}_y ]_{a_2};
\Delta_{1,z} \cap \mathcal{G}_{1, y}]_{a_2};
\Delta_{1,z} \cap \mathcal{E}_{1, y}]_{a_2}, \cdots ]_{a_2}.
$$
Now, since $\del\Delta^3_z \subset
\Delta_{1,z} \cap \mathcal{V}_y $, by part (2) of the theorem,
we can switch these blowups.  Then in addition, the inverse image in the $\Delta_{1,z} \cap \mathcal{V}_y$ blowup of
$\del\Delta^3_z$
is disjoint from submanifolds blown up in the next two blowups,
so we can use part (1) to move it to the end:
$$
M^3_z = [[[[[M^3_y; \Delta_{1,z} \cap \mathcal{V}_y ]_{a_2};
\Delta_{1,z} \cap \mathcal{G}_{1, y}]_{a_2};
\Delta_{1,z} \cap \mathcal{E}_{1, y}]_{a_2};
\del\Delta^3_z]_{a_2};\cdots ]_{a_2}.
$$
To simplify notation, absorb this last blowup into the $\cdots]_{a_2}$.
Now we can substitute for $M^3_y$ using the step above to get
$$
M^3_z = [[[[[[M \times M^2_y;
\mathcal{E}_{1, y} \cap (\partial{M} \times M^2_y);
(\tilde{\mathcal{E}}_{1,x} \smallsetminus\stackrel{\circ}{\mathcal{E}}_{1,y})\cap (\partial{M} \times M^2_y)]_{a_1}; \cdots]_{*} ; \hspace{1in}
$$ $$ \hspace{1in}
\Delta_{1,z} \cap \mathcal{V}_y ]_{a_2};
\Delta_{1,z} \cap \mathcal{G}_{1, y}]_{a_2};
\Delta_{1,z} \cap \mathcal{E}_{1, y}]_{a_2}; \cdots ]_{a_2}.
$$
Here the $\cdots$ coming from $M^3_y$ include the blowup of
$\del\Delta^3_y$ as well as blowups
of submanifolds of $\mathcal{E}_{i,x}$ for $i=1,2$.
Now again using part (1) of the theorem, we can move the blowups after $\del \Delta^3_y$
to the end of our listed blowups to obtain:
\begin{equation}
\label{eq:aqar1}
M^3_z = [[[[[M \times M^2_y;
\mathcal{E}_{1, y} \cap (\partial{M} \times M^2_y);
(\tilde{\mathcal{E}}_{1,x} \smallsetminus\stackrel{\circ}{\mathcal{E}}_{1,y})\cap (\partial{M} \times M^2_y)]_{a_1};
\del\Delta^3_y]_{a_1}; \hspace{1in}
\end{equation}
$$ \hspace{1in}
\Delta_{1,z} \cap \mathcal{V}_y ]_{a_2};
\Delta_{1,z} \cap \mathcal{G}_{1, y};
\Delta_{1,z} \cap \mathcal{E}_{1, y}]_{a_2}; \cdots ]_{*}.
$$
At this point, we need to again apply part (3) of Theorem \ref{th:commblowups} to the blowups
of $\del\Delta^3_y$, $\Delta_{1,z} \cap \mathcal{V}_y$ and $\Delta_{1,z} \cap \mathcal{G}_{1, y}$.
It is easier to describe the relevant submanifolds of $S=[M \times M^2_y;
\mathcal{E}_{1, y} \cap (\partial{M} \times M^2_y);
(\tilde{\mathcal{E}}_{1,x} \smallsetminus\stackrel{\circ}{\mathcal{E}}_{1,y})\cap (\partial{M} \times M^2_y)]_{a_1}$ if we use the equivalence of this space to the first blowups in equation (\ref{eq:tripleMy1}):
$$
[M \times M^2_x;V;\Delta_{1,y} \cap \mathcal{V}_x;
\Delta_{1,y} \cap \mathcal{E}_{1,x}].
$$
The new faces created by the three blowups here are, in order, $\calV_x$, $\calG_{1,y}$ and $\calE_{1,y}$.
Let $Q$ be the inverse image in $S$ of $\del \Delta^3_y$. This is the submanifold whose blow-up creates $\calV_y$. Let $P=$ the inverse image of $\Delta_{1,z}\cap \calG_{1,y}$. Then $P,Q$ are submanifolds of $\calG_{1,y}\subset S$ that intersect cleanly in $R=P\cap Q$.
By part (3) of Theorem \ref{th:commblowups} $[S;Q]_{a_1}; R']_{a_2};P']_{a_2} = [S;P]_{a_2};R'']_{a_1};Q'']_{a_1}$. The left hand side corresponds to the first five blowups in \eqref{eq:aqar1}.  So we may
replace these with the right hand side.  Further, the submanifolds $R''$, $Q''$ are disjoint from $\Delta_{1,z} \cap \mathcal{E}_{1, y}$,
so we may replace (\ref{eq:aqar1}) by
$$
M^3_z =
[[[M \times M^2_y;
\mathcal{E}_{1, y} \cap (\partial{M} \times M^2_y);
(\tilde{\mathcal{E}}_{1,x} \smallsetminus\stackrel{\circ}{\mathcal{E}}_{1,y})\cap (\partial{M} \times M^2_y)]_{a_1};
\Delta_{1,z} \cap \calG_{1,y};
\Delta_{1,z} \cap \mathcal{E}_{1, y}]_{a_2}, \Delta_{1,z} \cap \mathcal{V}_y; \cdots ]_{*}.
$$
Here $\calG_{1,y}$ is the front face arising from the blowup of $\calE_{1,y}\cap (\partial M \times M_y^2)$.
Notice that $ (\tilde{\mathcal{E}}_{1,x} \smallsetminus\stackrel{\circ}{\mathcal{E}}_{1,y})\cap (\partial{M} \times M^2_y)$ is disjoint from the next two submanifolds blown up in this list, so we can
use part (1) of the theorem to put this blowup at the end and absorb it and the blowup of
$\Delta_{1,z} \cap \mathcal{V}_y$ into the
dots.  So now our space looks like:
$$
M^3_z =
[[[M \times M^2_y;
\mathcal{E}_{1, y} \cap (\partial{M} \times M^2_y)]_{a_1};
\Delta_{1,z} \cap \calG_{1,y};
\Delta_{1,z} \cap \mathcal{E}_{1, y}]_{a_2}, \cdots ]_{*}.
$$
We are almost done now.  The next step is to use part 3 of Theorem \ref{th:commblowups}
again to rearrange these blowups from the form $[M \times M^2_y;Q]_{a_1};\tilde{A}]_{a_2};\tilde{\tilde{R}}]_{a_2}$
to the form $[M \times M^2_y;R]_{a_2};\tilde{A}^\prime]_{a_1};\tilde{\tilde{Q}}]_{a_1}$.  This yields
$$
M^3_z =
[[[M \times M^2_y;\Delta_{1,z} \cap \mathcal{E}_{1, y}]_{a_2};
\mathcal{E}_{1,z} \cap (\partial M \times M_y^2);
\mathcal{E}_{1, y} \cap (\partial{M} \times M^2_y)]_{a_1};  \cdots ]_{*}.
$$
Finally, we notice that
$M \times M^2_z = [M \times M^2_y;\Delta_{1,z} \cap \mathcal{E}_{1, y}]_{a_2}$,
so we have:
$$
M^3_z =
[[M \times M^2_z;
\mathcal{E}_{1,z}]_{a_2}; \calE_{1,z} \cap (\partial{M} \times M^2_y);
{\mathcal{E}}_{1, y} \cap (\partial{M} \times M^2_y)]_{a_1};  \cdots ]_{*}.
$$
That is, we again have a natural map $\pi^3_{z,1}:M^3_z\rightarrow M^2_z$ that is the
composition of several blow down maps to $M \times M^2_z$ followed by the
projection on to $M^2_z$, and is therefore a b-fibration.  It again fits naturally into the
commutative diagram by construction.
\end{proof}





\medskip\noindent{\bf Exponent matrices.}

In order to prove the main mapping theorem we need to calculate
the exponent matrices for the projections from the various blown up double spaces.
Since $M$ has only one boundary hypersurface, the exponent matrices for maps
from the blown up double spaces to $M$ are just vectors indexed by the  boundary hypersurfaces of the blown up space.
\begin{lemma}
\label{lem:e2matrix}
The exponent vectors of the $\pi_x$-projections are, corresponding to the hypersurfaces $\rf$, $\lf$, $\ff_x$ of $M^2_x$,
given by $e_{\pi_{x,l}}=(0,1,1)$ and $e_{\pi_{x,r}}=(1,0,1)$.\\
The exponent vectors of the $\pi_y$-projections are, corresponding to the hypersurfaces
$\rf$, $\lf$, $\ff_{yx}$, $\ff_{y}$ of $M^2_y$, given by
$e_{\pi_{y,l}}=(0,1,1,1)$ and $e_{\pi_{y,r}}=(1,0,1,1)$.\\
The exponent vectors of the $\pi_z$-projections are, corresponding to the hypersurfaces
$\rf$, $\lf$, $\ff_{zx}$, $\ff_{zy}$, and $\ff_z$ of $M^2_z$, given by
$e_{\pi_{z,l}}=(0,1,1,1,1)$ and $e_{\pi_{z,r}}=(1,0,1,1,1)$.
\end{lemma}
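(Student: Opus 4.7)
The plan is to reduce the computation to reading off the order of vanishing of $x$, pulled back through the blow-down maps, on each boundary hypersurface of the blown-up double spaces. Since $\partial M$ is the only boundary hypersurface of $M$, the exponent ``matrix'' of any b-map $\pi: X \to M$ is just a vector $(\alpha_G)_G$ indexed by the boundary hypersurfaces $G$ of $X$, and $\alpha_G$ is determined by $\pi^* x = a \cdot x_G^{\alpha_G}\cdot(\text{other bdfs})$ locally near $G$, for any bdf $x_G$ of $G$. Moreover, $\pi_{x/y/z,l}^* x = \beta_{x/y/z}^*\cdots\beta_x^* (x_{\text{left}})$ and $\pi_{x/y/z,r}^* x = \beta_{x/y/z}^*\cdots\beta_x^*(x_{\text{right}})$, where $x_{\text{left}}, x_{\text{right}}$ denote the lifts of $x$ under the two factor projections $M^2\to M$.

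\emph{The $x$-projections.} Working in the two standard projective charts of $M^2_x$, namely $(t=x'/x,x,\dots)$ valid away from $\lf$ and $(s=x/x',x',\dots)$ valid away from $\rf$, one reads off immediately that $\beta_x^* x_{\text{left}}$ equals $x$ in the first chart and $sx'$ in the second. The first chart shows $\alpha_\rf=0$ and $\alpha_{\ff_x}=1$; the second shows $\alpha_\lf=1$. Hence $e_{\pi_{x,l}}=(0,1,1)$. By symmetry, $\beta_x^* x_{\text{right}}$ gives $e_{\pi_{x,r}}=(1,0,1)$.

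\emph{Propagation through the $y$- and $z$-blowups.} The key point is that the centers $\partial\Delta_y\subset\ff_x$ and $\partial\Delta_z\subset\ff_y$ lie entirely in the interior of the left and right faces, so the blow-down maps $\beta_y, \beta_z$ only create new front faces $\ff_y$, $\ff_z$ and do not alter the behavior on $\lf$, $\rf$. Moreover, in the projective coordinates \eqref{eq:proj coord quasihom ff}--\eqref{eq:proj coord quasihom bd} used in the construction (with center defined by the vanishing of $T, Y$, respectively $\calT,\calY,\calZ$), the original $x$ is preserved: $\beta_y^* x = x$ and $\beta_z^* x = x$. Because $x$ is a bdf for the new front face in each of these coordinate systems, the pullback of $x$ vanishes to exactly order $1$ on the newly introduced face $\ff_y$ (respectively $\ff_z$) and, tautologically, also on the lift $\ff_{yx}$ (respectively $\ff_{zy}$, $\ff_{zx}$) of the previously present front faces. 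Together with the unchanged values on $\lf,\rf$, this yields $e_{\pi_{y,l}}=(0,1,1,1)$, $e_{\pi_{y,r}}=(1,0,1,1)$, $e_{\pi_{z,l}}=(0,1,1,1,1)$, $e_{\pi_{z,r}}=(1,0,1,1,1)$, in the stated orderings.

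\emph{Main obstacle.} There is essentially no obstacle beyond bookkeeping; the only subtle point is that the quasi-homogeneous blowups $\beta_y,\beta_z$ are of degree $a_1$, $a_2$ in the \emph{interior} directions $T, Y$ (resp.\ $\calT,\calY,\calZ$) while leaving the boundary direction $x$ untouched, so no powers $a_1$ or $a_2$ enter into the exponent matrix of $x$. This is already visible in the calculation $\beta_y^* x = x$ that underlies the pullback formulas \eqref{eq:vf pullback phi}. Thus all the front-face entries are $1$, completing the proof.
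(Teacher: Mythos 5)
Your proof is correct and follows essentially the same route as the paper's: track the pullback of the boundary defining function $x$ through the blow-down maps in local coordinates, observing that the quasi-homogeneous weights $a_1,a_2$ only enter through the \emph{interior} variables, so all exponents of $x$ are $0$ or $1$. The only (harmless) imprecision is the claim that $\beta_y^*x=x$ is a bdf for the new front face "in each of these coordinate systems": in the chart \eqref{eq:proj coord quasihom bd} it is instead a product $r\xi$ of bdfs of $\ff_y$ and $\ff_{yx}$, each appearing to first order, which is precisely what gives the entries $1$ at both faces.
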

\begin{proof}
We'll just consider the left projections, since the right ones are similar.  We know
$\pi_{*,l}(\rf) \not\subset \partial M$, so the first index is 0.  The other bhs of $M^2_*$ do
map to $\partial M$, and in each new set of local coordinates in the blowups, these
faces all have bdf $x$.  The maps $\pi_{*,l}$ just write the original coordinates
in terms of the blown up coordinates, so $\pi_{*,l}(x, \cdots) = (x, \cdots)$ in
each case, and thus the indices are all equal to 1.
\end{proof}

In order to prove the main composition theorem we need to calculate the exponent
matrices for the projections from the triple to the double spaces.  By the same proof
as for the projections from the double spaces to $M$, these will be matrices consisting
of just 1's and 0's that record simply if a given bhs $G$ in the domain maps into
the given bhs $H$ in the range. Thus, the exponent matrix is completely determined by the map $\overline{\pi}_{*,i}^3$ defined before Definition \ref{def:b-fibration}.
For instance, for $\overline{\pi}_1^3$ we get:
\begin{equation}
\label{eq:facemap}
\begin{aligned}
(\overline{\pi}^3_{z,1})^{-1}(M^2_z)&= \{ M^3_z , H_1\} \\
(\overline{\pi}^3_{z,1})^{-1}(\rf )&= \{ H_3, \mathcal{E}_{2,x} , \mathcal{E}_{2,y}, \mathcal{E}_{2,z} \} \\
(\overline{\pi}^3_{z,1})^{-1}( \lf)&=\{   H_2, \mathcal{E}_{3,x} , \mathcal{E}_{3,y}, \mathcal{E}_{3,z}\}\\
(\overline{\pi}^3_{z,1})^{-1}( \ff_{zx})&=\{\mathcal{V}_{x} ,
\mathcal{E}_{1,x}, \mathcal{G}_{2,y} , \mathcal{G}_{2,z}, \mathcal{G}_{3,y}, \mathcal{G}_{3,z}\} \\
(\overline{\pi}^3_{z,1})^{-1}( \ff_{zy})&=\{\mathcal{V}_{y}, \mathcal{E}_{1,y},\mathcal{G}_{1,y},
\mathcal{F}_{2,z},\mathcal{F}_{3,z}   \} \\
(\overline{\pi}^3_{z,1})^{-1}( \ff_{z})&=\{  \mathcal{V}_{z}, \mathcal{E}_{1,z},\mathcal{G}_{1,z},
\mathcal{F}_{1,z}\}
\end{aligned}
\end{equation}

For reference, we record the analogous results
for the b and $\phi$ triple space maps here.
\begin{equation}
\label{eq:facemap2}
\begin{aligned}
(\overline{\pi}^3_{x,1})^{-1}(M^2_x)&= \{ M^3_x , H_1\} \\
(\overline{\pi}^3_{x,1})^{-1}(\rf )&=\{   H_3, \mathcal{E}_{2,x} \} \\
(\overline{\pi}^3_{x,1})^{-1}( \lf)&=\{ H_2, \mathcal{E}_{3,x} \} \\
(\overline{\pi}^3_{x,1})^{-1}( \ff_x)&=\{\mathcal{V}_{x} , \mathcal{E}_{1,x}\}
\end{aligned}
\end{equation}

\begin{equation}
\label{eq:facemap3}
\begin{aligned}
(\overline{\pi}^3_{y,1})^{-1}(M^2_y)&= \{M^3_y , H_1\} \\
(\overline{\pi}^3_{y,1})^{-1}(\rf )&=\{   H_3, \mathcal{E}_{2,x} , \mathcal{E}_{2,y}\} \\
(\overline{\pi}^3_{y,1})^{-1}( \lf)&=\{ H_2, \mathcal{E}_{3,x} , \mathcal{E}_{3,y}\} \\
(\overline{\pi}^3_{y,1})^{-1}( \ff_{yx})&=\{\mathcal{V}_{x} ,
\mathcal{E}_{1,x}, \mathcal{G}_{2,y}, \mathcal{G}_{3,y}\} \\
(\overline{\pi}^3_{y,1})^{-1}( \ff_y)&=\{\mathcal{V}_{y}, \mathcal{E}_{1,y},\mathcal{G}_{1,y}\}
\end{aligned}
\end{equation}

\subsection{Full $\bfa$-calculus, polyhomogeneous mapping and composition theorems}
\label{subsec.polyhom map}
We will formulate the mapping and composition theorems for operators in the full calculus since it is almost no extra work and will be used in the sequel to this paper. Before we introduce the full calculus, we need to define certain half density bundles on $M^2_z$. Choosing a suitable half density bundle will make the bookkeeping easier.

We first discuss how densities behave under quasihomogeneous blowup. This and the formulas in Theorems \ref{th:basicmap} and \ref{th:basiccomp} below will be most transparent if we start our discussion with b-densities rather than smooth densities.
Recall that we denote the set of b-densities on an $n$-dimensional manifold with corners $X$ by $C^\infty(X,\Omega_b)$.
In local coordinates near a codimension $k$ corner in $X$ given by $x_1=\cdots = x_k=0$, a b-density is of the form $|\frac{dx_1}{x_1}\dots \frac{dx_k}{x_k}\,dy_1\dots dy_{n-k}|$ times a smooth function.  Under quasihomogeneous blowup $\beta:[X;Y]_a\to X$ of the boundary submanifold $Y=\{x_1=\dots=x_r=0,\,y_1=\dots=y_m=0\}$ to order $a$ this lifts, in the coordinates \eqref{eq:proj coord quasihom ff}, to $x_i^{am}|\frac{dx_i}{x_i}\prod_{j\neq i}\frac{dW_j}{W_j}\prod_l dV_l|$.
Together with a similar calculation in the coordinates \eqref{eq:proj coord quasihom bd} this means that
$$C^\infty([X;Y]_a,\beta^*\Omega_b) = \rho^{am} C^\infty([X;Y]_a,\Omega_b)$$
naturally, for any boundary defining function $\rho$ of the front face.
We apply this to the successive blowups of $M^2$ in $\partial\Delta_x=(\partial M)^2$ (with $a=1, m=0$), in $\partial\Delta_y$ (with $a=a_1, m=1+b$) and then in $\partial\Delta_z$ (with $a=a_2,m=1+b+f_1$), where in the latter two blow-ups the previous vanishing factors are also lifted. This yields the following result. Denote
\begin{equation}
\label{eq:half densities pullback}
\Omega_{\bfa,0}=\Omega_{\bfa,0}(M_z^2) := \beta_\bfa^* (\Omega_b(M^2)),
\end{equation}
that is, sections of $\Omega_{\bfa,0}(M_z^2)$ are lifts of b-densities on $M^2$. Let
\begin{equation}
\label{eq:def gamma}
\gamma_y = (1 + b) a_1, \ \gamma_z = (1+b)(a_1+a_2) + f_1 a_2
\end{equation}
and let $\rho_x,\rho_y,\rho_z,\rho_{\lf},\rho_{\rf}$ be defining functions for the faces $\ff_{zx},\ff_{zy},\ff_z,\lf,\rf$ of $M^2_z$ respectively. Then we obtain
\begin{equation}
\label{eq:Omega a 0}
 C^\infty(M^2_z, \Omega_{\bfa,0}) = \rho_y^{\gamma_y}\rho_z^{\gamma_z} C^\infty(M^2_z,\Omega_b).
\end{equation}
For the density bundle used for the small calculus, \eqref{eq:def omegatilde}, we have an extra factor $\rho_{\lf}^{-\gamma_z}\rho_{\rf}^{-\gamma_z}$ coming from the $\bfa$-densities on $M\times M$. This factor lifts also, and we obtain
\begin{equation}
\label{eq:omegatilde powers}
C^\infty(M^2_z,\Omegatilde_\bfa) = \rho_{\lf}^{-\gamma_z} \rho_{\rf}^{-\gamma_z} \rho_x^{-2\gamma_z}\rho_y^{-2\gamma_z+\gamma_y}\rho_z^{-\gamma_z} C^\infty (M^2_z,\Omega_b).
\end{equation}
For the mapping and composition theorems in the full calculus it is more natural to use half-densities associated with a density bundle we denote $\Omega_\bfa=\Omega_\bfa (M^2_z)$ and which satisfies
\begin{equation}
\label{eq:half densities on M2z}
C^\infty(M^2_z, \Omega_\bfa) = \rho_y^{-\gamma_y}\rho_z^{-\gamma_z}
C^\infty(M^2_z,\Omega_b).
\end{equation}
Invariantly it may be given as $\Omega_\bfa = (\Omega_b)^2\otimes  \Omega_{\bfa,0}^{-1}$.
Note that kernels of operators in the small calculus may be thought of as having coefficients in $\Omega_\bfa^{1/2} (M^2_z)$
rather than $\Omegatilde_\bfa^{1/2} (M^2_z)$ since the scaling powers in \eqref{eq:omegatilde powers} and \eqref{eq:half densities on M2z} are the same at $\ff_z$ and since the kernels for the small \bfa-calculus vanish to infinite order at all other faces of $M^2_z$.
\begin{definition}
\label{def.full calculus}
Let $M$ be an $\bfa$-manifold and let $\calJ$ be an index family for $M^2_z$. The {\em full \bfa-pseudodifferential calculus on $M$ with index family $\calJ$} is defined as the set of operators with kernels in (for $m\in\RR$)
$$ \Psi^{m,\calJ}_\bfa (M) := I^{m,\calJ} (M^2_z,\Delta; \Omega_\bfa^{1/2} (M^2_z)).$$
\end{definition}
Note that a different choice of half density bundle would have resulted only in a shift of the index sets in $\calJ$. The given normalization is particularly convenient below, and is natural
in that it behaves in the same way with respect to the $x$ and $y$ front faces as with respect to the $z$ front face. For the index set $\calG$ in Definition \eqref{def:small calculus}, $\Psi^{m,\calG}_{\bfa}(M)=\Psi^m_{\bfa}(M)$.

We can now analyze when and how operators in the full calculus act on polyhomogeneous functions.
\begin{theorem}
\label{th:basicmap}
Let $u$ be a b-half-density on $M$ that is polyhomogeneous at $\del M$ with
index set $I$.  Let $P\in \Psi^{m, \mathcal{J}}_\bfa(M)$.  If $\inf(\mathcal{J}(\rf)+I) >0$
then the pushforward in \eqref{eq:action} converges and defines $Pu$, which will be a
b-half-density on $M$ polyhomogeneous at $\del M$ with index set
$$
\calJ(\lf)\, \overline{\cup}\,\left(\calJ(\ff_{zx})+I\right)\,  \overline{\cup}\, (\calJ(\ff_{zy})+I)\, \overline{\cup}\, (\calJ(\ff_{z})+I).
$$
\end{theorem}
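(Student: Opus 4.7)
The plan is to realize $Pu$ by the pullback/pushforward formula \eqref{eq:action},
\[
Pu = (\pi_{z,l})_*\bigl[K_P\cdot(\pi_{z,r})^*u\bigr],
\]
and invoke the pullback theorem (Theorem \ref{thm:genpullback}) and pushforward theorem (Theorem \ref{thm:genpushforward}). By Lemma \ref{lem:bdouble} both $\pi_{z,l}$ and $\pi_{z,r}$ are b-fibrations, with exponent vectors at the bhs $(\rf,\lf,\ff_{zx},\ff_{zy},\ff_z)$ of $M^2_z$ equal to $(0,1,1,1,1)$ and $(1,0,1,1,1)$ respectively (Lemma \ref{lem:e2matrix}).

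For the pullback, Definition \ref{def:pullbackindex} combined with the vector $(1,0,1,1,1)$ gives that $(\pi_{z,r})^*u$ is polyhomogeneous with index set $I$ at $\rf,\ff_{zx},\ff_{zy},\ff_z$ and $0=\NN_0\times\{0\}$ at $\lf$; the extra $\NN_0$ shift appearing in the pullback formula is absorbed into $I$ itself by property 3 of Definition \ref{def:indfam}. Multiplication by this factor, which is smooth transverse to $\Delta$, preserves the conormal order $m$ of $K_P$ at $\Delta$ and adds index sets face by face, so
\[
K_P\cdot(\pi_{z,r})^*u \in I^{m,\calK}(M^2_z,\Delta)
\]
with $\calK(\rf)=\calJ(\rf)+I$, $\calK(\lf)=\calJ(\lf)$, $\calK(\ff_{zx})=\calJ(\ff_{zx})+I$, $\calK(\ff_{zy})=\calJ(\ff_{zy})+I$, $\calK(\ff_z)=\calJ(\ff_z)+I$.

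For the pushforward, from $e_{\pi_{z,l}}=(0,1,1,1,1)$ the only bhs $H$ with $\overline{\pi_{z,l}}(H)=M$ is $\rf$, so the sole integrability hypothesis of Theorem \ref{thm:genpushforward} reduces to $\inf(\calJ(\rf)+I)>0$, which is precisely the assumption. Since $\Delta$ maps diffeomorphically onto $M$ under $\pi_{z,l}$, its conormal singularity is integrated out in the transverse fibre directions and the output is polyhomogeneous on $M$ with no interior singularity. Applying Definition \ref{def:pushforwardindex}, the four contributing bhs $\lf,\ff_{zx},\ff_{zy},\ff_z$ each have exponent $1$, so no rescaling occurs, and the index set of $Pu$ at $\partial M$ is
\[
\calJ(\lf)\,\overline{\cup}\,(\calJ(\ff_{zx})+I)\,\overline{\cup}\,(\calJ(\ff_{zy})+I)\,\overline{\cup}\,(\calJ(\ff_z)+I),
\]
as claimed.

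The main obstacle I anticipate is not the index-set arithmetic but the half-density bookkeeping: Theorems \ref{thm:genpullback} and \ref{thm:genpushforward} are stated for b-half-densities and b-densities, while $K_P$ is valued in the twisted bundle $\Omega_\bfa^{1/2}(M^2_z)$ of Definition \ref{def.full calculus}, which differs from $\Omega_b^{1/2}(M^2_z)$ by the weight factors recorded in \eqref{eq:half densities on M2z}. I would verify face by face that these twists, together with the Jacobians arising when $u$ is treated as a b-half-density pulled back along the b-fibration $\pi_{z,r}$, cancel so that the index sets from the pullback and pushforward formulas appear with no additional shifts. The normalization in Definition \ref{def.full calculus} was engineered precisely for this cancellation, so I expect this to be routine bookkeeping rather than a genuine difficulty.
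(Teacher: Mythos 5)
Your proposal is correct and follows essentially the same route as the paper: pull back $u$ along $\pi_{z,r}$ using $e_{\pi_{z,r}}=(1,0,1,1,1)$, multiply by $K_P$, and push forward along $\pi_{z,l}$, with the integrability condition coming from $\rf$ being the only hypersurface mapped to all of $M$. The density bookkeeping you defer is exactly what the paper settles in one line: pairing $(\pi_{z,r})^*u$ with an auxiliary $(\pi_{z,l})^*\nu$, $\nu\in C^\infty(M,\Omega_b^{1/2})$, produces a section of $\Omega_{\bfa,0}^{1/2}$, whose weight factors from \eqref{eq:Omega a 0} cancel those of $\Omega_\bfa^{1/2}$ in \eqref{eq:half densities on M2z}, so the product with $K_P$ is a b-density and the index sets carry no extra shifts, as you anticipated.
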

\begin{proof}
We list index families for $M^2_z$ in the order $\rf,\lf,\ff_{zx},\ff_{zy},\ff_{z}$ as in Lemma \ref{lem:e2matrix}. Choose an auxiliary $\nu\in C^\infty(M,\Omega_b^{1/2})$. Then for $u\in\calA^I (M,\Omega_b^{1/2})$ we have
$(\pi_{z,l})^*\nu\cdot (\pi_{z,r})^*u \in \calA^{\calI}(M^2_z,\Omega_{\bfa,0}^{1/2})$, where $\calI = (I,0,I,I,I)$ since $e_{\pi_{z,r}} = (1,0,1,1,1)$ and by definition of $\Omega_{\bfa,0}^{1/2}$. Multiplying this with $K_P\in I^{m,\calJ} (M^2_z,\Delta; \Omega_\bfa^{1/2})$ we obtain an element of
$I^{m,\calI+\calJ} (M^2_z,\Delta; \Omega_b)$ since the $\rho$-factors in the densities cancel out.
The result now follows from the Pushforward Theorem \ref{thm:genpushforward}.
\end{proof}



\begin{theorem}
\label{th:basiccomp}
Let $P \in \Psi^{m, \mathcal{I}}_\bfa(M)$ and $Q\in \Psi_\bfa^{m', \mathcal{J}}(M)$.
If $\inf(\mathcal{I}(\rf) + \mathcal{J}(\lf))>0$ then the pushforward in \eqref{eq:comp} converges, so $P\circ Q$ is defined, and
$P\circ Q \in \Psi_\bfa^{m+m', \mathcal{K}}(M)$, where
\begin{equation}
\label{eq:calK calc}
\mathcal{K} = (\pi^3_{z,2})_\#[(\pi^3_{z,3})^\#(\mathcal{I}) + (\pi^3_{z,1})^\#(\mathcal{J})+W_\bfa] - w_\bfa
\end{equation}
for certain weight vectors $W_\bfa$ on $M^3_z$ and $w_\bfa$ on $M^2_z$ as explained in the proof below.
In particular,
$$
\mathcal{K}(\ff_z) = (\calI(\ff_z) + \calJ(\ff_z))\overline{\cup}
(\calI(\lf) + \calJ(\rf)+ \gamma_z)
\overline{\cup}(\calI(\ff_{zx}) + \calJ(\ff_{zx}) + \gamma_z)
\overline{\cup}(\calI(\ff_{zy}) + \calJ(\ff_{zy}) + \gamma_z - \gamma_y).
$$
If $P\in \Psi^{m, \mathcal{I}}_\bfa(M)$ then its adjoint $P^*\in \Psi^{m, \mathcal{I}'}_\bfa(M)$ where $\calI'$ is obtained from $\calI$ by switching the index sets at $\lf,\rf$ and leaving all other index sets the same.
\end{theorem}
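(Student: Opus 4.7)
The plan is to combine \eqref{eq:comp} with the Pullback Theorem \ref{thm:genpullback} and the Pushforward Theorem \ref{thm:genpushforward}. By Lemma \ref{lem:atripmaps}, each projection $\pi^3_{z,i}\colon M^3_z \to M^2_z$ is a b-fibration, hence automatically transversal to every interior p-submanifold in $M^2_z$, in particular to the lifted diagonal $\Delta$. I would pull back $K_P$ along $\pi^3_{z,3}$ and $K_Q$ along $\pi^3_{z,1}$ by Theorem \ref{thm:genpullback}, form the product (to a b-full-density once density bookkeeping is done), and push forward under $\pi^3_{z,2}$ by Theorem \ref{thm:genpushforward}.

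The essential hypotheses for Theorem \ref{thm:genpushforward} are, first, that the interior p-submanifolds $\Delta_3 := (\pi^3_{z,3})^{-1}(\Delta)$ and $\Delta_1 := (\pi^3_{z,1})^{-1}(\Delta)$ intersect transversally in $M^3_z$ (the intersection being the lifted triple $z$-diagonal, which is mapped diffeomorphically onto $\Delta$ by $\pi^3_{z,2}$), and second, that $\inf(\calE(H)+\calF(H)) > 0$ for every bhs $H$ of $M^3_z$ with $\overline{\pi^3_{z,2}}(H)=M^2_z$. For the first, transversality is obvious in the interior and propagates through the successive blowups constructing $M^3_z$ by repeated application of Lemma \ref{lem:transversal}, using that at each stage the blowup centers lie in $\Delta_1\cap\Delta_3$. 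For the second, an argument symmetric to \eqref{eq:facemap} shows that the only such bhs is $H_2$; since $H_2\mapsto\rf$ under $\pi^3_3$ and $H_2\mapsto\lf$ under $\pi^3_1$, the required inequality reduces to $\inf(\calI(\rf)+\calJ(\lf))>0$, which is the standing assumption.

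The remaining technical point is the density bookkeeping that produces the weight vectors. Using \eqref{eq:half densities on M2z} on $M^2_z$, together with the analogous presentation $C^\infty(\Omega_\bfa(M^3_z)) = \rho^{-W_b}C^\infty(\Omega_b(M^3_z))$ obtained by applying \eqref{eq:half densities pullback} through the sequence of blowups defining $M^3_z$, one defines $w_\bfa$ on $M^2_z$ as the weight vector converting $\Omega_\bfa^{1/2}$ to $\Omega_b^{1/2}$, and $W_\bfa$ on $M^3_z$ as the discrepancy between the pulled-back product $(\pi^3_{z,3})^*\Omega_\bfa^{1/2}\otimes(\pi^3_{z,1})^*\Omega_\bfa^{1/2}$ and $\Omega_b(M^3_z)$. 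Theorem \ref{thm:genpushforward} then yields exactly \eqref{eq:calK calc}. To extract the explicit formula for $\calK(\ff_z)$, I would enumerate via the analog of \eqref{eq:facemap} for $\pi^3_{z,2}$ the four bhs of $M^3_z$ lying over $\ff_z$, namely $\calV_z,\calF_{2,z},\calG_{2,z},\calE_{2,z}$, and identify their images under $(\pi^3_{z,3},\pi^3_{z,1})$ as $(\ff_z,\ff_z)$, $(\ff_{zy},\ff_{zy})$, $(\ff_{zx},\ff_{zx})$ and $(\lf,\rf)$ respectively, using the same face map calculation. Reading off $W_\bfa-w_\bfa$ at these four faces produces the stated weight shifts $0,\ \gamma_z-\gamma_y,\ \gamma_z,\ \gamma_z$.

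The adjoint statement is independent and straightforward. The involution $(p,p')\mapsto (p',p)$ of $M^2$ preserves each fibre diagonal $\Delta_x,\Delta_y,\Delta_z$ together with its natural interior extension to any order, hence lifts uniquely to an involution $\sigma_z\colon M^2_z\to M^2_z$ that exchanges $\lf\leftrightarrow \rf$, fixes each of $\ff_{zx},\ff_{zy},\ff_z$ setwise, and fixes the lifted diagonal $\Delta$. Since $K_{P^*}=\overline{\sigma_z^*K_P}$ as half-densities, the index family of $P^*$ is obtained from $\calI$ by interchanging the entries at $\lf$ and $\rf$. I expect the main obstacles to be the density bookkeeping underlying the explicit definitions of $W_\bfa$ and $w_\bfa$, and the verification that $\Delta_1$ and $\Delta_3$ remain transversal in $M^3_z$ through the full cascade of quasihomogeneous blowups; once these are in hand, everything else reduces to the face map bookkeeping of \eqref{eq:facemap} and Definitions \ref{def:pullbackindex}–\ref{def:pushforwardindex}.
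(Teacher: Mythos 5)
Your proposal is correct and follows essentially the same route as the paper's proof: pull back $K_P$, $K_Q$ under $\pi^3_{z,3}$, $\pi^3_{z,1}$, check transversality of the lifted double diagonals via Lemma \ref{lem:transversal} and the integrability condition at $H_2$ (indeed the only hypersurface mapped onto $M^2_z$ by $\pi^3_{z,2}$, so the condition reduces exactly to $\inf(\calI(\rf)+\calJ(\lf))>0$), push forward with weight-vector bookkeeping, and read off the faces $\calV_z,\calF_{2,z},\calG_{2,z},\calE_{2,z}$ over $\ff_z$ with the correct images $(\ff_z,\ff_z)$, $(\ff_{zy},\ff_{zy})$, $(\ff_{zx},\ff_{zx})$, $(\lf,\rf)$ and shifts $0$, $\gamma_z-\gamma_y$, $\gamma_z$, $\gamma_z$; the adjoint argument is also the paper's. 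The one adjustment: $W_\bfa$ must be the weight of the full triple product $(\pi^3_{z,1})^*\Omega_\bfa^{1/2}\otimes(\pi^3_{z,2})^*\Omega_\bfa^{1/2}\otimes(\pi^3_{z,3})^*\Omega_\bfa^{1/2}$ relative to $\Omega_b(M^3_z)$ — the factor $(\pi^3_{z,2})^*\nu_\bfa^{1/2}$ enters because the invariant composition formula computes $(P\circ Q)\cdot\nu_\bfa^{1/2}$ — since the two-factor product you describe is not a density on $M^3_z$ and cannot be compared directly with $\Omega_b(M^3_z)$.
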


\begin{proof}
The triple projection maps, $\pi^3_{z,i}$, are b-fibrations and transversal to the diagonal $\Delta \subset M^2_z$.
Thus we may apply the pullback theorem to $K_P$ and $K_Q$ and obtain distributions conormal with respect to the double diagonals in $M^3_z$.
The double diagonals in $M^2$
intersect transversally at the triple diagonal in $M^3$.  Then by Lemma \ref{lem:transversal}, the lifted
double diagonals in $M^3_z$ still intersect transversally at the lifted triple diagonal.  Further,
each of the three double diagonals in $M^3$ maps diffeomorphically to
$M^2$ under the opposite two
triple projection maps. This remains true for their lifts in the interior, since the blow-down maps are diffeomorphism there. Furthermore, the lifted diagonal (say the 13-diagonal) hits $\calE_{2,z},\calV_z,\calF_{2,z},\calG_{2,z}$ and $H_2$ in their interior and transversally, so from \eqref{eq:facemap} and the fact that all exponents are one we get that the whole lifted double diagonals map diffeomorphically to the whole
of the \bfa-double spaces under the opposite projection maps.
So we can apply the pushforward Theorem \ref{thm:genpushforward}.

To calculate the shifts in the index sets first recall the precise rule for composition: Choose a non-vanishing section $\nu_\bfa^{1/2}$ of $\Omega_\bfa^{1/2}(M^2_z)$, then $P\circ Q$ is determined by $ (P\circ Q)\cdot \nu_\bfa^{1/2} = (\pi^3_{z,2})_* [(\pi^3_{z,3})^*P\cdot (\pi^3_{z,1})^*Q \cdot (\pi^3_{z,2})^*\nu_\bfa^{1/2} ]$; in other words, with
$\mu_{\bfa} :=  (\pi^3_{z,1})^*\nu_{\bfa}^{1/2} \cdot (\pi^3_{z,2})^*\nu_{\bfa}^{1/2} \cdot (\pi^3_{z,3})^*\nu_{\bfa}^{1/2} $ and $P= p\nu_\bfa^{1/2}$, $Q=q\nu_\bfa^{1/2}$ one has
$ (P\circ Q)\cdot \nu_\bfa^{1/2} = (\pi^3_{z,2})_* [(\pi^3_{z,3})^*p\cdot (\pi^3_{z,1})^*q \cdot \mu_\bfa ] $. In order to apply the pushforward theorem, we need to relate $\mu_\bfa$ to b-densities on $M^3_z$. For a density bundle $\Omega_*(X)$ on a manifold with corners $X$ with $C^\infty (X,\Omega_*(X)) = \rho^w C^\infty(X,\Omega_b(X))$, where $\rho$ is a family of boundary defining functions and $w$ assigns an integer to each boundary hypersurface, call $w=w(\Omega_*(X))$ its {\em weight vector}. Now $\mu_\bfa$ is a smooth non-vanishing section of
$\Omega_\bfa(M^3_z) := (\pi^3_{z,1})^*\Omega_\bfa^{1/2}(M^2_z) \otimes (\pi^3_{z,2})^*\Omega_\bfa^{1/2}(M^2_z) \otimes (\pi^3_{z,3})^*\Omega_\bfa^{1/2}(M^2_z) $, so if we set $W_\bfa =w(\Omega_\bfa(M^3_z))$ and $w_\bfa = w(\Omega_{\bfa}(M^2_z))$ then the push-forward theorem gives
\eqref{eq:calK calc} where adding a weight vector to an index family means shifting the $z$-component in Definition \ref{def:indfam} by the weights.

To determine $W_{\bfa}$  we compare it with $W_{\bfa,0}= w(\Omega_{\bfa,0}(M^3_z))$
where $\Omega_{\bfa,0}(M^3_z) := (\beta^3_\bfa)^*\Omega_b(M^3)$. On the one hand, this can be determined from the definition of the \bfa-triple space using the considerations leading to \eqref{eq:Omega a 0}, and this gives
$W_{\bfa,0} = (2\gamma_y,\gamma_y,\gamma_y; 2\gamma_z,\gamma_y+\gamma_z,\gamma_z,\gamma_z)$, with respect to the ordering $\calV_y,\calG_{i,y},\calE_{i,y}; \calV_z, \calF_{i,z},\calG_{i,z},\calE_{i,z}$ of boundary hypersurfaces of $M^3_z$ (the weights are zero at the other faces).
On the other hand, we have $\Omega_{\bfa,0}(M^3_z) := (\pi^3_{z,1})^*\Omega_{\bfa,0}^{1/2}(M^2_z) \otimes (\pi^3_{z,2})^*\Omega_{\bfa,0}^{1/2}(M^2_z) \otimes (\pi^3_{z,3})^*\Omega_{\bfa,0}^{1/2}(M^2_z) $
since this results from pulling back the obvious identity $\Omega_b(M^3) := (\pi^3_{1})^*\Omega_b^{1/2}(M^2) \otimes (\pi^3_{2})^*\Omega_b^{1/2}(M^2) \otimes (\pi^3_{3})^*\Omega_b^{1/2}(M^2) $ by $\beta^3_\bfa$.
Now the pull-back theorem shows that with $w_0 = \frac12 (w_{\bfa}-w_{\bfa,0})$, where $w_{\bfa,0}= w(\Omega_{\bfa,0}(M^2_z))$, we have $W_{\bfa}-W_{\bfa,0} = (\pi^3_{z,1})^\# w_0 + (\pi^3_{z,2})^\# w_0 + (\pi^3_{z,3})^\# w_0 $. With $w_{\bfa,0} = (\gamma_y,\gamma_z)$ (with respect to the faces $\ff_{zy},\ff_z$ of $M^2_z$, the weights being zero at the other faces) from \eqref{eq:Omega a 0} and $w_\bfa = (-\gamma_y,-\gamma_z)$ from \eqref{eq:half densities on M2z} we get $w_0=(-\gamma_y,-\gamma_z)$ and then $W_{\bfa}-W_{\bfa,0} = -(3\gamma_y,\gamma_y,\gamma_y; 3\gamma_z,2\gamma_y + \gamma_z, \gamma_z,\gamma_z)$ from \eqref{eq:facemap}. Together, we obtain $W_\bfa = (W_{\bfa}-W_{\bfa,0}) + W_{\bfa,0} = -(\gamma_y,0,0;-\gamma_z,-\gamma_y,0,0).$ This gives the result.

The statement about adjoints is clear since the kernel of the adjoint is obtained by switching the factors in $M\times M$ and taking the complex conjugate.
\end{proof}

In particular, we have:

\begin{corollary}
\label{th:smallcomp}
Operators in the small \bfa-calculus may be composed. For $c,c'\in\RR\cup\{\infty\}$ and $m,m'\in\RR\cup\{-\infty\}$ we have
$$
x^c\Psi^m_\bfa \MO \cdot x^{c^\prime}\Psi^{m'}_\bfa \MO \subset x^{c+c^\prime}\Psi^{m+m'}_\bfa \MO.
$$
Also, $x^c\Psi^m_\bfa \MO$ is closed under taking adjoints.
\end{corollary}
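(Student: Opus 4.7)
The plan is to obtain this corollary as a direct specialization of Theorem \ref{th:basiccomp}, exploiting the fact that small calculus kernels vanish to infinite order at every boundary hypersurface of $M^2_z$ except $\ff_z$.

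First, I would record the index families: $P \in x^c\Psi^m_\bfa(M)$ corresponds to the index family $\calI$ with $\calI(\lf)=\calI(\rf)=\calI(\ff_{zx})=\calI(\ff_{zy})=\emptyset$ and $\calI(\ff_z) = c+\NN_0\times\{0\}$, since by the proof of Lemma \ref{lemma.normalop-exactseq} the pulled-back bdf $x$ is equivalent to a defining function of $\ff_z$ on its interior, while at the remaining faces the small calculus already enforces the empty index set. Analogously, $Q\in x^{c'}\Psi^{m'}_\bfa(M)$ yields an index family $\calJ$ of the same shape with $c'$ in place of $c$. The integrability condition $\inf(\calI(\rf)+\calJ(\lf))>0$ of Theorem \ref{th:basiccomp} is automatic, since both sets are empty and empty sums have infimum $+\infty$.

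Second, I would verify that the resulting index family $\calK$ given by \eqref{eq:calK calc} is empty at each of $\lf,\rf,\ff_{zx},\ff_{zy}$. By the pullback rule (Definition \ref{def:pullbackindex}), the pullback $(\pi^3_{z,3})^\#\calI$ is empty at a face $G$ of $M^3_z$ unless $\overline{\pi^3_{z,3}}(G)=\ff_z$; similarly for $(\pi^3_{z,1})^\#\calJ$. Listing the preimages via \eqref{eq:facemap} and its symmetric counterparts for the three projections, the unique face of $M^3_z$ mapping to $\ff_z$ under both $\pi^3_{z,1}$ and $\pi^3_{z,3}$ is the triple-diagonal face $\calV_z$. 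By the threefold symmetry of $M^3_z$, $\calV_z$ is also sent to $\ff_z$ by $\pi^3_{z,2}$, so its (only) contribution to the pushforward lands in $\calK(\ff_z)$ and nowhere else.

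Third, at $\ff_z$ the explicit formula in Theorem \ref{th:basiccomp} collapses: each of the last three terms is a sum involving an $\emptyset$ factor, and the extended union with $\emptyset$ is trivial. This yields
$$\calK(\ff_z) \;=\; \calI(\ff_z)+\calJ(\ff_z) \;=\; (c+c')+\NN_0\times\{0\},$$
which translates exactly to $P\circ Q \in x^{c+c'}\Psi^{m+m'}_\bfa(M)$.

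Finally, the adjoint statement follows from the last sentence of Theorem \ref{th:basiccomp}: forming $P^*$ merely swaps the index sets at $\lf$ and $\rf$ and leaves the others fixed. Since $\calI(\lf)=\calI(\rf)=\emptyset$ already, the index family is preserved and $P^*\in x^c\Psi^m_\bfa(M)$.

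The main obstacle I anticipate is the combinatorial step of identifying $\calV_z$ as the \emph{only} face of $M^3_z$ sent to $\ff_z$ by two distinct triple-to-double projections; this requires a careful reading of \eqref{eq:facemap} together with the threefold symmetry of the triple space construction.
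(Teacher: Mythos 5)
Your argument is correct and is essentially the paper's proof: the corollary is obtained by specializing Theorem \ref{th:basiccomp} to the index families equal to $(c+\NN_0)\times\{0\}$ at $\ff_z$ and $\emptyset$ at all other faces (so the integrability hypothesis is vacuous and the explicit formula for $\calK(\ff_z)$ collapses to $\calI(\ff_z)+\calJ(\ff_z)$), with the adjoint statement read off from the last part of that theorem. One small imprecision in your face-by-face bookkeeping: by Definition \ref{def:pullbackindex} a pulled-back index family is not empty at hypersurfaces mapped to the \emph{interior} of $M^2_z$ (there it is the smooth index set), but since the only such hypersurfaces ($H_1$ for $\pi^3_{z,1}$, $H_3$ for $\pi^3_{z,3}$) are sent to $\lf$ or $\rf$ by the other projection, the product index family is still empty there and your identification of $\calV_z$ as the sole contributing face, and hence your conclusion, stands.
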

\begin{proof}
This follows directly from Theorem \ref{th:basiccomp} by choosing the index sets at $\ff_z$
to be $(c+\mathbb{N}_0)\times\{0\}$ and $(c' + \mathbb{N}) \times \{0\}$, respectively,
and all other index sets to be $\emptyset$.
\end{proof}

Notice also

\begin{corollary}
\label{cor:invunderxconj}
The small \bfa-calculus is invariant under conjugation by powers
of the boundary defining function $x$.
\end{corollary}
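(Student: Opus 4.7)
The operator $x^s P x^{-s}$, acting on half-densities, has Schwartz kernel $(x/x')^s K_P$, where $x$ and $x'$ denote the lifts of the boundary defining function to the left and right factors of $M^2$. So the task is to show that if $K_P$ represents an element of $\Psi^m_\bfa(M)$ as in Definition \ref{def:small calculus}, then $(x/x')^s K_P$ does as well. My plan is to show that $(x/x')^s$ lifts to a polyhomogeneous function on $M^2_z$ whose multiplicative action preserves all the properties characterizing the small calculus.

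First I would study the lift of $x/x'$ to $M^2_z$ face by face. On $M^2$ itself this function is smooth positive in the interior and singular only at the corner $\partial M\times\partial M$. The b-blow-up $\beta_x$ resolves exactly this singularity: in the projective coordinate $t = x'/x$ on $M^2_x$ one has $x/x' = 1/t$, which is smooth and positive on $\interior{\ff_x}$, vanishes to order $1$ at $\lf$ (where $1/t$ is a boundary defining function), and blows up to order $1$ at $\rf$ (where $t$ is a boundary defining function). In the projective coordinates of the $y$-blow-up, $T = (1-t)/x^{a_1}$, one finds $x/x' = (1 - x^{a_1}T)^{-1}$, which is smooth near the interior of $\ff_y$ and equals $1$ on it. In the coordinates of the $z$-blow-up, $\calT = T/x^{a_2}$, one similarly computes $x/x' = (1 - x^{a_1+a_2}\calT)^{-1}$, smooth near the interior of $\ff_z$ with value $1$ there. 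Consequently $(x/x')^s$ is polyhomogeneous on $M^2_z$ with index $s$ at $\lf$, index $-s$ at $\rf$, and smooth with value $1$ at each of $\ff_{zx}$, $\ff_{zy}$, $\ff_z$ (in particular, nonvanishing near the lifted diagonal $\Delta$, which sits inside $\ff_z$).

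With this computed, the conclusion follows from the general principle that multiplying by such a polyhomogeneous factor preserves each defining property of an element of $\Psi^m_\bfa(M)$: the conormal singularity at $\Delta$ is preserved because $(x/x')^s$ is smooth and nonvanishing on a neighborhood of $\Delta$; the infinite-order vanishing at $\lf$ and $\rf$ is preserved because adding the finite indices $\pm s$ to the empty index set still gives the empty index set; and smoothness/infinite-order vanishing at $\ff_z$, $\ff_{zx}$, $\ff_{zy}$ is preserved because $(x/x')^s$ is smooth there. The half-density bundle $\Omegatilde_\bfa^{1/2}(M^2_z)$ is intrinsic and unaffected by multiplication by functions. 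The main (and only) obstacle is verifying the smoothness of the lift of $x/x'$ at the quasi-homogeneous front faces $\ff_y$ and $\ff_z$; this is settled by the explicit coordinate formulas above, which follow directly from the definitions of the blow-up coordinates in Subsection \ref{subsec:qhom}.
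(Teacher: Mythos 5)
Your proposal is correct and follows essentially the same route as the paper's own proof: the kernel of the conjugated operator is the lift of $(x/x')^{\pm s}K_P$, and this lift has only finite-order zeros/poles at $\lf$ and $\rf$ (harmless against the empty index sets there) while being smooth and non-vanishing near the other faces and the lifted diagonal, as your projective-coordinate computations at $\ff_y$ and $\ff_z$ confirm. (Only a cosmetic slip: on $\interior{\ff_{zx}}$ the lift is smooth and positive but not identically $1$; this does not affect the argument.)
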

\begin{proof}
This can be seen by applying the composition theorem, Theorem \ref{th:basiccomp}, or more simply by observing that multiplication by $(x'/x)^\alpha = t^\alpha$ leaves $I^{m,\mathcal{G}}(M^2_z,\Delta; \Omegatilde_\bfa^{1/2}(M^2_z))$ in Definition \ref{def:small calculus} invariant since $t$ has only a finite order zero and pole at $\rf$ and $\lf$, respectively, and is non-vanishing otherwise.
\end{proof}

The principal symbol and
normal operator behave well under composition:

\begin{lemma}
\label{lem:symbolcomp}
The symbol map $\Psi^m_{\bfa}(M) \stackrel{{}^\bfa \sigma_m}
{\longrightarrow} S^{[m]}({}^\bfa T^*M)$ and the normal operator map $\Psi^m_{\bfa}(M) \stackrel{N_\bfa}
{\longrightarrow} \Psi^m_{\sus(^\bfa N'B_1)-\phi_2}(\del M)$ are star algebra homomorphisms.  That is,
if $P \in \Psi^m_{\bfa}(M)$
and $Q \in \Psi^{m'}_{\bfa}(M)$, then
${}^\bfa\sigma_{m+m'} (PQ) = {}^\bfa\sigma_m(P){}^\bfa \sigma_{m'}(Q)$ and $N_\bfa(PQ)=N_\bfa(P)N_\bfa(Q)$, and ${}^\bfa\sigma_m(P^*) = \overline{{}^\bfa\sigma_m(P)}$ and $N_\bfa(P^*)=N_\bfa(P)^*$.
\end{lemma}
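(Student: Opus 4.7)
The plan is to verify each of the four claims separately, using the composition formula $K_{PQ} = (\pi^3_{z,2})_*[(\pi^3_{z,3})^*K_P \cdot (\pi^3_{z,1})^*K_Q]$ from \eqref{eq:comp} together with the b-fibration structure established in Lemma \ref{lem:atripmaps}.

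For the symbol claim, the principal symbol ${}^\bfa\sigma_m$ is determined by the behavior of the kernel in an arbitrarily small neighborhood of the lifted diagonal $\Delta\subset M^2_z$, which may be chosen disjoint from all boundary faces except $\ff_z$. Over the interior of $\Delta$, the $\bfa$-calculus reduces to the standard classical pseudodifferential calculus on $\interior{M}$, and the triple projections $\pi^3_{z,i}$, being b-fibrations, restrict near $\Delta$ to smooth fibrations transversal to the diagonals. Applying the Pullback Theorem \ref{thm:genpullback} and the standard composition formula for conormal distributions (now written in terms of the conormal bundle $N^*\Delta \cong {}^\bfa T^*M$), one obtains ${}^\bfa\sigma_{m+m'}(PQ) = {}^\bfa\sigma_m(P)\cdot {}^\bfa\sigma_{m'}(Q)$. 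This works uniformly up to $\ff_z$ because Lemma \ref{lemma.vectorfields} guarantees that $\bfa$-vector fields lift smoothly and transversally to $\Delta$.

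For the normal operator claim, the key observation is that for $P,Q$ in the small calculus, $K_P$ and $K_Q$ vanish to infinite order at every face of $M^2_z$ except $\ff_z$. By the symmetric analogue of \eqref{eq:facemap}, the pullback $(\pi^3_{z,3})^*K_P$ therefore vanishes to infinite order off $\{\calV_z,\calE_{3,z},\calG_{3,z},\calF_{3,z}\}$, while $(\pi^3_{z,1})^*K_Q$ vanishes to infinite order off $\{\calV_z,\calE_{1,z},\calG_{1,z},\calF_{1,z}\}$. The only face on which both pullbacks can be nontrivial is $\calV_z$, which under $\pi^3_{z,2}$ maps onto $\ff_z$. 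Using the identification \eqref{eq:ffz diffeo} and the corresponding identification of $\calV_z$ as the fibre product $F_2 \times_{B_1} F_2 \times_{B_1} F_2$ fibered over the total space ${}^\bfa N' B_1 \oplus {}^\bfa N' B_1$, the restriction of the pushforward formula to $\ff_z$ reduces exactly to the convolution-composition of suspended fibre operators on $\partial M$, i.e.\ $N_\bfa(PQ) = N_\bfa(P)\, N_\bfa(Q)$. Equivalently, on the normal family side this is the pointwise product $\Nhat_\bfa(PQ)(p,\mu) = \Nhat_\bfa(P)(p,\mu)\Nhat_\bfa(Q)(p,\mu)$, which also follows directly from the oscillatory characterization \eqref{eq:def normal family 2} by applying $P$ and $Q$ successively to $e^{ig}\utilde$.

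For the adjoint statements, the involution $\tau:(p,p')\mapsto(p',p)$ on $M^2$ preserves all three of the centers $\partial\Delta_x,\partial\Delta_y,\partial\Delta_z$ (and their interior extensions), so it lifts to an involution $\tau_z:M^2_z\to M^2_z$ which swaps $\lf\leftrightarrow\rf$ and fixes $\ff_z$ (acting on the ${}^\bfa N$ fibres by $\mu\mapsto -\mu$). Since the kernel of $P^*$ is $\overline{\tau_z^*K_P}$, the statements ${}^\bfa\sigma_m(P^*) = \overline{{}^\bfa\sigma_m(P)}$ and $N_\bfa(P^*) = N_\bfa(P)^*$ are immediate. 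The main obstacle is the identification in the normal-operator argument of the fibre structure of $\calV_z$ over $\ff_z$ with the composition fibration for suspended operators on $\phi_2:\partial M\to B_1$; this is essentially bookkeeping given the explicit construction of the triple space in \eqref{eq:ztriplespace} and Proposition \ref{prop:a front face general} from the appendix, but it is the step where the quasi-homogeneous blowup structure must be carefully tracked.
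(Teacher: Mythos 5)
Your proposal is correct, but your primary route for the multiplicativity of $N_\bfa$ differs from the paper's. The paper does not go through the triple space at all for this part: it uses the oscillatory-testing characterization of the normal family (Definition \ref{def.prop.normal family}), writing $\Nhat_\bfa(PQ)(p,\mu)u=[e^{-ig}Pe^{ig}[e^{-ig}Q(e^{ig}\utilde)]]_{|F_{2,p}}$ and observing that $e^{-ig}Q(e^{ig}\utilde)$ is an interior extension of $\Nhat_\bfa(Q)(p,\mu)u$, so that multiplicativity of the normal family (hence, after Fourier transform, of $N_\bfa$) is immediate; this is exactly your ``equivalently'' remark, which you relegate to a side comment. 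Your main argument instead restricts the pushforward formula to $\ff_z$ by tracking index sets through the triple space. That route can be made to work, but note two points. First, a small bookkeeping slip: $(\pi^3_{z,3})^*K_P$ is not rapidly vanishing on $H_3$ (the face mapped onto the interior of $M^2_z$), and likewise $(\pi^3_{z,1})^*K_Q$ is nontrivial on $H_1$; your conclusion survives because $H_1\neq H_3$ and neither lies over $\ff_z$, and the integrability condition at $H_2$ (the only bhs with $\overline{\pi}^3_{z,2}$-image all of $M^2_z$) holds since $K_P$ vanishes rapidly at $\rf$, but the accounting should be stated this way. Second, the identification of $\calV_z$ with the fibre product carrying the convolution-composition of suspended operators is precisely the step you call ``bookkeeping''; it is the real content of that route and requires tracking the quasi-homogeneous blowups through Proposition \ref{prop:a front face general}, which is why the paper sidesteps it via oscillatory testing. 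For the symbol statement your localization near $\Delta$ away from all faces but $\ff_z$ is essentially the paper's cutoff-plus-continuity argument, and your adjoint argument via the lifted involution $\tau_z$ (which exists because all centers and their interior extensions are symmetric) is a legitimate, slightly more explicit version of what the paper dismisses as clear from the definitions; note only that the induced map on $\intff_z$ swaps the two $F_2$ factors and negates the ${}^\bfa N$-fibre variables $(\calT,\calY,\calZ)$, i.e.\ acts by $v\mapsto-v$ on the fibres, with $\mu\mapsto-\mu$ appearing only after Fourier transform.
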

\begin{proof}
Consider first the symbol map. By continuity, it suffices to show that for any interior diagonal point
$p\in \interior{\Delta}_z \subset M^2_z$ we have
${}^\bfa \sigma_m(PQ)(p,\cdot)= {}^\bfa\sigma_m(P)(p, \cdot) {}^\bfa \sigma_{m'}(Q)(p, \cdot).$
Let $\chi(x)$ be a cutoff function on $M^2_z$ that equals one near the front face, is supported near the front face, and equals zero near the point $p$.
Use this to break the kernels $K_P$ and $K_Q$ down into pieces $K_{P,1}$ and $K_{Q,1}$
supported near the front face and $K_{P,2}$ and $K_{Q,2}$ away from it.
When we lift these pieces to the double space to compose them, the lifts from the right and
left respectively of $K_{P,1}$ and $K_{Q,1}$ will vanish in a neighborhood of the preimage of $p$
along the lift of the diagonal from the middle. Thus of the four terms in the composition of the
two sums, only the composition of  $K_{P,2}$ and $K_{Q,2}$ will
contribute to the symbol at $p$.  But then we are away from the boundary, and there the standard
argument implies that the symbol is multiplicative.
%

For the normal operator we know from Definition \ref{def.prop.normal family} that for $u \in C^\infty(\del M)$
and for any smooth extension $\tilde{u}$ to the interior, we have
\begin{align*}
\Nhat_\bfa (PQ)(p, \mu) u&= \left[ e^{-ig} PQ (e^{ig} \utilde) \right]_{|F_{2,p}} \\
{}&=  \left[ e^{-ig} Pe^{ig}\left[e^{-ig}Q (e^{ig} \utilde)\right] \right]_{|F_{2,p}}.
\end{align*}
But $\left[e^{-ig}Q (e^{ig} \utilde)\right]$ is an extension to the interior of
$\left[e^{-ig}Q (e^{ig} \utilde)\right]_{|F_{2,p}} = \Nhat_\bfa(Q)(p,\mu)u$,
so overall,
$$
\Nhat_\bfa (PQ)(p, \mu) u = \left( \Nhat_\bfa(P)(p,\mu) \circ \Nhat_\bfa(Q)(p,\mu)\right) u.
$$
Here the composition is simply multiplication in the transform variables $\tau, \eta$ and $\xi$,
and is composition of the differential operators in $w$.  This
implies after transforming back that $N_\bfa(PQ) = N_\bfa(P) \circ N_\bfa(Q)$.

The statements on adjoints are clear from the definitions and the standard symbol formulas.
\end{proof}

\section{Parametrix construction and proof of main theorems}
\label{sec:param}
We can now construct the desired parametrix for a fully elliptic \bfa-pseudodifferential operator.

\begin{theorem}
\label{th:param}
Let $P\in \Psi^m_\bfa (M)$ be a fully elliptic \bfa-operator over an \bfa-manifold $M$.  Then there exists an \bfa-operator $Q$ of order $-m$ over $M$ such that
$PQ = I +R_1$ and $QP = I + R_2$, where $R_i \in x^{\infty} \Psi^{-\infty}_{\bfa} \MO$.
\end{theorem}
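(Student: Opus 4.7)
The plan is to construct $Q$ by a standard two-stage Neumann-type iteration, built on the two short exact sequences at our disposal: Lemma \ref{lem.short exact - sigma} for the principal symbol ${}^\bfa\sigma_m$ and Lemma \ref{lemma.normalop-exactseq} for the normal operator $N_\bfa$. Both maps are multiplicative by Lemma \ref{lem:symbolcomp} and both admit asymptotic (Borel) summation. Ellipticity will let us invert at the symbolic level, and full ellipticity, via Lemma \ref{lem.fullyelliptic}, will let us invert at the boundary in the suspended algebra $\Psi^{-m}_{\sus({}^\bfa N'B_1)-\phi_2}(\partial M)$.

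First I would build a parametrix modulo $\Psi^{-\infty}_\bfa(M)$. Pick $Q_0\in \Psi^{-m}_\bfa(M)$ with ${}^\bfa\sigma_{-m}(Q_0)=({}^\bfa\sigma_m(P))^{-1}$, possible by surjectivity in \eqref{eq:smallsymbses}; then $PQ_0=I-R_0$ with $R_0\in \Psi^{-1}_\bfa(M)$. Iterating by lifting ${}^\bfa\sigma_{-m}(Q_0)\cdot {}^\bfa\sigma_{-k}(R_0^k)$ to $Q_k\in \Psi^{-m-k}_\bfa(M)$ and asymptotically summing $\tilde Q\sim\sum_k Q_k$ via Lemma \ref{lem.short exact - sigma} yields $\tilde Q\in \Psi^{-m}_\bfa(M)$ with
$$P\tilde Q=I-S, \qquad S\in \Psi^{-\infty}_\bfa(M).$$

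Next, using full ellipticity, lift $N_\bfa(P)^{-1} N_\bfa(S)\in \Psi^{-\infty}_{\sus({}^\bfa N' B_1)-\phi_2}(\partial M)$ to $T_1\in \Psi^{-\infty}_\bfa(M)$ via surjectivity in Lemma \ref{lemma.normalop-exactseq}; multiplicativity of $N_\bfa$ gives $P(\tilde Q+T_1)=I-S_1$ with $N_\bfa(S_1)=0$, hence $S_1\in x\Psi^{-\infty}_\bfa(M)$. Iterating, at stage $k$ one has $PQ^{(k)}=I-S_k$ with $S_k\in x^k\Psi^{-\infty}_\bfa(M)$, and the leading contribution of $S_k$ at $\ff_z$ (the coefficient in a Taylor expansion in a bdf of $\ff_z$, which in the interior of $\ff_z$ coincides with $x$, cf.\ \eqref{eq:proj coord quasihom bd}) can be interpreted as an element of $\Psi^{-\infty}_{\sus({}^\bfa N'B_1)-\phi_2}(\partial M)$; the closure of the small calculus under multiplication by $x$ (Corollary \ref{th:smallcomp}) and the rescaling invariance of Corollary \ref{cor:invunderxconj} reduce each correction to another application of $N_\bfa(P)^{-1}$, producing $T_k\in x^k\Psi^{-\infty}_\bfa(M)$. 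The asymptotic summability in Lemma \ref{lemma.normalop-exactseq} then gives $Q\in \Psi^{-m}_\bfa(M)$ with $PQ=I+R_1$, $R_1\in x^\infty\Psi^{-\infty}_\bfa(M)$.

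Finally, an entirely symmetric construction (or, more economically, the adjoint argument using the adjoint statements in Theorem \ref{th:basiccomp} and Lemma \ref{lem:symbolcomp}) yields $Q'\in \Psi^{-m}_\bfa(M)$ with $Q'P=I+R_2'$, $R_2'\in x^\infty\Psi^{-\infty}_\bfa(M)$. Then the associativity identity
$$Q'(PQ)=(Q'P)Q \quad\Longrightarrow\quad Q'+Q'R_1 = Q + R_2' Q,$$
combined with the two-sided ideal property of $x^\infty\Psi^{-\infty}_\bfa(M)$ in $\Psi^*_\bfa(M)$ (Corollary \ref{th:smallcomp}), shows $Q-Q'\in x^\infty\Psi^{-\infty}_\bfa(M)$, so the same $Q$ serves as left parametrix, giving $QP=I+R_2$ with $R_2\in x^\infty\Psi^{-\infty}_\bfa(M)$. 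The main obstacle is the iterative normal-operator step: one must make precise the identification of successive leading kernels of elements of $x^k\Psi^{-\infty}_\bfa(M)$ at $\ff_z$ as elements of the suspended algebra, so that $N_\bfa(P)^{-1}$ can be applied at each stage and the resulting correction lies in $x^k\Psi^{-m}_\bfa(M)$; this hinges on the identification of $x$ with a defining function of $\ff_z$ in its interior and on the closure of the small calculus under multiplication by $x$.
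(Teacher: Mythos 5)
Your proposal is correct and follows the same overall strategy as the paper: invert the principal symbol and asymptotically sum to get a parametrix modulo $\Psi^{-\infty}_\bfa(M)$, use full ellipticity and the exact sequence of Lemma \ref{lemma.normalop-exactseq} to correct the remainder at $\ff_z$, push the error into $x^\infty\Psi^{-\infty}_\bfa(M)$, and finally compare left and right parametrices via associativity and the ideal property to get a two-sided one. The one place you diverge is the last improvement stage: you propose to re-invert $N_\bfa(P)$ at every order in $x$, interpreting the leading coefficient of each $S_k\in x^k\Psi^{-\infty}_\bfa(M)$ at $\ff_z$ as a suspended operator (using Corollaries \ref{th:smallcomp} and \ref{cor:invunderxconj}) --- precisely the point you flag as the main obstacle. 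This does work (the key fact is that conjugation by $x^k$ leaves the normal operator unchanged, since $t=x'/x$ equals $1$ on $\ff_z$), but the paper avoids the issue entirely: after a \emph{single} normal-operator correction one has $PQ_{f,1}=I+R_{f,1}$ with $R_{f,1}\in x\Psi^{-\infty}_\bfa$, and then Corollary \ref{th:smallcomp} gives $R_{f,1}^n\in x^n\Psi^{-\infty}_\bfa$, so a second Neumann series $I-R_{f,1}+R_{f,1}^2-\cdots$, asymptotically summed via Lemma \ref{lemma.normalop-exactseq}, kills the remainder to infinite order in $x$ with no further inversions of $N_\bfa(P)$ and no need to identify successive leading kernels. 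So your argument is sound, but the Neumann-series shortcut at the boundary stage is both simpler and removes the technical step you were worried about.
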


\begin{proof}
This is proved by the `usual' parametrix construction, applied twice.
First, since $P$ is elliptic, its principal symbol ${}^\bfa\sigma_m(P)$
is invertible, with inverse in  $S^{[-m]}(^\bfa T^*M)$. By exactness of the symbol sequence for order $-m$, there is  $Q_1 \in \Psi^{-m}_\bfa$
such that ${}^\bfa\sigma_{-m}(Q_1) = {}^\bfa\sigma_m(P)^{-1}$.  By Lemma \ref{lem:symbolcomp},
we have ${}^\bfa\sigma_0(PQ_1-I) = {}^\bfa\sigma_m(P) {}^\bfa\sigma_m(P)^{-1} - 1 = 0$, so by the exact sequence for order zero, $PQ_1 = I + R_1$ for some $R_1 \in \Psi^{-1}_\bfa$.
The remainder can now be improved using a kind of Neumann series. That is, let $S$ be an asymptotic sum for $I-R_1+R_1^2-+\dots$, which exists since $R_1^n\in \Psi^{-n}_\bfa$, and set $Q_s=Q_1 S$. Then $PQ_s = (I+R_1)S = I + R_s$ with $R_s \in \Psi^{-\infty}_\bfa $, so $Q_s$ is a `small parametrix'.

To obtain a full parametrix, we first want to improve $Q_s$ to $Q_s+Q'$ such that the remainder $R_{f,1}$ in
$P(Q_s+Q')=I+R_{f,1}$ is both in $\Psi^{-\infty}_\bfa $ and has $N_\bfa(R_{f,1})=0$.
Subtracting $PQ_s =I+R_s$ from this equation gives $PQ'=R_{f,1}-R_s$, and applying $N_\bfa$ to this and using that $N_\bfa$ is an algebra homomorphism we see that the condition $N_\bfa(R_{f,1})=0$ is equivalent to $N_\bfa(P) N_\bfa(Q') = - N_\bfa(R_s)$.
Now $N_\bfa(P)$ is invertible by full ellipticity, so we can rewrite this as $N_\bfa(Q')=-N_\bfa(P)^{-1} N_\bfa(R_s)$.
Using the  exact sequence for $N_\bfa$ (with $m=-\infty$) we see that $Q'\in \Psi_\bfa^{-\infty}$ can be chosen to have this normal operator.

We conclude that, with $Q_{f,1}=Q_s+Q'$, we have $PQ_{f,1} = I + R_{f,1}$ with $R_{f,1} \in x\Psi^{-\infty}_\bfa$, again by the exact sequence for $N_\bfa$.
Now by Corollary \ref{th:smallcomp}, we know that $R_{f,1}^n \in x^n\Psi^{-\infty}_\bfa$
for all $n$, so if we choose an asymptotic sum $S_f\in\Psi^0_\bfa$ of $I- R_{f,1} + R_{f,2}^2 -+ \cdots$ using Lemma \ref{lemma.normalop-exactseq}, then $Q_f = Q_{f,1}S_{f}$ satisfies $PQ_f = I+R_f$ where $R_f\in x^{\infty} \Psi^{-\infty}_{\bfa}$, so $Q_f$ is a right parametrix for $P$.

Finally, we do the same construction again except that we multiply the various $Q$'s from the left and find a left parametrix $Q_f'$, so that $Q_f'P=I+R_f'$ with  $R_f' \in x^{\infty} \Psi^{-\infty}_{\bfa}$. Then evaluating $Q_f'PQ_f$ in two ways we get
$Q_f'(I+R_f) = Q_f'PQ_f = (I+R_f')Q_f$, so $Q_f-Q_f' = Q_f'R_f - R_f' Q_f \in x^{\infty} \Psi^{-\infty}_{\bfa}$, which implies that $Q_fP$ differs from $Q_f'P$ by an element of $x^\infty\Psi^{-\infty}_\bfa$, hence is again of the form $Q_fP=I+R$ with $R\in x^\infty\Psi^{-\infty}_\bfa$, so $Q_f$ is a two-sided parametrix.
\end{proof}

\subsection{Sobolev mapping theorems}\label{section.sob}
In order to  obtain the Fredholm and regularity results in Theorem \ref{th:mainthm}
from Theorem \ref{th:param} we need to define \bfa-Sobolev spaces and see how \bfa-pseudodifferential
operators map between them.  First consider how to define $L^2$ spaces.
Denote by $L^2(M,\Omega^{1/2})$ the space of measurable half densities $u$  on $\intM$
with finite norm  $\|u\|_{L^2}:=\left(\int |u|^2 \right)^{1/2}$. Note that this integral is well-defined since $|u|^2$ is a density. Because of this invariance, and since in the interior of $M$ any density can be viewed as an \bfa-density and vice versa, putting a different half-density bundle here, for example $\Omega_\bfa^{1/2}$, would make no difference. In Subsection \ref{subsec.proofs} we will explain how to translate the boundedness results from half-densities to functions.

The mapping properties of $\bfa$-pseudodifferential operators are best expressed in terms of \bfa-Sobolev
spaces.
Before we introduce these we need a lemma.

\begin{lemma}
\label{lem.inverse in calculus}
Assume $P\in\Psi^m_\bfa(M)$ is fully elliptic and is invertible as an operator in $L^2(M,\Omega^{1/2})$. Then $P^{-1}$ is in $\Psi^{-m}_\bfa(M)$.
\end{lemma}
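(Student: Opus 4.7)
My plan is to apply the parametrix construction from Theorem \ref{th:param} and show that the discrepancy with $P^{-1}$ is smoothing. Theorem \ref{th:param} produces $Q\in\Psi^{-m}_\bfa(M)$ with $PQ=I+R$ and $QP=I+R'$, where $R,R'\in x^\infty\Psi^{-\infty}_\bfa(M)$ have kernels in $x^\infty C^\infty(M^2)$. By the hypothesized invertibility of $P$ on $L^2$, the operator $S:=Q-P^{-1}$ is bounded on $L^2$, and the parametrix identities yield $S=P^{-1}R=R'P^{-1}$, so that $PS=R$ and $SP=R'$. It will suffice to show $S\in x^\infty\Psi^{-\infty}_\bfa(M)$, for then $P^{-1}=Q-S\in\Psi^{-m}_\bfa(M)$.

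The technical core is a regularity-at-infinity statement: $P^{-1}$ preserves $x^\infty C^\infty(M)$ continuously in its natural Fr\'echet topology. Given $f\in x^\infty C^\infty(M)$ and $u:=P^{-1}f\in L^2(M)$, the identity $u=Qf-R'u$ from $QP=I+R'$ decomposes $u$ into two controllable terms: the mapping Theorem \ref{th:basicmap} applied with empty index family (feasible since $Q$ has empty index sets at $\lf,\rf,\ff_{zx},\ff_{zy}$ in the small calculus, and $\inf(\calJ(\rf)+\emptyset)=+\infty>0$) gives $Qf\in x^\infty C^\infty(M)$, while Cauchy--Schwarz against the kernel $K_{R'}\in x^\infty C^\infty(M^2)$ shows $R'$ maps $L^2(M)$ continuously into $x^\infty C^\infty(M)$. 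Combining these with the trivial bound $\|u\|_{L^2}\le\|P^{-1}\|\,\|f\|_{L^2}$ produces explicit Fr\'echet seminorm estimates for $P^{-1}$ on $x^\infty C^\infty(M)$.

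With that in hand, I can compute $K_S$ directly. For each fixed $y_0\in M$ the identity $K_S(\cdot,y_0)=P^{-1}K_R(\cdot,y_0)$ combined with the regularity step gives $K_S(\cdot,y_0)\in x^\infty C^\infty(M)$. Since $K_R\in x^\infty C^\infty(M^2)$, the map $y\mapsto K_R(\cdot,y)$ is smooth from $M$ into $x^\infty C^\infty(M_x)$ with all derivatives vanishing to infinite order at $\partial M$. Composition with the continuous linear operator $P^{-1}$ preserves smoothness and decay in the $y$-parameter, so every mixed partial $\partial_x^\beta\partial_y^\alpha K_S$ exists, is jointly continuous on $M^2$, and vanishes to infinite order at $\partial(M^2)$. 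Hence $K_S\in x^\infty C^\infty(M^2)$, $S\in x^\infty\Psi^{-\infty}_\bfa(M)$, and $P^{-1}=Q-S\in\Psi^{-m}_\bfa(M)$.

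The main obstacle is the upgrade from the pointwise inclusion $P^{-1}(x^\infty C^\infty(M))\subset x^\infty C^\infty(M)$ to quantitative continuity in the Fr\'echet topology: only a seminorm estimate makes it possible to transfer smooth $y$-parameter dependence through $P^{-1}$, which is the mechanism that lifts individual-variable elliptic regularity to joint regularity of the kernel $K_S$. The whole argument has to be carried out using only the $L^2$ bound on $P^{-1}$, the parametrix identities, and seminorm estimates on smoothing operators coming from their smooth rapidly-decaying kernels, since the \bfa-Sobolev machinery is only set up in the rest of Section \ref{section.sob}.
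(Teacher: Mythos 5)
Your argument is correct, but it is organized differently from the paper's proof. The paper also starts from the parametrix identities $PQ=I+R$, $QP=I+R'$ with $R,R'\in x^\infty\Psi^{-\infty}_\bfa(M)$, but instead of studying $S=Q-P^{-1}=P^{-1}R$ it substitutes one identity into the other to get $P^{-1}=Q-QR+R'P^{-1}R$; then $QR\in x^\infty\Psi^{-\infty}_\bfa(M)$ by the composition theorem (Corollary \ref{th:smallcomp}), and the whole problem reduces to the observation that $R'LR\in x^\infty\Psi^{-\infty}_\bfa(M)$ whenever $L$ is merely bounded on $L^2$, because the kernel of $R'LR$ at $(x,y)$ is $R'\bigl(L(K_R(\cdot,y))\bigr)(x)$ with $R$ and $R'$ mapping into $\Cdot^\infty(M)$ with smooth parameter dependence. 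The two-sided sandwich thus never requires any mapping property of $P^{-1}$ beyond $L^2$-boundedness. Your one-sided identity $S=P^{-1}R$ forces you to prove the extra regularity statement that $P^{-1}$ preserves $\Cdot^\infty(M)$ continuously (via $u=Qf-R'u$, Theorem \ref{th:basicmap} for $Q$, and Cauchy--Schwarz for $R'$), and then to push smooth Fr\'echet-valued parameter dependence through $P^{-1}$ to get joint smoothness of $K_S$. That step is sound as written and yields a slightly stronger by-product (a boundary-regularity property of $P^{-1}$ itself, in the spirit of the proof of Theorem \ref{th:mainthm}), at the cost of more analysis; the paper's sandwich identity buys the same conclusion with only the composition theorem and the characterization of $x^\infty\Psi^{-\infty}_\bfa(M)$ as operators with kernels smooth on $M^2$ and vanishing to infinite order at $\partial(M^2)$.
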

\begin{proof}
This is a standard argument.
Let $Q$ be a parametrix for $P$, so that $PQ=I+R$, $QP=I+R'$ with $R,R'\in\xPsiinfty$. Multiplying by $P^{-1}$ and reordering gives $P^{-1}=Q-P^{-1}R$ and $P^{-1}=Q-R'P^{-1}$. Plugging the second equation into the first yields
$P^{-1}=Q-QR + R'P^{-1}R$. Now by a simple calculation we have $R'LR\in \xPsiinfty$ for any $R,R'\in\xPsiinfty$ and any operator $L$ that is bounded on $L^2$, and this gives the claim. Here it is important that $\xPsiinfty$ can be characterized without reference to the $\bfa$-structure, as the set of operators with Schwartz kernels smooth on $M^2$ and vanishing to infinite order at $\partial(M^2)$. It is standard that these operators are continuous maps $C^{-\infty}(M) \to \Cdot^\infty(M)$.
\end{proof}

To introduce the Sobolev spaces as normed spaces we use the following lemma:

\begin{lemma}
\label{lem.exist ell inv}
For every $m \geq 0$, there is an invertible fully elliptic element $G_m \in \Psi_\bfa^m \MO$ with inverse in $\Psi_\bfa^{-m}\MO$.
\end{lemma}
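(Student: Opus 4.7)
The case $m=0$ is handled by $G_0=I$, and for $m=2k$ with $k\in\NN$, Theorem \ref{th:resolv} together with the composition theorem (Corollary \ref{th:smallcomp}) gives $G_{2k}:=(\Delta_{\bfa b}+I)^k\in\Psi^{2k}_\bfa(M)$, invertible with inverse $(\Delta_{\bfa b}+I)^{-k}\in\Psi^{-2k}_\bfa(M)$. The real content of the lemma lies in general $m>0$.

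My strategy for general $m>0$ is: first construct a self-adjoint, fully elliptic $A_m\in\Psi^m_\bfa(M)$ with positive principal symbol and positive normal family, then perturb by a finite-rank element of $x^\infty\Psi^{-\infty}_\bfa(M)$ to obtain an invertible $G_m$. For the normal family, fix an $\bfa$-boundary metric $g_{\bfa b}$, let $B_p$ be the induced fibre Laplacian on $F_{2,p}$, and take
\[
\widehat{N}_m(p,\mu) := \bigl(1 + |\mu|^2_{g_{\bfa b}} + B_p\bigr)^{m/2},
\]
interpreted via the functional calculus of the positive self-adjoint elliptic operator with parameter inside the parentheses. This is a positive invertible element of $\Psi^m_{\sus({}^\bfa N'B_1)-\phi_2}(\partial M)$ with inverse $\widehat{N}_{-m}$. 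By exactness of \eqref{eq:normexseq}, lift to $\widetilde{A}_m\in\Psi^m_\bfa(M)$ and set $A_m:=(\widetilde{A}_m+\widetilde{A}_m^*)/2$; by Theorem \ref{th:basiccomp} and Lemma \ref{lem:symbolcomp} this still lies in $\Psi^m_\bfa(M)$ with $N_\bfa(A_m)=\widehat{N}_m$, and is symmetric on $L^2(M,\Omega^{1/2})$. By Lemma \ref{lem.fullyelliptic}, $A_m$ is fully elliptic.

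By Theorem \ref{th:param}, $A_m$ admits a parametrix with remainder in $x^\infty\Psi^{-\infty}_\bfa(M)$, and such remainders are Hilbert--Schmidt on $L^2(M,\Omega^{1/2})$, so $A_m$ (with its natural self-adjoint extension) is Fredholm; its kernel is finite-dimensional, and applying the parametrix to any $u\in\ker A_m$ shows $u$ is smooth on $M$ and vanishes to infinite order at $\partial M$. Let $\{u_i\}_{i=1}^N$ be an $L^2$-orthonormal basis of $\ker A_m$; then $Ru:=\sum_i u_i\langle u,u_i\rangle$ has Schwartz kernel $\sum_i u_i(p)\overline{u_i(p')}$, smooth on $M^2$ and vanishing to infinite order at $\partial(M^2)$, so $R\in x^\infty\Psi^{-\infty}_\bfa(M)$. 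Put $G_m:=A_m+R$. Since $R$ is smoothing, $G_m$ has the same principal symbol and normal family as $A_m$ and is still fully elliptic; a direct check using the orthogonal decomposition $L^2=\ker A_m\oplus\overline{\mathrm{range}\,A_m}$ and the bijectivity of $R$ on $\ker A_m$ shows $G_m$ is bijective on $L^2(M,\Omega^{1/2})$, and Lemma \ref{lem.inverse in calculus} then yields $G_m^{-1}\in\Psi^{-m}_\bfa(M)$. The main technical obstacle is justifying the complex power $\widehat{N}_m$ for non-integer $m$ inside the suspended fibre calculus; this is a Seeley-type statement for parameter-dependent elliptic operators on the compact fibres $F_{2,p}$, classical but to be verified in the present setup.
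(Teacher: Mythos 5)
There is a genuine gap at the lifting step. Exactness of the normal operator sequence \eqref{eq:normexseq} only tells you that \emph{some} $\widetilde{A}_m\in\Psi^m_\bfa(M)$ has $N_\bfa(\widetilde{A}_m)$ equal to your prescribed suspended operator; it gives no control whatsoever of the principal symbol ${}^\bfa\sigma_m(\widetilde{A}_m)$ over the interior of $M$ (and symmetrizing does not help). The compatibility of symbol and normal operator only forces ${}^\bfa\sigma_m(A_m)$ to agree with the (positive) parameter-dependent symbol of $\widehat{N}_m$ \emph{at} $\partial M$; away from the boundary the symbol of the lift may vanish. Consequently you are not entitled to invoke Lemma \ref{lem.fullyelliptic} (whose hypothesis is ellipticity of $P$), nor Theorem \ref{th:param}, nor the Fredholm/regularity consequences, nor Lemma \ref{lem.inverse in calculus} -- every subsequent step of your argument presupposes that $A_m$ (hence $G_m$) is elliptic, and this is never arranged. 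The construction can be repaired by prescribing the symbol and the normal operator jointly: first quantize a positive symbol such as $|\xi|^m_{g^*}$ (whose boundary restriction matches the principal symbol of $\widehat{N}_m$), then correct by a lift of the order $m-1$ suspended discrepancy using Lemmas \ref{lem.short exact - sigma} and \ref{lemma.normalop-exactseq} -- but as written the proof does not do this. A second, acknowledged but unresolved, debt is the Seeley-type statement that $(1+|\mu|^2+B_p)^{\pm m/2}$ lies in the parameter-dependent (suspended) calculus for non-integer $m$; the paper's argument never needs complex powers.

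For comparison, the paper's route avoids both issues: it takes $E_{m/2}\in\Psi^{m/2}_\bfa(M)$ with principal symbol $|\xi|^{m/2}_{g^*}$ and sets $G_m=E_{m/2}^*E_{m/2}+1$. Ellipticity in the interior is then automatic from the choice of symbol, full ellipticity follows because $\Nhat_\bfa(G_m)(p,\mu)=\Nhat_\bfa(E_{m/2})(p,\mu)^*\Nhat_\bfa(E_{m/2})(p,\mu)+I\geq I$ is invertible for every $(p,\mu)$, and the same lower bound $G_m\geq 1$ gives invertibility of $G_m$ outright, so no finite-rank perturbation by the kernel projection (and no discussion of self-adjoint extensions) is needed; Lemma \ref{lem.inverse in calculus} then places $G_m^{-1}$ in $\Psi^{-m}_\bfa(M)$. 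Your use of Theorem \ref{th:resolv} for even $m$ is not circular (its proof does not rely on this lemma), but it is only a special case and does not rescue the general construction.
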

\begin{proof}
Choose an $\bfa$-metric $g$ and let  $E_{m/2} \in  \Psi_\bfa^{m/2}
\MO$ be an operator with principal symbol
${}^\bfa\sigma(p,\xi)=|\xi|_{g^*(p)}^{m/2}$. Then $E_{m/2}$ is
elliptic. By Lemma \ref{lem.fullyelliptic} $N_\bfa (E_{m/2})$ is
elliptic. Hence $\Nhat_\bfa(E_{m/2})(p,\mu)$ is an elliptic element
of $\Psi^{m/2}(\phi_2^{-1}(p))$ for all $(p,\mu)\in ({}^\bfa N')^*
B_1$. Now let $G_m = E_{m/2}^*E_{m/2} + 1$.  This is  a positive
operator, and is still elliptic of order $m$ since $m\geq 0$.  By
Definition \ref{def:normal operator}, $N_\bfa(E_{m/2}^*) =
N_\bfa(E_{m/2})^*$.  Further, since the normal operator map is an
algebra homomorphism, $N_\bfa(G_m) =
N_\bfa(E_{m/2})^*N_\bfa(E_{m/2}) + I$. Hence $\Nhat_\bfa(G_m)
(p,\mu)= \Nhat_\bfa(E_{m/2})(p,\mu)^*\Nhat_\bfa(E_{m/2})(p,\mu) + I$
is elliptic and positive, hence invertible in
$\Psi^*(\phi_2^{-1}(p))$ for each $(p,\mu)$, by the analogue of
Lemma \ref{lem.inverse in calculus} in the standard calculus. By
Lemma \ref{lem.fullyelliptic} it follows that $G_m$ is fully
elliptic. Since it is also positive, it is invertible. By Lemma
\ref{lem.inverse in calculus} the inverse is in $\Psi_\bfa^{-m}(M)$.
\end{proof}

We use these operators to define weighted Sobolev spaces on $M$.
\begin{definition}
\label{def:sobolev}
Let $G_m$ be chosen as in Lemma \ref{lem.exist ell inv} for $m> 0$, $G_0=I$, and let $G_{m}=G_{-m}^{-1}$ for $m<0$.
For $m,\alpha \in \RR$, the weighted Sobolev space $x^\alpha H^m_\bfa (M,\Omega^{1/2})$
is defined by completing $C_0^\infty(\intM, \Omega^{1/2})$ with respect to the norm:
$$
||u||_{x^\alpha H^m_\bfa} = ||G_m(x^{-\alpha} u)||_{L^2}.
$$
\end{definition}
The definition extends in an obvious way to sections of $\Omega^{1/2}\otimes E$ for a hermitian vector bundle $E$ over $M$.
It follows from the next theorem that any other choice of invertible fully elliptic operator $G'_m$
defines the same Sobolev spaces and
endows them with an equivalent norm.
\begin{theorem}
\label{th:bounded sobolev}
Let $M$ be an \bfa-manifold and let $E,F$ be hermitian vector bundles over $M$.
An operator $P \in \Psi_\bfa^m \MOEF$ defines a bounded map
$$
P: x^\alpha H^k_\bfa(M, \Omega^{1/2}\otimes E) \rightarrow x^\alpha H^{k-m}_\bfa(M,  \Omega^{1/2}\otimes F)
$$
for any $m,k, \alpha \in \RR$.
\end{theorem}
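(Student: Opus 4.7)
The plan is to reduce the boundedness claim to $L^2$-boundedness of operators in $\Psi^0_\bfa$, and then establish the latter by Hörmander's square-root trick, adapted so that both the $\bfa$-principal symbol and the $\bfa$-normal operator are controlled simultaneously.

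\textbf{Step 1 (Reductions).} Multiplication by $x^\alpha$ is an isometry $H^s_\bfa\to x^\alpha H^s_\bfa$ for every $s$, so boundedness of $P$ on the weighted spaces is equivalent to boundedness of $x^{-\alpha}Px^\alpha$ between the unweighted ones. By Corollary \ref{cor:invunderxconj}, $x^{-\alpha}Px^\alpha\in\Psi^m_\bfa(M;E,F)$, so we may assume $\alpha=0$. By Definition \ref{def:sobolev} the map $G_k:H^k_\bfa\to L^2$ is a norm-preserving isomorphism, and likewise for $G_{k-m}$, so it suffices to prove that $P':=G_{k-m}PG_{-k}$, which lies in $\Psi^0_\bfa(M;E,F)$ by the composition Theorem \ref{th:basiccomp}, is bounded on $L^2(M,\Omega^{1/2}\otimes E)$.

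\textbf{Step 2 (Model square roots).} Choose $C>0$ strictly larger than both $(\sup|^\bfa\sigma_0(P')|^2)^{1/2}$, which is finite by compactness of the $\bfa$-cosphere bundle of $M$, and the uniform $L^2$-operator norm of the parametric family $\Nhat_\bfa(P')(p,\mu)$ over $(p,\mu)\in ({}^\bfa N')^*B_1$, a uniform bound being standard for order-zero parametric pseudodifferential families on the compact fibres $F_{2,p}$ (see \cite{Sh}). Then $C^2-{}^\bfa\sigma_0(P'^*P')$ is a smooth strictly positive element of $S^{[0]}({}^\bfa T^*M)$, and $C^2 I-\Nhat_\bfa(P'^*P')$ is a positive, elliptic, invertible family whose operator square root lies in $\Psi^0_{\sus({}^\bfa N'B_1)-\phi_2}(\partial M)$ by the functional calculus for parametric pseudodifferential operators.

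\textbf{Step 3 (Quantization and conclusion).} Using these two model square roots I construct $Q\in\Psi^0_\bfa(M;E)$ with
\begin{equation*}
C^2-P'^*P' \;=\; Q^*Q + R, \qquad R\in x^\infty\Psi^{-\infty}_\bfa(M;E),
\end{equation*}
by a two-step asymptotic procedure. First, iterating the symbolic square-root step through the short exact sequence of Lemma \ref{lem.short exact - sigma} together with its asymptotic summation clause produces an approximate $Q_0$ with $C^2-P'^*P'-Q_0^*Q_0\in\Psi^{-\infty}_\bfa$. Next, iterating through the normal-operator sequence of Lemma \ref{lemma.normalop-exactseq} and its asymptotic summation clause, I correct $Q_0$ by successive elements of $x^j\Psi^{-\infty}_\bfa$, cancelling the normal operator of the remaining error at each stage, until the final error lies in $x^\infty\Psi^{-\infty}_\bfa$. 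Such an $R$ has Schwartz kernel smooth on $M^2$ and vanishing to infinite order at $\partial(M^2)$, hence is Hilbert--Schmidt, in particular bounded on $L^2$. Therefore
\begin{equation*}
\|P'u\|_{L^2}^2 = \langle P'^*P'u,u\rangle \;\le\; C^2\|u\|_{L^2}^2 + |\langle Ru,u\rangle| \;\le\; \bigl(C^2+\|R\|_{L^2\to L^2}\bigr)\|u\|_{L^2}^2.
\end{equation*}

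\textbf{Main obstacle.} The crux is the synchronised quantization of the two model square roots: one must match both the commutative symbolic square root and the noncommutative parametric operator square root of the same element $C^2-P'^*P'$. This is resolved by the two-step asymptotic scheme above, relying on the star-algebra homomorphism property of both $^\bfa\sigma$ and $N_\bfa$ (Lemma \ref{lem:symbolcomp}) to guarantee that the square-root identity is preserved after each improvement, and on the Borel-type asymptotic summation statements in Lemmas \ref{lem.short exact - sigma} and \ref{lemma.normalop-exactseq} to close the iteration at the appropriate residual order.
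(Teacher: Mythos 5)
Your proposal is correct and follows essentially the same route as the paper: reduce via conjugation by $x^\alpha$ and the isomorphisms $G_{k-m},G_{-k}$ to $L^2$-boundedness of an order-zero operator, then run H\"ormander's approximate square-root argument, using the symbol sequence to get an error in $\Psi^{-\infty}_\bfa$ and the normal-operator sequence to improve it to $x^\infty\Psi^{-\infty}_\bfa$, whose elements are Hilbert--Schmidt hence bounded. The only cosmetic differences are that the paper first localizes to trivial line bundles and writes the square root as a self-adjoint $Q$ with $Q^2=CI-P^*P+R$, while you keep the bundles and use $Q^*Q$.
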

\begin{proof}
We prove this only for the case where $E$ and $F$ are trivial line
bundles, as the general case follows by localization from this case.
By Corollary \ref{cor:invunderxconj} it suffices to consider the case that $\alpha=0$.
Further reduce the proof to the case that $k=m=0$ as follows.
Assume that the theorem is true if $k=m=0$.  The operator $P'=G_{k-m}P G_{-k}$
has order zero, so it is bounded on $L^2$.  By definition, $G_l$ is an isomorphism $H_\bfa^l\to L^2$ for all $l$.
Therefore, $P=G_{k-m}^{-1}P'G_k$ is bounded $H^k_\bfa\to L^2\to L^2\to H^{k-m}_\bfa$.

So we need to show boundedness on $L^2$ of zero order operators $P$. This can be reduced to boundedness of operators in $\xPsiinfty$ by adapting a standard argument of H\"ormander.  Using exactness of the
standard symbol sequence, multiplicativity and asymptotic
completeness of the standard symbol, one uses an iterative procedure to find an approximate square root of $CI-P^*P$ for a sufficiently large constant $C$, i.e.\ a selfadjoint operator $Q\in\Psi_\bfa^0 \MOEF$ satisfying
$Q^2= CI -P^*P + R$ for some $R \in \Psi_\bfa^{-\infty} \MOEF$. Using the normal operator in a similar way, $Q$ can even be found so that $R\in \xPsiinfty$; the details can be done in the same way as in \cite{MaMe}. Reordering gives $P^*P=CI-Q^2+R$, hence  $||Pu||^2 \leq C||u||^2 + \langle Ru, u\rangle$
for any $u\in L^2$ since $\|Qu\|^2\geq 0$, so boundedness of $P$ follows from boundedness of $R$.

Finally, operators in $\xPsiinfty$ are Hilbert-Schmidt, hence bounded, since their kernels are square integrable (see Lemma \ref{lem:compactop} below), so we are done.
\end{proof}

We now consider compactness.

\begin{lemma}
\label{lem:compactop}
Let $P\in x^p \Psi_\bfa^k \MO$. If $p>0$ and $k<0$ then $P$ is a compact operator on  $x^\alpha H^m_\bfa (M,\Omega^{1/2})$ for any $\alpha, m$. If $p>\frac12(\gamma_z-1)$, where $\gamma_z$ is defined in \eqref{eq:def gamma}, and $k<- \dim M /2$ then $P$ is a Hilbert-Schmidt operator.
\end{lemma}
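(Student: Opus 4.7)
My plan treats the two claims in turn. The Hilbert--Schmidt assertion comes from a direct kernel calculation in local coordinates near $\ff_z$; the compactness claim is then deduced, on any weighted Sobolev space, from the mapping properties of Theorem \ref{th:bounded sobolev} combined with a weighted Rellich-type compact embedding.

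For the Hilbert--Schmidt claim I would work in the projective coordinates $(x, y, z, w, \calT, \calY, \calZ, w')$ near the interior of $\ff_z$, writing $K_P = K_P' \sigma$ with $\sigma$ the reference section of $\Omegatilde_\bfa^{1/2}(M^2_z)$ as in Lemma \ref{lemma.bdiff}. The Jacobian of the change $(\calT,\calY,\calZ) \mapsto (x',y',z')$ at fixed $(x,y,z,w)$ is a pure power of $x$ determined by $\gamma_z$, and the reference density $|\sigma|^2$ carries a compensating $x$-power. The hypothesis $k < -\dim M / 2$ forces, via the standard Sobolev embedding for a conormal distribution at the codimension-$\dim M$ submanifold $\Delta$, that $K_P'$ is a bounded continuous function, Schwartz in $(\calT,\calY,\calZ)$. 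Since small-calculus kernels vanish to infinite order at $\lf, \rf, \ff_{zx}, \ff_{zy}$, the only possible obstruction to $\int_{M^2}|K_P|^2 < \infty$ is the behaviour as $x \to 0$, where the integrand reduces to a bounded function times a pure power of $x$; integrability at $0$ yields the stated threshold on $p$.

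For the compactness claim I would first reduce to the case $\alpha = 0$, $m = 0$. Since $G_m x^{-\alpha}$ is an isometric isomorphism $x^\alpha H^m_\bfa \to L^2$ and conjugation by it preserves $x^p\Psi^k_\bfa$ (by Corollary \ref{cor:invunderxconj} for the $x^\alpha$-factor and Corollary \ref{th:smallcomp} together with Lemma \ref{lem.exist ell inv} for the $G_m$-factor), it suffices to show that $P \in x^p\Psi^k_\bfa$ with $p>0$, $k<0$ is compact on $L^2(M,\Omega^{1/2})$. By Theorem \ref{th:bounded sobolev} such $P$ maps $L^2$ boundedly to $x^p H^{-k}_\bfa$, so the desired compactness would follow from compactness of the inclusion $\iota : x^p H^{-k}_\bfa \hookrightarrow L^2$ whenever $p>0$ and $-k>0$.

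The compactness of $\iota$ is the one step requiring genuine work and is the main obstacle. I would establish it by a standard cutoff-and-Rellich argument: choose a smooth cutoff $\chi_\epsilon$ equal to $1$ on $\{x \geq \epsilon\}$ and supported in $\{x \geq \epsilon/2\}$. For any $u \in x^p H^{-k}_\bfa$ of unit norm, the tail is controlled by $\|(1-\chi_\epsilon) u\|_{L^2} \leq \epsilon^p \|u\|_{x^p L^2}$, which is uniformly small in $u$, while $\chi_\epsilon u$ is supported on the compact manifold with boundary $\{x \geq \epsilon/2\}$, on which $\bfa$-Sobolev norms coincide with ordinary Sobolev norms and the classical Rellich theorem gives compactness of $H^{-k} \hookrightarrow L^2$. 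A diagonal extraction over $\epsilon = 1/n$ then produces an $L^2$-Cauchy subsequence from any bounded sequence in $x^p H^{-k}_\bfa$, completing the proof.
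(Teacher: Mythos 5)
On the Hilbert--Schmidt half you follow essentially the paper's route (square integrability of the lifted kernel, which by the infinite-order vanishing at the other faces only needs to be checked near $\ff_z$, with the $x$-power count $2p-\gamma_z>-1$ coming from the Jacobian versus the density factor). One intermediate claim, however, is false as stated: $k<-\dim M/2$ does \emph{not} make $K_P'$ a bounded continuous function --- that would require $k<-\dim M$. What $k<-\dim M/2$ gives, via Plancherel applied to the transverse symbol expansion, is that $K_P'$ is locally square integrable across the diagonal in the variables $(\calT,\calY,\calZ,w,w')$, uniformly up to $x=0$; this weaker statement is exactly what the $x$-integration needs, so the computation goes through once you replace ``bounded'' by ``uniformly locally $L^2$'' (this is precisely how the paper phrases it).

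For the compactness half you take a genuinely different route. The paper never uses a Rellich-type embedding: it observes that $(P^*P)^N\in x^{2Np}\Psi^{2Nk}_\bfa(M)$ by Corollary \ref{th:smallcomp}, so for $N$ large this operator is Hilbert--Schmidt by the first part, hence compact; the spectral theorem then makes $|P|=\sqrt{P^*P}$ compact and the polar decomposition transfers this to $P$. Your argument --- $P:L^2\to x^pH^{-k}_\bfa$ bounded by Theorem \ref{th:bounded sobolev}, composed with a compact inclusion $x^pH^{-k}_\bfa\hookrightarrow L^2$ obtained from the cutoff estimate $\|(1-\chi_\eps)u\|_{L^2}\le \eps^p\|u\|_{x^pL^2}$ plus classical Rellich on $\{x\ge\eps/2\}$ and a diagonal extraction --- is also correct, and your reduction to $\alpha=m=0$ works as claimed, since conjugation by $G_m x^{-\alpha}$ preserves $x^p\Psi^k_\bfa(M)$ by Corollaries \ref{th:smallcomp} and \ref{cor:invunderxconj}. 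The price is that the weighted compact-embedding lemma must actually be proved: you need boundedness of multiplication by $\chi_\eps$ on $H^{-k}_\bfa$ (it lies in $\Psi^0_\bfa(M)$, so Theorem \ref{th:bounded sobolev} applies) and the comparison of $\bfa$- with ordinary Sobolev norms on $\{x\ge\eps/2\}$ for possibly non-integer $-k$; these are standard, and can be streamlined by first embedding $H^{-k}_\bfa$ into $H^{s}_\bfa$ for an integer $0<s\le -k$, but they are extra work not present in the paper. What the paper's argument buys is economy, reusing only the composition theorem and the Hilbert--Schmidt computation already at hand; what yours buys is a compact-embedding statement of independent interest and a compactness proof that does not factor through Hilbert--Schmidt operators.
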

\begin{proof}
By a conjugation argument as in the proof of Theorem \ref{th:bounded sobolev} it suffices to prove this on $L^2$. We first prove the second claim.   To do this, we need to show that
the kernel of $P$ is square integrable. Since $k<- \dim M/ 2$ the kernel of $P$ is locally square integrable in the interior of $M^2_z$. Also, by definition it vanishes to infinite order at all faces except $\ff_z$, so it suffices to
check square integrability in a neighborhood of $\ff_z$.
In local coordinates the kernel is
$$K'_P(x,\calT, y, \calY,z, \calZ, w, w') |d\calT\, d\calY \,
d\calZ \, dw'|^{1/2} |x^{-\gamma_z} dxdydzdw|^{1/2},$$
where $K'_P=x^p h$ with $h$ vanishing faster than any negative power as $|(\calT, \calY, \calZ)|\to\infty$ and square integrable in $\calT,\calY,\calZ,w,w'$ near the diagonal $\calT=\calY=\calZ=0$, $w=w'$, uniformly up to $x=0$. Therefore, $|K'_P|^2$ is integrable since the exponents of $x$ add up to $2p-\gamma_x>-1$ and $y,z,w$ and $w'$ vary only over compact sets.

If $P\in x^p \Psi_\bfa^k \MO$ then its adjoint $P^*$ is also in $x^p \Psi_\bfa^k \MO$  and $(P^*P)^N \in x^{2Np}\Psi_\bfa^{2Nk}\MO$ by Corollary \ref{th:smallcomp}, so if $p>0$, $k<0$ then for some $N\in\NN$ the operator $(P^*P)^N$ is Hilbert-Schmidt, hence compact, by the first part of the proof. Since $P^*P$ is selfadjoint the spectral theorem then implies that $|P|=\sqrt{P^*P}$ is compact, and the polar decomposition of $P$ then shows that $P$ is compact.
\end{proof}

\subsection{Proofs of the main theorems}
\label{subsec.proofs}

\noindent{\bf From half-densities to functions.}
In the body of the paper we have chosen to state our results in terms of half-densities since in this way we did not have to choose an auxiliary measure for the $L^2$ and Sobolev spaces. The theorems in the introduction were stated in terms of functions, not half-densities. Therefore, we now translate the results to statements about functions which are in $L^2$ or Sobolev spaces with respect to any fixed density.

As before we identify half-densities with functions by choosing a fixed positive density $\nu$ on the interior of $M$ and identifying $f\,\nu^{1/2}$ with $f$. Thus, define
$$ x^\alpha H^m_\bfa (M,\nu) = \{f\in C^{-\infty}(M):\, f\,\nu^{1/2} \in x^\alpha H^m_\bfa(M,\Omega^{1/2})\},$$
with norm $\|f\|_{x^\alpha H^m_\bfa (M,\nu)} = \|f\,\nu^{1/2}\|_{x^\alpha H^m_\bfa(M,\Omega^{1/2})}$, and similarly for sections of a hermitian vector bundle over $M$.
For the $L^2$ space, i.e.\ $\alpha=m=0$, this is standard notation since it gives $\|f\|_{L^2(M,\nu)}= \left(\int |f|^2\,\nu\right)^{1/2}$. These Sobolev spaces of functions depend on the choice of $\nu$, but multiplying $\nu$ by a smooth positive function will not change them. For example, the $\bfa$-Sobolev spaces associated to any two \bfa-metrics will be equivalent. Multiplying the density by $x^s$ corresponds to a shift in weight:
$$ x^\alpha H^m_\bfa (M,x^s\nu) = x^{\alpha-\frac s 2} H^m_\bfa(M,\nu). $$
We have the following corollary of the Sobolev mapping theorem.
\begin{corollary}
\label{lem.sobolev mapping functions}
Let $\nu$ be a positive density on $\intM$. If $P\in \Psi^m_\bfa(M,E,F)$ then $P_\nu=\nu^{-1/2}P\nu^{1/2}$ defines a bounded map
$$
P_\nu: x^\alpha H^k_\bfa(M,E,\nu) \rightarrow x^\alpha H^{k-m}_\bfa(M,F,\nu)
$$
for any $m,k, \alpha \in \RR$.
\end{corollary}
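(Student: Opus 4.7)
The plan is to observe that the corollary is essentially a tautology once the definitions are unwound: the map $f\mapsto f\nu^{1/2}$ is, by the very definition of $x^\alpha H^k_\bfa(M,E,\nu)$, an isometric isomorphism onto $x^\alpha H^k_\bfa(M,\Omega^{1/2}\otimes E)$, and $P_\nu$ is the conjugate of $P$ by this isomorphism. So the boundedness statement transfers from the half-density setting (already proved in Theorem \ref{th:bounded sobolev}) to the function setting with no further analytic input.

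In more detail, I would first recall the definitions: for a positive density $\nu$ on $\intM$, the norm on $x^\alpha H^k_\bfa(M,E,\nu)$ is defined by $\|f\|_{x^\alpha H^k_\bfa(M,E,\nu)} = \|f\nu^{1/2}\|_{x^\alpha H^k_\bfa(M,\Omega^{1/2}\otimes E)}$, so the map $\iota_\nu: f\mapsto f\nu^{1/2}$ is by construction a unitary map from the function space to the half-density space, with inverse $g\mapsto \nu^{-1/2} g$ (well defined since $\nu$ is positive on $\intM$).

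Next, I would unwind the definition of $P_\nu$: for any $f\in x^\alpha H^k_\bfa(M,E,\nu)$,
\begin{equation*}
\iota_\nu(P_\nu f) = (\nu^{-1/2} P (\nu^{1/2} f))\,\nu^{1/2} = P(f\nu^{1/2}) = P(\iota_\nu f).
\end{equation*}
Thus $P_\nu = \iota_\nu^{-1}\circ P \circ \iota_\nu$ as a diagram of (a priori unbounded) operators. By Theorem \ref{th:bounded sobolev}, $P$ is bounded $x^\alpha H^k_\bfa(M,\Omega^{1/2}\otimes E)\to x^\alpha H^{k-m}_\bfa(M,\Omega^{1/2}\otimes F)$, and $\iota_\nu$ is an isometry in both degrees, so $P_\nu$ is bounded between the corresponding function spaces with the desired norm bound. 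This finishes the proof.

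There is no genuine obstacle here; the only mild subtlety is making sure that $\iota_\nu$ is an isomorphism onto the full half-density Sobolev space (not merely a dense subspace), but this is immediate from the definition of $x^\alpha H^k_\bfa(M,E,\nu)$ as the set of $f$ with $f\nu^{1/2}$ in the half-density Sobolev space, equipped with the pulled-back norm, so that $\iota_\nu$ is a bijective isometry by construction.
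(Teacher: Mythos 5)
Your argument is correct and is essentially identical to the paper's proof, which establishes the same fact by writing $P_\nu$ as the composition of the bounded maps $\nu^{1/2}$, $P$ (bounded by Theorem \ref{th:bounded sobolev}), and $\nu^{-1/2}$, where multiplication by $\nu^{\pm 1/2}$ is an isometry precisely because the norm on $x^\alpha H^k_\bfa(M,E,\nu)$ is defined by pulling back the half-density norm. The only cosmetic difference is that you phrase it as conjugation by an isometric isomorphism rather than as a chain of three bounded maps.
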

Here $\nu^{\pm 1/2}$ denotes the multiplication operator.
\begin{proof}
This follows from  Theorem \ref{th:bounded sobolev} by considering the sequence of bounded maps
$$x^\alpha H^k_\bfa(M,E,\nu)\stackrel{\nu^{1/2}}\longrightarrow x^\alpha H^k_\bfa(M,\Omega^{1/2}\otimes E)
\stackrel{P}\longrightarrow x^\alpha H^{k-m}_\bfa(M,\Omega^{1/2}\otimes F)
\stackrel{\nu^{-1/2}}\longrightarrow x^\alpha H^{k-m}_\bfa(M,F,\nu)$$
\end{proof}
Note that, if $\nu=x^s \nu_0$ for $s\in\RR$ and $\nu_0$ a smooth positive density on $M$, then for an \bfa-differential operator $P$ the corresponding $P_\nu$ is of the form \eqref{eq:difflocal}. This follows from $x^{-s}(x^{1+a_1+a_2}\partial_x)x^s = x^{1+a_1+a_2}\partial_x + x^{a_1+a_2}s$ . Also, by arguments similar to those in the case of closed manifolds, it is straightforward to show that $f\in L^2(M,\nu)$ is in
$ H^m_\bfa (M,\nu)$ if and only if $f\in H^m_\loc (\interior{M})$ and in any adapted local coordinates near the boundary,
$P_\nu f \in L^2(M,\nu)$ for all $P_\nu$ as in \eqref{eq:difflocal}.

For the proof of Theorem \ref{th:mainthm} we need to relate tempered distribution and weighted Sobolev spaces. For the following lemma see also \cite{Me-mwc}, Section 3.5.
\begin{definition}[and Lemma]
\label{def:Schwartz space}
Schwartz space on a compact manifold with boundary $M$ is defined as
$$ \Cdot^\infty (M) := \{f\in C^\infty(M):\,f \text{ vanishes to infinite order at }\partial M\} $$
where $\nu$ is a smooth positive density. This is the intersection of all weighted b-Sobolev spaces:
\begin{equation}
\label{eq:Schwartz Sobolev}
\Cdot^\infty (M) = \bigcap_{\alpha\in\RR, m\in\RR} x^\alpha H_b^m (M,\nu).
\end{equation}
The Frechet topology on $\Cdot^\infty (M)$ is defined using the norms of these Sobolev spaces.

The space of {\em tempered distributions}  is defined as the dual space
$$ \Cdot^{-\infty} (M) = (\Cdot^\infty(M))^* .$$
\end{definition}
\begin{proof}
We only need to check \eqref{eq:Schwartz Sobolev}. Recall that, for $m\in \NN_0$, by definition $u\in H^m_b(M,\nu)$ iff $u$ is in the standard $H^m(M,\nu)$ Sobolev space away from the boundary and, in any local coordinate patch $U$ near the boundary, $ (x\partial_x)^k \partial_y^\gamma u\in L^2(U)$ whenever $k+|\gamma|\leq m$. Therefore the Sobolev Embedding Theorem gives $\|x^m u\|_{C^l(M)} \leq C \|u\|_{H^m_b(M,\nu)}$ whenever $m>l+\tfrac12 \dim M$, from which the claim follows easily.
\end{proof}

\begin{lemma}
\label{lem:Schwartz space}
The intersection of all weighted \bfa-Sobolev spaces is the Schwartz space on $M$:
$$ \Cdot^\infty(M) = \bigcap_{\alpha, m\in\RR} x^\alpha H^m_\bfa (M,\nu), $$
and the topologies are equivalent.
The union of all weighted \bfa-Sobolev spaces is the space of tempered distributions:
$$ \Cdot^{-\infty}(M) = \bigcup_{\alpha, m\in\RR} x^\alpha H^m_\bfa (M,\nu). $$
\end{lemma}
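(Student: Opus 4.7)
The plan is to reduce the statement to its b-analog proved in \ref{def:Schwartz space} by comparing $\bfa$-Sobolev and b-Sobolev norms directly. The key local observation is that the explicit bases
$$
x\partial_x,\ \partial_{y_i},\ \partial_{z_j},\ \partial_{w_k} \quad\text{and}\quad x^{1+a_1+a_2}\partial_x,\ x^{a_1+a_2}\partial_{y_i},\ x^{a_2}\partial_{z_j},\ \partial_{w_k}
$$
of ${}^b\calV(M)$ and ${}^\bfa\calV(M)$ respectively yield the two inclusions
$$
x^{a_1+a_2}\cdot{}^b\calV(M)\ \subset\ {}^\bfa\calV(M)\ \subset\ {}^b\calV(M).
$$
Iterating at the level of differential operators and absorbing commutators (which only improve the $x$-vanishing), these produce, for every integer $m\geq 0$ and every $\alpha\in\RR$, continuous inclusions
$$
x^\alpha H^m_b(M,\nu)\ \hookrightarrow\ x^\alpha H^m_\bfa(M,\nu)\ \hookrightarrow\ x^{\alpha-m(a_1+a_2)}H^m_b(M,\nu).
$$

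It suffices to treat integer $m$: since $H^m_\bfa\subset H^{m'}_\bfa$ whenever $m\geq m'$, the intersection over real $m$ agrees with the intersection over integer $m$, and similarly for the union. Negative integer orders $m=-N<0$ are handled by $L^2$-duality on half-densities, which dualises the nonnegative-order inclusions above into
$$
x^\alpha H^m_\bfa(M,\nu)\ \hookrightarrow\ x^\alpha H^m_b(M,\nu),\qquad x^\alpha H^m_b(M,\nu)\ \hookrightarrow\ x^{\alpha+m(a_1+a_2)}H^m_\bfa(M,\nu).
$$
Any weight shifts introduced in these steps are harmless because $\alpha$ ranges over all of $\RR$.

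With these inclusions in hand, both identities reduce to the b-statement. For the intersection, the right-hand inclusion (and its dual for $m<0$) forces any $u\in\bigcap_{\alpha,m}x^\alpha H^m_\bfa(M,\nu)$ to lie in $x^{\alpha-m(a_1+a_2)}H^m_b(M,\nu)$ for all $\alpha,m$; as $\alpha$ varies over $\RR$ with $m$ fixed so does $\alpha-m(a_1+a_2)$, hence $u\in \bigcap_{\alpha',m}x^{\alpha'}H^m_b(M,\nu)=\Cdot^\infty(M)$. The reverse containment is provided by the left-hand inclusion. The two Fr\'echet topologies coincide because they are defined by countable families of Sobolev seminorms, and each family dominates the other via the displayed inclusions. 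The union identity follows by the symmetric argument, using the other pair of inclusions.

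The one delicate point is the bookkeeping of weight shifts in the $L^2$-duality used to pass from nonnegative to negative orders; the auxiliary density $\nu$ contributes only a global shift that is absorbed into $\alpha$. No new analytic ingredient beyond Theorem \ref{th:bounded sobolev} (used to define the Sobolev spaces consistently via $G_m$) and the b-version in \ref{def:Schwartz space} is required.
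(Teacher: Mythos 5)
Your proof is correct and takes essentially the same route as the paper: both arguments sandwich the weighted $\bfa$-Sobolev spaces between weighted b-Sobolev spaces via the inclusions $x^\alpha\,{}^\bfa\Diff^m(M)\subset x^\alpha\,{}^b\Diff^m(M)\subset x^{\alpha-m(a_1+a_2)}\,{}^\bfa\Diff^m(M)$ (which is exactly what your vector-field inclusions encode), handle negative orders by $L^2$-duality, and then reduce both the intersection and the union statements to the b-case of Definition \ref{def:Schwartz space}. You merely make explicit the weight bookkeeping, the reduction to integer orders, and the equivalence of the Fr\'echet topologies, which the paper's proof leaves implicit.
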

\begin{proof}
It suffices to show that for each $\beta\in\RR,k\in\ZZ$ there are $\alpha,\alpha'\in\RR$, $m,m'\in\ZZ$ so that
$$ x^\alpha H^m_\bfa (M,\nu) \subset x^\beta H^k_b (M,\nu) \subset x^{\alpha'} H_\bfa^{m'}(M,\nu)$$
with continuous inclusions. For non-negative $k$ this follows from the obvious inclusions
$$ x^\alpha {}^\bfa\Diff^m (M) \subset x^\alpha {}^b \Diff^m (M) \subset x^{\alpha-m(a_1+a_2)} {}^\bfa\Diff^m (M)$$
and for negative $k$ using the duality of $H^k_\bfa(M,\nu)$ with $H^{-k}_\bfa(M,\nu)$ and similar for b-Sobolev spaces.
\end{proof}

\begin{proof}[Proof of Theorems \ref{th:psd-calc} and \ref{th:mainthm}]
Fix a positive regular density $\nu$ on $M$ and identify $P\in\Psi^m_\bfa(M)$ acting on half densities with $P_\nu=\nu^{-1/2}P\nu^{1/2}$ acting on functions. The general \bfa-differential operator \eqref{eq:difflocal} is then really $P_\nu$, for $P$ having the kernel \eqref{eq:diff kernel}, which is clearly in $\Psi^m_\bfa(M)$. Together with Theorem \ref{th:param} this proves Theorem \ref{th:psd-calc}.

In our new notation, the operator $P$ in Theorem \ref{th:mainthm} should be replaced by $P_\nu$.
For the metric $g=x^{2r}g_{\bfa b}$ we have $\dvol_g = x^{nr}\dvol_{g_{\bfa b}}$ where $n=\dim M$. Also, $\dvol_{g_{\bfa b}}$ is a positive $\bfa$-density, so it equals $x^{-\gamma_z}$ times a positive regular density, with $\gamma_z$ defined in \eqref{eq:def gamma}. We use this regular density as $\nu$ in the identification above. Then we have
$\dvol_g = x^s \nu$ with $s=nr-\gamma_z$ and so $x^\alpha H^k_\bfa(M,E,\dvol_g)=x^{\alpha-s/2}H^k_\bfa(M,E,\nu)$.
The theorem therefore follows from Corollary \ref{lem.sobolev mapping functions} and the corresponding statements for half-densities which we now prove.

By Theorem \ref{th:param}, $P$ has a parametrix, so $PQ=I+R$,
$QP=I+R'$ with $R,R'\in\xPsiinfty$. By Lemma \ref{lem:compactop} $R$
and $R'$ are compact operators on
$x^\alpha L^2(M,\Omega^{1/2})$ for any $\alpha$, so $P$ is Fredholm
on this space. If $Pu=f\in x^\alpha H^k_{\bfa}(M,\Omega^{1/2})$ and
$u\in x^{{\alpha'}}H^{k'}_{\bfa}(M,\Omega^{1/2})$ then applying $Q$ gives
$u+R'u = QPu=Qf$. Now  $Qf \in x^\alpha H^{(k+m)}_{\bfa}(M,\Omega^{1/2})$ by Theorem \ref{th:bounded
sobolev}, and $R'u$ is in the same space by the same theorem, so $u$
must also be.
\end{proof}

\begin{proof}[Proof of Theorem \ref{th:resolv}]
Write $\Delta=\Delta_{{\bfa b}}$.
Since $g_{\bfa b}$ is a complete metric, $\Delta$ is essentially selfadjoint on compactly supported smooth functions, and since it is non-negative, its spectrum is contained in $[0,\infty)$. (In the sequel to this paper we will see that the spectrum is equal to $[0,\infty)$.) Furthermore, the principal symbol of $\Delta$ is $\sigma(p,\xi) = |\xi|^2_{g^*(p)}$ times the identity, which is invertible
by definition of an $\bfa$-metric. Therefore, the Laplacian is \bfa-elliptic.

Therefore, the theorem follows from the following claim: If $P\in\Psi^m_\bfa(M)$, $m\geq0$, is elliptic and non-negative then $(P-\lambda)^{-1}\in\Psi^{-m}_\bfa(M)$ for all $\lambda\in\CC\setminus[0,\infty)$. This claim is proved as follows.
First, $P\geq0$ implies $\Nhat_\bfa(P)(p,\mu)\geq 0$ for each $p,\mu$.
Therefore, for each $\lambda\in\CC\setminus[0,\infty)$ the operator $\Nhat_\bfa(P-\lambda)(p,\mu)=\Nhat_\bfa(P)(p,\mu)-\lambda\in\Psi^m(F_{2,p})$ is invertible as operator in $L^2(F_{2,p})$, hence by Lemma \ref{lem.fullyelliptic} and the remark following it $P-\lambda$ is fully elliptic, hence its inverse is in $\Psi_\bfa^{-m}(M)$ by Lemma \ref{lem.inverse in calculus}.
\end{proof}

In a future paper, we will consider the Laplacian in more detail, and show that parametrices
can be constructed for it with respect to various weighted Sobolev spaces.



\section{Appendix:  Multi-fibred boundary structures}
In this appendix we show how the geometric setup of the main text may be generalized to more than two fibrations. The treatment here is more systematic, and also serves to fill in the details in parts of Section \ref{section:definitions}.

We introduce the structure of several nested fibrations at the boundary, extending to various orders to the interior of a manifold with boundary. This is to encode the idea of vector fields being tangent to the fibres of those fibrations at the boundary, such that the tangency to larger fibres is to higher order than to the tangency to smaller fibres.

\begin{definition}
A {\em tower of fibrations of manifolds with boundary} is a sequence of fibrations
${\bf \Phi} = (\Phi_0,\dots,\Phi_k)$,
\begin{equation}
\label{eq:tower of fibrations}
\Btilde_k \stackrel{\Phi_k} \to \Btilde_{k-1} \stackrel{\Phi_{k-1}} \to
\dots \stackrel{\Phi_1} \to \Btilde_0 \stackrel{\Phi_0} \to \Btilde_{-1} = [0,\eps)
\end{equation}
where each $\Btilde_i$ is a manifold with boundary.
\end{definition}
For simplicity we assume the $\Btilde_i$ to be connected. Then the fibre $\Phi_i^{-1} (p)$ is (up to diffeomorphism) independent of $p\in \Btilde_{i-1}$ and will be denoted $F_i$. For $i \geq j$ denote the compositions $\Phi_{i,j}=\Phi_j\circ \dots \circ \Phi_i: \Btilde_i \to \Btilde_{j-1}$, with fibres $F_{i,j}\subset \Btilde_i$. Then we get a sequence of nested foliations of $\Btilde_k$, given by the fibres of the maps $\Phi_k=\Phi_{k,k},\Phi_{k,k-1},\dots,\Phi_{k,0}$, the leaves being $F_k=F_{k,k},F_{k,k-1},\dots,F_{k,0}$ of successively increasing dimension. Restriction of the map $\Phi_k$ makes each $F_{k,i}$ a fibre bundle over $F_{k-1,i}$ for $i<k$.

Note that the map $\Phi_{k,0}:\Btilde_k \to [0,\eps)$ is a boundary
defining function, but it is natural to think of it also as a
fibration.

A tower of fibrations induces a tower of fibrations at the boundary, $B_i=\partial \Btilde_i$,
$$ B_k \stackrel{\phi_k} \to B_{k-1} \stackrel{\phi_{k-1}} \to
\dots \stackrel{\phi_1} \to B_0 (\to \{0\})
$$
with the same fibres. Such towers of fibrations (over closed manifolds) were also considered by Bismut and Cheeger in \cite{BisChe} under the name of multifibrations.

For any fibration $\phi:M\to B$, tangency of a vector field $V$ on
$M$ to the fibres of $\phi$ is equivalent to $Vf \equiv 0$ for all
functions $u$ constant on the fibres, i.e. $u\in C_\phi^\infty(M) :=
\phi^*C^\infty(B)$. Therefore, if $M,B$ are manifolds with boundary
then {\em tangency of $V$ to the $\phi$-fibres to order $a\in\NN$ at
the boundary} may be reasonably defined as $Vu=O(x^a)$ for all $u\in
C_\phi^\infty(M)$. Here and in the sequel we write $O(x^a)$ for a
section of a vector bundle that is  $x^a$ times a smooth section
(smooth up to the boundary), for some bdf $x$. This is independent
of the choice of  $x$.

Alternatively,  tangency of $V$ to the fibres of $\phi$ is equivalent to $\phi_*V=0$, where $\phi_*V$ is the section of $\phi^* TB$ defined by $(\phi_*V)(p) = d\phi_{|p}(V_{|p}) \in T_{\phi(p)} B$ for $p\in M$. Then tangency to order $a$ is equivalent to $\phi_*V=O(x^a)$.
Therefore we define:

\begin{definition}
Let $\bfPhi$ be a tower of fibrations as in \eqref{eq:tower of fibrations} and let $\bfa = (a_0,\dots,a_k) \in \NN^{k+1}$. Define the space of vector fields tangent to $\bfPhi$ to order $\bfa$ by
\begin{align}
\label{eq:def a phi V}
^{\bfa,\bfPhi}\calV &= \{
V\in \Gamma(T\Btilde_k):
\begin{aligned}[t]
 Vu &= O(x^{a_k}) && \text{ for all } u \in C_{\Phi_k}^\infty (\Btilde_{k})\\
 Vu &= O(x^{a_{k-1}+a_k}) && \text{ for all } u \in C_{\Phi_{k,k-1}}^\infty(\Btilde_{k}) \\
 &\vdots && \\
 Vu &= O(x^{a_0 + \dots + a_k}) && \text{ for all } u\in C_{\Phi_{k,0}}^\infty (\Btilde_{k})
     \}
\end{aligned} \\
\label{eq:def a phi V 2}
&= \{V\in \Gamma(T\Btilde_k):
(\Phi_{k,i})_*V = O(x^{a_i+\dots+a_k})\ \text{ for }i=0,\dots k \}
\end{align}
\end{definition}
The setup in the main text corresponds to $k=2$, $a_0=1$, and there we write simply $\bfa=(a_1,a_2)$.
Note that the last condition in \eqref{eq:def a phi V} implies that all vector fields are tangent to the boundary.

Examples of such structures appear previously in the b-calculus literature.  For instance, a tower of fibrations of height $k=0$ is just a boundary defining function $x:U \to [0,\eps)$ on $U=\Btilde_0$. If $a_0=1$ then $^{\bfa,x}\calV = {}^{\rm b}\calV(U)$ is the set of vector fields tangent to the boundary, or b-vector fields
as defined in \cite{Me-aps}. If $a_0=2$ then $^{\bfa,x}\calV = {}^c\calV(U)$ is the set of cusp vector fields, tangent to second order to the boundary, as defined in \cite{MaMe}. The set $^{\rm b}\calV(U)$ is independent of the choice of $x$ but $^{\rm c}\calV(U)$ depends on the choice of $x$ up to constant multiple and to second order at $\partial U$, i.e. $x,x'$ define the same structure if and only if $x' = cx+ O(x^2)$ for some constant $c>0$.
A tower of fibrations of height $k=1$ is given by a single fibration and a boundary defining function, and in the case $a_0=a_1=1$ is called a manifold with fibred boundary in \cite{MaMe}.

{\bf Local coordinate representations:} Let $\bfPhi$ be a tower of fibrations. We say coordinates $x, y^{(0)},y^{(1)},\dots, y^{(k)}$ on $\Btilde_k$ are {\em  compatible with $\bfPhi$} if $x$ is the pull-back via $\Phi_{k,0}$ of a coordinate on $\Btilde_{-1} = [0,\eps)$, the $y^{(0)}$ are pulled back via $\Phi_{k,1}$ from $\Btilde_0$ and so on. Each $y^{(i)}$ is a $(\dim F_i)$-tuple and may be thought of as coordinates on $F_i$, for each $i$ (but more precisely $y^{(i)}$ are just pull-backs of functions on $\Btilde_i$ which, complemented by $x$ and the pull-backs of $y^{(0)},\dots,y^{(i-1)}$ to $\Btilde_i$, are local coordinates on
$\Btilde_i$).

Choose compatible coordinates on a coordinate neighborhood $U\subset\Btilde_k$.
Write $\partial_{y^{(i)}}$ for $\partial_{y^{(i)}_1},\partial_{y^{(i)}_2},\dots$. Then $(\Phi_{k,i})_*\partial_{y^{(j)}} = \partial_{y^{(j)}}$ for $j<i$ (with $y^{(-1)}:=x$). Therefore, a vector field $\beta_{-1} \partial_x + \sum_{i=0}^k \beta_i \partial_{y^{(i)}}$ satisfies the conditions in \eqref{eq:def a phi V 2} if and only if $\beta_{i-1}=O(x^{a_i+\dots+a_k})$ for $i=0,\dots,k$, and this shows
\begin{equation}
\label{eq:a calV in coords}
^{\bfa,\bfPhi}\calV_{|U} = \Span_{C^\infty(\Btilde_k)_{|U}}
      \{ x^{a_0+\dots+a_k}\partial_x, \
         x^{a_1+\dots+a_k}\partial_{y^{(0)}},
         \dots,
         x^{a_k}\partial_{y^{(k-1)}},\
         \partial_{y^{(k)}} \}
\end{equation}

We now define the main object of interest.
\begin{definition}
Let $M$ be a manifold with boundary, $k\in\NN_0$ and $\bfa\in\NN^{k+1}$. A {\em multi-fibred boundary structure to order $\bfa$} (or \bfa-structure) on $M$  is a space of vector fields $^\bfa\calV(M) \subset \Gamma(TM)$ for which there is a tower of fibrations $\bfPhi$ with $\Btilde_k$ a neighborhood of the boundary of $M$ and $^\bfa\calV(M)_{|\Btilde_k} = {}^{\bfa,\bfPhi}\calV$. In this case we say that $^\bfa\calV(M)$ is {\em represented by} $\bfPhi$.
\end{definition}
Alternatively, an \bfa-structure may be characterized by the dual space of one-forms
$$ ^\bfa\Omega(M) = \{ \omega\in \Omega^1(\interior{M}):\, \omega(V) \text{ extends smoothly from }\interior{M}\text{ to } M,\text{ for all }V\in {}^\bfa\calV(M)\}
$$
or in terms of the algebra of functions
\begin{equation}
\label{eq:def a calF}^\bfa\calF(M) = \{ u\in C^\infty(M): Vu = O(x^{a_0+\dots+a_k})\text{ for all }V\in {}^\bfa\calV(M)\}.
\end{equation}
The algebra $^\bfa\calF(M)$ determines $^\bfa\calV(M)$ via
$$ ^\bfa\calV(M) = \{V\in \Gamma(TM):\, Vu = O(x^{a_0+\dots+a_k})\text{ for all }u\in {}^\bfa\calF(M) \}.$$

Let $^\bfa\calV(M)$ be represented by $\bfPhi$, and let $x, y^{(0)},y^{(1)},\dots, y^{(k)}$ be coordinates compatible with $\bfPhi$, defined on a neighborhood $U$ of a boundary point of $M$. Then it is clear from \eqref{eq:a calV in coords} that
\begin{align}
\label{eq:a Omega in coords}
^\bfa\Omega(M)_{|U} &= \Span_{C^\infty(M)_{|U}}
     \{ \frac{dx}{x^{a_0+\dots+a_k}},\
        \frac{dy^{(0)}} {x^{a_1+\dots+a_k}},
        \dots,
        \frac{dy^{(k-1)}} {x^{a_k}},\
        dy^{(k)}\}
        \\[1mm]
\label{eq:a calF in coords}
^\bfa\calF(M)_{|U} &= \begin{gathered}[t]\{ u_{-1}(x) + x^{a_0} u_0(x,y^{(0)}) + x^{a_0+a_1}u_1(x,y^{(0)},y^{(1)}) + \dots \\
+ x^{a_0+\dots+a_k}u_k(x,y^{(0)},\dots,y^{(k)}):\ u_{-1},\dots,u_k\text{ smooth }\}.
\end{gathered}
\end{align}
(To see \eqref{eq:a calF in coords}, write an arbitrary function in terms of its Taylor expansion in $x$ and check using \eqref{eq:def a calF} which variables each coefficient may depend on).
That is, a  function is in $^\bfa\calF(M)$ if and only if it is constant on the leaves of $\Phi_{k,i}$ up to an error $O(x^{a_0+\dots+a_i})$ for each $i=0,\dots,k$:
$$ ^\bfa\calF(M) = C_{\Phi_{k,0}}^\infty(M) + x^{a_0} C_{\Phi_{k,1}}^\infty(M) + \dots + x^{a_0+\dots+a_k} C^\infty(M),$$
where $x$ is the bdf $\Phi_{k,0}$, or more generally any bdf in $^\bfa\calF(M)$.

The basic geometric structure of interest to us is any one of the spaces $^\bfa\calV(M)$, $^\bfa\Omega(M)$ or $^\bfa\calF(M)$, but for
practical purposes it is often convenient to work with a $\bfPhi$ representing it. Now we need
to understand when two towers of fibrations $\bfPhi$, $\bfPhi'$ are {\em equivalent to order $\bfa$} in the sense that they define the same structure $^\bfa\calV(M)$. We first do this in coordinates. Let $x,y^{(0)},\dots,y^{(k)}$ be compatible coordinates for $\bfPhi$ and $x',y^{(0)'},\dots,y^{(k)'}$ for $\bfPhi'$. Since these are two coordinate systems on a neighborhood of the boundary in $M$ we may regard $x'$ and each $y^{(i)'}$ as functions of $x,y^{(0)},\dots,y^{(k)}$. For the structures to be equivalent, the spaces of 1-forms in \eqref{eq:a Omega in coords} must be the same for the two sets of coordinates. In particular, $ (x')^{-a_0-\dots-a_k} dx',\
         {(x')^{-a_1-\dots-a_k}}dy^{(0)'},
        \dots,
         {(x')^{-a_k}} dy^{(k-1)'},\
        dy^{(k)'}$
must lie in 
\eqref{eq:a Omega in coords}. Here one can replace $x'$ in the denominators by $x$ since the quotient of any two boundary defining functions is smooth.
One obtains, with smooth functions $u_i^j$,
\begin{align}
y^{(k)'}   &= u_k^k (x,y^{(0)},\dots,y^{(k)})&& && \notag \\
y^{(k-1)'} &= u_{k-1}^{k-1} (x,y^{(0)},\dots,y^{(k-1)})&& + O(x^{a_k}) && \notag\\
y^{(k-2)'} &= u_{k-2}^{k-2} (x,y^{(0)},\dots,y^{(k-2)})&& + x^{a_{k-1}}u_{k-1}^{k-2}(x,y^{(0)},\dots,y^{(k-1)})&&+ O(x^{a_{k-1}+a_k}) \notag \\
\vdots & &&&&
\label{eq:coord transform}\\
y^{(0)'} &= u_0^0 (x,y^{(0)}) &&+ x^{a_1} u_1^0(x,y^{(0)},y^{(1)}) &&+\dots + O(x^{a_1+\dots+a_k})\notag \\
x' &= u_{-1}^{-1} (x) &&+ x^{a_0} u_0^{-1}(x,y^{(0)}) &&+ \dots
+ O(x^{a_0+\dots+a_k}) \notag
\end{align}
Here $u_i^j$ is $\RR^{\dim F_j}$ valued.
It is not hard to see that these equations are also sufficient for equivalence of $\bfPhi,\bfPhi'$.
The terms $u_j^j$ here are the local coordinate form of a diffeomorphism between $\bfPhi$ and $\bfPhi'$, i.e.\ a set of diffeomorphisms $A_i:\Btilde_i\to\Btilde_i'$ commuting with the $\Phi_i, \Phi_i'$. In particular,
by the conditions on boundary defining functions, $u_{-1}^{-1}(x)=xv(x)$ for some smooth function $v$. Clearly, if $\bfPhi$, $\bfPhi'$ are diffeomorphic then they  are equivalent to any order since they define the same fibres. The higher terms describe what deviation from diffeomorphism is allowed to yield the same multi-fibration structure to order $\bfa$, that is, to what orders the fibres of $\Phi_i,\Phi_i'$ may differ (up to the coordinate changes $A_i$). The last equation simply says $x'\in {}^{\bfa}\calF(M)$.
In particular, for any $k$, the tower of fibrations at the boundary is determined by $^\bfa\calV(M)$, up to diffeomorphism.

In the examples above, we see that for $k=0$ and $a_0=2$, the function $u_{-1}^{-1}$ is just $cx$. Also, for $a_0=1$ and any $k$, the first term in the $x'$ formula can be absorbed into the second, and for $k=2$, $a_0=a_1=1$ this gives precisely the condition $x'\in x C_{\phi}^\infty(M)$ from \cite{MaMe}.

The following normal form shows that any multi-fibred boundary structure may be expressed in the apparently more special form used in the main part of the paper, Definition \ref{def:interior dfs}.
\begin{lemma} \label{lem:product}
Any multi-fibred boundary structure may be represented by a tower of fibrations with $\Btilde_i=B_i\times [0,1)$ and $\Phi_i=\phi_i\times\id$ for all $i$.
\end{lemma}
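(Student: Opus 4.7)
Given a representative tower $\bfPhi=(\Phi_0,\dots,\Phi_k)$ for $^\bfa\calV(M)$, I will construct a boundary-collar diffeomorphism $\Psi:B_k\times[0,1)\to U$, with $U\subset\Btilde_k$ a neighborhood of $\partial M$, that intertwines the ``product'' fibrations $\phi_i\times\id$ with the given $\Phi_i$. Once this is done, pulling back $\bfPhi$ via $\Psi$ gives a representative of the form required, and since $\Psi$ is a genuine diffeomorphism (not merely an equivalence to some finite order), the pulled-back tower represents exactly the same space of vector fields $^\bfa\calV(M)$ that $\bfPhi$ does.

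\emph{Inductive construction of $\Psi$.} Set $B_i=\partial\Btilde_i$ and $\phi_i=\Phi_i|_{B_i}$, so we obtain the boundary tower $B_k\stackrel{\phi_k}{\to}\cdots\stackrel{\phi_0}{\to}\{0\}$. I will build, by downward induction on $i$ (starting from $i=-1$), diffeomorphisms
\[
\Psi_i:B_i\times[0,1)\longrightarrow U_i\subset\Btilde_i
\]
onto neighborhoods $U_i$ of $\partial\Btilde_i$, satisfying the compatibility $\Phi_i\circ\Psi_i=\Psi_{i-1}\circ(\phi_i\times\id)$ on $\Psi_i^{-1}(U_i\cap\Phi_i^{-1}(U_{i-1}))$. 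The base case $i=-1$ is vacuous since $\Btilde_{-1}=[0,\eps)$ is already a product over a point. For the inductive step, suppose $\Psi_{i-1}$ is built; then the vector field $\partial_t$ on $B_{i-1}\times[0,1)$ transports across $\Psi_{i-1}$ to a vector field $V_{i-1}$ on $U_{i-1}$ transverse to $\partial\Btilde_{i-1}$. Choose any connection (smooth horizontal distribution) for the fibration $\Phi_i:\Btilde_i\to\Btilde_{i-1}$ and let $V_i$ be the horizontal lift of $V_{i-1}$ to $\Btilde_i$. Since $V_i$ projects to $V_{i-1}$, which points ``inward'' transversally to $B_{i-1}$, the field $V_i$ points inward transversally to $B_i$. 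Define $\Psi_i(q,t)$ as the time-$t$ flow of $V_i$ starting at $q\in B_i$; this is defined for $t$ small enough and is a diffeomorphism onto a neighborhood of $B_i$ (here I use compactness of $B_i$, which follows from compactness of $M$). The intertwining identity $\Phi_i\circ\Psi_i=\Psi_{i-1}\circ(\phi_i\times\id)$ holds by construction, because both sides are flows of projectable vector fields starting from the matching initial data $\phi_i:B_i\to B_{i-1}$.

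\emph{Conclusion.} Take $\Psi:=\Psi_k$, and pull back the tower: define $\Btilde_i':=B_i\times[0,1)$ with $\Phi_i':=\phi_i\times\id$, and identify $\Btilde_k'$ with the open neighborhood $U:=U_k$ of $\partial M$ in $M$ via $\Psi$. The iterated intertwining identities ensure that, under this identification, $\Phi_{i,j}'$ coincides with $\Phi_{i,j}$ (restricted to $U$) for every $i\geq j$. In particular the defining conditions \eqref{eq:def a phi V 2} for $^{\bfa,\bfPhi'}\calV$ on $\Btilde_k'$ read exactly like those for $^{\bfa,\bfPhi}\calV$ on $U$, giving
\[
^{\bfa,\bfPhi'}\calV=\,^{\bfa,\bfPhi}\calV|_U=\,^\bfa\calV(M)|_U,
\]
so $\bfPhi'$ represents $^\bfa\calV(M)$, as required.

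\emph{Main obstacle.} The only nontrivial ingredient is the inductive construction of compatible collars; the key point is that a connection on $\Phi_i$ allows horizontal lifting of the already-constructed normal vector field from $\Btilde_{i-1}$ to $\Btilde_i$, so that its flow produces a collar of $\Btilde_i$ that fibres (on the nose, not just to order $\bfa$) over the collar of $\Btilde_{i-1}$. Once this is in place, no further work is needed since no order-$\bfa$ approximation is involved: we obtain an actual diffeomorphism between the given tower and the product tower.
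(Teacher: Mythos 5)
Your argument is correct, and in substance it follows the same inductive trivialization as the paper: both proofs exploit that everything is fibred over the contractible interval and work up the tower one fibration at a time. The difference is in the mechanism. The paper first trivializes the composite $\Phi_{i,0}$ over $[0,1)$ and then observes that the resulting $x$-family of fibrations $B_i\to B_{i-1}$ must be trivial as a family, correcting by a smooth family of diffeomorphisms of $B_i$; you never trivialize first, but instead build all the collar identifications simultaneously and compatibly by choosing a connection on each $\Phi_i$ and flowing the horizontal lift of the normal vector field $\partial_t$ --- which is precisely the Ehresmann-type argument hiding behind the paper's appeal to equivalence of the family by a smooth family of diffeomorphisms. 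Your version is more explicit and self-contained: properness (compactness of the fibres $F_i$, coming from compactness of $M$) is what guarantees the lifted flow exists as long as the base flow, and injectivity of each collar follows neatly by projecting to the level below, as you use. Two small presentational caveats, neither a gap: the intertwining identities make the new composite maps agree with the old $\Phi_{k,i}$ only up to post-composition with the diffeomorphisms $\Psi_{i-1}$ of the intermediate bases (and up to reparametrizing the interval, since the collar exists only for small time), so the displayed equality of the two vector field spaces should be justified by noting that the conditions \eqref{eq:def a phi V 2} depend only on the fibres of the maps $\Phi_{k,i}$ near the boundary --- equivalently on the subalgebras $C^\infty_{\Phi_{k,i}}$ --- and not on the particular choice of base or bdf, both of which are insensitive to such post-compositions.
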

\begin{proof}
Given any representing tower of fibrations, first trivialize $\Phi_0:\Btilde_0\to [0,1)$. This can be done since $[0,1)$ is simply connected. This puts $\Btilde_0$ and $\Phi_0$ into product structure. Next, choose any trivialization $\Btilde_1\cong B_1\times[0,1)$ of $\Phi_0\circ\Phi_1$. Then $\Phi_1:B_1\times[0,1)\to B_0\times [0,1)$ is a family, parametrized by $x\in [0,1)$, of fibrations $B_1\to B_0$. These have to be equivalent via a smooth family of diffeomorphisms of $B_1$. Composing with these diffeomorphisms, we obtain product structure for $\Phi_1$ as well. Continue like this up to $\Btilde_k,\Phi_k$.
\end{proof}

\medskip {\bf Equivalence and fibre diagonals:} From now on we consider only the case $a_0=1$ for simplicity.
For a fibration $\phi:M\to B$ let $\Delta_\phi=\{(p,p'):\, \phi(p)=\phi(p')\}\subset M^2$ be the fibre diagonal. Two fibrations $\phi:M\to B$, $\phi':M\to B'$ have the same fibre diagonal if and only if they are diffeomorphic in the sense described above, i.e. if and only if they have the same fibres. Therefore, it is not surprising that equivalence of towers of fibrations, to some order $\bfa$ at the boundary, may be reformulated in terms of agreement to finite order of their fibre diagonals.
In Definition \ref{def:submfd finite order} we have introduced the notion of `agreement to finite order' only for p-submanifolds. However, (fibre) diagonals in $M^2$, for $M$ a manifold with boundary, are not p-submanifolds. Therefore, we first blow up the corner $\dM\times\dM$; then we are in the setting of p-submanifolds. \ownremark{Certainly one can extend the whole stuff about agreement to some order to submanifolds that are not p. And at some point it may be wise to do so. But not now.}

Given a tower of fibrations $M=\Btilde_k \stackrel{\Phi_k} \to \Btilde_{k-1} \stackrel{\Phi_{k-1}} \to
\dots \stackrel{\Phi_1} \to \Btilde_0 \stackrel{\Phi_0} \to \Btilde_{-1} = [0,\eps)$, with associated fibrations $\Phi_{k,i}:M \to \Btilde_{i-1}$, we have
$$M^2\supset \Delta_{\Phi_{k,0}}\supset \Delta_{\Phi_{k,1}}\supset\dots\supset \Delta_{\Phi_{k,k}} \supset \Delta_M.$$
Let $\beta_b:M^2_x=[M^2,(\dM)^2]\to M^2$ be the blow-down map for the $b$-blowup and let $\Deltatilde_i$ be the lift of $\Delta_{\Phi_{k,i}}$ under $\beta_b$. Then
\begin{equation}
\label{eq:chain of diagonals}
(M^2_x\supset)\, \Deltatilde_1\supset\Deltatilde_2\supset \dots \supset \Deltatilde_k
\end{equation}
is a chain of p-submanifolds, i.e. near any point there is one coordinate system in which all $\Deltatilde_i$ are given by the vanishing of some of the coordinates.
The submanifold $\Deltatilde_0$ is irrelevant in the sequel because $a_0=1$.

The p-submanifold $\Deltatilde_i$ is an interior extension of $\Delta_i := \Deltatilde_i\cap \ff_x$ which under $\beta_b$ is identified with the fibre diagonal $\Delta_{\phi_{k,i}}\subset \partial M\times \partial M$. The equivalence introduced in Definitions \ref{def:submfd finite order} and \ref{def:ideal order a} has a natural extension to such chains of submanifolds: Two interior extensions of the chain $\Delta_1\supset\dots\supset\Delta_k$ agree to order $(a_1,\dots,a_k)$ if and only if they define the same ideal
$$
\calI (\Deltatilde_1) + \calI(\ff_x)^{a_1}\calI(\Deltatilde_2) + \calI(\ff_x)^{a_1+a_2}\calI(\Deltatilde_3)+\dots +\calI(\ff_x)^{a_1+\dots+a_{k-1}} \calI(\Deltatilde_k) + \calI(\ff_x)^{a_1+\dots+a_k}.
$$
We then have:

\begin{theorem} \label{thm:equiv towers}
Let $\bfa=(1,a_1,\dots,a_k)$.
Two towers of fibrations on $M$ are equivalent to order $\bfa$ if and only if the associated chains \eqref{eq:chain of diagonals} of p-submanifolds are equivalent to order $(a_1,\dots,a_k)$ in the sense introduced above.
\end{theorem}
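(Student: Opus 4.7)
The statement is local near $\partial M$, so the plan is to work in compatible coordinates and compute the ideal explicitly. Fix a boundary point and compatible coordinates $x, y^{(0)}, \ldots, y^{(k)}$ for $\bfPhi$, and write $\tilde x, \tilde y^{(i)}$ for the pullbacks to the right factor of $M^2$. After the b-blow-up, $x$ and $t = \tilde x / x$ are coordinates on $M^2_x$ away from $\mathrm{lf}$, so $\calI(\ff_x)=(x)$ and the lift $\Deltatilde_i$ of $\Delta_{\Phi_{k,i}}$ is defined by $t=1$ and $y^{(j)} = \tilde y^{(j)}$ for $j=0,\ldots,i-1$. Writing out the defining ideal of the chain $\Deltatilde_1\supset\cdots\supset\Deltatilde_k$ to order $(a_1,\ldots,a_k)$ and absorbing redundant lower-order pieces into higher-weight pieces, one obtains
\begin{equation}
\label{eq:ideal normal form}
I_\bfa^\bfPhi = \bigl(t-1,\ y^{(0)}-\tilde y^{(0)}\bigr) + x^{a_1}\bigl(y^{(1)}-\tilde y^{(1)}\bigr) + \cdots + x^{a_1+\cdots+a_{k-1}}\bigl(y^{(k-1)}-\tilde y^{(k-1)}\bigr) + (x^{a_1+\cdots+a_k}).
\end{equation}

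For the direction ``equivalence $\Rightarrow$ equal ideals'', suppose $\bfPhi$ and $\bfPhi'$ are equivalent to order $\bfa$, so by \eqref{eq:coord transform} (with $a_0=1$) the new coordinates expand as $y^{(i)'} = u_i^i(x, y^{(0)},\ldots,y^{(i)}) + x^{a_{i+1}} u^i_{i+1} + \cdots + O(x^{a_{i+1}+\cdots+a_k})$, and similarly for $x'$. The plan is to show each generator of $I_\bfa^{\bfPhi'}$ (computed by the analog of \eqref{eq:ideal normal form} in primed coordinates) lies in $I_\bfa^\bfPhi$, and vice versa by symmetry. The key computation is, for each term $x^{a_{i+1}} u^i_{i+1}$ above, to expand
\[
x^{a_{i+1}}u^i_{i+1}(x,y^{(0)},\ldots,y^{(i+1)}) - \tilde x^{a_{i+1}}u^i_{i+1}(\tilde x,\tilde y^{(0)},\ldots,\tilde y^{(i+1)})
\]
by Hadamard's lemma as $x^{a_{i+1)}}\sum_j c_j(y^{(j)}-\tilde y^{(j)}) + (x^{a_{i+1}}-\tilde x^{a_{i+1}})(\cdots)$ and note that $x^{a_{i+1}}-\tilde x^{a_{i+1}}$ is divisible by $t-1$. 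Summing contributions and matching weights, every term falls into the prescribed piece of \eqref{eq:ideal normal form}.

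For the converse ``equal ideals $\Rightarrow$ equivalence'', proceed by induction on $i$ from $k$ down to $-1$. At each level $i$, the hypothesis forces $x^{a_1+\cdots+a_i}(y^{(i)'}-\tilde y^{(i)'}) \in I_\bfa^\bfPhi$ (with the convention that this is just $t'-1$ or $y^{(0)'}-\tilde y^{(0)'}$ at the top). Substituting the ansatz $y^{(i)'} = F(x, y^{(0)},\ldots,y^{(k)})$ and using Taylor expansion in the variables $y^{(i+1)},\ldots,y^{(k)}$ around $\tilde y^{(j)} = y^{(j)}$, the previously determined structure of $y^{(i+1)'},\ldots,y^{(k)'}$ combined with the normal form \eqref{eq:ideal normal form} of $I_\bfa^\bfPhi$ pins down the allowed dependence of $F$ on the higher-index variables: each variable $y^{(j)}$ with $j>i$ may enter only multiplied by a factor $x^{a_{i+1}+\cdots+a_j}$, modulo $O(x^{a_{i+1}+\cdots+a_k})$. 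This is exactly the line of \eqref{eq:coord transform} at height $i$.

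The main obstacle is the bookkeeping in the converse direction: one must carefully distinguish contributions to the generators of $I_\bfa^\bfPhi$ at each weight $x^{a_1+\cdots+a_j}$ and ensure that membership in $I_\bfa^\bfPhi$ really forces the desired structural form of $y^{(i)'}$, rather than merely a weaker vanishing. The downward induction on $i$ is essential here, because the Hadamard expansion of $y^{(i)'} - \tilde y^{(i)'}$ involves the already-controlled differences $y^{(j)'} - \tilde y^{(j)'}$ for $j>i$, whose structure we need in order to identify ``genuine'' obstructions at level $i$.
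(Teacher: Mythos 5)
The paper itself contains no proof of Theorem \ref{thm:equiv towers}: it is stated and immediately deferred to \cite{GH2} (``in preparation''), so there is no in-paper argument to measure yours against. Judged on its own, your plan is sound. Your coordinate normal form is the correct local expression of the ideal $\calI(\Deltatilde_1)+\calI(\ff_x)^{a_1}\calI(\Deltatilde_2)+\dots+\calI(\ff_x)^{a_1+\dots+a_k}$ near $\Delta_1\subset\ff_x$ (away from $\lf,\rf$, which the chain never meets; away from $\Delta_1$ both ideals localize to the unit ideal, so the local computation suffices), and the forward direction goes through exactly as you indicate: a Hadamard term $x^{a_1+\dots+a_i}\,x^{a_{i+1}+\dots+a_m}c_j\,(y^{(j)}-\tilde y^{(j)})$ with $j\le m$ lies in $\calI(\ff_x)^{a_1+\dots+a_j}\calI(\Deltatilde_{j+1})$, the divisibility of $x^{c}-\tilde x^{c}$ by $t-1$ absorbs the boundary-defining-function factors, the $O(x^{a_{i+1}+\dots+a_k})$ remainders land in $\calI(\ff_x)^{a_1+\dots+a_k}$, and symmetry of the equivalence gives both inclusions. (You do rely on \eqref{eq:coord transform} being equivalent to tower equivalence; the sufficiency half of that is only asserted, not proved, in the paper, so strictly it is an input you inherit.)

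For the converse, the ``weaker vanishing'' worry you raise disappears — and the downward induction becomes unnecessary — if you simply restrict the membership statement $x^{a_1+\dots+a_i}(y^{(i)'}-\tilde y^{(i)'})\in\calI(\Deltatilde_1)+x^{a_1}\calI(\Deltatilde_2)+\dots+(x^{a_1+\dots+a_k})$ to the lifted partial diagonal $\Deltatilde_{m+1}=\{t=1,\ y^{(j)}=\tilde y^{(j)}\ \text{for}\ j\le m\}$ for each $m\ge i$: restriction annihilates the summands up to $x^{a_1+\dots+a_m}\calI(\Deltatilde_{m+1})$, every surviving summand carries the factor $x^{a_1+\dots+a_{m+1}}$, and after cancelling $x^{a_1+\dots+a_i}$ you read off directly that varying $y^{(m+1)},\dots,y^{(k)}$ changes $y^{(i)'}$ only by $x^{a_{i+1}+\dots+a_{m+1}}$ times a smooth function; running $m=i,\dots,k-1$ (and the same argument applied to $t'-1$, using $x'=x\cdot(\text{positive smooth})$, for the $x'$ line) reconstructs \eqref{eq:coord transform} level by level, each level independently. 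In particular your stated reason that the induction is ``essential'' is not correct: the Hadamard expansion of $y^{(i)'}-\tilde y^{(i)'}$ in the unprimed coordinate differences involves no primed differences at higher levels. Finally, record explicitly that ideal equality forces (and the paper's definition of chain equivalence presupposes) that the boundary fibre diagonals $\Delta_j$ of the two towers coincide, so that the common compatible coordinates you compute in actually exist.
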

This is proved in \cite{GH2}. There we also show that a chain of p-submanifolds to order $(a_1,\dots,a_k)$ can be canonically resolved by first blowing up $\Delta_1$ to order $a_1$, then $\Delta_2$ to order $a_2$ etc. More precisely, when blowing up $\Delta_1$ we lift $\Delta_2,\dots,\Delta_k$ by taking the intersection of their extensions $\Deltatilde_2,\dots,\Deltatilde_k$ with the front face, and similarly for subsequent lifts. In particular, the lift of $\Delta_i$ has the same dimension as $\Delta_i$.\footnote{This is different from blowing up a chain of submanifolds (without increasing orders of definedness), as in \cite{Me-mwc}, where the lift say of $\Delta_2\subset\Delta_1$ is its preimage, hence has larger dimension than $\Delta_2$. } It is then easy to check that the centers of subsequent blow-ups are still well-defined to these orders after the previous blow-ups. We obtain:

\begin{corollary}
The double space $M^2_\bfa$ for a multi-fibred boundary structure to order $\bfa=(1,a_1,\dots,a_k)$ is canonically defined as the resolution of the chain $\Delta_1\supset\dots\supset\Delta_k$ to order $(a_1,\dots,a_k)$.
\end{corollary}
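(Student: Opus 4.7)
The plan is first to reduce the canonicity of $M^2_\bfa$ to the canonicity of the chain of centers, and then to verify inductively that each stage of the iterated quasi-homogeneous blowup is well-defined, with lifts of the deeper centers retaining their prescribed orders of definedness.

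By Theorem \ref{thm:equiv towers}, an $\bfa$-structure on $M$ (with $\bfa=(1,a_1,\dots,a_k)$) determines canonically the chain $\Delta_1\supset\Delta_2\supset\cdots\supset\Delta_k$ of boundary p-submanifolds of $M^2_x$ to orders $a_1,\dots,a_k$, and only this equivalence class is intrinsic. The first step is therefore an invariance principle: if two chains of interior extensions $\{\Deltatilde_j\}$ and $\{\Deltatilde_j'\}$ represent the same chain of submanifolds to orders $(a_1,\dots,a_k)$ in the sense of Definition \ref{def:ideal order a}, then the resulting iterated blow-ups are canonically isomorphic. This follows by applying the pullback formula \eqref{eq:pull-back generator} stage by stage: after blowing up $\Delta_1$ to order $a_1$, any modification of $\Deltatilde_1$ by terms in $\calI(\ff_x)^{a_1}$ pulls back to a modification by terms in the ideal generated by $r_\ff^{a_1}$, which is absorbed into the ambient smooth structure at the new front face.

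Next I would carry out the inductive construction. Set $W_0:=M^2_x$ and assume $W_{i-1}$ has been built together with lifts $\Delta_j^{(i-1)}$ of $\Delta_j$ for $j\geq i$ as boundary submanifolds to order $a_j$. Define $W_i := [W_{i-1}; \Delta_i^{(i-1)}]_{a_i}$, and lift the remaining centers using the recipe at the end of Subsection \ref{subsec:qhom}. The crux is to verify that each resulting $\Delta_j^{(i)}$ for $j>i$ is again a well-defined submanifold to order $a_j$. In local coordinates compatible with the chain (in which each $\Delta_j$ is cut out by vanishing of a prescribed subset of the coordinates), one can compute the lifts explicitly using the projective coordinates \eqref{eq:proj coord quasihom ff} and \eqref{eq:proj coord quasihom bd}, and check that the defining ideals retain the local form \eqref{eq:order a ideal locally} with the correct weight $a_j$. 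Setting $M^2_\bfa := W_k$ then completes the construction, and canonicity follows by combining the invariance principle with the inductive construction.

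The main obstacle is the local verification in the induction: one must confirm that after a quasi-homogeneous blowup of a strictly larger center $\Delta_i$ (rather than along $\Delta_j$ itself), the lift of the deeper $\Delta_j$ remains defined to exactly the correct order $a_j$, with no shift. The essential point is that since $\Delta_j$ lies inside $\Delta_i$, near the front face an interior extension of $\Delta_j$ may be chosen so that its directions transverse to $\Delta_i$ pull back to combinations of the projective fibre coordinates $W_j$ and $V$ of \eqref{eq:proj coord quasihom ff}; the weighted order of vanishing of the defining ideal is then seen directly to be preserved. A parallel computation in the coordinates \eqref{eq:proj coord quasihom bd} handles the region near the strict transforms of the interior boundary hypersurfaces.
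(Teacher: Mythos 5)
Your plan follows the paper's own route: by Theorem \ref{thm:equiv towers} the $\bfa$-structure canonically determines the chain $\Delta_1\supset\dots\supset\Delta_k$ to orders $(a_1,\dots,a_k)$, and the double space is then the iterated quasi-homogeneous blowup of this chain, which requires exactly the two points you isolate — invariance of each blowup under change of representative within the order-$a_i$ equivalence class (the ideal-theoretic definition of $[X;Y_a]$ and \eqref{eq:pull-back generator}) and the stage-wise check in projective coordinates that the lifted centers remain defined to the correct orders. The paper itself only sketches this, citing \cite{GH2} for the canonical resolution of a chain and declaring the order bookkeeping ``easy to check,'' so your outline is essentially the same argument, carried slightly further in local coordinates.
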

Recall that in the main text we use the notation $\bfa=(a_1,a_2)$ instead of $\bfa=(1,a_1,a_2)$.
\medskip

{\bf Reducing a multi-fibred boundary structure:}
Given a multi-fibred boundary structure (mfbs) $^\bfa\calV(M)$, we may reduce it by `forgetting' the smallest fibres. This may be iterated. Thus, let $l=0,\dots,k$ and choose a tower of fibrations
\eqref{eq:tower of fibrations} representing $\bfPhi$. Then the reduced mfbs $\Red_l ({}^\bfa\calV(M))$ is represented by the tower of fibrations
\begin{equation}
\label{eq:def reduced mfbs}
\Btilde_k \stackrel{\Phi_{k,l}}\to \Btilde_{l-1} \stackrel{\Phi_{l-1}}\to\dots \stackrel{\Phi_0}\to [0,1)
\end{equation}
with the orders $(a_0,\dots,a_l)$. This is easily seen to be independent of the choice of $\bfPhi$. In compatible local coordinates near a boundary point, $\Red_l({}^\bfa\calV(M))$ is spanned by
$$ x^{a_0+\dots+a_l}\partial_x,\ x^{a_1+\dots+a_l}\partial_{y^{(0)}},\dots,
x^{a_l} \partial_{y^{(l-1)}},\ \partial_{y^{(l)}},\dots,\partial_{y^{(k)}}.$$
In particular, for $l=0$ and $a_0=1$ one gets $^{\rm b}\calV(M)$.
\medskip

{\bf Vector bundles associated  with a multi-fibred boundary structure:}
\eqref{eq:a calV in coords} shows that $^\bfa\calV(M)$ is locally free and hence is the space of sections of a vector bundle, the {\em $\bfa$-tangent bundle $^\bfa TM$}. Thus, a local basis of $^\bfa TM$, at a boundary point, consists of the expressions in \eqref{eq:a calV in coords}. Interpreting $v\in {}^\bfa T_pM$ as a `usual' tangent vector gives a natural bundle map $\iota_\bfa: {}^\bfa TM \to TM$ which over the interior of $M$ is an isomorphism. Define the {\em $\bfa-$normal bundle }
\begin{equation}
\label{eq:def a-normal bundle}
^\bfa N \partial M := \ker \iota_{\bfa|\partial M}.
\end{equation}
It is spanned by the same vectors as $^\bfa TM$ except $\partial_{y^{(k)}}$.

$^\bfa N\partial M$ is actually the pull-back of a bundle from the base $B_{k-1}$. To see this, let $\bfPhi: \Btilde_k \stackrel{\Phi_k} \to \Btilde_{k-1} \stackrel{\Phi_{k-1}} \to
\dots \stackrel{\Phi_0} \to \Btilde_{-1} = [0,\eps)$ be a tower of fibrations  representing $^\bfa\calV(M)$, and let $\bfPhi'$ be the tower
$$\bfPhi': \Btilde_{k-1} \stackrel{\id} \to \Btilde_{k-1} \stackrel{\Phi_{k-1}} \to
\dots \stackrel{\Phi_0} \to \Btilde_{-1} = [0,\eps).$$
Thus, the total space of $\bfPhi'$ is $\Btilde_{k-1}$, and the smallest fibres are points.
Then canonically
\begin{equation}
\label{eq:normal bundle as pullback}
^\bfa N\partial M \cong (\phi_k)^* ({}^\bfa N'B_{k-1}) \quad\text{ where } {}^\bfa N'B_{k-1} :=
{}^{\bfa, \bfPhi'} T\Btilde_{k-1|B_{k-1}}.
\end{equation}
This is clear since in local coordinates, both sides are spanned by
$x^{a_0+\dots+a_k}\partial_x,\dots, x^{a_k}\partial_{y^{(k-1)}}$. More invariantly, any $V'\in {}^{\bfa,\bfPhi'}\calV$ lifts to a section of $V\in {}^{\bfa,\bfPhi}\calV$, and the lift is unique when one imposes the condition $\iota_\bfa V_{|B_k}=0$, so this defines the desired isomorphism.

Denote by $^{(a_0,\dots,a_l)} N\partial M$ the a-normal bundle of the reduction $\Red_l {}^\bfa \calV(M)$. It is spanned by
$$ x^{a_0+\dots+a_l}\partial_x,\dots,x^{a_l}\partial_{y^{(l-1)}}$$
and is the pull-back of a bundle on $B_{k-l-1}$.
In particular, for $l=0$ the basis is $x^{a_0}\partial_x$.

The $\bfa$-normal bundle can be used to give a description of the top order front face $\ff_\bfa$ of $M^2_\bfa$ (and of the previous front faces by cutting off the chain at $\Btilde_l$ for $l<k$), which lies at the core of the analysis of the normal operator in the $\bfa$-pseudodifferential calculus. Since $\ff_\bfa$ arises from the blow-up of a center canonically isomorphic with $\Delta_{\phi_k} = \partial M \times_{\phi_k} \partial M$, it is naturally fibred (via the blow-down map) over this space.

For $\bfa=(a_0,\dots,a_{k-1},a_k)$ let $\bfa+1 = (a_0,\dots,a_{k-1},a_k+1)$. We call a choice of mfbs ${}^{\bfa+1}\calV(M)$ a {\em one step refinement} of a mfbs ${}^\bfa\calV(M)$ if ${}^{\bfa+1}\calV(M)\subset{}^\bfa\calV(M)$.
A refinement allows us to consider ${}^\bfa N^*\partial M$ naturally as a subspace of ${}^\bfa T^*_{\partial M}M$ (rather than a quotient, which it always is), by defining its sections to be  $(x\alpha)_{|\partial M}$ for $\alpha$ a section of ${}^{\bfa+1}T^*M$, where $x$ is a compatible bdf. In local coordinates, ${}^\bfa T^*_{\partial M}M$ is spanned by the expressions on the right in \eqref{eq:a Omega in coords} and ${}^\bfa N^*\partial M$ by the same except $dy^{(k)}$.

Recall that in the main text the fibre diagonal $\Delta_{\Phi_k}$ is called $\Delta_z$, and its intersection with the front face is identified with the boundary fibre diagonal $\Delta_{\phi_k}$.
\begin{proposition} \label{prop:a front face general}
For a multi-fibred boundary structure ${}^\bfa\calV(M)$ there is a diffeomorphism of the interior of the front face of $M^2_\bfa$
\begin{equation}
\label{eq:ffa diffeo}
\interior{\ff}_\bfa \cong \ {}^\bfa N\partial M\times_{\phi_k}\partial M,
\end{equation}
as bundles over $\partial M \times_{\phi_k}\partial M$. The diffeomorphism is determined by, and determines, a one step refinement of ${}^\bfa\calV(M)$.

Suppose such an extension, hence identification \eqref{eq:ffa diffeo} is chosen.
If $g\in C^\infty(\interior{M})$ is such that $dg$ extends to a section of ${}^\bfa N^*\partial M$ then
\begin{equation}
\label{eq:dg meaning}
\beta_\bfa^*(\pi_l^*g - \pi_r^*g)\text{  extends  smoothly to }\intff_\bfa \text{ and equals }  dg \text{ there}
\end{equation}
where $\beta_\bfa:M^2_\bfa\to M^2$ is the blow-down map and $\pi_l,\pi_r:M^2\to M$ are projections to the left and right factor, and $dg$ is considered as a function on ${}^\bfa N\partial M\times_{\phi_k}\partial M$ constant in the second factor. Also, $\beta_\bfa^*(\pi_l^*g - \pi_r^*g)$ has finite order singularities at the other faces of $M^2_\bfa$.
\end{proposition}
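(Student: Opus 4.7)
The plan is to work locally in coordinates $(x,y_i,z_j,w_k)$ adapted to the boundary trivialization fixed in Definition \ref{def:interior dfs} (or, in the general setting of the appendix, compatible coordinates $(x,y^{(0)},\dots,y^{(k)})$). In the projective coordinates from \eqref{eq:proj coord quasihom ff} iterated through the three blowups $\beta_x,\beta_y,\beta_z$, a point of $\intff_z$ has coordinates $(\calT,y,\calY,z,\calZ,w,w')$ with $x=0$, where $\calT=(x-x')/x^{1+a_1+a_2}$, $\calY=(y-y')/x^{a_1+a_2}$, $\calZ=(z-z')/x^{a_2}$. These coordinates exhibit $\intff_z$ as a bundle over $\partial M\times_{\phi_k}\partial M$ (parametrized by the shared $y,z$ and the independent $w,w'$) whose fibre is $\RR^{1+b+f_1}$; under the local basis $x^{1+a_1+a_2}\partial_x,\,x^{a_1+a_2}\partial_y,\,x^{a_2}\partial_z$ of ${}^\bfa N\partial M$, this is exactly the fibre of ${}^\bfa N\partial M$, giving the identification \eqref{eq:ffa diffeo} locally.

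For claim 1 it remains to check how the identification transforms under the allowed coordinate changes \eqref{eq:coord transform}. The leading terms $u_j^j$ there just rename the basis of ${}^\bfa N\partial M$ linearly and do not spoil the identification. However the sub-leading corrections of order $O(x^{a_{i+1}+\cdots+a_k})$ that are allowed by $\bfa$-equivalence contribute an affine shift in the fibre coordinates $\calY^{(\bullet)}$ that survives at $x=0$. A one-step refinement ${}^{\bfa+1}\calV(M)\subset{}^\bfa\calV(M)$ is equivalent to tightening these corrections by one extra power of $x$, which precisely cancels the ambiguous affine shifts and thereby makes the linear identification canonical; conversely a canonical identification pins down the corrections to that same order and thus specifies the refinement. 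This is the main obstacle: keeping track of which finite-order data in the coordinate transformations affects the identification at $x=0$ versus only affects higher-order behaviour.

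For claim 2, pull back $\pi_l^*g-\pi_r^*g$ via the blow-down map expressed in these projective coordinates, giving $g(x,y,z,w)-g(x-x^{1+a_1+a_2}\calT,\,y-x^{a_1+a_2}\calY,\,z-x^{a_2}\calZ,\,w')$. Apply the fundamental theorem of calculus along the straight line joining the two arguments with $w$ interpolated to $w'$: the derivative equals
\[
\calT\cdot(x^{1+a_1+a_2}\partial_x g)+\calY\cdot(x^{a_1+a_2}\partial_y g)+\calZ\cdot(x^{a_2}\partial_z g)+(w-w')\cdot\partial_w g,
\]
evaluated along the path. Each of the four coefficients is $dg$ paired with a basis element of ${}^\bfa TM$, hence smooth up to the boundary by the hypothesis $dg\in{}^\bfa T^*M$. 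The extra hypothesis $dg|_{\partial M}\in{}^\bfa N^*\partial M$ (which is pulled back from $B_1$) forces $\partial_w g$ to vanish at $\partial M$, killing the last term at $x=0$, and makes the other three coefficients independent of $w$ at $\partial M$, so the integrand is constant in $s$ at $x=0$ and the integral reduces to the claimed expression for $dg$.

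For claim 3, integrating the local basis \eqref{eq:local basis aT*} shows that the hypothesis $dg\in{}^\bfa T^*M$ forces $g=O(x^{-(a_1+a_2)})$ near $\partial M$. Hence $\pi_l^*g-\pi_r^*g$ has only polynomial growth at $\partial M\times M\cup M\times\partial M$, which lifts under $\beta_\bfa$ to finite-order singularities at $\lf,\rf,\ff_{zx},\ff_{zy}$; the precise exponents can be read off from the exponent vectors of $\beta_\bfa^*x$ and $\beta_\bfa^*x'$ computed in Lemma \ref{lem:e2matrix}.
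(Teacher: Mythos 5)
Your overall structure is sound, and for the first claim your coordinate analysis is essentially the paper's own argument: the paper additionally packages the statement invariantly (the lifts of $\bfa$-vector fields restrict to the fibres of $\ff_\bfa$ and make it an affine bundle modelled on ${}^\bfa N\partial M\times_{\phi_k}\partial M$, a refinement supplying the zero section as the lift of the interior fibre diagonal), and then verifies in coordinates exactly the dichotomy you describe: the leading terms of \eqref{eq:coord transform} give the linear transitions of ${}^\bfa N\partial M$, while the innermost-order corrections produce affine shifts that are killed precisely by passing to an $(\bfa+1)$-structure. For the second claim your route (fundamental theorem of calculus along a segment) differs from the paper's, which uses the one-parameter flow of a lifted $\bfa$-vector field to prove \eqref{eq:dg meaning} first over diagonal points $(q,q)$ and then extends to general $(p,p')\in\Delta_{\phi_k}$ by integrating $dg$ along a path inside a $\Phi_k$-fibre.

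Two steps in your treatment of claim 2 are genuinely gappy. First, the straight-line interpolation ``with $w$ interpolated to $w'$'' is not well defined: on $\intff_\bfa$ the points $w,w'\in F_2$ are arbitrary, need not lie in a common (connected) coordinate chart, and linear interpolation of coordinate values is not a path in $M$. The fix is exactly the paper's second step: split $\pi_l^*g-\pi_r^*g$ into a base-direction difference at fixed fibre point (to which your FTC applies and which yields $\langle dg|_p,v\rangle$ at $x=0$) plus the fibre difference $g(x',y',z',w)-g(x',y',z',w')$, which is an integral of the fibre component of $dg$ along a path in $F_2$ and vanishes smoothly at $x=0$ since that component vanishes on $\partial M$. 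Second, your justification that the coefficients of $dg$ are $w$-independent at $\partial M$ ``because ${}^\bfa N^*\partial M$ is pulled back from $B_1$'' is a non sequitur: the bundle is a pull-back, but its sections need not be pull-backs of sections. The fact you need is true, but for a different reason — either because in the intended application $g=\Phi_2^*\gbar$ is itself a pull-back, or in general because $dg$ is closed, so $\partial_w$ of the $\frac{dx}{x^{1+a}},\frac{dy}{x^a},\frac{dz}{x^{a_2}}$ coefficients equals $x^{1+a}\partial_x\theta$, $x^a\partial_y\theta$, $x^{a_2}\partial_z\theta$ respectively and hence vanishes at $x=0$; alternatively the issue disappears entirely if you keep $w$ fixed along the path as above. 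With these repairs (and the minor remark that the scaling factors $x^{1+a_1+a_2}$, $x^{a_1+a_2}$, $x^{a_2}$ taken at the left endpoint differ from those at the path point by a factor smooth up to $\intff_\bfa$), your argument for claims 2 and 3 goes through.
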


\begin{proof}
We first give an invariant definition of the map ${}^\bfa N\partial M\times_{\phi_k}\partial M \to \intff_\bfa$ and then write it in local coordinates. Let $p,p'\in\partial M$ with $\phi_k(p)=\phi_k(p')$, so $(p,p')\in \partial M\times_{\phi_k}\partial M$. The fibre over $(p,p')$ of the vector bundle ${}^\bfa N\partial M\times_{\phi_k}\partial M \to M\times_{\phi_k}\partial M$, denoted  ${}^\bfa N_{(p,p')}\partial M\times_{\phi_k}\partial M$, is canonically identified with ${}^\bfa N_p\partial M$.
Given  $v\in {}^\bfa N_{(p,p')}\partial M\times_{\phi_k}\partial M\cong {}^\bfa N_{p}\partial M$,  choose an $\bfa$-vector field $V$ on $M$ with $V(p)=v$. The lift of $V$ to the left factor of $M^2$ and then to $M^2_\bfa$ is smooth, its restriction to the fibre $\ff_{\bfa,(p,p')}$ of $\ff_\bfa$ over $(p,p')$ depends only on $v$ and is tangent to this fibre, defining a complete vector field on it. These claims will be checked below in local coordinates. Let $A_v$ be the time one map of the flow of this restricted vector field.
Then $v\mapsto A_v$ defines an action of ${}^\bfa N_{(p,p')}\partial M\times_{\phi_k}\partial M $ on $\ff_{\bfa,(p,p')}$ and makes $\ff_{\bfa}$ an affine bundle modelled on the vector bundle ${}^\bfa N\partial M\times_{\phi_k}\partial M$.

Giving an isomorphism \eqref{eq:ffa diffeo} therefore corresponds to choosing a `zero section' in $\ff_\bfa$.
Given a refinement ${}^{\bfa+1}\calV(M)$ of ${}^\bfa\calV(M)$, the fibre diagonal $\Delta_{\phi_k}$, which is blown up last in the construction of $M^2_\bfa$, is defined to order $a_k+1$. Since in $M^2_\bfa$ it is blown up only to order $a_k$, its lift, the intersection of $\ff_\bfa$ with the lift of the interior fibre diagonal $\Delta_{\Phi_k}$, is still well-defined and projects diffeomorphically to $\Delta_{\phi_k}$ under the blow-down map, see the end of Subsection \ref{subsec:qhom}. This is the desired section of $\ff_\bfa\to \partial M \times_{\phi_k}\partial M$.

We now write this construction in local coordinates, where the various claims made will be obvious.
For the purpose of clarity we will restrict to the case of a single fibration, $\partial M \stackrel{\phi} \to B$, defined to order $a$. The general case is completely analogous. We denote base variables by $y$ and fibre variables by $z$. As in the main text we introduce coordinates $t=x'/x$ on the b-blow-up and $T=(1-t)/x^{a}$, $Y=(y-y')/x^a$ near the interior of the top front face $\ff_a$. Write $p=(y,z)$, $p'=(y,z')$. Then $T\in\RR$ and $Y\in\RR^b$ are coordinates on $\intff_{a,(p,p')}$. Recall that the fibre $^a N_p\partial M$ (usually denoted $^\phi N_p\partial M$) is spanned by $x^{1+a}\partial_x$ and $x^a\partial_y$. Let $\alpha\in\RR,\beta\in\RR^b$.
By the calculation \eqref{eq:vf pullback phi} $v=\alpha x^{1+a}\partial_x + \beta x^a\partial_y$ lifts to be smooth near $\intff_{a,(p,p')}$ and the lift is $\alpha\partial_T + \beta \partial_Y$ there, hence is tangent to this fibre, with time one map $A_v:(T,Y)\mapsto (T+\alpha,Y+\beta)$.

A change of coordinates $\ytilde= f(x,y) + x^a r(x,y,z)$,
$\xtilde = xh(x,y) + x^{1+a}s(x,y,z)$ as in \eqref{eq:coord transform} (where in $\xtilde$ the first term was absorbed into the second) yields
\begin{align*}
\Ytilde &= \frac{\ytilde -\ytilde'}{\xtilde^a} = Y h^{-a} d_y f  + (r(0,y,z) - r(0,y,z'))h^{-a}, \\
\Ttilde &= \frac{\xtilde-\xtilde'}{\xtilde^{1+a}} = T h^{-a} + Y  h^{-a-1}d_y h + (s(0,y,z) - s(0,y,z'))h^{-a-1}
\end{align*}
at $\ff_{a,(p,p')}$, where $h=h(0,y)$. This shows that the set  where $T=Y=0$ -- the intersection of the interior fibre diagonal with the front face -- is well defined if and only if the last terms vanish.
This is equivalent to having $r=xr_1$, $s=xs_1$, i.e.  only allowing coordinate changes compatible with an $(a+1)$ structure.

Summarizing, the diffeomorphism \eqref{eq:ffa diffeo} is given, in coordinates, by sending the point on $\intff_{a,(p,p')}$ with coordinates $T=\alpha$, $Y=\beta$ to $\alpha x^{1+a}\partial_x + \beta x^a\partial_y$.

It remains to prove \eqref{eq:dg meaning}.
Assume first $p=p'$. Let $v\in {}^\bfa N_p\partial M$ and again choose an $\bfa$-vector field $V$ with $V(p)=v$ as elements of ${}^\bfa N_p\partial M$. Denote the flow of $V$ by $t\mapsto \exp_{tV}$.
For $q\in \interior{M}$ we have
\begin{equation}
\label{eq:g minus g}
g(\exp_V(q)) - g(q) = \int_0^1 \frac{d}{dt} g(\exp_{tV}(q))\,dt=\int_0^1 dg(V)_{|\exp_{tV}(q)}\, dt.
\end{equation}
Denote $G=\beta_\bfa^*(\pi_l^*g-\pi_r^*g)$, $\Vtilde$ the lift of $V$ to $M^2_\bfa$ used above, and let $Q$ be the point on $\beta_\bfa^* \Delta_{\Phi_k}$ projecting to $(q,q)$, for a compatible choice of interior extension $\Phi_k$ of $\phi_k$. Unravelling the definitions of $G$ and $\Vtilde$ one sees that
the left side of \eqref{eq:g minus g} can be rewritten as \ownremark{$(\pi_l^*g-\pi_r^*g)(\exp_{V_l}(q,q))$ where $V_l$ is the lift of $V$ to the left factor of $M^2$ and then as}
$G(\exp_\Vtilde (Q))$. Now the right side of \eqref{eq:g minus g} is smooth in $v,q$ also for  $q\in\partial M$, so $G(\exp_\Vtilde (Q))$ is also smooth. Recall $\exp_\Vtilde(Q)=A_v(0)$ for $q\in\partial M$ by definition of $A_v$, and this covers all of  $\ff_{\bfa,(q,q)}$ as $v$ varies, so we get the smoothness claim. Also, for $q\in\partial M$ we have $\exp_{tV}(q)=q$ for all $t$ (since $V=0$ over $\partial M$ as section of $TM$), so the right side of \eqref{eq:g minus g} equals $dg(v)$ and \eqref{eq:dg meaning} follows.

To extend this to arbitrary $(p,p')\in \Delta_{\phi_k}$ it suffices to show that $g(q)-g(q')\to 0$ as $(q,q')\in\interior{\Delta}_{\Phi_k}$ approach the boundary. Now $q,q'$ can be connected by a curve $\gamma:[0,1]\to \interior{M}$ for which $\Phi_k(\gamma(t))$ is constant. Then $g(q)-g(q') = \int_0^1 dg(\gamma'(t))\, dt$. Since the $\Phi_k$ fibre component of $dg$ vanishes at $(p,p')\in \Delta_{\phi_k}$ this approaches zero as $(q,q')\to (p,p')$.
\end{proof}



\end{document}